\newtheoremstyle{plainsl}%
       {\topsep}
       {\topsep}
       {\slshape} 
       {}
       {\normalfont\bfseries}	
       {.}
       { }
       {}	
\theoremstyle{plainsl}
\newtheoremstyle{break}
   {\topsep}{\topsep}%
   {\slshape}{}%
   {\bfseries}{}%
   {  }
   {\thmname{#1}\thmnumber{\@ifnotempty{#1}{ }\@upn{#2}}%
    \thmnote{ {\bfseries(#3)}}{.}}%
\theoremstyle{break}
\newtheorem{theorem}{Theorem}[section]
\newtheorem{thm}[theorem]{Theorem}
\newtheorem{defn}[theorem]{Definition}
\newtheorem{example}[theorem]{Example}
\let\oldexample\example
\renewcommand{\example}{\oldexample\normalfont}
\newtheorem{lem}[theorem]{Lemma}
\newtheorem{cor}[theorem]{Corollary}
\newtheorem{obs}[theorem]{Observation}
\newtheorem{prop}[theorem]{Proposition}
\newtheorem{conj}[theorem]{Conjecture}
\newcommand\txtsl[1]{\textsl{#1}\index{#1}}
\newcommand\stackplus[1]{\makebox[0ex][l]{$+$} \raisebox{-.75ex}{\makebox[2ex]{$_{#1}$}}}
\DeclareMathOperator{\nul}{null}
\DeclareMathOperator{\T}{T}
\DeclareMathOperator{\CC}{cc}
\DeclareMathOperator{\mr}{mr}
\DeclareMathOperator{\rank}{rank}
\DeclareMathOperator{\TT}{\mathcal{T}}
\DeclareMathOperator{\PP}{\mathcal{P}}
\DeclareMathOperator{\diam}{diam}
\DeclareMathOperator{\Spec}{Spec}
\DeclareMathOperator{\supp}{Supp}
\DeclareMathOperator{\bfx}{{\mathbf x}}
\title{Zero Forcing Sets for Graphs }
\author{Fatemeh Alinaghipour Taklimi}
\begin{document}
\beforepreface
\addcontentsline{toc}{chapter}{Abstract}

\chapter*{Abstract}
For any simple \textsl{graph} $G$ on $n$ vertices, the (\textsl{positive semi-definite}) \textsl{minimum rank} of $G$ is defined to be  the smallest possible rank  among all (positive semi-definite) real symmetric $n\times n$ matrices whose entry in position $(i,j)$, for $i\neq j$, is non-zero  if $ij$ is an edge in $G$ and zero otherwise. Also, the (\textsl{positive semi-definite}) \textsl{maximum nullity} of $G$ is defined to be the largest possible nullity of a (positive semi-definite) matrix in the above set of matrices. 
In this thesis we study two graph parameters, namely the \textsl{zero forcing number} of $G$, $Z(G)$, and
the \textsl{positive zero forcing number} of $G$, $Z_+(G)$, which bound the maximum nullity and  the positive semi-definite maximum nullity from above, respectively. With regard to the zero forcing number, we introduce some new families of graphs for which the zero forcing number and the maximum nullity are the same. Also we establish an equality between the zero forcing number and the \textsl{path cover} number for a new family of graphs. In addition, we establish a connection between the zero forcing number and the \textsl{chromatic number} of graphs. With regard to the positive zero forcing number, we introduce the concept of \textsl{forcing trees} in a graph and we establish a connection between the positive zero forcing number and the \textsl{tree cover number}. Also we study families of graphs for which these parameters coincide.        
 In addition, we provide some new results on the connections of this parameter with other graph parameters, including  the \textsl{independence} number and the chromatic number of $G$.   

\addcontentsline{toc}{chapter}{Acknowledgments}
\chapter*{Acknowledgments}
Foremost, I would like to express my sincere gratitude to my advisors professor Shaun Fallat and  Dr. Karen Meagher for their continuous support of my PhD studies and research, for their patience, motivation, enthusiasm, and immense knowledge. I am so lucky to pursue my PhD under the supervision of such brilliant mathematicians.

I would also like to thank the other members of my PhD committee, Dr. Sandra Zilles and  Dr. Shonda Gosselin and appreciate their valuable time spent on reading my thesis.

I also appreciate the financial supports of Dr. Fallat, Dr. Meagher and the department of Mathematics and Statistics at the University of Regina, during my PhD studies. 

Last but not the least, I would like to thank my husband, Bahman, for all his helps, especially for drawing many of the graphs in my thesis. I really appreciate it.

\bigskip
\hfill {\bf Fatemeh Alinaghipour Taklimi}
\vspace{-.3cm}

\hfill {Regina, Canada, July 2013}
 
\addcontentsline{toc}{chapter}{Dedication}

\chapter*{Dedication}
\begin{center}
To  my  dear husband and best friend 

{\large\bf Bahman}

for his sincere love, endless support and encouragement, and to my lovely son 

{\large\bf Ilia}

for all the happiness he brought to us...

\vspace{2cm}
\end{center}
 \tableofcontents
\listoffigures 
 
 \afterpreface

\chapter{Introduction}\label{intro}
Pattern classes of matrices represent an important part of  the field of combinatorial matrix theory. Examples of related applications are models of problems in economics, mathematical biology (especially ecological food webs), statistical mechanics, and communication complexity.

\section{The minimum rank problem}

The minimum rank problem for a simple graph, or the minimum rank problem for short, determines the minimum rank among all real symmetric matrices whose zero-nonzero pattern of off-diagonal entries is described by a given simple graph $G$. 

 A graph or digraph ``describes'' the zero-nonzero pattern of a family of matrices; namely, the $(i,j)$-entry of the matrix is either non-zero or zero, depending on whether vertices $v_i$ and $v_j$ are adjacent or non-adjacent in the graph. Note that there is no restriction on the diagonal entries of the matrix. In particular, the ``adjacency matrix'' of a graph $G$ is described by $G$; this is the (0,1)-matrix whose rows and columns are labeled by the vertices of $G$ and whose $(i,j)$-entry is $1$ if and only if the vertices $v_i$ and $v_j$ are adjacent in $G$. The set of all real symmetric matrices which are described by a graph $G$, is denoted by $\mathcal{S}(G)$. For any graph $G$, we denote the minimum rank of $G$ by $\mr(G)$.

The \textsl{minimum rank problem} is to determine the minimum among the ranks of the matrices in such a family;  the determination of maximum nullity is then equivalent. Considerable progress has been made on the minimum rank problem for the family of real symmetric matrices described by a simple graph, although the problem is far from being solved in general. For example, the minimum rank problem has been completely solved for all types of tree patterns. The reader may refer to~\cite{MR2350678} to see a more detailed discussion on  the recent results in the  minimum~rank~problem.
 
 We can also view  the minimum rank problem as the problem of determining  the maximum nullity. Since the nullity is, by definition, the number of vectors in a basis for the null space, the core idea becomes the study of subspaces with structured bases. Let matrix $A$ be described by a graph $G$.  Essentially, if ``enough'' of the entries of a vector in the null space of $A$
are known to be zero, then, because of the pattern of $A$, we may deduce that the entire vector is zero. From this line of research, a new parameter was devised, and has become known as the \textsl{zero forcing number}. This parameter  is the minimum size of a ``zero forcing set'', which is a set of initially black-colored vertices of $G$ which, through a coloring process, can ``force'' the rest of the vertices to be black. The zero forcing number gives an upper bound on the maximum nullity of $A$. However, a zero forcing parameter is a completely combinatorial parameter, and thus has the advantage of being intimately related to the structures within a pattern, or more precisely, a graph.

It turns out, perhaps surprisingly, that the zero-nonzero pattern described by the graph has significant influence on the minimum rank. For example, the minimum rank of a path graph is always equal to the number of edges in the path.

If in the definition of the minimum rank and maximum nullity, one restricts the matrices considered to the family of positive semi-definite matrices, then the resulting quantities will be called \textsl{positive semi-definite minimum rank} and \textsl{positive semi-definite maximum nullity}, respectively. In addition to studying the zero forcing number, in this thesis we investigate a graph parameter called \textsl{positive zero forcing number}  which has the ``same nature'' as the zero forcing number. This is the minimum size of a \textsl{positive zero forcing set} for the graph which has, in turn, a similar definition as a zero forcing set with a different colour change rule. The goal for studying this parameter, similar to the conventional zero forcing number, is to bound  the positive semi-definite maximum nullity. 
 Although these two zero forcing numbers are analogous (as the two maximum nullities are), there has been more research conducted on the conventional zero forcing number.

\section{Relationship to other problems}
The  minimum rank of a family of matrices associated with a graph has also played a role in various other problems, such as the inverse eigenvalue problem,  singular graphs, biclique decompositions and the biclique partition number, and orthonormal labellings of graphs (see the survey paper~\cite{MR2350678} for details on these problems).

It is well-known that the sum of the rank and the nullity of an $n\times n$ matrix is $n$. This implies that, for any graph $G$, the sum of the minimum rank and the maximum nullity of a graph on $n$ vertices is $n$. Hence the solution of the minimum rank problem is equivalent to the determination of the maximum multiplicity of an eigenvalue among the same family of matrices in the symmetric free-diagonal case. An important related problem is then, the so-called \textsl{inverse eigenvalue problem} of a graph $G$, which can be stated as: what eigenvalues are possible for a real symmetric matrix $A$ having nonzero off-diagonal entries determined by the edges of $G$? For example, if the minimum rank is $m$, then the multiplicity of an eigenvalue can be no more than $n-m$.

In spectral graph theory, an important tool is the $(0,1)$-adjacency matrix $A(G)$ of the graph $G$, and  a well-known open problem from 1957, is to characterize the graphs $G$ whose adjacency matrix is singular (see  \cite{MR0087952} for a survey of this problem). Further, many researchers have studied the dimension of the null space of the adjacency matrix; the nullity of $A(G)$ is an upper bound on the difference of $n$ and the minimum rank of $G$.

Partitioning the edges of a graph is an important research area in graph theory (see \cite{MR0289210}, \cite{MR929670}, \cite{MR1330781}, \cite{MR816059}). A \textsl{biclique partition} is a partition of the edges of a graph into subgraphs in such a way that each subgraph is a complete bipartite graph (or a biclique). The size of the smallest such partition is called the biclique partition number of $G$ and is denoted by $\text{bp}(G)$ (see also \cite{MR0300937}). In \cite{MR0289210} it is shown that
\[
\text{bp}(G)\geq \max\{i_+(A(G)), i_-(A(G))\},
\]
also
\[
n-\alpha(G)\geq \max\{i_+(A), i_-(A)\},\quad A\in \mathcal{S}(G),
\]
where $A(G)$ is the $(0,1)$-adjacency matrix of $G$ and $i_+(X)$ and $i_-(X)$ are the number of positive and negative eigenvalues of the symmetric matrix $X$, respectively, and $\alpha(G)$ is the \textsl{independence number} of $G$; that is the maximum size of an independent set in $G$. The second bound is often called the \textsl{inertia bound} on the independence number.  In the special case when $G$ is a bipartite graph it follows easily that $\text{bp}(G)\geq \frac{1}{2} \rank(A(G))\geq \frac{1}{2}\mr(G)$.

The orthogonal labeling of the vertices is another prominent subject in graph theory that also has ties to minimum rank. If $G = (V,E)$ is a graph, then an \textsl{orthonormal labeling} of $G$ of \textsl{dimension} $d$ is a function $f \,:\, V\longrightarrow \mathbb{R}^d$ such that $f(u)\cdot f(v) = 0$ whenever vertices $u$ and $v$ are not adjacent, and $|f(u)| = 1$ for all $u\in V$ (see \cite{MR2046632} for more details about orthonormal labeling). If  $d(G)$ denotes the smallest dimension $d$ over all orthonormal labellings of $G$, then $d(G)$ is equal to the minimum rank of a positive semidefinite  matrix whose graph is given by G (see \cite{MR2046632}). Lov{\'a}sz \cite{MR514926} has also shown that
\begin{equation}\label{lovasz}
\alpha(G)\leq \vartheta(G)\leq d(G)\leq \chi(\bar{G}),
\end{equation}
where, $\vartheta(G)$ is the  Lov{\'a}sz $\vartheta$ function and $\chi(\bar{G})$ is the chromatic number of the complement of $G$ (i.e., the clique cover number of $G$). Note that (\ref{lovasz}) provides lower and upper bounds on $d(G)$; hence it approximates the minimum rank of a  positive semidefinite  matrix described by $G$.

\section{Thesis objectives}

One of the main objectives in this thesis is to determine the families of graphs for which  the maximum nullity and the zero forcing number coincide. This is, in particular, important since the zero forcing number is a graph theoretical parameter which is more ``tangible'' than the maximum nullity. The latter is an algebraic parameter,  and in the majority of the cases more difficult to deal with. This will, therefore, ease the study of this algebraic concept. 
 
It has been shown \cite[Proposition 2.10]{MR2645093} that the path cover number of a graph is a lower bound for the zero forcing number of the graph. As another objective, we will investigate families of graphs for which these two parameters are equal.      

One of the other objectives of this thesis is to develop new approaches to studying the (positive) zero forcing number and to understand its possible connections with other graph parameters. 
As the zero forcing number and the positive  zero forcing number have a  ``graph colouring characteristic'',  one of the natural directions to this goal is studying the possible connections  of these two parameters and other colouring parameters. As a result, we establish some relationships with the  traditional graph colouring parameter, namely the chromatic number. Further, some results on the connection between positive zero forcing number and the chromatic number will be presented. 

The other connection which will be established is the relationship between the positive zero forcing number and the \textsl{tree cover number}, which is the minimum number of induced trees of a graph which cover the vertices of the graph. We show, in particular, that the positive zero forcing number is an upper bound for the tree cover number of the graph. Also, we will investigate families of graphs in which the positive zero forcing number and the tree cover number agree.

\section{Outline of the thesis}

This thesis includes seven chapters. Chapter~\ref{Basics} will cover some necessary background material on graph theory and matrix theory and classical facts which we will use in later chapters. 

In Chapter~\ref{min_rank} we will introduce the algebraic parameters (positive semi-definite) minimum rank and maximum nullity,  defined for a given simple graph. We will then provide some known facts about these concepts.

Chapter~\ref{ZFS} will provide all the information we need to begin the study of zero forcing sets and zero forcing numbers. This will illustrate how the zero forcing number bounds the maximum nullity of a graph. Then we study the graphs for which this bound is tight and provide some new results.  Next we investigate graphs for which the zero forcing number meets the path cover number. In addition, the Colin de Verdi$\grave{\text{e}}$re parameters for a graph and their connection to the zero forcing number will be discussed in Section~\ref{colin de}. We conclude this chapter with a section on our new results on bounding the chromatic number with the zero forcing number. 

In Chapter~\ref{zfs_and_graph_operations}, we  study the  effect of graph operations on the zero forcing number. In particular, we calculate the zero forcing number of some graph products in terms of those of the initial graphs.

In Chapter~\ref{PZFS}, which also includes some of the main new results of this thesis, we will first introduce the parameter \textsl{positive zero forcing number} and then show how it bounds the positive semi-definite maximum nullity (and thus the positive semidefinite minimum rank). Furthermore, we will define the concept of  \textsl{forcing trees} which provides connections between the positive zero forcing number and the \textsl{tree cover number}. Next we introduce one of the most important families of graphs for which the positive zero forcing number equals the tree cover number. In addition, we study these two graph parameters in more families of graphs.  Finally, we will  compare the positive zero forcing number with  the chromatic number and  the independence number of a graph.

The thesis will be concluded with Chapter~\ref{conclusion} in which we show some of  the potential  directions for future research work on this topic. We will list some open questions and conjectures which are closely related to our results.

\chapter{Graph Theory}\label{Basics}

This chapter presents some basic definitions, concepts and facts from graph theory and matrix theory needed throughout the thesis.
 The reader may also refer to  \cite{MR2368647}  and \cite{MR1367739} for additional details.


\section{Basic concepts}\label{graph theory}
Throughout this thesis all graphs are assumed to be finite, simple and undirected. We will often use the notation $G=(V,E)$ to denote the graph\index{graph} with non-empty vertex set $V=V(G)$ and edge set $E=E(G)$. An edge of $G$ with end-points $i$ and $j$ is denoted by $ij$. The \textsl{order}\index{order of a graph} of the graph $G$ is defined to be $|V(G)|$, this may also be written as $|G|$. Vertex $v$ is a \txtsl{neighbour} of vertex $u$ if $uv\in E(G)$. The set of all neighbours of $v$ is denoted by $N(v)$. The \txtsl{degree} of a vertex $v$ is $|N(v)|$ and is denoted by $d(v)$. The minimum degree over all the vertices of a graph $G$ is the \txtsl{minimum degree} of the graph $G$ and is denoted by $\delta(G)$. 
 A graph $H$ is a \txtsl{subgraph} of $G$ (denoted $H \leq G$) if $V(H)\subseteq V(G)$ and  $E(H) \subseteq E(G)$. A subgraph $H$ of a graph $G$ is said to be \textsl{induced}\index{graph!induced} if, for any pair of vertices $x$ and $y$ of $H$, $xy$ is an edge of $H$ if and only if $xy$ is an edge of $G$. Let  $S\subseteq V(G)$; then  the subgraph $H$ of $G$ induced by $S$ is denoted by $H=G[S]$. Also the subgraph induced by $V(G)\backslash S$ is denoted by $G\backslash H$. The \textsl{degree} of a vertex $v\in V(G)$ in an induced subgraph of the graph $G$, the subgraph $H$, is denoted by $d_H(v)$. 
 A \txtsl{supergraph} of a graph $G$ is a graph of which $G$ is a subgraph. A graph $G$ is called \textsl{complete}\index{graph!complete} if $G$ has all possible edges between its vertices. The complete graph on $n$ vertices is denoted by $K_n$. An \textsl{empty}\index{graph!empty} graph is a graph with empty edge set. The complement of the graph $G$ which is denoted by $\overline{G}$ is a graph on the same set of vertices such that two vertices are adjacent in $\overline{G}$ if and only if they are not adjacent in $G$. In particular, the empty graph is denoted by $\overline{K_n}$.

A \txtsl{path} on $n$ vertices is a graph $P_n$ with vertex set  $V(P_n)=\{v_1,\ldots,v_n\}$ in which $v_i$ is adjacent only to $v_{i+1}$, for all $i\in\{1,\ldots,n-1\}$. We may also write $P_n$ as $v_1v_2\cdots v_n$.
The graph formed by adding the edge $v_1v_n$ to $P_n$ is called a \txtsl{cycle} on $n$ vertices and is denoted by $C_n$. The cycle $C_n$ is said to be an \txtsl{odd cycle} (\txtsl{even cycle}) if $n$ is odd (even). A graph $G$ is said to be \textsl{connected}\index{graph!connected} if for any pair $x,y$ of vertices in $G$, there is a path between $x$ and $y$. A \txtsl{component} of a graph is a maximal connected subgraph of the graph. It is clear that  any connected graph has only one component. Any graph $G$ with more than one component is said to be \txtsl{disconnected}\index{graph!disconnected}. If a graph contains no cycle as a subgraph, then it is called a \txtsl{forest}. A connected forest is a \txtsl{tree}.

The length of a path is the number of edges of the path, i.e. the length of $P_n$ is $n-1$. Let $u$ and $v$ be two vertices in graph $G$. Then the \textsl{distance} between $u$ and $v$ in $G$, denoted by $d_G(u,v)$, is the length of a shortest path in $G$ between $u$ and $v$. The \txtsl{diameter} of a graph $G$ is, then, defined to be the maximum distance over all vertices:  
\[
\diam(G)=\max\{ d_G(u,v)\,\,|\,\, u,v\in V(G) \}.
\]

A subset $S$ of the vertex set of a graph is said to be an \txtsl{independent} set if no pair of elements of $S$ are adjacent.
A \textsl{multipartite}\index{graph!multipartite}  graph is a graph whose vertex set can be partitioned into subsets $X_1,\ldots, X_k$, with $k\geq2$, such that each $X_i$ is an independent set. The subsets $X_i$ are called the \txtsl{parts} of the graph.  A multipartite graph with two parts  is called a \textsl{bipartite}\index{graph!bipartite} graph.

\begin{thm}[{{see \cite{MR1367739}}}]\label{bipartite}
A graph is bipartite if and only if it does not contain an odd cycle. \qed
\end{thm}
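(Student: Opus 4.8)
The plan is to prove the two implications separately, with the forward direction being essentially immediate and the reverse direction requiring a distance-based two-colouring argument together with one auxiliary observation about closed walks.

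For the ``only if'' direction, suppose $G$ is bipartite with parts $X_1$ and $X_2$. Given any cycle $v_1 v_2 \cdots v_k v_1$, I would note that consecutive vertices lie in different parts (since each $X_i$ is independent), so the vertices alternate between $X_1$ and $X_2$ as one traverses the cycle. Returning to $v_1$ after $k$ steps forces $k$ to be even. Hence $G$ has no odd cycle; this is exactly the contrapositive of the claimed implication.

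For the ``if'' direction, assume $G$ contains no odd cycle. It suffices to treat each component separately, so I may assume $G$ is connected. Fix a vertex $r$ and set
\[
X_1 = \{\, v \in V(G) : d_G(r,v) \text{ is even} \,\}, \qquad
X_2 = \{\, v \in V(G) : d_G(r,v) \text{ is odd} \,\}.
\]
These sets partition $V(G)$, and the goal is to show each is independent, which shows $G$ is bipartite. Suppose instead that $u$ and $w$ are adjacent and lie in the same part, so $d_G(r,u)$ and $d_G(r,w)$ have the same parity. Concatenating a shortest $r$--$u$ path, the edge $uw$, and a shortest $w$--$r$ path produces a closed walk through $r$ of odd length. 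The main step is then the auxiliary fact that \emph{any closed walk of odd length contains an odd cycle}, which I would establish by induction on the length of the walk: if the walk repeats a vertex, split it at that vertex into two shorter closed walks, one of which must have odd length, and apply the inductive hypothesis; otherwise the walk is itself an odd cycle. This contradicts the hypothesis that $G$ has no odd cycle, so no such edge $uw$ exists, and the two parts are independent.

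The only genuinely non-routine point is the closed-walk lemma; everything else is bookkeeping with the distance function. I would either prove that lemma inline as sketched or isolate it as a short preliminary observation. An alternative to the whole reverse direction would be to use a spanning tree $T$ of the connected graph $G$ and two-colour by parity of depth in $T$; checking that non-tree edges respect the colouring again reduces to the observation that a monochromatic edge would close an odd cycle, so the crux is the same.
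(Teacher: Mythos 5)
Your proof is correct and is the standard textbook argument (alternation around a cycle for the forward direction; distance-parity two-colouring plus the odd-closed-walk lemma for the converse). The paper itself gives no proof of this statement --- it is quoted from the literature with only a citation --- so there is nothing to compare against, and your write-up, including the inductive proof of the closed-walk lemma, would serve as a complete self-contained proof.
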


 It is obvious from Theorem~\ref{bipartite} that all trees are bipartite. A \textsl{complete bipartite}\index{graph!complete bipartite} graph is a bipartite graph with bipartition $(X,Y)$ in which each vertex of $X$ is adjacent to every vertex of $Y$; if $|X|=m$ and $|Y|=n$, such a graph is denoted by $K_{m,n}$. A multipartite graph which has all possible edges in its edge set is called a \txtsl{complete multipartite graph}.

A \txtsl{clique} in a graph is a subset of its vertices such that every pair of vertices in the subset are adjacent. A clique covering of a graph is a family of cliques in the graph that cover all edges of the graph. The \txtsl{clique cover number} of a graph $G$, denoted by  $\CC(G)$, is the minimum number of cliques in the graph needed to ``cover'' all edges of $G$; i.e. each edge of $G$ lies in at least one of those cliques. Figure~\ref{M(G)=M(G-v)-1} has an example of a graph with $\CC(G)=4$. The reader may refer to \cite{MR2368647} for more facts about the clique cover number.

There are similar concepts for covering the vertices of a graph. For instance, a \txtsl{path covering} of a graph is a family of induced vertex-disjoint paths in the graph that cover all vertices of the graph. The minimum number of such paths that cover the vertices of a graph $G$ is the \txtsl{path cover number} of $G$ and is denoted by $P(G)$.  
Also a \txtsl{tree covering} of a graph is a  family of induced vertex disjoint trees in the graph that cover all vertices of the graph. The minimum number of such trees that cover the vertices of a graph $G$ is the \txtsl{tree cover number} of $G$ and is denoted by $T(G)$.

Next, we define the \txtsl{chromatic number} of  a graph. To this end, recall that a k-vertex colouring of G assigns to each vertex one colour from a set of $k$ colours; the colouring is
proper if no two adjacent vertices have the same colour. A graph $G$ is \textsl{$k$-colourable}\index{k-colourable@$k$-colourable} if $G$ has a proper $k$-vertex colouring. The chromatic number, $\chi(G)$, of  $G$ is the minimum $k$ for which $G$ is $k$-colourable. A $k$-chromatic graph $G$ is a graph with $\chi(G)=k$.

Identifying two vertices is a way to form new graphs. Let $v_1,v_2\in V(G)$. The simple graph formed by identifying $v_1$ and $v_2$ has vertex set $\left(V(G)\backslash \{v_1,v_2\}\right)\cup \{v\}$ and $N(v)=N(v_1)\cup N(v_2)$ with  all possible multiple edges changed to single edges. Similarly we can identify vertices from different graphs. If $v_1\in G$ and $v_2\in H$, then identifying the vertices $v_1$ and $v_2$ produces the \txtsl{vertex-sum} of the graphs $G$ and $H$ at vertex $v$, which is denoted by $G\stackplus{v}H$. This simple graph has $V(G)\cup \left(V(H)\backslash \{v_1,v_2\}\right)\cup \{v\}$  as its vertex set and $E(G)\cup E(H)$ as its edge set and with $N(v)=N(v_1)\cup N(v_2)$.    
 The graph formed by connecting $G$ and $H$ by adding the edge $v_1v_2$  is the \txtsl{edge-sum} of the graphs $G$ and $H$ and is denoted by $G=G\,\,\stackplus{v_1v_2}\,\,H$. This graph has $V(G)\cup V(H)$ as its vertex set and $E(G)\cup E(H)\cup\{v_1v_2\}$ as its edge set.

The graph resulting from the removal of a vertex $v$ and all edges incident with $v$ from the graph $G$ is denoted by $G-v$. Also the graph resulting from removing an edge $e$ from the graph $G$ is denoted by $G-e$. The \txtsl{subdivision} of an edge $e=uv$ in a graph $G$ yields a graph with vertex set $V(G)\cup\{w\}$ and edge set $E(G)\backslash\{uv\}\cup\{uw\}\cup\{wv\}$.

A \txtsl{cut vertex} of a graph $G$ is a vertex whose deletion disconnects $G$.  A \txtsl{cut set} of the graph $G$ is a set $V' \subseteq V(G)$ whose deletion disconnects $G$.
 The \txtsl{vertex connectivity} of the graph $G$, which is denoted by $\kappa(G)$, is the minimum size over all cut sets of $G$. 

Recall that a subgraph of a graph $G$ is a graph ``smaller'' than $G$ that is obtained from $G$ by removing some vertices and/or edges of $G$.  There are other ``smaller'' graphs, which are not necessarily  subgraphs of $G$, yet are obtained from $G$ by a series of edge and vertex removing and identifying operations. The next definition introduces an example of these ``smaller" graphs. If $e=xy$ is an edge of $G$, then the \txtsl{contraction} of $e$, which is denoted by $G/e$, is the operation that  deletes the edge $e$ and identifies $x$ and $y$ and removing all multiples edges.  The resulting graph is a simple graph with fewer number of vertices and edges. 
A \txtsl{minor} of a graph $G$ arises by performing a sequence of the following operations:
\begin{enumerate}[(a)]
\item deletion of edges;
\item deletion of isolated vertices;
\item contraction of edges.
\end{enumerate}
Note that a graph obtained from a graph $G$ by deleting some  of its vertices is also a minor of $G$; therefore any subgraph is a minor. If a graph $G$ has a graph $H$ as a minor, we say that ``$G$ has an $H$ minor''.

A graph parameter  $\lambda$ is said to be  \txtsl{minor-monotone}, if for any graph $G$ and any minor  $H$ of $G$, we have $\lambda(H)\leq \lambda(G)$, or if for any graph $G$ and any minor  $H$ of $G$, we have $\lambda(H)\geq \lambda(G)$. For example, $\kappa(G)$ is a minor-monotone graph parameter. Based on the definition, a graph has lots of different minors but the existence of certain minors in the graph can give information about its minor monotone  parameters.

A graph is said to be \txtsl{embeddable} in the plane, or \txtsl{planar}, if it can be drawn in the plane so that its edges intersect only at their incident vertices.  Such a drawing of a planar graph $G$ is called a \txtsl{planar embedding} or \txtsl{crossing-free embedding} of $G$. We refer the reader to the books~\cite{MR2159259}  and~\cite{MR1367739} for  detailed discussions about planar graphs.  A graph is planar if and only if it has neither a $K_5$ minor nor a $K_{3,3}$ minor (see \cite[Theorem 6.2.2]{MR1367739}). In a planar graph a \txtsl{face} is a region bounded by edges that does not have any edges going through them. The area outside the planar graph is also a face, called the \textsl{outer face}\index{face!outer}, and the other faces are called the \textsl{inner faces}\index{face!inner}.   Similarly a graph is \txtsl{outerplanar} if it has a planar embedding in which the outer face contains all the vertices. The edges touching the outer face are called \txtsl{outer edges} and other edges are called \txtsl{inner edges}. In the graph depicted in Figure~\ref{pendant_tree_construction}, $wu$ is an outer edge while $wv$ is an inner edge and the region bounded by edges $wu$, $wv$ and $uv$ is an inner face. A graph is outerplanar if and only if it has neither a $K_4$ minor nor a $K_{2,3}$ minor~\cite{MR2159259}.

The \txtsl{corona} of $G$ with $H$, denoted by  $G\circ H$, is the graph of order $|V(G)||V(H)|+|V(G)|$ obtained by taking one copy of $G$ and $|V(G)|$ copies of $H$, and joining all vertices in the $i$-th copy of $H$ to the  $i$-th vertex of $G$. See Figure~\ref{corona} for an illustration of $C_5\circ K_2$.  In Section~\ref{graph_with_Z=M}  we give a generalization of the corona, in keeping with the notation of this generalization, the corona of $G$ with $H$ will be denoted by $G \prec H,\ldots,H\succ$.

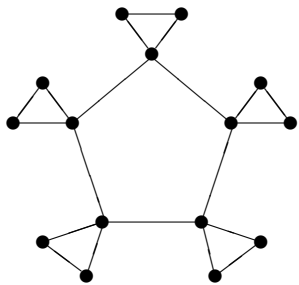
\begin{figure}[ht!]
\begin{center}
\unitlength=.75pt
\begin{picture}(85,75)
\put(0,0){\circle*{6}}
\put(-30,0){\circle*{6}}
\put(-15,20){\circle*{6}}
\put(-30,0){\line(1,0){27}}
\put(-30,0){\line(3,4){13}}
\put(-15,20){\line(3,-4){13}}

\put(80,0){\circle*{6}}
\put(110,0){\circle*{6}}
\put(95,20){\circle*{6}}
\put(82.3,2.3){\line(3,4){11.2}}
\put(97,17.5){\line(3,-4){11.2}}
\put(83,0){\line(1,0){25}}

\put(65,-50){\circle*{6}}
\put(95,-60){\circle*{6}}
\put(72,-77){\circle*{6}}
\put(66,-53){\line(1,-4){6}}
\put(68,-51){\line(3,-1){24}}
\put(72,-77){\line(4,3){20.5}}

\put(15,-50){\circle*{6}}
\put(-15,-60){\circle*{6}}
\put(7,-77){\circle*{6}}
\put(-15,-60){\line(3,1){27}}
\put(-15,-60){\line(4,-3){19.5}}
\put(8,-74){\line(1,3){7}}

\put(40,35){\circle*{6}}
\put(25,55){\circle*{6}}
\put(55,55){\circle*{6}}
\put(25,55){\line(1,0){27}}
\put(25,55){\line(3,-4){13}}
\put(55,55){\line(-3,-4){13}}

\put(0,0){\line(6,5){40}}
\put(0,0){\line(1,-3){17}}
\put(15,-50){\line(1,0){47}}
\put(65,-50){\line(1,3){16}}
\put(80,0){\line(-6,5){40}}
\end{picture}\unitlength=1pt
\\
\vspace{3cm}
\end{center}
\caption{Corona of $C_5\circ K_2$}
\label{corona}
\end{figure}


\section{Spectral graph theory}\label{matrix theory}
This section represents a brief introduction to an important part of algebraic graph theory, namely  spectral graph theory. Here we introduce some  linear algebraic tools which are used to establish some  properties of graphs. For more details on spectral graph theory, the reader may refer to \cite{MR1271140} or  \cite{MR1829620} and for more details on matrix theory to \cite{MR0276251} or \cite{MR832183}.

First we define the concept of an \txtsl{adjacency matrix}.
Assume that $X$ is a graph with vertex set $V(X)=\{v_1,\ldots,v_n\}$. Then the \txtsl{adjacency matrix} of $X$, which is denoted by $A(X)$, is a square (0,1)-matrix of size $n$, whose $(i,j)$-th entry is $1$ if and only if $v_i$ is adjacent to $v_j$. Since there are no loops in the graph, the diagonal entries of the adjacency matrix are zero. Also, changing the order of the vertices produces a distinct, but similar adjacency matrix.

Recall from linear algebra that for a matrix $A=[a_{i,j}]$, the \txtsl{transpose of a matrix} of $A$, denoted by  $A^{\top}$, is  defined to be the matrix $A^\top=[a'_{i,j}]$, where $a'_{i,j}=a_{j,i}$, for all $i$ and $j$. An $n\times n$ matrix $A$ is said to be a \txtsl{symmetric matrix} if $A^\top=A$. The set of all square $n\times n$ matrices with entries in $\mathbb{C}$ is denoted by $M_n$. The \txtsl{conjugate} of a matrix $A=[a_{ij}] \in M_n$ is denoted by $\bar{A}=[\bar{a}_{ij}]$, where $\bar{a}_{ij}$ is the conjugate of $a_{ij}$.
The \txtsl{conjugate transpose} of a matrix $A$, denoted by $A^*$, is defined to be 
\[
A^*\equiv \left(\bar{A}\right)^\top= [\bar{a}_{ji}].
\]
A matrix $A=[a_{ij}] \in M_n$ is said to be \txtsl{Hermitian} if $A=A^*$. The set of real symmetric $n\times n$ matrices is denoted by $\mathcal{S}_n$ and the set of Hermitian $n\times n$ matrices is denoted by $H_n$. 

The \txtsl{null space} (or the \textsl{kernel}) of a matrix $A$ is the vector space consisting of all vectors $v$ for which $Av=0$; we denote the null space of $A$ by $\ker(A)$. The matrix $A$ is said to be \txtsl{nonsingular} (or \txtsl{invertible}) if there is no nonzero vector in its null space, otherwise, it is \txtsl{singular}. The dimension of the null space of a matrix $A$ is called the \txtsl{nullity} of $A$ and is denoted by $\nul(A)$. The \txtsl{rank} of a matrix is defined to be the dimension of the row or column space of the matrix; that is, the dimension of the vector space spanned by the rows or columns of the matrix. The rank of $A$ is denoted by $\rank(A)$. The following fact is well-known.

\begin{prop}[{{see \cite{MR0276251}}}]\label{rank+null=n}
For an $n\times n$ matrix $A$,
\[
\rank(A)+\nul(A)=n.\qed
\]
\end{prop}

For a square matrix $A$, the \txtsl{eigenvalues} are defined to be the roots $\lambda$ of the \txtsl{characteristic polynomial} $\phi(A;x)=$det$(xI-A)$. Equivalently, a complex number $\lambda$ is an eigenvalue of $A$, if the determinant of the matrix $\lambda I-A$ is zero. This is, in turn, equivalent to the fact that there is a non-zero vector in the null space of $\lambda I-A$.  If $\lambda$ is an eigenvalue of $A$, then the null space of $\lambda I -A$ is  called the \txtsl{eigenspace} of $A$ corresponding to the eigenvalue $\lambda$, and its non-zero elements are called the \txtsl{eigenvector}s of $A$ corresponding to  $\lambda$. The dimension of the eigenspace corresponding to the eigenvalue $\lambda$ is called the \txtsl {geometric multiplicity} of the eigenvalue $\lambda$. The \txtsl{algebraic multiplicity}  of the eigenvalue $\lambda$ of a matrix $A$, is the maximum power of the factor $x-\lambda$ in the characteristic polynomial $\phi(A;x)$ and is denoted by $m(\lambda)$.

\begin{prop}
If a square matrix $A$ is symmetric, then all the eigenvalues of $A$  are real and  the \txtsl {algebraic multiplicity} of each eigenvalue is equal to its \txtsl{geometric multiplicity}.\qed
\end{prop}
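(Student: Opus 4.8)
The plan is to prove the two claims in turn; throughout I read ``symmetric'' as ``real symmetric'' (the same argument handles the Hermitian case after replacing transposes by conjugate transposes, and the statement is genuinely false for complex symmetric matrices).

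First, to see that every eigenvalue is real, I would take an eigenvalue $\lambda\in\mathbb{C}$ with eigenvector $v\in\mathbb{C}^{n}\setminus\{0\}$, so $Av=\lambda v$. Since $A$ is real and symmetric, $A^{*}=\overline{A}^{\top}=A$. Multiplying $Av=\lambda v$ on the left by $v^{*}$ gives $v^{*}Av=\lambda\,v^{*}v$, while taking conjugate transposes of $Av=\lambda v$ gives $v^{*}A=\overline{\lambda}\,v^{*}$, and then multiplying on the right by $v$ gives $v^{*}Av=\overline{\lambda}\,v^{*}v$. Since $v\neq 0$ forces $v^{*}v>0$, we conclude $\lambda=\overline{\lambda}$, i.e. $\lambda\in\mathbb{R}$. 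In particular every eigenspace $E_{\lambda}=\ker(\lambda I-A)$ is an honest subspace of $\mathbb{R}^{n}$; write $k=\dim E_{\lambda}$ for its dimension, the geometric multiplicity of $\lambda$.

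Second, for the equality of the algebraic multiplicity $m(\lambda)$ with the geometric multiplicity $k$, I would pick an orthonormal basis $u_{1},\dots,u_{k}$ of $E_{\lambda}$ and extend it, by Gram--Schmidt, to an orthonormal basis $u_{1},\dots,u_{n}$ of $\mathbb{R}^{n}$; let $U$ be the orthogonal matrix whose columns are these vectors. For $i\le k$ and any $j$ we have $u_{i}^{\top}Au_{j}=(Au_{i})^{\top}u_{j}=\lambda\,u_{i}^{\top}u_{j}=\lambda\,\delta_{ij}$, so the first $k$ rows of $U^{\top}AU$ are $\lambda e_{1}^{\top},\dots,\lambda e_{k}^{\top}$, and by symmetry of $U^{\top}AU$ the first $k$ columns are $\lambda e_{1},\dots,\lambda e_{k}$. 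Thus
\[
U^{\top}AU=\begin{pmatrix}\lambda I_{k}&0\\0&D\end{pmatrix}
\]
for some symmetric $(n-k)\times(n-k)$ matrix $D$, whence $\phi(A;x)=\phi(U^{\top}AU;x)=(x-\lambda)^{k}\,\phi(D;x)$. To finish I would argue that $\lambda$ is not a root of $\phi(D;x)$: if $Dz=\lambda z$ for some $z\neq 0$, then $w=U\begin{pmatrix}0\\ z\end{pmatrix}$ is a nonzero eigenvector of $A$ for $\lambda$ that is orthogonal to all of $u_{1},\dots,u_{k}$, contradicting that those vectors span $E_{\lambda}$. Hence the exact power of $(x-\lambda)$ in $\phi(A;x)$ is $k$, i.e. $m(\lambda)=k$.

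The individual computations here are routine; the step that has to be got right --- and the only place the symmetry hypothesis is really used --- is the passage from block \emph{triangular} to block \emph{diagonal} form, $U^{\top}AU=\begin{pmatrix}\lambda I_{k}&0\\0&D\end{pmatrix}$. For a non-symmetric matrix one gets only the triangular version, and then $\lambda$ can reappear as an eigenvalue of the complementary block, so that $m(\lambda)>k$ is possible. An alternative, less self-contained route is to quote the spectral theorem (a real symmetric matrix is orthogonally diagonalizable) together with the elementary fact that any diagonalizable matrix has equal algebraic and geometric multiplicities; the argument sketched above instead uses only orthonormal bases, Gram--Schmidt, and the multiplicativity of determinants on block matrices.
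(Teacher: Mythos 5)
Your proof is correct. Note, however, that the paper does not actually prove this proposition: it is stated as a standard fact from linear algebra and closed immediately with a \qed, so there is no argument in the text to compare yours against. Both halves of your write-up are sound: the $v^{*}Av$ computation for reality of the eigenvalues is the standard one, and the reduction to the block-diagonal form $U^{\top}AU=\left(\begin{smallmatrix}\lambda I_{k}&0\\0&D\end{smallmatrix}\right)$ correctly isolates where symmetry is used (passing from block triangular to block diagonal), with the final step --- that $\lambda$ is not an eigenvalue of $D$ --- justified by the orthogonality contradiction. Your closing remark that one could instead invoke the spectral theorem is also accurate; either route would serve as a legitimate proof of the proposition the paper leaves unproved.
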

 Note that in this thesis, we call the the eigenvalues of the  adjacency matrix of the graph the eigenvalues of the graph. So we have the following simple corollary of the previous results. 
\begin{cor}\label{evalues are real}
All the eigenvalues of a graph are real.\qed
\end{cor}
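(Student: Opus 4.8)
The plan is to unwind the definitions and then invoke the immediately preceding proposition. First I would recall that, by definition, the eigenvalues of a graph $X$ are precisely the eigenvalues of its adjacency matrix $A(X)$, so it suffices to show that every eigenvalue of $A(X)$ is real.

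Next I would observe that $A(X)$ belongs to $\mathcal{S}_n$, where $n=|V(X)|$: its entries are all $0$ or $1$, hence real, and it is symmetric because, for a simple undirected graph, $v_i$ is adjacent to $v_j$ if and only if $v_j$ is adjacent to $v_i$, so the $(i,j)$ and $(j,i)$ entries agree; equivalently $A(X)^\top = A(X)$. Having placed $A(X)$ among the real symmetric matrices, I would apply the preceding Proposition (that a symmetric square matrix has only real eigenvalues) directly to $A(X)$ to conclude that all its eigenvalues, and therefore all eigenvalues of the graph $X$, are real.

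There is essentially no obstacle here: the statement is a formal corollary, and the only thing to be careful about is making the two definitional reductions explicit — that ``eigenvalues of a graph'' means ``eigenvalues of $A(X)$,'' and that $A(X)$ is genuinely real symmetric — before citing the proposition. No estimates, inductions, or case analyses are needed.
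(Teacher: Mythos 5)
Your proposal is correct and follows exactly the route the paper intends: the paper treats this as an immediate corollary of the preceding proposition once one notes that the eigenvalues of a graph are by definition those of its adjacency matrix, which is real symmetric. No gaps; the two definitional reductions you make explicit are precisely what the paper leaves implicit.
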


 Therefore, we usually order the eigenvalues of a graph on $n$ vertices as $\lambda_n\geq \lambda_{n-1}\geq \cdots\geq \lambda_1$.  The \txtsl{spectrum} of a graph $X$ is the set of its eigenvalues with their multiplicities. It is usually written as the array:
\[
\Spec(X) =
\left( {\begin{array}{cccc}
 \lambda_1 & \lambda_2 & \cdots & \lambda_s   \\
 m(\lambda_1) & m(\lambda_2) & \cdots & m(\lambda_s)\\
 \end{array} } \right),
\]
\medskip
where $\lambda_s>\cdots>\lambda_1$ are the distinct eigenvalues of $X$. 
For example, the spectra of the complete graph $K_n$ and the complete bipartite graph $K_{m,n}$ are as follows:
\[
\Spec(K_n) =
\left( {\begin{array}{cccc}
 -1 & n-1   \\
 n-1 & 1\\
 \end{array} } \right),
\]
\[
\Spec(K_{m,n}) =
\left( {\begin{array}{cccc}
 -\sqrt{mn}& 0 & \sqrt{mn}   \\
 1 & m+n-2& 1\\
 \end{array} } \right).
\]
A symmetric matrix $A$ is \txtsl{positive semi-definite} if all the eigenvalues of $A$ are non-negative. For example $A(K_n)$ is not a positive semi-definite matrix while $A(K_n)+I$ is. The matrix $A$ is said to be a \txtsl{tridiagonal matrix} if it has non-zero entries only on the diagonal, the super-diagonal and the sub-diagonal. For example $A(P_n)$ is a tridiagonal matrix.  The following is another example of a  tridiagonal matrix :
\[
A =
\left( {\begin{array}{cccc}
 4 & -1 & 0 & 0   \\
 -1 & 3 & 2 & 0  \\
 0 & 2 & 1 & 1 \\
 0 & 0 & 1 & -5
 \end{array} } \right).
\]
\begin{lem}\label{A^i of a path} Let $A$ be an $n\times n$ tridiagonal matrix in which all the entries of the super-diagonal and sub-diagonal are non-zero. Then the $(1,n)$-entry of $A^r$ is zero, for any integer  $0\leq r\leq n-2$, and non-zero for $r=n-1$.
\end{lem}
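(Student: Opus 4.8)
The plan is to think of the tridiagonal matrix $A$ as (a weighting of) the path $P_n$ with vertices $1,2,\dots,n$, and to read off $(A^r)_{1,n}$ as a weighted count of walks of length $r$ from vertex $1$ to vertex $n$. Concretely, $(A^r)_{1,n} = \sum a_{1,i_1}a_{i_1,i_2}\cdots a_{i_{r-1},n}$, where the sum is over all sequences $1=i_0,i_1,\dots,i_r=n$ with each consecutive pair either equal or differing by $1$ (so that the corresponding entry of $A$ is allowed to be nonzero). First I would observe that in $P_n$ the graph distance between vertex $1$ and vertex $n$ is $n-1$, so there is no walk of length $r < n-1$ joining them; hence \emph{every} term in the sum for $(A^r)_{1,n}$ contains a factor $a_{i_k,i_{k+1}}$ with $|i_k-i_{k+1}|\ge 2$, which is $0$. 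This gives $(A^r)_{1,n}=0$ for $0\le r\le n-2$. (The case $r=0$ is immediate since $A^0=I$ and $n\ge 2$.)

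For $r=n-1$, the key point is that there is essentially only one walk of length $n-1$ from $1$ to $n$ in $P_n$, namely the monotone walk $1,2,3,\dots,n$: any walk of length $n-1$ between two vertices at distance $n-1$ must be a shortest path traversed without backtracking. Therefore the sum collapses to the single term
\[
(A^{n-1})_{1,n} = a_{1,2}\,a_{2,3}\cdots a_{n-1,n},
\]
which is a product of super-diagonal (or sub-diagonal) entries of $A$, all of which are nonzero by hypothesis. Hence $(A^{n-1})_{1,n}\neq 0$.

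To make the "only one walk" claim rigorous I would argue as follows: if $1=i_0,i_1,\dots,i_{n-1}=n$ is such a walk, then $|i_{k+1}-i_k|\le 1$ for all $k$, and $\sum_{k=0}^{n-2}(i_{k+1}-i_k) = n-1$ is a sum of $n-1$ integers each in $\{-1,0,1\}$; for this sum to equal $n-1$ every summand must equal $+1$, forcing $i_k = k+1$ for all $k$. Alternatively, one can prove the whole lemma by a clean induction on $n$ (or on $r$), expanding $(A^r)_{1,n} = \sum_j (A^{r-1})_{1,j}A_{j,n}$ and using that $A_{j,n}=0$ unless $j\in\{n-1,n\}$, together with the fact that $(A^{r-1})_{1,n-1}=0$ for $r-1\le n-3$ (the same lemma for the $(n-1)\times(n-1)$ leading principal submatrix) and $(A^{n-2})_{1,n-1}\neq 0$. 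I do not expect any real obstacle here; the only thing to be careful about is the bookkeeping in the combinatorial/inductive step, in particular not overlooking the "stay in place" steps $i_{k+1}=i_k$ coming from nonzero diagonal entries — they are harmless for the vanishing part and are excluded for $r=n-1$ by the counting argument above.
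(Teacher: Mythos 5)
Your proposal is correct, and it takes a genuinely different route from the one in the thesis. The thesis proves the vanishing part by strengthening the statement to ``$(A^r)_{i,n}=0$ for all $1\le i<n-r$'' and inducting on $r$, expanding $(A^r)_{i,n}=A_{i,i-1}(A^{r-1})_{i-1,n}+A_{i,i}(A^{r-1})_{i,n}+A_{i,i+1}(A^{r-1})_{i+1,n}$; it then simply asserts that the $(1,n)$-entry of $A^{n-1}$ equals $A_{1,2}A_{2,3}\cdots A_{n-1,n}\neq 0$. Your walk-counting argument instead reads $(A^r)_{1,n}$ as a weighted sum over sequences $1=i_0,\dots,i_r=n$ with $|i_{k+1}-i_k|\le 1$, and the single displacement identity $\sum_{k}(i_{k+1}-i_k)=n-1$ delivers both halves of the lemma at once: no such sequence exists for $r\le n-2$, and for $r=n-1$ every step must be $+1$, collapsing the sum to the product of super-diagonal entries. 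What your approach buys is a uniform, self-contained justification of the nonvanishing claim that the thesis leaves as a bare assertion; what the thesis's induction buys is a slightly stronger intermediate statement (vanishing of a whole range of entries of $A^r$, not just the corner) at the cost of handling the $r=n-1$ case separately. Your handling of the diagonal ``stay-in-place'' steps is exactly the point one needs to be careful about, and you address it correctly; the inductive alternative you sketch at the end is essentially the thesis's proof.
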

\begin{proof}
We assume $n\geq 3$ as the lemma is trivial for $n=1,2$. First we claim that for any $0\leq r\leq n-1$ and for any $1\leq i< n-r$, the $(i,n)$-entry of $A^r$ is zero. The proof is by induction on $r$. If $r=0$ or $1$, the claim is, obviously, true. Thus, assume $2\leq r\leq n-1$. Then for any $1\leq i<n-r$, since $A$ is tridiagonal, we have
\begin{align*}
(A^r)_{i,n} &= (A\, A^{r-1})_{i,n} = \sum_{k=1}^n A_{i,k} (A^{r-1})_{k,n}\\
            &= A_{i,i-1}  (A^{r-1})_{i-1,n} + A_{i,i}  (A^{r-1})_{i,n} + A_{i,i+1}  (A^{r-1})_{i+1,n},
\end{align*}
where  $A_{i,i-1}$ is considered to be zero if $i=1$. Then, using the induction hypothesis, we have $(A^{r-1})_{i-1,n} = (A^{r-1})_{i,n} =  (A^{r-1})_{i+1,n}=0$; thus the claim is proved. Now it follows from the claim that, for any $0\leq r\leq n-2$, the $(1,n)$-entry of $A^r$ is zero. To complete the proof of lemma, it is enough to note that the $(1,n)$-entry of $A^{n-1}$ is 
\[
A_{1,2} \,A_{2,3}\,\cdots \,A_{n-1,n}\neq 0.\qedhere
\]
\end{proof}


\chapter{The Minimum Rank of Graphs}\label{min_rank}
In this chapter we introduce the  ``minimum rank'' and the ``maximum nullity'' of a graph. We will also consider the positive semi-definite minimum rank and  the positive semi-definite maximum nullity of graphs. The problem of evaluating or approximating these parameters is of great importance in this thesis. In the future chapters we will approach  these problems using some  combinatorial tools.


\section{Definition}\label{min_rank_def}
In this chapter we introduce the concept of minimum rank of graphs and related concepts, and provide some examples.
To a given graph $G$ with vertex set $\{1, \ldots, n\}$, we associate a class of real, symmetric matrices as follows.
\[
\mathcal{S}(G)=\{ A=[a_{ij}]  \,\,|\,\, A\in \mathcal{S}_n,\,\text{ for } \,\,i \neq j, \,\,\, a_{ij} \neq 0 \iff  ij \in E(G)\},
\]
Note that there is no restriction on the value of  $a_{ii}$, with $i=1,2,\dots,n$ and  the adjacency matrix $A(G)$ belongs to $\mathcal{S}(G)$.

On the other hand,  the \textsl{graph}\index{graph of  a matrix} of an $n\times n$ Hermitian matrix $A$, denoted by $\mathcal{G}(A)$, is the graph with vertices $\{1,\ldots,n\}$ and the edge set
\[
\{ij\,\,|\, a_{ij}\neq 0,\,\,1\leq i \neq j\leq n\}.
\]
The \txtsl{minimum rank} of $G$ is defined to be
\[
\mr(G)=\text{min} \{\rank(A) \,\,|\,\, A \in \mathcal{S}(G)\},
\]
while the \txtsl{maximum nullity} of $G$ is defined as
\[
M(G)=\text{max} \{\nul(A) \,\,|\,\, A \in \mathcal{S}(G)\}.
\]
By Proposition~\ref{rank+null=n} we have
\begin{equation}\label{mr&M}
\mr(G)+M(G)=|V(G)|.
\end{equation}

Note that for any non-empty graph each matrix in the class of all symmetric matrices associated with a non-empty graph is non-zero, so the minimum rank of any graph is bounded below by $1$. Also, as the diagonal entries of the matrices are free to be chosen, they can always be adjusted  in such a way that the matrix admits a zero eigenvalue; this implies that $\mr(G)$ is bounded above by $|V(G)|-1$. Thus we have the following lemma.
\begin{lem}
For any non-empty graph $G$
\[
 1\leq \mr(G)\leq |V(G)|-1.
 \]
 \end{lem}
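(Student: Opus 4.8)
The plan is to establish the two-sided bound directly from the definition of $\mr(G)$ together with the freedom in the diagonal entries of matrices in $\mathcal{S}(G)$, exactly as the paragraph preceding the statement indicates.

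\textbf{Lower bound.} First I would fix any $A \in \mathcal{S}(G)$. Since $G$ is non-empty, it has at least one edge $ij$, and by the definition of $\mathcal{S}(G)$ this forces $a_{ij} \neq 0$. Hence $A$ is not the zero matrix, so $\rank(A) \geq 1$. As this holds for every matrix in $\mathcal{S}(G)$, taking the minimum gives $\mr(G) \geq 1$.

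\textbf{Upper bound.} Here I would exploit that the diagonal entries of matrices in $\mathcal{S}(G)$ are unconstrained. Start from the adjacency matrix $A(G) \in \mathcal{S}(G)$, which by Corollary~\ref{evalues are real} has only real eigenvalues; let $\mu$ be any one of them. Then $A(G) - \mu I$ still lies in $\mathcal{S}(G)$, because subtracting a scalar multiple of the identity changes only diagonal entries and leaves every off-diagonal entry (hence the zero-nonzero pattern off the diagonal) intact. By construction $A(G) - \mu I$ is singular, so $\nul(A(G)-\mu I)\geq 1$, and by Proposition~\ref{rank+null=n} this gives $\rank(A(G)-\mu I)\leq |V(G)|-1$. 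Therefore $\mr(G)\leq |V(G)|-1$.

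\textbf{Conclusion and the main point.} Combining the two bounds yields $1 \leq \mr(G) \leq |V(G)|-1$, as claimed. There is no real obstacle in this argument; the only thing to be careful about is the verification that the matrix used for the upper bound genuinely remains in $\mathcal{S}(G)$ — that is, that shifting by $-\mu I$ does not accidentally create or destroy a nonzero off-diagonal entry — which is immediate since such a shift touches only the diagonal. (One could equally well argue the upper bound without invoking a specific eigenvalue: any symmetric matrix with the prescribed off-diagonal pattern can have its diagonal adjusted to force the all-ones vector, or any chosen nonzero vector, out of being an eigenvector obstruction — but the eigenvalue-shift argument is the cleanest.)
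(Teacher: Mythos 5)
Your proof is correct and follows essentially the same route as the paper: the lower bound comes from any matrix in $\mathcal{S}(G)$ being nonzero because $G$ has an edge, and the upper bound from adjusting the free diagonal so the matrix admits a zero eigenvalue. Your eigenvalue-shift $A(G)-\mu I$ is simply an explicit instantiation of the diagonal adjustment the paper sketches.
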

  \begin{example}
Consider the complete graph $K_3$. Then, for example
\[
\left(
\begin{array}{ccc}
  0 & -3 & 1   \\
 -3 & 0 & 7   \\
  1 & 7 & -4   \\
\end{array}
\right) \in \mathcal{S}(K_3);
\]
and also
\[
\left(
\begin{array}{cccc}
1  &1   &1    \\
1 & 1  & 1    \\
1 & 1  & 1    \\
\end{array}
\right) \in \mathcal{S}(K_3);
\]
Thus since there is a matrix of rank $1$ in $\mathcal{S}(K_3)$, $\mr(K_3)=1$.
\end{example}
\begin{example}
As another illustrative example, consider any matrix $A$ associated with a path  $v_1 v_2 \cdots v_n$. Up to labelling of the vertices, $A$ is a symmetric tridiagonal matrix with nonzero sub-diagonal and super-diagonal. From Lemma~\ref{A^i of a path} the $(1,n)$-entry of $A^i$ is zero, for any $0\leq i\leq n-2$, and non-zero for $i=n-1$. We deduce that the set $\{I=A^0, A^1, A^2,\dots, A^{n-1} \}$ is linearly independent. Thus the minimal polynomial of $A$ must be of degree at least $n$. This implies that $A$ must have $n$ distinct eigenvalues. This, in turn, implies that the number of non-zero eigenvalues of $A$ is at least $n-1$; that is $\mr(G)\geq n-1$  and, therefore, $\mr(G)=n-1$.  In fact a path on $n$ vertices is the only graph on $n$ vertices whose minimum rank is equal to $n-1$ (see \cite[Theorem 1.4]{MR2350678} and \cite{fiedler1969characterization}). 
\end{example}

\section{Basic facts about the minimum rank}\label{min_rank_basics} 
In this section we provide some basic results about the minimum rank of graphs. The set of real positive semidefinite matrices described by a graph $G$ and the set of Hermitian positive semidefinite matrices described by $G$ are
\[
\mathcal{S}_+(G)=\{A\in \mathcal{S}_n\,\,:\,\, \mathcal{G}(A)=G\,\, \text{and}\,\, A\,\, \text{is positive semidefinite} \}
\]
and
\[
\mathcal{H}_+(G)=\{A\in H_n\,\,:\,\, \mathcal{G}(A)=G\,\, \text{and}\,\, A\,\, \text{is positive semidefinite} \},
\]
respectively. The \textsl{minimum positive semidefinite rank}\index{minimum rank!positive semidefinite} of $G$ and \textsl{minimum Hermitian positive semidefinite rank}\index{minimum rank!Hermitian positive semidefinite} of $G$ are
\[
\mr^\mathbb{R}_+(G)=\text{min} \{\rank(A) \,\,|\,\, A \in \mathcal{S}_+(G)\} 
\]
and
\[
\mr^\mathbb{C}_+(G)=\text{min} \{\rank(A) \,\,|\,\, A \in \mathcal{H}_+(G)\},
\]
respectively. The \textsl{maximum positive semidefinite nullity}\index{maximum nullity!positive semidefinite} of $G$ and \textsl{maximum Hermitian positive semidefinite nullity}\index{maximum nullity!Hermitian positive semidefinite} of $G$ are
\[
M^\mathbb{R}_+(G)=\text{max} \{\nul(A) \,\,|\,\, A \in \mathcal{S}_+(G)\} 
\]
and 
\[
M^\mathbb{C}_+(G)=\text{max} \{\nul(A) \,\,|\,\, A \in \mathcal{H}_+(G)\},
\]
respectively. 

Clearly $\mr^\mathbb{R}_+(G)+M^\mathbb{R}_+(G)=|V(G)|$ and $\mr^\mathbb{C}_+(G)+M^\mathbb{C}_+(G)=|V(G)|$.

The following two observations are well known and straightforward to prove.
\begin{obs}\label{M of disconnected graphs}
If the connected components of the graph $G$ are $G_1,\ldots, G_t$, then
\[
\mr(G)=\sum^t_{i=1}\mr(G_i)\,\,\,\,\, \text{and}\,\,\,\,\,M(G)=\sum^t_{i=1}M(G_i).\qed
\]
\end{obs}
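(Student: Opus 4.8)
The statement to prove is Observation~\ref{M of disconnected graphs}: if $G$ has connected components $G_1,\dots,G_t$, then $\mr(G)=\sum_{i=1}^t \mr(G_i)$ and $M(G)=\sum_{i=1}^t M(G_i)$.

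\medskip

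The plan is to exploit the fact that any matrix $A\in\mathcal{S}(G)$ has a block-diagonal structure once the vertices are ordered component by component. First I would observe that, after a suitable permutation of $\{1,\dots,n\}$ grouping together the vertices of each component, every $A\in\mathcal{S}(G)$ becomes $A = A_1\oplus A_2\oplus\cdots\oplus A_t$, where $A_i$ is indexed by $V(G_i)$: indeed, if $v\in V(G_i)$ and $w\in V(G_j)$ with $i\ne j$, then $vw\notin E(G)$, so $a_{vw}=0$. Moreover, $A_i\in\mathcal{S}(G_i)$ precisely because the zero-nonzero pattern of $A$ restricted to the rows and columns of $V(G_i)$ is governed by the edges of $G$ within $V(G_i)$, which are exactly the edges of the induced subgraph $G_i$. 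Conversely, given any choice of $A_i\in\mathcal{S}(G_i)$ for each $i$, the direct sum $A_1\oplus\cdots\oplus A_t$ lies in $\mathcal{S}(G)$. Since for a block-diagonal (direct sum) matrix we have $\rank(A)=\sum_i\rank(A_i)$ and $\nul(A)=\sum_i\nul(A_i)$, this correspondence is exactly what is needed.

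\medskip

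From this, the inequality $\mr(G)\le\sum_{i=1}^t\mr(G_i)$ follows by choosing, for each $i$, a matrix $A_i\in\mathcal{S}(G_i)$ with $\rank(A_i)=\mr(G_i)$ and forming their direct sum, which lies in $\mathcal{S}(G)$ and has rank $\sum_i\mr(G_i)$. For the reverse inequality, take any $A\in\mathcal{S}(G)$ achieving $\rank(A)=\mr(G)$; writing it as $A_1\oplus\cdots\oplus A_t$ with $A_i\in\mathcal{S}(G_i)$, we get $\mr(G)=\rank(A)=\sum_i\rank(A_i)\ge\sum_i\mr(G_i)$. Combining the two gives equality for $\mr$. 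The statement for $M(G)$ then follows either by the identical argument with $\nul$ in place of $\rank$, or more quickly by invoking $\mr(G)+M(G)=|V(G)|$ from~\eqref{mr&M}, $\mr(G_i)+M(G_i)=|V(G_i)|$, and $|V(G)|=\sum_i|V(G_i)|$.

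\medskip

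There is essentially no serious obstacle here; this is a routine decomposition argument. The only point that deserves a careful sentence is the two facts being used about direct sums of symmetric matrices, namely that rank and nullity are additive over blocks, and that $\mathcal{G}(A_1\oplus\cdots\oplus A_t)$ is the disjoint union of the $\mathcal{G}(A_i)$ — both of which are immediate from the definitions. I would present the proof in roughly five or six lines.
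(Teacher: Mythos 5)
Your proof is correct: the block-diagonal decomposition of every $A\in\mathcal{S}(G)$ into $A_1\oplus\cdots\oplus A_t$ with $A_i\in\mathcal{S}(G_i)$, together with additivity of rank and nullity over direct sums, is exactly the standard argument. The paper states this observation without proof (calling it ``well known and straightforward''), and your write-up supplies precisely the argument it has in mind.
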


The following fact has been shown in \cite{MR1416462}. 
\begin{prop}\label{mr-spread}
For any vertex $v$ of $G$, $0\leq \mr(G)-\mr(G-v)\leq 2$. \qed
\end{prop}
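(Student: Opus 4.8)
The plan is to establish the two inequalities $\mr(G-v)\le\mr(G)$ and $\mr(G)\le\mr(G-v)+2$ separately, each by an explicit matrix construction together with an elementary fact about ranks. Throughout, recall that $G-v=G[V(G)\setminus\{v\}]$ is an induced subgraph of $G$, so an edge $ij$ with $i,j\ne v$ lies in $E(G-v)$ if and only if it lies in $E(G)$.

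For the left inequality, I would take $A\in\mathcal{S}(G)$ with $\rank(A)=\mr(G)$ and pass to the principal submatrix $B=A[V(G)\setminus\{v\}]$ obtained by deleting the row and column indexed by $v$. Then $B$ is real symmetric, its diagonal is unconstrained, and for $i\ne j$ with $i,j\ne v$ we have $B_{ij}=A_{ij}\ne 0$ exactly when $ij\in E(G)$, hence exactly when $ij\in E(G-v)$. Thus $B\in\mathcal{S}(G-v)$, and since the rank of a submatrix never exceeds the rank of the ambient matrix, $\mr(G-v)\le\rank(B)\le\rank(A)=\mr(G)$, giving $0\le\mr(G)-\mr(G-v)$.

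For the right inequality, I would start from $B\in\mathcal{S}(G-v)$ with $\rank(B)=\mr(G-v)$ and build an $|V(G)|\times|V(G)|$ real symmetric matrix $A$ by ``bordering'': put $B$ as the principal submatrix on $V(G)\setminus\{v\}$; in the row (and, symmetrically, the column) indexed by $v$, place an arbitrary nonzero real number in position $u$ for each neighbour $u$ of $v$ in $G$ and $0$ in position $u$ whenever $uv\notin E(G)$; and set the $(v,v)$-entry to any value, say $0$. By construction the off-diagonal zero--nonzero pattern of $A$ is precisely the one prescribed by $E(G)$, so $A\in\mathcal{S}(G)$ and $\mr(G)\le\rank(A)$. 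Since $A$ is obtained from $B\oplus[\,0\,]$ (which has the same rank as $B$) by modifying one row and the corresponding one column, we have $\rank(A)\le\rank(B)+2=\mr(G-v)+2$; hence $\mr(G)\le\mr(G-v)+2$. Combining the two bounds yields $0\le\mr(G)-\mr(G-v)\le 2$.

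I do not expect a genuine obstacle here; the construction is forced once one decides to border a minimum-rank realization of $G-v$. The only point meriting a sentence of justification is the bordering estimate --- that appending a single row and a single column to a matrix raises its rank by at most $2$ --- which holds because appending one row can raise the rank by at most $1$, and likewise for one column. (Alternatively the whole argument can be recast in terms of $M(G)$ via $\mr(G)+M(G)=|V(G)|$ from~(\ref{mr&M}), but the submatrix/bordering formulation above seems the most transparent.)
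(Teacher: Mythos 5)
Your argument is correct. The paper states Proposition~\ref{mr-spread} without proof, only citing an external reference, so there is no in-paper argument to compare against; your two-sided bound --- the principal-submatrix inequality $\mr(G-v)\le\rank(A[V(G)\setminus\{v\}])\le\mr(G)$ for the lower bound, and the bordering estimate $\rank(A)\le\rank(B)+2$ (one appended row and one appended column each raising the rank by at most one) for the upper bound --- is the standard proof of this fact, and every step, including the verification that the bordered matrix lies in $\mathcal{S}(G)$, checks out.
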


Clearly if $A'$ is a submatrix of $A$ then $\rank(A')\leq \rank(A)$. Based on this fact we have the following corollary. 
\begin{cor}
If $G'$ is an induced subgraph of $G$ then 
\[
\mr(G')\leq \mr(G).\qed
\]
\end{cor}

Note that a similar result does not hold for maximum nullity. To show this note that $M(C_n)=2$ and $P_{n-1}$ is an induced subgraph of it with  $M(P_{n-1})=1$. Also the graph shown in Figure~\ref{M(G)=M(G-v)-1} has $M(G)=2$ while $M(G-v)=3$.

Since a connected graph $G$ contains an induced path on $\diam(G)+1$ vertices we observe the following.
\begin{obs}
For a connected graph $G$ we have $\diam(G)\leq \mr(G)$.\qed
\end{obs}

The \textsl{union}\index{union of graphs} of the graphs $G_i=(V_i,E_i)$, $i=1,\ldots, h$, is defined to be 
\[
\bigcup^h_{i=1} G_i=\left(\bigcup^h_{i=1}V_i\,,\, \bigcup^h_{i=1}E_i\right).
\]
If $G=\bigcup^h_{i=1} G_i$, a matrix $A$ of rank at most $\sum^h_{i=1} \mr(G_i)$ having $\mathcal{G}(A)=G$ can be obtained by choosing (for each $i=1, \ldots, h$) a matrix $A_i$ that realizes $\mr(G_i)$, embedding $A_i$ in a matrix $\tilde{A}_i$ of size $|V(G)|$, choosing $a_i\in \mathbb{R}$ such that no cancelation of nonzero entries occurs, and letting $A=\sum^h_{i=1}a_i\tilde{A}_i$. Thus we have the following.
\begin{obs}\label{union}
If $G=\bigcup^h_{i=1}G_i$, then $\mr(G)\leq \sum^h_{i=1}\mr(G_i)$.\qed
\end{obs}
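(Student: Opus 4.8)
The plan is to make rigorous the construction already sketched in the paragraph preceding the statement, where the "union" $G=\bigcup_{i=1}^h G_i$ is taken on a common vertex set $V(G)$ (after relabelling, each $V_i\subseteq V(G)$). For each $i$, choose a matrix $A_i\in\mathcal{S}(G_i)$ with $\rank(A_i)=\mr(G_i)$; such an $A_i$ exists by definition of $\mr$. First I would embed each $A_i$ into an $|V(G)|\times|V(G)|$ symmetric matrix $\tilde A_i$ by placing the entries of $A_i$ in the rows and columns indexed by $V_i$ and filling the remaining entries with zeros. Since padding with zero rows and columns does not change the rank, $\rank(\tilde A_i)=\rank(A_i)=\mr(G_i)$, and $\mathcal{G}(\tilde A_i)=G_i$ (viewed on the vertex set $V(G)$, with isolated vertices outside $V_i$). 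Subadditivity of rank then gives $\rank\!\bigl(\sum_{i=1}^h a_i\tilde A_i\bigr)\le\sum_{i=1}^h\rank(a_i\tilde A_i)\le\sum_{i=1}^h\mr(G_i)$ for any scalars $a_i$.

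The one point that needs care is the choice of the scalars $a_i\in\mathbb{R}$ so that $A:=\sum_{i=1}^h a_i\tilde A_i$ actually lies in $\mathcal{S}(G)$, i.e.\ so that no off-diagonal entry that should be nonzero is accidentally killed by cancellation among the various $\tilde A_i$. For a fixed off-diagonal position $(p,q)$ with $pq\in E(G)$, the entry $(A)_{pq}=\sum_{i:\,pq\in E(G_i)} a_i (\tilde A_i)_{pq}$ is a nonzero linear form in the $a_i$ (the sum is over a nonempty index set, and each coefficient $(\tilde A_i)_{pq}$ is nonzero there), so it vanishes only on a proper affine subspace (a hyperplane) of $\mathbb{R}^h$. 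There are finitely many edges of $G$, hence finitely many such hyperplanes, and a finite union of proper subspaces cannot cover $\mathbb{R}^h$; therefore some choice of $(a_1,\dots,a_h)$ avoids all of them. For that choice $A\in\mathcal{S}(G)$: off-diagonal entries at non-edges of $G$ are automatically $0$ (each $\tilde A_i$ is zero there, since $E(G_i)\subseteq E(G)$), entries at edges of $G$ are nonzero by construction, and the diagonal is unconstrained. Hence $\mr(G)\le\rank(A)\le\sum_{i=1}^h\mr(G_i)$, as claimed.

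The main (and really only) obstacle is the cancellation issue just addressed; everything else is bookkeeping about zero-padding and the elementary inequalities $\rank(X+Y)\le\rank(X)+\rank(Y)$ and $\rank(cX)\le\rank(X)$. One can alternatively phrase the scalar selection as "a generic choice of $(a_i)$ works" or, if one prefers an explicit argument, pick the $a_i$ one at a time, at each step excluding the finitely many bad values coming from positions already assigned. I would present the hyperplane-avoidance version, since it is shortest and self-contained.
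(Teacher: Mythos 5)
Your proposal is correct and follows exactly the construction the paper itself uses (the paragraph immediately preceding the observation): realize each $\mr(G_i)$ by some $A_i$, zero-pad to $\tilde A_i$, and take $A=\sum_i a_i\tilde A_i$ with scalars chosen to avoid cancellation, then apply subadditivity of rank. The only difference is that you make the paper's phrase ``choosing $a_i$ such that no cancelation occurs'' rigorous via the finite-hyperplane-avoidance argument, which is a welcome but not substantively different elaboration.
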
  

The inequality in Observation~\ref{union} can be strict. For example $\mr(K_3)=1$ and $K_3$ can be considered as the union of $3$ copies of $K_2$ with $\mr(K_2)=1$. Since $1<1+1+1$, the inequality in this example is strict.   

If in the definition of the union of graphs we have $V_i\cup V_j=\emptyset$ for $i\neq j$, then we have the \txtsl{disjoint union} of graphs. Under these assumptions the matrices $\tilde{A}_i$, for $1\leq i\leq h$, don't overlap on the position of nonzero entries. Thus in this case we have the same result for the minimum rank of disjoint union of graphs as Observation~\ref{M of disconnected graphs}, i.e. $\mr(\bigcup^h_{i=1}G_i)= \sum^h_{i=1}\mr(G_i)$. 

The following well-known fact is a special case of Observation~\ref{union}.
\begin{cor}\label{mr&cc}
If $G$ is a graph, $\mr(G)\leq \CC(G)$. \qed
\end{cor}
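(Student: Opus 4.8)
The statement to prove is Corollary~\ref{mr&cc}: if $G$ is a graph, then $\mr(G) \leq \CC(G)$.

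The plan is to realize $G$ as a union of cliques and then invoke Observation~\ref{union} together with the fact that a complete graph has minimum rank $1$. First I would let $\{Q_1, \dots, Q_k\}$ be a clique covering of $G$ of minimum size, so $k = \CC(G)$; by the definition of a clique covering, every edge of $G$ lies in at least one $Q_i$, which means that $G = \bigcup_{i=1}^k G[Q_i]$ as a union of graphs (the vertex sets collectively exhaust $V(G)$ since every vertex lies in some clique, at worst a singleton clique, and the edge sets collectively give $E(G)$). Each induced subgraph $G[Q_i]$ is a complete graph $K_{|Q_i|}$.

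The second ingredient is that $\mr(K_m) \leq 1$ for every $m$: indeed the all-ones matrix $J_m$ lies in $\mathcal{S}(K_m)$ and has rank $1$ (and rank $0$ only occurs for the empty graph), exactly as computed in the first Example of Section~\ref{min_rank_def}. Hence $\mr(G[Q_i]) \leq 1$ for each $i$. Applying Observation~\ref{union} to the decomposition $G = \bigcup_{i=1}^k G[Q_i]$ gives
\[
\mr(G) \le \sum_{i=1}^k \mr(G[Q_i]) \le \sum_{i=1}^k 1 = k = \CC(G),
\]
which is the desired inequality. One should note that isolated vertices of $G$ contribute cliques $K_1$ with $\mr(K_1) = 0$, so they cause no difficulty; and if $G$ itself is empty then $\CC(G) = 0 = \mr(G)$ and the bound holds trivially.

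I expect essentially no obstacle here: this is a direct corollary, and the only point requiring a moment's care is checking that a minimum clique covering genuinely exhibits $G$ as a union of graphs in the precise sense used by Observation~\ref{union} — that is, that the union of the vertex sets is all of $V(G)$, not just that the union of the edge sets is $E(G)$. This is handled by allowing singleton cliques in the covering (or, equivalently, by observing that adding isolated-vertex cliques changes neither $\CC(G)$ nor the bound). With that caveat dispatched, the two displayed inequalities above constitute the whole argument.
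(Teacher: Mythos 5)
Your proof is correct and follows exactly the route the paper intends: the paper states this corollary as "a special case of Observation~\ref{union}," and your argument simply carries that out by decomposing $G$ into the cliques of a minimum clique covering, using $\mr(K_m)\leq 1$, and summing. The extra care about vertex coverage and singleton cliques is sound (singletons contribute $\mr(K_1)=0$ to the sum, so the bound is unaffected) but not something the paper bothers to spell out.
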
 
 The problem of the minimum rank of trees has  been completely solved (see \cite{MR1712856}).
\begin{thm}
$M(T)=P(T)=|V(T)|-\mr(T)$.\qed
\end{thm}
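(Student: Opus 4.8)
The plan is to prove the chain $M(T) = P(T) = |V(T)| - \mr(T)$ for a tree $T$ in two halves, using the fact that $\mr(T) + M(T) = |V(T)|$ from equation~(\ref{mr&M}), which immediately gives the right-hand equality $M(T) = |V(T)| - \mr(T)$. So the real content is the identity $M(T) = P(T)$, and I would establish it by proving both inequalities separately.

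For the inequality $P(T) \le M(T)$ — equivalently $\mr(T) \le |V(T)| - P(T)$ — I would build a low-rank matrix in $\mathcal{S}(T)$ directly from an optimal path cover. If $T$ is covered by induced vertex-disjoint paths $P^{(1)}, \dots, P^{(k)}$ with $k = P(T)$, then each $P^{(i)}$ on $n_i$ vertices carries a tridiagonal matrix of rank exactly $n_i - 1$ (from the path example in Section~\ref{min_rank_def}). The union of these paths is a spanning subgraph of $T$, and $\mr$ of a disjoint union of paths is $\sum (n_i - 1) = |V(T)| - k$. The extra edges of $T$ not lying on any path must be incorporated without raising the rank; here I would invoke Observation~\ref{union} style reasoning, writing $T$ as the union of the $k$ paths together with single-edge graphs $K_2$ for each leftover edge, but that only gives $\mr(T) \le |V(T)| - k + (\text{number of extra edges})$, which is too weak. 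The cleaner route is an explicit inductive construction: root $T$, process it from the leaves, and at each step either extend a path or attach a new pendant path, adjusting off-diagonal entries of the partially-built symmetric matrix so that each newly added non-path edge is realized by reusing an existing row/column dependency rather than creating a new independent one. This is the delicate bookkeeping step.

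For the reverse inequality $M(T) \le P(T)$ — equivalently $\mr(T) \ge |V(T)| - P(T)$ — I would argue combinatorially using the structure of trees. A standard approach: for any $A \in \mathcal{S}(T)$ achieving $M(T) = \nul(A)$, one shows how to read off from $A$ a collection of at most $M(T)$ induced paths covering $V(T)$. Alternatively, and more in the spirit of this thesis, I would use the zero forcing number: by the results to be developed in Chapter~\ref{ZFS}, $M(T) \le Z(T)$ and $P(T) \le Z(T)$, and for trees one can show $Z(T) = P(T)$ directly by a leaf-stripping argument — in a tree, a minimum zero forcing set corresponds exactly to choosing one endpoint of each path in an optimal path cover, because the forcing process along a tree propagates precisely down induced paths. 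Combined with the known inequality $P(T) \le M(T)$ from the first half (or from \cite[Proposition 2.10]{MR2645093}), this pins everything down.

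The main obstacle I anticipate is the matrix construction for $\mr(T) \le |V(T)| - P(T)$: naively superimposing tridiagonal blocks handles the path edges but mishandles the branch edges of $T$, and one must verify that the branch edges can be given nonzero values without introducing new linearly independent columns — i.e., that the rank genuinely stays at $|V(T)| - P(T)$. The right framework is an induction on $|V(T)|$ where one removes a pendant path ending at a leaf, applies the hypothesis to the smaller tree, and then re-attaches, carefully choosing the new diagonal and off-diagonal entries (using the freedom in the diagonal and a dimension count on where the new columns land in the column space of the old matrix) so that the rank increases by exactly the length of the attached path minus the overlap. Handling the attachment point, where a new path meets an already-covered vertex, is the crux, since that is exactly where an extra edge of $T$ outside the path cover appears.
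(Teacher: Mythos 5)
First, note that the thesis does not prove this statement at all: it quotes it as a known result, citing \cite{MR1712856}, and states it without proof. So you are attempting a proof where the paper offers only a citation. Your reduction of the statement to $M(T)=P(T)$ via (\ref{mr&M}) is correct, and your argument for the direction $M(T)\le P(T)$ is sound in the context of this thesis: $M(T)\le Z(T)$ is Theorem~\ref{M(G)<=Z(G)} and $Z(T)=P(T)$ is Proposition~\ref{P(T)=Z(T)}, both of which are proved independently of the present theorem, so there is no circularity --- only a forward reference, since this theorem sits in Chapter~\ref{min_rank} while those results appear in Chapter~\ref{ZFS}.

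The genuine gap is in the direction $P(T)\le M(T)$, equivalently $\mr(T)\le |V(T)|-P(T)$, which is the harder half and the one your proposal does not actually establish. You correctly diagnose that superimposing tridiagonal blocks over the paths of an optimal cover and then absorbing the $P(T)-1$ branch edges via Observation~\ref{union} only yields the trivial bound $\mr(T)\le |V(T)|-1$. But the replacement you offer --- ``adjusting off-diagonal entries so that each newly added non-path edge reuses an existing row/column dependency'' and ``carefully choosing the new diagonal and off-diagonal entries so that the rank increases by exactly the right amount'' --- is a description of what a proof would have to accomplish, not a proof. Concretely: when you strip a pendant path $P_1$ attached to the rest of $T$ by a single edge $uv$, one checks that $P(T\backslash V(P_1))=P(T)-1$, so reattaching $|P_1|$ vertices and $|P_1|$ edges must raise the rank by exactly $|P_1|-1$, i.e.\ must create one new null vector; whether this can be arranged depends on whether the inductively built matrix on $T\backslash V(P_1)$ admits (or can be perturbed to admit) a null vector with prescribed behaviour at the attachment vertex $u$, and nothing in your sketch controls this. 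That control is exactly what the cut-vertex reduction (Theorem~\ref{reduction}) or the machinery of \cite{MR1712856} supplies. As written, this half of your argument is an announced plan with the crux unfilled, so the proof is incomplete.
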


The \txtsl{rank-spread} of a graph $G$ at  a vertex $v$, denoted by $r_v(G)$, is the amount of change in the minimum rank of $G$ when $v$ is deleted. More precisely 
\[
r_v(G) = \mr(G)-\mr(G-v).
\]
From Proposition~\ref{mr-spread}, for any vertex $v$ of $G$, we have $0\leq r_v(G) \leq 2$. It has been shown in \cite{MR2702337, MR2095918} that if a graph has a cut-vertex, then the problem of finding the minimum rank of the graph can be reduced to the problem of determining the minimum rank of several graphs of smaller orders. 
 
\begin{thm}[{{Cut-vertex reduction}}]\label{reduction} Let $v$ be a cut-vertex of a graph $G$. For $i =1,\ldots, h$, let $W_i\subseteq V (G)$ be the vertices of the $i$-th component of $G\backslash v$ and let $G_i$ be the subgraph of $G$ induced by $\{v\} \cup W_i$. Then
\[
r_v(G)=\text{min}\,\{ \sum_{i=1}^h r_v(G_i),\,2 \}
\]
and hence
\[
\mr(G)=\sum_{i=1}^h\mr(G_i-v)+\text{min}\,\{ \sum_{i=1}^h r_v(G_i),\,2 \}.\qed
\]
\end{thm}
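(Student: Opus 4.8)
The plan is to prove the two displayed formulas as a single statement, deriving the minimum rank formula from the rank-spread formula together with the identity $\mr(G) = \mr(G-v) + r_v(G)$ and the fact that $G-v$ is the disjoint union of the components induced by the $W_i$. So the crux is the rank-spread identity $r_v(G) = \min\{\sum_i r_v(G_i),\,2\}$.

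First I would establish the upper bound $r_v(G) \le \min\{\sum_i r_v(G_i),\,2\}$. The bound $r_v(G)\le 2$ is immediate from Proposition~\ref{mr-spread}. For the bound $r_v(G)\le \sum_i r_v(G_i)$, I would build a small-rank matrix in $\mathcal{S}(G)$ by a union-type construction as in Observation~\ref{union}: for each $i$, take $A_i\in\mathcal{S}(G_i)$ realizing $\mr(G_i)$, and observe that since $v$ is a cut-vertex the graphs $G_i$ overlap only at the single vertex $v$; hence the matrices $\tilde A_i$ (embedded into size $|V(G)|$) overlap only in the $(v,v)$ diagonal entry, which is unconstrained. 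Summing them with generic coefficients to avoid cancellation of off-diagonal entries gives $A\in\mathcal{S}(G)$ with $\rank A\le\sum_i\mr(G_i) = \sum_i\bigl(\mr(G_i-v)+r_v(G_i)\bigr) = \mr(G-v)+\sum_i r_v(G_i)$, using Observation~\ref{M of disconnected graphs} for $\mr(G-v)=\sum_i\mr(G_i-v)$. Therefore $r_v(G)\le\sum_i r_v(G_i)$.

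The harder direction is the lower bound $r_v(G)\ge\min\{\sum_i r_v(G_i),\,2\}$. Here I would fix any $A\in\mathcal{S}(G)$ realizing $\mr(G)$ and analyze how its rank relates to the rank of its principal submatrix $A[V\setminus\{v\}]$ and of the blocks $A[\{v\}\cup W_i]$. Writing $A$ in block form with the row/column of $v$ separated out, the key point is a careful bookkeeping of how the single shared row/column $v$ can simultaneously lower the rank contribution in each block; since there is only one such row/column, it can buy at most a total saving of $2$ over the sum of the $\mr(G_i)$, and moreover the saving in each block is exactly $2 - r_v(G_i)$ at best. Concretely I would argue: (i) $\mr(G)\le\sum_i r_v(G_i)+\mr(G-v)$ is the upper bound just proven, and (ii) $\mr(G)\ge \mr(G-v)$ trivially, plus (iii) $\mr(G)\ge\mr(G_i)\ge\mr(G_i-v)+r_v(G_i)$ combined with the structure forcing these to "add up" away from the block that absorbs the discount. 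The cleanest route is probably to invoke the known cut-vertex rank results directly or to reproduce the short argument from \cite{MR2702337} via the observation that $r_v(G) = 2$ unless there is a vector in the kernel achieving the right support pattern across all blocks; I expect this kernel-vector existence argument — showing that if $\sum_i r_v(G_i) \le 1$ then one can patch together null vectors of the individual blocks into a null vector of $A$ of the right dimension — to be the main obstacle.

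Once the rank-spread formula is in hand, the minimum rank formula follows by pure substitution: $\mr(G) = \mr(G-v) + r_v(G) = \sum_{i=1}^h \mr(G_i - v) + \min\{\sum_{i=1}^h r_v(G_i),\,2\}$, where the first equality is the definition of $r_v(G)$ and $\mr(G-v) = \sum_i \mr(G_i-v)$ is Observation~\ref{M of disconnected graphs} applied to the components of $G\setminus v$. I would present the upper-bound construction in full and then either cite the companion papers for the matching lower bound or include the kernel-patching lemma as a separate step, depending on how self-contained the chapter is meant to be.
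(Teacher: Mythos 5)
The first thing to note is that the paper does not prove this theorem at all: it is stated as a known result imported from \cite{MR2702337, MR2095918} and closed with a \qed, so there is no in-paper argument to measure you against. Judged on its own terms, your proposal is half a proof. The upper bound $r_v(G)\leq\min\{\sum_i r_v(G_i),2\}$ is handled correctly and completely: $r_v(G)\leq 2$ is Proposition~\ref{mr-spread}, and the union construction (which here needs no generic coefficients, since $v$ being a cut-vertex means every off-diagonal position is nonzero in at most one $\tilde A_i$) gives $\mr(G)\leq\sum_i\mr(G_i)=\sum_i\mr(G_i-v)+\sum_i r_v(G_i)=\mr(G-v)+\sum_i r_v(G_i)$, hence $r_v(G)\leq\sum_i r_v(G_i)$. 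The derivation of the second displayed formula from the first is also fine.

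The genuine gap is the lower bound $r_v(G)\geq\min\{\sum_i r_v(G_i),2\}$, which is the entire content of the theorem, and your proposal does not establish it. The three inequalities you list do not combine to give it: (ii) yields only $r_v(G)\geq 0$, and (iii) yields $r_v(G)\geq r_v(G_i)-\sum_{j\neq i}\mr(G_j-v)$, which is useless when the other components have positive minimum rank. The phrase ``the structure forcing these to add up away from the block that absorbs the discount'' is naming the difficulty rather than resolving it, and you concede as much by calling the kernel-patching step ``the main obstacle'' and offering to cite the companion papers instead. What is actually needed is an argument on an arbitrary minimum-rank $A\in\mathcal{S}(G)$, written in block form with $v$'s row and column bordering the direct sum of the blocks $A_i$ on the $W_i$: one must show that if $\rank(A)<\sum_i\rank(A[G_i])$ then the discounts in the separate blocks are coupled through the single bordering row/column, which is the delicate rank bookkeeping of \cite{MR2702337}. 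Since the thesis itself treats this as a black box, deferring to the literature is defensible here, but you should then state plainly that the lower bound is being cited rather than proved, instead of presenting a sketch that suggests the listed inequalities suffice.
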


\begin{example}
Consider the graph $K_3$ with a vertex labeled $v$. The vertex $v$ is a cut-vertex of the graph $G=K_3\stackplus{v}K_3$. Using Theorem~\ref{reduction}, we calculate the minimum rank of the graph $G$.
 The removal of the vertex $v$ from $G$ results in two components $G_1-v=G_2-v=K_2$ with $G_1=G_2=K_3$.  We have $r_v(G_1)=r_v(G_2)=0$ therefore, $r_v(G)=\text{min}\,\{ \sum_{i=1}^h r_v(G_i),\,2 \}=0$. Also $\mr(G_1-v)=\mr(G_2-v)=1$ thus $\mr(G)=1+1=2$.
\end{example}

\chapter{Zero Forcing Sets}\label{ZFS}

In this chapter we will introduce a new type of graph colouring which defines a graph parameter called the zero forcing number,  denoted by $Z(G)$, which is the minimum size of a zero forcing set. This parameter was first introduced and defined at the workshop ``Spectra of Families of Matrices described by Graphs, Digraphs, and Sign Patterns'', which was held at the American Institute of Mathematics on October, 2006 (see \cite{MR2388646}). Also, in that workshop it was shown that  $Z(G)$ is an upper bound for $M(G)$. Somewhat surprisingly, $M(G)=Z(G)$, for most of the graphs for which $M(G)$ is known. For instance, these two parameters are equal for all graphs with fewer than seven vertices and for some families of chordal graphs. We will establish this equality for more families of graphs and we show the equality $Z(G)=P(G)$ in some families of graphs as well.  In addition, we will establish a relationship between $Z(G)$ and the chromatic number $\chi(G)$ of a graph $G$.

\section{Definition}\label{ZFS_basics}
 This section includes basic definitions and facts about the zero forcing number of  a graph $G$, which will be used throughout this thesis. The zero forcing number is a graph parameter that arises from a type of graph colouring, therefore, first we turn our attention to the rules of this new graph colouring. This graph colouring is based on a colour-change rule that describes how to colour the vertices of the graph.

Let $G$ be a graph with all vertices initially coloured either black or white. If $u$ is a black vertex of $G$ and $u$ has exactly one neighbour that is white, say $v$, then we change the colour of $v$ to black; this rule is called the \txtsl{colour change rule}. In this case we say ``$u$ forces $v$'' which is denoted by {$u\rightarrow v$}. The procedure of colouring a graph using the colour rule is called the zero forcing process (shortly the forcing process). Note that each vertex will force at most one other vertex. 
Given an initial colouring of $G$, the \txtsl{derived set} is the set of all black vertices resulting from repeatedly applying the colour-change rule until no more changes are possible.
A \txtsl{zero forcing set} $Z$, is a subset of vertices of $G$ such that if initially the vertices in $Z$ are coloured black and the remaining vertices are coloured white, then the derived set of $G$ is $V(G)$. The \txtsl{zero forcing number} of a graph $G$, denoted by $Z(G)$, is the smallest size of a zero forcing set of $G$.
We abbreviate the term zero forcing set as ZFS. A  zero forcing process is called \textsl{minimal}\index{minimal zero forcing process} if the initial set of black vertices is a minimal ZFS.  
Note that for any nonempty graph $G$
\[
1\leq Z(G) \leq |V(G)|-1.
\]
\section{Examples and basic results}
This section presents examples of zero forcing sets of several graphs in order to illustrate this concept.  As an example, each endpoint of a path is a zero forcing set for the path, thus $Z(P_n)=1$ (note that no other vertex of the path is a zero forcing set). In a cycle, any set of two adjacent vertices is a zero forcing set and there is no ZFS of size one for the cycle, thus $Z(C_n)=2$. It is, also, easy to see that $Z(K_n)=n-1$.
\begin{example}\label{forcing-figures}
Let $G$ be the graph in Figure~\ref{start_graph_for_zfs}. Then, Figure~\ref{finding_a_zfs} illustrates why the set $Z=\{v_1,v_2\}$ is a ZFS for $G$.

\begin{figure}[ht!]
\begin{center}
\unitlength=1pt
\begin{picture}(100,25)
\multiput(0,0)(50,0){3}{\circle{6}}
\multiput(0,-50)(50,0){2}{\circle{6}}

\put(0,-3){\line(0,-1){44}}
\put(3,-50){\line(1,0){44}}
\put(2,-48){\line(1,1){45.5}}
\put(50,-47){\line(0,1){44}}
\put(53,0){\line(1,0){44}}

\put(0,8){\makebox(0,0){$v_1$}}
\put(50,8){\makebox(0,0){$v_2$}}
\put(100,8){\makebox(0,0){$v_5$}}
\put(0,-58){\makebox(0,0){$v_3$}}
\put(50,-58){\makebox(0,0){$v_4$}}
\end{picture}
\vspace{2cm}
\end{center}
\caption{The graph $G$ for which we want to find a ZFS}
\label{start_graph_for_zfs}
\end{figure}
\vspace{.5cm}

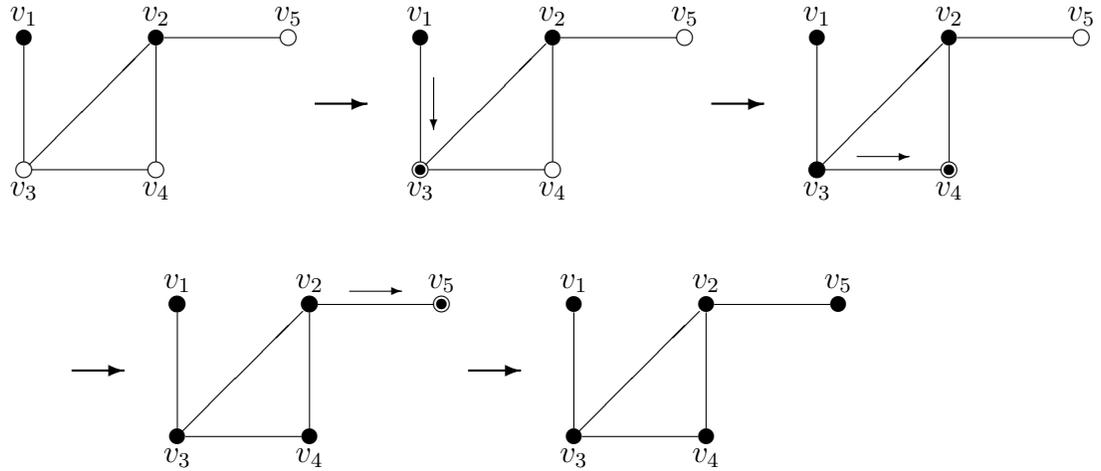
\begin{figure}[ht!]
\begin{center}
\unitlength=1pt
\begin{picture}(360,30)
\multiput(0,0)(50,0){2}{\circle*{6}}
\put(100,0){\circle{6}}
\multiput(0,-50)(50,0){2}{\circle{6}}

\put(0,-3){\line(0,-1){44}}
\put(3,-50){\line(1,0){44}}
\put(2,-48){\line(1,1){45.5}}
\put(50,-47){\line(0,1){44}}
\put(53,0){\line(1,0){44}}

\put(0,8){\makebox(0,0){$v_1$}}
\put(50,8){\makebox(0,0){$v_2$}}
\put(100,8){\makebox(0,0){$v_5$}}
\put(0,-58){\makebox(0,0){$v_3$}}
\put(50,-58){\makebox(0,0){$v_4$}}

\thicklines
\put(110,-25){\vector(1,0){20}}
\thinlines
\unitlength=1pt

\multiput(150,0)(50,0){2}{\circle*{6}}
\put(250,0){\circle{6}}
\multiput(150,-50)(50,0){2}{\circle{6}}
\put(150,-50){\circle*{4}}

\put(150,-3){\line(0,-1){44}}
\put(153,-50){\line(1,0){44}}
\put(152,-48){\line(1,1){45.5}}
\put(200,-47){\line(0,1){44}}
\put(203,0){\line(1,0){44}}

\put(155,-15){\vector(0,-1){20}}

\put(150,8){\makebox(0,0){$v_1$}}
\put(200,8){\makebox(0,0){$v_2$}}
\put(250,8){\makebox(0,0){$v_5$}}
\put(150,-58){\makebox(0,0){$v_3$}}
\put(200,-58){\makebox(0,0){$v_4$}}

\thicklines
\put(260,-25){\vector(1,0){20}}
\thinlines

\multiput(300,0)(50,0){2}{\circle*{6}}
\put(400,0){\circle{6}}
\multiput(300,-50)(50,0){2}{\circle{6}}
\put(300,-50){\circle*{6}}
\put(350,-50){\circle*{4}}

\put(300,-3){\line(0,-1){44}}
\put(303,-50){\line(1,0){44}}
\put(302,-48){\line(1,1){45.5}}
\put(350,-47){\line(0,1){44}}
\put(353,0){\line(1,0){44}}

\put(315,-45){\vector(1,0){20}}

\put(300,8){\makebox(0,0){$v_1$}}
\put(350,8){\makebox(0,0){$v_2$}}
\put(400,8){\makebox(0,0){$v_5$}}
\put(300,-58){\makebox(0,0){$v_3$}}
\put(350,-58){\makebox(0,0){$v_4$}}
\end{picture}


\begin{picture}(240,100)
\thicklines
\put(-40,-25){\vector(1,0){20}}
\thinlines

\multiput(0,0)(50,0){3}{\circle{6}}
\multiput(0,-50)(50,0){2}{\circle*{6}}
\multiput(0,0)(50,0){2}{\circle*{6}}
\put(100,0){\circle*{4}}

\put(0,-3){\line(0,-1){44}}
\put(3,-50){\line(1,0){44}}
\put(2,-48){\line(1,1){45.5}}
\put(50,-47){\line(0,1){44}}
\put(53,0){\line(1,0){44}}

\put(65,5){\vector(1,0){20}}

\put(0,8){\makebox(0,0){$v_1$}}
\put(50,8){\makebox(0,0){$v_2$}}
\put(100,8){\makebox(0,0){$v_5$}}
\put(0,-58){\makebox(0,0){$v_3$}}
\put(50,-58){\makebox(0,0){$v_4$}}

\thicklines
\put(110,-25){\vector(1,0){20}}
\thinlines
\multiput(150,0)(50,0){3}{\circle*{6}}
\multiput(150,-50)(50,0){2}{\circle*{6}}

\put(150,-3){\line(0,-1){44}}
\put(153,-50){\line(1,0){44}}
\put(152,-48){\line(1,1){45.5}}
\put(200,-47){\line(0,1){44}}
\put(203,0){\line(1,0){44}}

\put(150,8){\makebox(0,0){$v_1$}}
\put(200,8){\makebox(0,0){$v_2$}}
\put(250,8){\makebox(0,0){$v_5$}}
\put(150,-58){\makebox(0,0){$v_3$}}
\put(200,-58){\makebox(0,0){$v_4$}}
\end{picture}
\vspace{2.5cm}
\end{center}
\caption{Finding a zero forcing set}
\label{finding_a_zfs}
\end{figure}
\end{example}
 Any three pairwise adjacent vertices of a wheel form a ZFS for it. Any set of pendant vertices of size $n-1$ of a star ($K_{1,n-1}$) is a ZFS for it. As another example we have the following proposition. 
 
\begin{prop}\label{Z of complete multipartite graph}
 Let $K_{n_1, \dots ,n_m}$ be a complete multipartite graph with at least one $n_i> 1$ for $1\leq i\leq m$, 
then
\[
Z(K_{n_1, \dots ,n_m}) = (n_1+n_2+\dots+n_m)-2.
\]
\end{prop}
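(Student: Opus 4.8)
The plan is to establish the equality by proving two inequalities. Write $N = n_1 + n_2 + \cdots + n_m$ for the total number of vertices, and let $X_1, \dots, X_m$ denote the parts, with $|X_i| = n_i$; reorder so that $n_1 \geq 2$.

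First I would show $Z(K_{n_1,\dots,n_m}) \leq N - 2$ by exhibiting an explicit zero forcing set of size $N-2$. Pick two vertices $a, b$ in the large part $X_1$ and colour black every vertex of the graph except $a$ and $b$; this black set $Z$ has $N-2$ vertices. To see that $Z$ forces the whole graph: any vertex $v$ lying in a part $X_j$ with $j \neq 1$ is black and is adjacent to both $a$ and $b$ and to no other white vertex (all other vertices are black), so $v$ cannot force yet — but wait, we need a vertex with exactly one white neighbour. Since $m \geq 2$, there is a part $X_j$ with $j \neq 1$; if $|X_j| \geq 2$ pick $w \in X_j$, but actually the cleanest choice is: leave white exactly $a \in X_1$ and some $c \in X_j$ with $j \neq 1$ instead. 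With that choice, $a$ has exactly one white neighbour, namely $c$ (its only non-neighbour among whites would be other vertices of $X_1$, but those are black), so $a \to c$; then $c$ is black and now $a$ is the unique white vertex, so any black neighbour of $a$ forces it. Hence $Z$ of size $N-2$ works and $Z(K_{n_1,\dots,n_m}) \leq N-2$.

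Next I would prove the lower bound $Z(K_{n_1,\dots,n_m}) \geq N - 2$, i.e. that no set of $N-3$ black vertices can force the graph. Suppose $Z$ is a zero forcing set and consider the moment the first force occurs, say $u \to v$; at that moment $u$ has exactly one white neighbour. The white vertices at that moment are exactly $V \setminus Z$, of size at least $3$ if $|Z| \leq N-3$. The key structural observation is that in a complete multipartite graph the non-neighbours of any vertex $u$ all lie in the single part containing $u$; so the white neighbours of $u$ are all white vertices \emph{not} in $u$'s part. For $u$ to have exactly one white neighbour, all but at most one white vertex must lie in $u$'s own part. Thus if there are $w \geq 3$ white vertices, at least $w - 1 \geq 2$ of them lie in one part, say $X_i$, and the lone possible outside white vertex. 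Now I would argue that this configuration persists: after any force, the number of white vertices drops by one, and the same counting shows every subsequent forcing vertex must again have all-but-one of its white neighbours blocked, forcing essentially all remaining white vertices into a common part — this constrains $V \setminus Z$ so tightly that $|V \setminus Z| \leq 2$ is forced, i.e. $|Z| \geq N-2$.

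The main obstacle will be making the lower-bound argument airtight: one must track the forcing process carefully and show that having three or more white vertices creates a genuine deadlock. The cleanest route is probably to argue directly about the initial white set $W = V \setminus Z$ with $|W| \geq 3$: for \emph{some} black vertex to force, it needs exactly one white neighbour, which (by the non-neighbour structure) means all white vertices except at most one lie in a single part $X_i$; so $W \subseteq X_i \cup \{x\}$ for some part $X_i$ and some vertex $x \notin X_i$. But then every black vertex $u$ with $u \notin X_i$ sees \emph{at least two} white vertices (two elements of $W \cap X_i$, assuming $|W \cap X_i| \geq 2$) unless $u = x$; and $x$ itself, lying outside $X_i$, also sees those $\geq 2$ white vertices of $X_i$. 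Meanwhile black vertices inside $X_i$ see only the white vertices outside $X_i$, of which there is at most one ($x$), but they are non-adjacent to the rest of $X_i$ — so they could force $x$. After $x$ is forced black, $W \cap X_i$ (size $\geq 2$) remains all white and now lies entirely in one part with no white vertices outside it; then \emph{no} vertex outside $X_i$ has exactly one white neighbour (each sees $\geq 2$) and vertices inside $X_i$ have \emph{zero} white neighbours, so the process halts with $W \cap X_i$ still white — contradiction. This shows $|W| \leq 2$, completing the proof. Combining the two bounds gives $Z(K_{n_1,\dots,n_m}) = N - 2$.
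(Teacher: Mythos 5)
Your overall strategy (upper bound by an explicit set of size $N-2$, lower bound by showing three or more white vertices deadlock) is exactly the paper's, and your lower-bound argument is sound: it is the same case analysis the paper performs, just organized around the observation that a forcing vertex requires all but one white vertex to lie in its own part.

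There is, however, one step in your upper bound that is invalid as written: you have the \emph{white} vertex $a$ perform the force $a\to c$. The colour change rule only permits a \emph{black} vertex with a unique white neighbour to force, so $a$ cannot act while it is white, and the sentence computing ``$a$'s white neighbours'' is beside the point. The repair is immediate and is what the paper does: since $n_1\geq 2$, pick a black vertex $a'\in X_1\setminus\{a\}$; it is non-adjacent to $a$ and adjacent to everything else, so its unique white neighbour is $c$ and $a'\to c$; afterwards $a$ is the sole white vertex and any black neighbour of $a$ forces it. Note that this is also where the hypothesis that some $n_i>1$ is genuinely used -- if both white vertices sat in singleton parts, every black vertex would see two white neighbours and nothing could force -- so your choice of $a$ in the large part $X_1$ is the right one; you just need a black companion of $a$ in $X_1$ to start the process.
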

\begin{proof}
Let $G=K_{n_1, \dots ,n_m}$ and $Z$ be the set of all vertices of the graph except for two vertices say, $u$ and $v$, which lie in different parts, $A$ and $B$. Since the graph is a complete multipartite graph, any vertex in $A$ can force $v$ and any vertex in $B$ can force $u$. Thus Z is a ZFS for $G$ and
\[
Z(G) \leq |G|-2.
\]
Next we show that $Z(G) \geq |G|-2$. Suppose that $Z(G)<|G|-2$, therefore we have a set of size at most  $|G|-3$ as the set of initial black vertices and this means we initially have at least 3 white vertices, which leads to the following three different cases:
\begin{enumerate}[(a)]
\item All of them are in the same part.
\item They are all in different parts.
\item Exactly two of them are in the same part.
\end{enumerate}
In the first and second cases there is no black vertex with a single white neighbour in the graph thus no vertex can perform a force. In the third case neither of the two white vertices that are in the same part can be turned black since any existing black vertex is either adjacent to both of them or adjacent to neither of them. Therefore the initial set of black vertices can not be a zero forcing set for the graph. Thus $Z(G) \geq|G|-2$.
\end{proof}
\begin{obs}
For every graph $G$ we have
\[
 Z(G)\geq \delta(G).\qed
 \]
\end{obs}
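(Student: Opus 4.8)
The plan is to prove the contrapositive-flavoured statement by inspecting the very first application of the colour change rule. First I would dispose of the degenerate case: if every minimum zero forcing set is all of $V(G)$ (equivalently, if no force can ever occur), then $Z(G)=|V(G)|\geq\delta(G)$, since $\delta(G)\leq|V(G)|-1$ for any graph. So from now on I may assume a minimum zero forcing set $Z$ with $Z\subsetneq V(G)$; because the derived set of $Z$ is $V(G)$, at least one force must take place during the forcing process started from $Z$.

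Next I would single out the \emph{first} force in that process, say $u\rightarrow v$. The crucial observation is that at the instant this force happens no earlier force has occurred, so the set of black vertices is still exactly $Z$. By the colour change rule $u$ is black, hence $u\in Z$, and $v$ is the unique white neighbour of $u$; consequently every neighbour of $u$ other than $v$ is black, i.e. $N(u)\setminus\{v\}\subseteq Z$.

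Finally I would do the count. Since $u\notin N(u)$ and $u\in Z$, the singleton $\{u\}$ and the set $N(u)\setminus\{v\}$ are disjoint subsets of $Z$, so $|Z|\geq 1+\bigl(d(u)-1\bigr)=d(u)\geq\delta(G)$. As $|Z|=Z(G)$ by choice, this gives $Z(G)\geq\delta(G)$.

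I do not expect any real obstacle here; the statement is an immediate structural consequence of the colour change rule. The only points requiring a little care are isolating the trivial case $Z=V(G)$ (so that "a force occurs" is legitimate) and the bookkeeping that $u$ is not counted among its own neighbours, which is exactly what upgrades the bound from $d(u)-1$ to $d(u)$.
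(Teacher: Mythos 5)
Your proof is correct: the only content of this observation is that the vertex performing the first force must already have all of its neighbours but one coloured black, and your counting ($u$ itself plus $N(u)\setminus\{v\}$, which are disjoint subsets of $Z$) is exactly right, as is your handling of the degenerate case where no force ever occurs. The paper states the observation without proof, but it is the special case $H'=G$ of Theorem~\ref{Z&delta}, whose proof uses precisely this ``first vertex to perform a force'' argument, so your route is essentially the paper's.
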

This bound is tight when $G$ is a complete graph. But for trees with a large path cover number, such as stars,  this  can be a very bad bound (see Proposition~\ref{P(T)=Z(T)}).

Since the neighbours of the first black vertex which is performing a force in a zero forcing process are all black, it is easy to observe the following.
The $i$-th {\it level}\index{levels of a forcing process} of a forcing process is the colouring of $G$ after applying the colour-change rule in a zero forcing process $i$ times. As noted in \cite{MR2388646}, since any vertex that turns black under one sequence of application of the colour-change rule can always be turned black, regardless of the order of colour changes, we have the following.
\begin{prop}\label{uniqueness}
Let $G$ be a graph. The derived set of any forcing process in the graph $G$, starting with a specific initial set of black vertices, is unique.
\end{prop}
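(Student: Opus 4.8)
The plan is to prove that any two ``forcing processes'' starting from the same initial black set $B_0 \subseteq V(G)$ terminate at the same derived set, i.e. to establish a confluence (Church--Rosser type) property for the colour-change rule. First I would record the elementary fact that the process always terminates: each application of the colour-change rule turns a white vertex black, so the set of black vertices strictly increases, and since $G$ is finite this can happen only finitely many times. Hence every forcing process is a finite sequence of forces ending in a colouring in which no further force is possible, and this terminal colouring is what we call a derived set. In particular derived sets exist, and the statement to prove is that a derived set does not depend on the order in which forces were performed.

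Now let $D$ and $D'$ be the derived sets obtained from two forcing processes, both starting from $B_0$. I would list the vertices forced in the first process, in the order in which they are coloured black, as $v_1, v_2, \ldots, v_k$, so that $D = B_0 \cup \{v_1, \ldots, v_k\}$ and, crucially, at the moment $v_i$ is forced the set of black vertices is exactly $B_0 \cup \{v_1, \ldots, v_{i-1}\}$. The key step is to show, by induction on $i$, that $v_i \in D'$. For the inductive step, assume $\{v_1, \ldots, v_{i-1}\} \subseteq D'$ (this is vacuous when $i=1$). In the first process $v_i$ is forced by some black vertex $u$ whose unique white neighbour at that stage is $v_i$; therefore $u \in B_0 \cup \{v_1, \ldots, v_{i-1}\}$ and $N(u) \setminus \{v_i\} \subseteq B_0 \cup \{v_1, \ldots, v_{i-1}\}$, so by the induction hypothesis both $u$ and all of $N(u) \setminus \{v_i\}$ lie in $D'$. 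If $v_i$ were still white in the colouring $D'$, then $u$ would be a black vertex of $D'$ having exactly one white neighbour, namely $v_i$, and the colour-change rule could be applied --- contradicting that $D'$ is a derived set (a terminal colouring). Hence $v_i \in D'$, which completes the induction.

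It follows that $D = B_0 \cup \{v_1, \ldots, v_k\} \subseteq D'$. Applying the identical argument with the roles of the two processes interchanged gives $D' \subseteq D$, and therefore $D = D'$, as desired. I expect the only genuine subtlety to be the bookkeeping in the key step: being precise that at the instant $v_i$ is forced the black set is exactly $B_0 \cup \{v_1, \ldots, v_{i-1}\}$, and that ``derived set'' must be read as ``colouring from which no force is possible'', since it is exactly this terminality of $D'$ that yields the contradiction when one assumes $v_i \notin D'$. Everything else is a routine induction, and no properties of $\mathcal{S}(G)$ or of minimum rank are needed.
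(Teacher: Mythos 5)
Your proof is correct. The paper does not actually write out an argument for this proposition --- it simply remarks (citing the AIM workshop paper) that any vertex turned black under one sequence of applications of the colour-change rule can always be turned black regardless of the order of colour changes --- and your confluence-style induction on the chronological order of forces is precisely the standard rigorous justification of that remark, so it is essentially the intended approach.
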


\section{Connection to the maximum nullity}
In this section we show how the zero forcing number of a graph bounds the maximum nullity of the graph. To see this we need some additional definitions and theorems from linear algebra. The \txtsl{support} of a vector $\bfx=(x_i)$, denoted by $\supp(\bfx)$, is the set of indices  $i$ such that $x_i\neq0$. The following proposition relates the support of the vectors in the null space of a matrix to its nullity, (from \cite[Proposition 2.2]{MR2388646}).

\begin{prop}\label{vanishing_at_k_positions}
Let $A$ be an $n\times n$ matrix and suppose $\nul(A)>k$. Then there is a nonzero vector $\bfx\in \nul(A)$ vanishing at any $k$ specified positions.\qed 
\end{prop}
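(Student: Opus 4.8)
The plan is to reduce the statement to an elementary dimension count inside the null space $\ker(A)$, viewed as a subspace of $\mathbb{R}^n$ (or $\mathbb{C}^n$, the argument being identical). Fix the $k$ specified positions, say $S=\{i_1,\ldots,i_k\}\subseteq\{1,\ldots,n\}$, and consider the ``coordinate restriction'' map
\[
\pi_S:\ker(A)\longrightarrow \mathbb{R}^k,\qquad \pi_S(\bfx)=(x_{i_1},x_{i_2},\ldots,x_{i_k}).
\]
First I would observe that $\pi_S$ is linear, since each coordinate functional $\bfx\mapsto x_{i_j}$ is linear and $\pi_S$ is just the tuple of these functionals restricted to the subspace $\ker(A)$.

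Next I would apply the rank--nullity theorem (Proposition~\ref{rank+null=n}) to the linear map $\pi_S$. Its domain $\ker(A)$ has dimension $\nul(A)$, which by hypothesis is strictly greater than $k$, while its codomain has dimension $k$; hence $\rank(\pi_S)\leq k<\nul(A)$. Therefore the kernel of $\pi_S$ (as a map on $\ker(A)$) has dimension $\nul(A)-\rank(\pi_S)\geq \nul(A)-k\geq 1$, so it is nontrivial. Any nonzero vector $\bfx$ in $\ker(\pi_S)$ lies in $\ker(A)$ and satisfies $x_{i_1}=\cdots=x_{i_k}=0$, i.e.\ it vanishes at the $k$ prescribed positions, which is exactly what is required.

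I do not anticipate a genuine obstacle here: the only thing to get right is the identification of ``vanishing at $k$ positions'' with membership in the kernel of the restriction map $\pi_S$, and then invoking that a linear map from a space of dimension $>k$ into one of dimension $k$ cannot be injective. One could alternatively phrase this without naming $\pi_S$: the $k$ linear equations $x_{i_1}=0,\ldots,x_{i_k}=0$ cut out a subspace of $\ker(A)$ of codimension at most $k$, hence of dimension at least $\nul(A)-k\geq 1$, and one picks a nonzero element. Either phrasing yields the result immediately.
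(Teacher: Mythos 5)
Your proof is correct: the observation that the $k$ coordinate functionals cut out a subspace of $\ker(A)$ of codimension at most $k$, hence of dimension at least $\nul(A)-k\geq 1$, is exactly the standard dimension-count argument for this fact. The paper itself states the proposition without proof (deferring to its cited source), and that source's argument is the same linear-algebra count you give, so there is nothing that differs and no gap.
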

In other words, if $W$ is a set of $k$ indices, then there is a nonzero vector $\bfx \in \nul(A)$ such that $\supp(\bfx)\cap \it W=\emptyset$.

\begin{prop}[{{see \cite[Proposition 2.3]{MR2388646}}}]\label{support}
 Let $Z$ be a zero forcing set of $G=(V,E)$ and $A\in \mathcal{S}(G)$. If $\bfx \in\nul(A)$ and $\supp(\bfx)\cap Z=\emptyset$, then $\bfx=0$.
\end{prop}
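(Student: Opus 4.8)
The plan is to argue by contradiction via the colour-change rule, tracking the forcing process step by step and showing that the zero entries of $\bfx$ propagate exactly the way the black colouring does. So suppose $\bfx\in\nul(A)$ with $\supp(\bfx)\cap Z=\emptyset$, but $\bfx\neq 0$. Since $Z$ is a zero forcing set, starting from $Z$ black and everything else white, the derived set is all of $V$. I would prove by induction on the steps of this forcing process that at every level of the process, $x_v=0$ for every black vertex $v$. The base case is the initial colouring: the black vertices are exactly those in $Z$, and by hypothesis $\supp(\bfx)\cap Z=\emptyset$, so $x_v=0$ for all $v\in Z$.

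For the inductive step, suppose at some level all black vertices $v$ satisfy $x_v=0$, and the next application of the rule has a black vertex $u$ forcing its unique white neighbour $w$. I need to show $x_w=0$. Here is where I use $A\in\mathcal{S}(G)$ and $A\bfx=0$: look at the $u$-th coordinate of $A\bfx$, which gives
\[
0=(A\bfx)_u=a_{uu}x_u+\sum_{y\in N(u)}a_{uy}x_y.
\]
Now $x_u=0$ since $u$ is black (inductive hypothesis). Every neighbour $y$ of $u$ other than $w$ is black at this level — because $u$ has exactly one white neighbour, namely $w$ — so $x_y=0$ for all such $y$ by the inductive hypothesis. Hence the sum collapses to $a_{uw}x_w=0$. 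Since $uw\in E(G)$ and $A$ is described by $G$, we have $a_{uw}\neq 0$, so $x_w=0$. This completes the induction: after each force, the newly-blackened vertex also has a zero entry of $\bfx$.

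Since the derived set is $V(G)$, every vertex is eventually black, so $x_v=0$ for all $v\in V(G)$, i.e.\ $\bfx=0$, contradicting our assumption. I do not anticipate a genuine obstacle here; the one point requiring a little care is making the induction on "levels" or "steps of the forcing process" precise — one should fix one valid forcing sequence (which exists and whose derived set is $V(G)$ by the definition of a zero forcing set, and is in any case independent of choices by Proposition~\ref{uniqueness}) and induct along it, being careful that at the moment $u$ forces $w$, all of $u$'s neighbours except $w$ are already black. That last fact is immediate from the colour-change rule itself, since $u$ is permitted to force only when it has a unique white neighbour.
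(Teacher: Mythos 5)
Your proof is correct and is essentially the same argument the paper gives: both track the forcing process and use the equation $(A\bfx)_u=0$, together with the fact that all neighbours of $u$ except its unique white neighbour $w$ are already black (hence have zero entries), to conclude $a_{uw}x_w=0$ and thus $x_w=0$. You merely make the induction on forcing steps explicit and wrap the argument in an unnecessary contradiction framing; the mathematical content is identical.
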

\begin{proof}
Assume $\bf x \in\nul(A)$ and $\supp(\bfx)\cap Z=\emptyset$. If $Z=V$, the statement clearly holds, so suppose $Z\neq V$. If $v\in Z$ then $x_v=0$. Since $Z$ is a zero forcing set we must be able to perform a colour change. That is, there exists a vertex $u$ coloured black ($x_u$ is required to be $0$) with exactly one neighbour $v$ coloured white (so $x_v$ is not yet required to be $0$). Upon examination, the equation $(A\bfx)_u=0$ reduces to $a_{uv}x_v=0$, which implies that $x_v=0$. Similarly each colour change corresponds to requiring another entry in $\bfx$ to be zero. Thus $\bfx=0$.
\end{proof}

\begin{thm}[{{see \cite[Proposition 2.4]{MR2388646}}}]\label{M(G)<=Z(G)}
If $G$ is a graph, then $M(G)\leq Z(G)$.
\end{thm}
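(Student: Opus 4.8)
The plan is to argue by contrapositive on the dimension of the null space, using Proposition~\ref{vanishing_at_k_positions} together with Proposition~\ref{support}. Let $Z$ be a zero forcing set of $G$ with $|Z| = Z(G)$, and let $A \in \mathcal{S}(G)$ be arbitrary. It suffices to show that $\nul(A) \leq |Z|$, since taking the maximum over all such $A$ then yields $M(G) \leq Z(G)$.

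First I would suppose, for contradiction, that $\nul(A) > |Z| =: k$. By Proposition~\ref{vanishing_at_k_positions}, applied with the set $W = Z$ of $k$ specified positions, there exists a nonzero vector $\bfx \in \nul(A)$ vanishing at every position indexed by $Z$; equivalently, $\supp(\bfx) \cap Z = \emptyset$. But now Proposition~\ref{support} applies directly: since $Z$ is a zero forcing set, $A \in \mathcal{S}(G)$, $\bfx \in \nul(A)$, and $\supp(\bfx) \cap Z = \emptyset$, we conclude $\bfx = 0$. This contradicts the choice of $\bfx$ as nonzero. Hence $\nul(A) \leq |Z| = Z(G)$ for every $A \in \mathcal{S}(G)$, and taking the maximum gives $M(G) \leq Z(G)$.

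There is essentially no obstacle here: the theorem is an immediate consequence of the two preceding propositions, which do all the real work. Proposition~\ref{support} is the substantive ingredient (it encodes the forcing process as a sequence of forced-zero deductions on the entries of a null vector), and Proposition~\ref{vanishing_at_k_positions} is the standard linear-algebra fact that a subspace of dimension exceeding $k$ must contain a nonzero vector supported off any prescribed $k$ coordinates. The only point requiring the tiniest care is the strict-versus-weak inequality: one needs $\nul(A) > k$ (strict) to invoke Proposition~\ref{vanishing_at_k_positions}, which is exactly what the contradiction hypothesis $\nul(A) > |Z|$ supplies, so the argument closes cleanly.
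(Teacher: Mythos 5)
Your proof is correct and follows essentially the same route as the paper: both arguments suppose the nullity exceeds $|Z|$, invoke Proposition~\ref{vanishing_at_k_positions} to produce a nonzero null vector vanishing on $Z$, and then apply Proposition~\ref{support} to force that vector to be zero, a contradiction. The only cosmetic difference is that you quantify over an arbitrary $A\in\mathcal{S}(G)$ and take the maximum at the end, whereas the paper starts directly from a matrix realizing $\nul(A)>|Z|$.
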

\begin{proof}
Let $Z$ be a ZFS for $G$. Assume $M(G)> |Z|$, and let $A\in \mathcal{S}(G)$ with $\nul(A) > |Z|$. By Proposition~\ref{vanishing_at_k_positions}, there is a nonzero vector ${\bfx} \in  \ker(A)$ that vanishes on all vertices in $Z$. By Proposition~\ref{support}, $\bfx=0$, which is a contradiction. 
\end{proof}
Note that Theorem~\ref{M(G)<=Z(G)} displays a nice relationship between a linear algebraic quantity, $M(G)$, and a purely graph theoretical parameter, $Z(G)$. Note also that the inequality in this theorem can be tight or strict. For example, for paths, it holds with equality while the corona  $C_5 \prec K_1,\ldots,K_1\succ$ (also called the penta-sun) has a zero forcing number equal to three (this follows from the fact that is proven in Corollary~\ref{unicycles}) but the maximum nullity of the penta-sun is equal to two, see \cite[Example 4.1]{MR2388646}.

\section{Zero forcing chains}
In this section we study the \textsl{zero forcing chains} produced by a zero forcing process in a graph and how this concept relates the zero forcing number of a graph to the path cover number of the graph.  

Let $Z$ be a zero forcing set of a graph $G$. Construct the derived set, making a list of the forces in the order in which they are performed. This list is called the \txtsl{chronological list of forces}.
A \txtsl{forcing chain} (for a particular chronological list of forces) is a sequence of vertices $(v_1,v_2,\ldots,v_k)$ such that $v_i\rightarrow v_{i+1}$, for $i=1,\ldots,k-1$.   Not that a minimal zero forcing process produces a minimal collection of forcing chains. 
For the graph $G$ in Example~\ref{forcing-figures}, we have the forcing chains $(v_1,v_3,v_4)$ and $(v_2,v_5)$.
 A \txtsl{maximal forcing chain} is a forcing chain that is not a proper subsequence of another zero forcing chain (the previous example has two maximal forcing chains). Note that a zero forcing chain can consist of a single vertex $(v_1)$ and such a chain is maximal if $v_1\in Z$ and $v_1$ does not perform a force. In each step of a forcing process, each vertex can force at most one other vertex and can be forced by at most one other vertex, therefore the maximal forcing chains are disjoint. Thus the vertices of the zero forcing set partition the vertices of the graph into disjoint paths (this will be discussed in Proposition~\ref{P(G)<=Z(G)}). As we showed in Theorem~\ref{uniqueness}, the derived set of a given set of black vertices is unique; however, a chronological list of forces and the forcing chains of a particular zero forcing set usually is not. 

 The number of chains in a zero forcing process starting with a zero forcing set $Z$ is equal to the size of $Z$ and the elements of $Z$ are the initial vertices of the forcing chains. Let $Z$ be a zero forcing set of a graph $G$. A \txtsl{reversal} of $Z$ is the set of last vertices of the maximal zero forcing chains of a chronological list of forces. Thus the cardinality of a reversal of $Z$ is the same as the cardinality of $Z$.
\begin{thm}[{{see \cite[Theorem 2.6]{MR2645093}}}]\label{Reversal is a ZFS}
If $Z$ is a zero forcing set of $G$, then so is any reversal of $Z$.
\end{thm}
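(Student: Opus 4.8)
The plan is to show that if one runs the forcing process starting from a reversal $Z'$ of $Z$, then the entire vertex set is eventually coloured black. The key observation is that a reversal is obtained by taking a chronological list of forces for $Z$ and literally reversing each maximal forcing chain: if a maximal chain is $(v_1, v_2, \ldots, v_k)$ with $v_1 \in Z$, then in the reversal the chain becomes $(v_k, v_{k-1}, \ldots, v_1)$ with $v_k \in Z'$. Since the maximal forcing chains partition $V(G)$ into disjoint paths (as noted just before the statement), it suffices to argue that, starting from $Z'$, one can perform all the forces $v_{i+1} \to v_i$ in reverse chronological order, i.e. the force that was performed last in the original process is performed first in the new process, and so on.

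First I would set up notation: let the chronological list of forces for $Z$ be $f_1, f_2, \ldots, f_m$, where $f_t$ is the force "$u_t \to w_t$" performed at step $t$, so $\{w_1, \ldots, w_m\} = V(G) \setminus Z$ and each $w_t$ is distinct, and $Z' = V(G) \setminus \{u_1', \ldots\}$ where more precisely $Z'$ consists of $Z \setminus (\text{set of vertices that get forced}) $ replaced appropriately — cleanly, $Z'$ is the set of terminal vertices of the maximal chains. I would then prove by downward induction on $t$ (from $t = m$ down to $t = 1$) the statement: after performing the reversed forces $\bar f_m, \bar f_{m-1}, \ldots, \bar f_{t+1}$ (where $\bar f_s$ denotes "$w_s \to u_s$") starting from $Z'$, the set of black vertices is exactly $Z' \cup \{u_m, u_{m-1}, \ldots, u_{t+1}\} = Z \cup \{w_m, \ldots, w_{t+1}\}$... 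I need to be careful here; the right invariant is that the black set after $k$ reversed forces equals $V(G)$ minus the set of white vertices, and the white vertices at that stage are precisely the vertices which, in the \emph{original} process, had already been forced black by the time step $m-k$. Equivalently: black after the first $k$ reversed steps $= Z' \cup \{u_m, u_{m-1}, \ldots, u_{m-k+1}\}$.

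The crux of the argument — and the step I expect to be the main obstacle — is verifying that the reversed force $\bar f_t : w_t \to u_t$ is \emph{legal} at the moment we want to apply it, i.e. that $w_t$ is black and $u_t$ has exactly one white neighbour, namely $u_t$ itself is the unique white neighbour of $w_t$. That $w_t$ is black follows because $w_t$ is either in $Z'$ or was turned black by an earlier reversed force (indeed $w_t = w_{t}$ appears as the \emph{source} of $\bar f_t$, and it was the \emph{target} $w_t$ of $f_t$, hence either it is terminal in its chain — so in $Z'$ — or the next force $f_{t'}$ in its chain with $t' > t$ has already been reversed). The harder half is the uniqueness of the white neighbour: I would use the fact that in the original process, at step $t$, vertex $u_t$ was black and $w_t$ was its unique white neighbour, so every neighbour of $u_t$ other than $w_t$ was black at time $t$, meaning it was either in $Z$ or forced before step $t$; one then checks that all such vertices are black in the reversed process at the stage where $\bar f_t$ is applied — they are black because they correspond to targets $w_s$ with $s < t$ (or elements of $Z$), and these have \emph{not yet} been un-coloured... this requires matching up, for each neighbour $x$ of $u_t$, which chain $x$ lies on and where, and arguing via Proposition~\ref{uniqueness} (uniqueness of the derived set) to avoid case analysis on the order. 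An alternative, cleaner route — which I would actually prefer — is: let $Z'$ be a reversal, let $D$ be the derived set of $Z'$, and suppose for contradiction $D \neq V(G)$; pick a maximal chain $(v_1,\ldots,v_k)$ and let $v_j$ be the last vertex of it lying in $D$ (with $v_k = $ first vertex of the reversed chain, so $v_k \in Z' \subseteq D$, hence $j \geq 1$ is well-defined and $j < k$ would give the contradiction target $v_{j+1}$). Then show $v_j$ can force $v_{j+1}$: one must show every neighbour of $v_j$ except $v_{j+1}$ is in $D$. Here the key input is that in the original forcing (from $Z$), when $v_{j+1} \to v_j$ was performed, $v_{j+1}$'s only white neighbour was $v_j$; by Theorem~\ref{Reversal is a ZFS} applied inductively to smaller configurations, or by a direct parity/ordering argument on the chains, one concludes every other neighbour of $v_j$ precedes $v_j$ on its own chain or lies in $Z'$, hence is in $D$ — contradiction. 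I would flesh out whichever of these two formulations leads to fewer case distinctions; the essential mathematical content is identical, and the one genuine difficulty is the bookkeeping that shows "black in the reversed process" coincides with "already-forced in the original process run backwards."
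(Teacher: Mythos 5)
Your overall plan---reverse each maximal chain and perform the forces in reverse chronological order, with the invariant that after $k$ reversed steps the black set is $Z'\cup\{u_m,\dots,u_{m-k+1}\}$---is sound, and that invariant is the correct one (your other description of the white set, as ``the vertices already forced by time $m-k$,'' is false: a vertex of $Z$ that performs the first force stays white in the reversed process until the very end, while a forced vertex that never forces lies in $Z'$ and is black from the start). The genuine gap is exactly at the step you flag as the main obstacle, and the fact you propose to use there is about the wrong vertex. To legalize the reversed force $w_t\rightarrow u_t$ you must show that every neighbour of $w_t$ other than $u_t$ is already black. You instead invoke the legality of the original force $f_t$, i.e.\ that every neighbour of $u_t$ other than $w_t$ was black at step $t$, and then try to conclude that those vertices are black in the reversed process. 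That conclusion is both unnecessary and generally false: in $P_3=a\,b\,c$ with $Z=\{a\}$ and reversal $\{c\}$, at the first reversed force $c\rightarrow b$ the vertex $a$ is a neighbour of $u_2=b$ and is still white. The observation that actually closes the gap---and is the heart of the proof in the text---is this: if $x$ is a neighbour of $w_t$ with $x\neq u_t$, then $x$ had the white neighbour $w_t$ at every step before step $t$ of the original process, so $x$ cannot have performed any force at a step $\leq t$; hence $x$ either never forces (so it is terminal in its chain and lies in $Z'$) or forces at some step $s>t$ (so $x=u_s$ and was already coloured by $\bar f_s$). Nothing in your sketch supplies this; ``matching up which chain $x$ lies on,'' the uniqueness of the derived set, and ``Theorem~\ref{Reversal is a ZFS} applied inductively to smaller configurations'' (which is circular as written) do not substitute for it.

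For comparison, the paper's proof proceeds by induction on $|V(G)|$: it verifies only the first reversed force $u\rightarrow v$ directly, using precisely the observation above applied to the target of the last original force, and then deletes $u$ and applies the induction hypothesis. Once you add the missing observation, your single downward induction on $t$ also works, and with arguably less bookkeeping; without it, neither of your two formulations constitutes a proof.
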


\begin{proof}
We prove this by induction on the number of vertices. For the base case consider the graph $K_2$. Either of the vertices of the graph is a reversal for the other one and also is a ZFS for the graph. Let $G$ be a graph and assume that this is true for all graphs $G'$  with $|V(G')|<|V(G)|$. Now in graph $G$ write the chronological list of forces in reverse order. Reversing each force, this is the reverse chronological list of forces. Let the reversal of $Z$ for this list be denoted by $W$. We show the reverse chronological list of forces is a valid list of forces for $W$. Consider the first force, $u\rightarrow v$ on the reverse chronological list. We need to show that all neighbours of $u$ except $v$ must be in $W$, so that $u$ can force $v$. Since then the last force  in the original chronological list of forces was $v\rightarrow u$, each of the neighbours of $u$ had $u$ as a white neighbour and thus could not have forced any vertex previously (in the original chronological list of forces). Then $\left(Z\backslash \{u\}\right)\cup \{v\}$ is a ZFS for the graph $G-u$. Then according to the induction hypothesis, the rest of forces in  the reverse chronological list of forces is a valid list of forces for the graph $G-u$. Thus $W$ is a zero forcing set of $G$.
\end{proof}
Since we can always reverse a ZFS, every connected graph (except $K_1$) has multiple ZFS. Therefore, we have the following.
\begin{cor}[see{{\cite[Corollary 2.7]{MR2645093}}}]
No connected graph of order greater than one has a unique minimum zero forcing set.\qed
\end{cor}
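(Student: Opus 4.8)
The statement to prove is the Corollary: no connected graph of order greater than one has a unique minimum zero forcing set.

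The plan is to derive this directly from Theorem~\ref{Reversal is a ZFS}, which guarantees that any reversal of a zero forcing set is itself a zero forcing set, together with the fact that a reversal has the same cardinality as the original set. So the argument reduces to showing that for a connected graph $G$ with $|V(G)| > 1$, given any minimum zero forcing set $Z$, one can always produce a reversal $W$ with $W \neq Z$; then $W$ is a second minimum zero forcing set, contradicting uniqueness.

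First I would fix a minimum zero forcing set $Z$ and a chronological list of forces. Since $|V(G)| > 1$ and $G$ is connected, $G$ has at least one edge, so $Z \neq V(G)$ (a zero forcing set equal to the whole vertex set would only be forced to be minimum when there are no forces to perform, i.e.\ when $n=1$; more carefully, if $|V(G)|>1$ and $G$ is connected then $Z(G) \le |V(G)|-1$, so a minimum ZFS omits at least one vertex). Hence at least one force $u \to v$ is performed during the process. Now I would consider a maximal forcing chain $(v_1, v_2, \ldots, v_k)$ with $k \ge 2$ — such a chain exists precisely because at least one force occurs. Its initial vertex $v_1$ lies in $Z$, while its terminal vertex $v_k$ lies in the reversal $W$. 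The key observation is that $v_k \notin Z$: indeed $v_k$ was forced (it is $v_{k-1} \to v_k$), so $v_k$ was white at the start, hence not in the initial black set $Z$. Therefore $v_k \in W \setminus Z$, so $W \neq Z$. Since $|W| = |Z| = Z(G)$ by the remark preceding the corollary, $W$ is a minimum zero forcing set distinct from $Z$.

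The main thing to be careful about is the edge case ensuring that at least one force actually takes place, i.e.\ that $Z \ne V(G)$ for a connected graph on more than one vertex; this follows from the bound $1 \le Z(G) \le |V(G)|-1$ noted in Section~\ref{ZFS_basics}. Given that, no real obstacle remains — the content is entirely carried by Theorem~\ref{Reversal is a ZFS}, and the corollary is essentially the observation that the reversal construction genuinely moves at least one vertex out of the set whenever the forcing process is nontrivial. I would close by noting that since every connected graph of order at least two admits a nontrivial forcing process on any of its minimum zero forcing sets, it has at least two distinct minimum zero forcing sets.
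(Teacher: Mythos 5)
Your proposal is correct and follows exactly the route the paper intends: the corollary is stated with a \qed as an immediate consequence of Theorem~\ref{Reversal is a ZFS}, justified by the preceding remark that one can always reverse a ZFS. Your argument simply fills in the (correct) details — that connectivity and order at least two force $Z(G)\le |V(G)|-1$, hence at least one force occurs, hence some maximal chain has length at least two and its terminal vertex lies in the reversal but not in $Z$ — so the reversal is a genuinely distinct minimum zero forcing set.
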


The following theorem shows that there is no connected graph $G$ with a vertex $v\in V(G)$ such that $v$ is in every minimum zero forcing set. To see a proof of it refer to \cite[Theorem 2.9]{MR2645093}. Let ZFS(G) be the set of all minimum zero forcing sets of $G$.
\begin{thm}
If $G$ is a connected graph of order greater than one, then
\[
\bigcap_{Z\in ZFS(G)}Z=\emptyset. \qed
\]
\end{thm}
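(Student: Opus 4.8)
The plan is to argue by contradiction: suppose some vertex $v$ lies in every minimum zero forcing set of $G$. I would split the argument into two phases. Phase one uses Theorem~\ref{Reversal is a ZFS} to force $v$ into a completely degenerate role in every minimal forcing process; phase two exploits this to manufacture a minimum zero forcing set that avoids $v$.

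For phase one, fix a minimum zero forcing set $Z$ (so $v\in Z$) together with a chronological list of forces $\mathcal F$, and let $W$ be the associated reversal. By Theorem~\ref{Reversal is a ZFS}, $W$ is a zero forcing set, and since $|W|=|Z|=Z(G)$ it is a \emph{minimum} one, so $v\in W$ by hypothesis. Now $v\in Z$ makes $v$ the initial vertex of some maximal forcing chain, while $v\in W$ makes $v$ the final vertex of some maximal forcing chain; since maximal forcing chains are pairwise disjoint, these must be the same chain, which is therefore the singleton chain $(v)$. Hence $v$ forces nothing and is forced by nothing in $\mathcal F$, so every force of $\mathcal F$ takes place among the vertices of $G-v$; consequently $\mathcal F$ is also a valid forcing process for $Z_0:=Z\setminus\{v\}$ in $G-v$, and $Z_0$ is a zero forcing set of $G-v$ with $|Z_0|=Z(G)-1$.

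For phase two I would first observe that some neighbour of $v$ must perform a force in $\mathcal F$: if not, then the source of every force in $\mathcal F$ is a non-neighbour of $v$, so (deleting or reinstating $v$ does not alter those sources' neighbourhoods) the whole process $\mathcal F$ runs verbatim in $G$ starting from $Z_0$ and blackens $V(G)\setminus\{v\}$; then a neighbour $u$ of $v$, which exists because $G$ is connected with more than one vertex, is black with $v$ as its only white neighbour and has not yet forced, so $u$ forces $v$, making $Z_0$ a zero forcing set of $G$ of size $Z(G)-1$, which is absurd. So let $w$ be the first neighbour of $v$ that performs a force, say $w\to w^{+}$ as the $s$-th force, preceded by forces $f_1,\dots,f_{s-1}$ whose sources are therefore all non-neighbours of $v$. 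The key move is to substitute $w^{+}$ for $v$: put $Z^{*}:=Z_0\cup\{w^{+}\}$. Since $w^{+}$ is forced in $\mathcal F$ we have $w^{+}\notin Z$, hence $|Z^{*}|=Z(G)$ and $v\notin Z^{*}$. Running $f_1,\dots,f_{s-1}$ from $Z^{*}$ in $G$ stays valid (their sources ignore $v$, and $w^{+}$ is never one of their targets, so pre-colouring $w^{+}$ black does not invalidate them), after which $w$ is black and every neighbour of $w$ other than $v$ is black; as $w$ has not yet forced, $w$ now forces $v$, and at that point the black set contains $Z_0\cup\{v\}=Z$, so everything turns black. Thus $Z^{*}$ is a minimum zero forcing set of $G$ not containing $v$ — the required contradiction.

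The step I expect to be genuinely non-obvious is the choice of replacement for $v$ in phase two: one cannot simply throw a neighbour of $v$ into $Z_0$ (that typically stalls, since such a neighbour then has $v$ as an unwanted extra white neighbour), nor in general a forcing-chain endpoint; the correct replacement is $w^{+}$, the vertex the \emph{first} $v$-adjacent forcer would have forced, precisely because this keeps the early part of $\mathcal F$ intact while freeing that forcer to target $v$ instead. The remaining work is the routine bookkeeping that $f_1,\dots,f_{s-1}$ really do remain valid once $v$ is reinstated (white) and $w^{+}$ is pre-coloured black, which reduces to the two facts that these forces have sources non-adjacent to $v$ and that none of them targets $w^{+}$.
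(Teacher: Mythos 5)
Your argument is correct. Note that the thesis does not actually prove this statement -- it only cites \cite[Theorem 2.9]{MR2645093} -- so your proposal supplies a complete, self-contained proof where the paper has none, and it is built on exactly the tool the paper makes available, namely Theorem~\ref{Reversal is a ZFS} together with the fact that the maximal forcing chains are disjoint, start at the vertices of $Z$, and end at the vertices of the reversal. The two key steps both check out: (i) if $v$ lay in every minimum zero forcing set, then $v$ would be both the first vertex of a maximal chain (as $v\in Z$) and the last vertex of a maximal chain (as $v$ lies in the reversal $W$, which is also minimum), forcing its chain to be the singleton $(v)$, so $v$ neither forces nor is forced; and (ii) the substitution $Z^{*}=(Z\setminus\{v\})\cup\{w^{+}\}$, where $w$ is the first $v$-adjacent forcer and $w^{+}$ its target, works because the sources of all earlier forces are non-adjacent to $v$ (so reinstating $v$ as white does not disturb them), $w^{+}$ is not the target of any earlier force, and once $w$ is black with every neighbour except $v$ black it may force $v$, after which the black set contains $Z$ and the process completes by monotonicity of the derived set. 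The degenerate case where no neighbour of $v$ ever forces is also handled correctly and yields the stronger contradiction $Z(G)\le |Z|-1$. I see no gap.
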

The next proposition shows that the path cover number is a lower bound for the zero forcing number (see \cite[Proposition 2.10]{MR2645093}).

\begin{prop}\label{P(G)<=Z(G)}
For any graph $G$, $P(G)\leq Z(G)$.\qed
\end{prop}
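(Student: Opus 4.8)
The plan is to exploit the structure of maximal forcing chains that was just developed. Fix a minimum zero forcing set $Z$ with $|Z| = Z(G)$, pick any chronological list of forces for $Z$, and consider the collection of maximal forcing chains associated with it. The key observations are already in hand: in any step of the forcing process each vertex forces at most one vertex and is forced by at most one vertex, so the maximal forcing chains are pairwise vertex-disjoint; moreover every vertex of $G$ lies on exactly one maximal forcing chain (a vertex not forced by anyone starts its own chain, and a vertex that is forced lies on the chain of whoever forced it, extended maximally). Thus these chains partition $V(G)$, and their number equals $|Z|$ since the initial vertex of each maximal chain is precisely an element of $Z$ and distinct chains have distinct initial vertices.

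Next I would check that each maximal forcing chain $(v_1, v_2, \ldots, v_k)$ induces a path in $G$. The consecutive adjacencies $v_i v_{i+1}$ hold because $v_i \to v_{i+1}$ requires $v_{i+1}$ to be a neighbour of $v_i$. The content to verify is that there are no ``extra'' chords: if $v_i$ and $v_j$ were adjacent with $j \geq i+2$, I would derive a contradiction by looking at the moment $v_i$ performs its force. At that moment $v_i$ is black with exactly one white neighbour, namely $v_{i+1}$; but $v_j$ with $j > i+1$ is still white at that time (it only gets coloured later, after $v_{i+1}, \ldots, v_{j-1}$ are coloured in turn), so $v_i$ would have at least two white neighbours, contradicting the colour change rule. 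Hence $G[\{v_1, \ldots, v_k\}]$ is exactly the path $v_1 v_2 \cdots v_k$.

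Putting these together: the maximal forcing chains form a family of induced, pairwise vertex-disjoint paths covering all of $V(G)$, i.e. a path covering of $G$, and it has $|Z| = Z(G)$ members. Since $P(G)$ is by definition the minimum size of such a covering, $P(G) \leq Z(G)$.

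The main obstacle is really the chord-freeness step — showing each maximal forcing chain is an \emph{induced} path rather than merely a walk or a non-induced path. The timing argument above (that at the instant $v_i$ forces, every $v_j$ with $j>i+1$ is still white) is the crux, and it relies implicitly on Proposition~\ref{uniqueness} so that ``the colour at a given time'' is well defined; everything else (disjointness, covering, counting) is bookkeeping already spelled out in the surrounding discussion of forcing chains.
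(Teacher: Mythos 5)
Your proof is correct and follows the same route the paper takes (the paper merely cites this result and sketches it in the surrounding discussion: the maximal forcing chains are disjoint and partition $V(G)$ into $|Z|$ paths rooted at the vertices of $Z$). Your timing argument for chord-freeness is sound and supplies the one detail the paper glosses over, namely that the chains are \emph{induced} paths as required by the paper's definition of a path covering.
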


The most famous family of graphs for which the path cover number agrees with the zero forcing number is trees (see \cite[Proposition 4.2]{MR2388646}). 
Two forcing chains $P_1$ and $P_2$ are called \textsl{adjacent}\index{adjacent forcing chains} if there are two vertices $v\in P_1$ and $u\in P_2$ such that $uv\in E(G)$.

\begin{prop}\label{P(T)=Z(T)}
For any tree $T$, $P(T)=Z(T)$. Moreover, any minimal path covering of a tree $\PP(T)$ coincides with a collection of forcing chains with $|\PP(T)|=Z (G)$ and  the set consisting of one end-point from each path in $\PP(T)$ is a ZFS for $T$.
 \end{prop}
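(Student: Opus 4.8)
The plan is to prove the two inequalities $P(T)\le Z(T)$ and $Z(T)\le P(T)$ separately, and then extract the additional structural claims along the way. The inequality $P(T)\le Z(T)$ is already available as Proposition~\ref{P(G)<=Z(G)} (valid for all graphs), so the real work is the reverse inequality together with the observation that a minimal path covering of $T$ arises as a collection of forcing chains. First I would recall the mechanism behind Proposition~\ref{P(G)<=Z(G)}: a chronological list of forces starting from a zero forcing set $Z$ partitions $V(G)$ into $|Z|$ disjoint induced paths (the maximal forcing chains are vertex-disjoint, and each such chain $(v_1,\dots,v_k)$ with $v_i\to v_{i+1}$ is an induced path in any graph, since if $v_iv_j$ were an edge with $j>i+1$ then $v_i$ would have had two white neighbours at the moment it forced). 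So from $Z(T)$ we always get a path covering of size $Z(T)$, giving $P(T)\le Z(T)$ for free.

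For the direction $Z(T)\le P(T)$, I would argue by induction on $|V(T)|$, the base case $|V(T)|=1$ being trivial. The key is to take a minimum path covering $\PP=\{Q_1,\dots,Q_p\}$ of $T$ with $p=P(T)$ and show that choosing one endpoint from each $Q_i$ gives a zero forcing set. One clean way: pick a leaf $\ell$ of $T$ with neighbour $w$, and delete $\ell$. A minimum path cover of $T$ either has $\ell$ as an endpoint of some path $Q_i$ (then $Q_i-\ell$ is a path, possibly empty, and $\PP'=\PP$ with $Q_i$ shortened is a path cover of $T-\ell$), or $\ell$ is itself a singleton path (then $\PP'=\PP\setminus\{Q_i\}$ covers $T-\ell$). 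In the first subcase $P(T-\ell)\le P(T)$, so by induction $Z(T-\ell)\le P(T-\ell)\le p$; I would then argue that a minimum ZFS $Z'$ of $T-\ell$ whose chains align with $\PP'$ can be lifted to a ZFS of $T$ of the same size by noting that once all of $T-\ell$ is black, $w$ is black and can force $\ell$ (its unique remaining white neighbour, since $T$ is a tree so $\ell$ has degree $1$). The singleton subcase forces us to add $\ell$ to the set; here one shows $P(T)=P(T-\ell)+1$ when $\ell$ is a singleton in every minimum cover, so $Z(T)\le Z(T-\ell)+1\le P(T-\ell)+1=P(T)$. The ``moreover'' statement then follows by tracking, through the induction, that the endpoints of the paths in $\PP$ form the ZFS and that the forcing chains produced coincide with the paths $Q_i$.

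The main obstacle is the bookkeeping that makes the induction actually close, namely ensuring one can choose the minimum path cover of $T-\ell$ so that it is compatible with a minimum path cover of $T$ (i.e.\ that removing a leaf does not force the path cover number to drop in a way that breaks the endpoint-to-ZFS correspondence), and handling the case distinction about whether $\ell$ is an endpoint of a longer path or a singleton. A subtler point is verifying the converse matching: that an arbitrary minimum path covering $\PP(T)$ of a tree \emph{can} be realized as the forcing chains of the ZFS consisting of one endpoint per path — this uses the fact that in a tree there is no ambiguity coming from cycles, so that once a path $Q_i=(u_1,\dots,u_k)$ has $u_1$ black, $u_1$ has all neighbours outside $Q_i$ lying in other paths which (by minimality of $\PP$ and an exchange/cut-vertex argument) will themselves have been blackened from their own endpoints first; formalizing ``first'' requires choosing a good order on the paths, e.g.\ rooting $T$ and processing paths from the leaves up. I would present the order-of-forcing argument carefully, since that is where a hand-wavy proof would hide a gap.
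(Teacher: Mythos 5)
Your reduction of $P(T)\le Z(T)$ to Proposition~\ref{P(G)<=Z(G)} is fine, and your observation that maximal forcing chains are induced paths is correct. The problem is in the other direction, and it is exactly the point you defer as ``bookkeeping'': your induction does not close. Your inductive hypothesis is the proposition itself, which speaks only of \emph{minimal} path coverings of the smaller tree; but in your first subcase the covering $\PP'$ of $T-\ell$ obtained by shortening $Q_i$ need not be minimal. Take $T=K_{1,3}$ with centre $c$ and leaves $x,y,z$, and the minimum covering $\PP=\{(x,c,y),(z)\}$ with $\ell=x$: then $T-\ell$ is the path $y$--$c$--$z$ and $\PP'=\{(c,y),(z)\}$ has two paths while $P(T-\ell)=1$. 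So the induction hypothesis tells you nothing about $\PP'$, and in particular nothing that lets you conclude that the forcing chains in $T$ realize the originally given covering $\PP$ --- which is precisely the ``moreover'' clause you are supposed to prove. (The bare inequality $Z(T)\le P(T)$ can still be rescued by splitting on whether $P(T-\ell)=P(T)$, since when it drops you may simply add $\ell$ to a minimum ZFS of $T-\ell$; but the structural claim is lost.)

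The fix is to change what you delete. The paper inducts on $P(T)$ and removes an entire \emph{pendant path} of the covering, i.e.\ a path $P_1\in\PP(T)$ attached to the rest of $T$ by a single edge $uv$ (such a path exists, else contracting each covering path to a vertex yields a connected graph of minimum degree two, hence a cycle in $T$). Removing all of $V(P_1)$ preserves minimality: if $T-V(P_1)$ admitted a covering by fewer than $|\PP(T)|-1$ paths, adding $P_1$ back would contradict minimality of $\PP(T)$. Then the black endpoint of $P_1$ forces along $P_1$ up to the attachment vertex $v$, the induction colours $T-V(P_1)$ along the remaining paths (in particular $u$), and $v$ finishes $P_1$; all forces run along the paths of $\PP(T)$. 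If you want to keep your leaf-deletion scheme instead, you must strengthen the inductive statement to arbitrary (not necessarily minimal) induced path coverings --- which is true for trees, but proving it essentially requires the pendant-path argument anyway. Either way, the missing ingredient is a deletion step that keeps the restricted covering within the scope of the inductive hypothesis.
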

 
\begin{proof}
We prove this by induction on the path cover number. For any tree with $\PP(T)=1$ (a path), the theorem applies. To perform induction step we need to show the following claim. 
\newline {\bf Claim.} In any minimal path covering of a tree there always is a path that is connected (through an edge) to only one other path in the path covering. We call such path a \txtsl{pendant path}. To observe this, suppose there is no such a path in a minimal path covering of a tree $T$. Thus any path is connected to at least two other paths in the path covering. This means the graph has a cycle as a subgraph which contradicts $T$ being a tree.

Assume the theorem holds for all trees $T'$ with $P(T')<P(T)$. Let $\PP(T)$ be a path covering of $T$ with $|\PP(T)|=P(T)$. Let $Z$ be the set consisting of one end-point of each path in $\PP(T)$ and $P_1$ be a pendant path in $\PP(T)$ that is joined to the rest of $T$ by only one edge $uv$ with $v\in V(P_1)$ and $u\not\in V(P_1)$. Then by repeatedly applying the colour-change rule starting at the black end-point of $P_1$, all vertices from the black end-point through to $v$ are coloured black. Now the path $P_1$ is irrelevant to the analysis of the tree $T-V(P_1)$, thus by the induction hypothesis, the black end-points of the remaining paths are a zero forcing set for $T-V(P_1)$, and all vertices not in $P_1$, including $u$, can be coloured black. Hence the remainder of path $P_1$ can also be coloured black and $Z$ is a zero forcing set for $T$. Moreover all the forces are performed along the paths in $\PP(T)$ which completes the proof.   
\end{proof}

We will basically follow a similar idea as in the proof of Proposition~\ref{P(T)=Z(T)} in order to prove Theorem~\ref{For block-cycle Z(G)=P(G)} and Theorem~\ref{outerplanars satisfy Z_+=T} in the following sections.   
In \cite{MR1712856} it has been shown that:
\begin{thm}\label{M(T)=P(T)}
For any tree $T$, $M(T)=P(T)$.
\end{thm}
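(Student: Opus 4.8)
The plan is to prove the two inequalities $M(T)\le P(T)$ and $P(T)\le M(T)$ separately; the first is immediate from what precedes, and the second carries all the content. For $M(T)\le P(T)$: by Proposition~\ref{P(T)=Z(T)} we have $Z(T)=P(T)$, and by Theorem~\ref{M(G)<=Z(G)} we have $M(T)\le Z(T)$, so $M(T)\le P(T)$ (equivalently, $\mr(T)\ge |V(T)|-P(T)$). This already pins down one side and reduces the whole theorem to showing $P(T)\le M(T)$.

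For $P(T)\le M(T)$ I would induct on $|V(T)|$. The base case is $T$ a path, where $\mr(P_n)=n-1$ gives $M(P_n)=1=P(P_n)$. For the inductive step, assume $T$ is not a path, so it has a vertex $v$ of degree at least $3$; then $v$ is a cut vertex, and in a tree each branch $T_i$ at $v$ (the subgraph induced by $\{v\}$ and the $i$-th component $W_i$ of $T\backslash v$) has $v$ as a leaf. Apply the cut-vertex reduction (Theorem~\ref{reduction}): $\mr(T)=\sum_i \mr(T_i-v)+\min\{\sum_i r_v(T_i),\,2\}$. Since the number $h$ of branches is at least $2$, each $T_i$ and each $T_i-v$ has fewer vertices than $T$, so the induction hypothesis gives $M(T_i)=P(T_i)$ and $M(T_i-v)=P(T_i-v)$; in particular $r_v(T_i)=1-P(T_i)+P(T_i-v)\in\{0,1\}$. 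Substituting $\mr(G)=|V(G)|-M(G)$ and using $\sum_i|W_i|=|V(T)|-1$, one gets a closed formula
\[
M(T)=1-h+\sum_{i=1}^h P(T_i)+s-\min\{s,2\},\qquad s:=\sum_{i=1}^h r_v(T_i).
\]

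It then remains to show that $P(T)$ is at most this same quantity; combined with $P(T)\ge M(T)$ from the first paragraph, this forces $P(T)=M(T)$. To bound $P(T)$ from above I would build an explicit path cover of $T$. In each branch $T_i$ a minimum path cover has a unique path $Q_i$ containing $v$, and since $v$ is a leaf of $T_i$ the vertex $v$ is an endpoint of $Q_i$. Choose two branches $i_1\ne i_2$ maximizing $r_v(T_{i_1})+r_v(T_{i_2})$ (so this sum equals $\min\{s,2\}$), and concatenate $Q_{i_1}$ and $Q_{i_2}$ at $v$ into one induced path of $T$; for every other branch $i$ take a minimum path cover of $W_i=T_i-v$, which meets $v$ in no path. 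Counting the resulting paths and using $P(T_i-v)=P(T_i)-1+r_v(T_i)$ gives a path cover of $T$ of size exactly $1-h+\sum_i P(T_i)+s-\min\{s,2\}$, as desired.

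I expect the delicate point to be this last construction, and specifically the bookkeeping that matches the correction term $\min\{s,2\}$ from the cut-vertex reduction: that is precisely why one merges the $v$-paths of the two branches of largest rank-spread, and why one can merge only two of them (with three merged $v$-paths $v$ would have degree $\ge 3$, so the result is not a path). One must also verify the routine facts that the concatenated path is chordless (no edge of $T$ joins distinct $W_i$) and that $r_v(T_i)\in\{0,1\}$ with the stated description. An alternative to the whole induction is a direct recursive construction of a matrix $A\in\mathcal S(T)$ together with a $P(T)$-dimensional space of null vectors: root the ``path tree'' of a minimum path cover and, at each gluing edge $x_jy_j$, arrange that one endpoint carries a null vector nonzero there while the other carries one vanishing there (a Parter--Wiener-style analysis). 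The same interaction of null vectors at vertices of degree $\ge 3$ is the crux in that approach as well.
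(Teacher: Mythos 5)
Your argument is correct, but there is nothing in the thesis to compare it against: Theorem~\ref{M(T)=P(T)} is imported from the literature (the text says only ``In \cite{MR1712856} it has been shown that...'') and no proof is given here. What you have written is therefore a genuine derivation from tools the thesis does make available, and it checks out. The direction $M(T)\le P(T)$ via Proposition~\ref{P(T)=Z(T)} and Theorem~\ref{M(G)<=Z(G)} is not circular, since the thesis proves Proposition~\ref{P(T)=Z(T)} by a purely combinatorial induction on pendant paths. For the converse, your closed formula for $M(T)$ at a vertex $v$ of degree $h\ge 3$ follows correctly from the cut-vertex reduction (Theorem~\ref{reduction}) plus the inductive hypothesis, and the two ``routine facts'' you defer really are routine: since $v$ is a leaf of each branch $T_i$, the unique path through $v$ in a minimum cover of $T_i$ can be truncated, giving $P(T_i)-1\le P(T_i-v)\le P(T_i)$ and hence $r_v(T_i)\in\{0,1\}$; and the concatenation $Q_{i_1}\cup Q_{i_2}$ is induced because no edge of $T$ joins distinct components of $T\setminus v$. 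The count of your constructed cover is exactly $1-h+\sum_i P(T_i)+s-\min\{s,2\}$ (this also survives the degenerate case $Q_{i_j}=\{v\}$), so $P(T)\le M(T)$ and equality follows. The one caveat worth recording is that your proof is only as self-contained as Theorem~\ref{reduction}, which the thesis likewise states without proof; the source cited for $M(T)=P(T)$ argues instead through the Parter--Wiener analysis of eigenvalue multiplicities at vertices of degree at least three, essentially the alternative you sketch in your closing paragraph, so your route trades that spectral machinery for the minimum-rank cut-vertex formula.
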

 Combining Proposition~\ref{P(T)=Z(T)} and Theorem~\ref{M(T)=P(T)} we obtain the following.
 \begin{cor}
 For any tree $T$, $M(T)=Z(T)$. 
 \end{cor}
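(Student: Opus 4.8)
The final statement to prove is the corollary $M(T) = Z(T)$ for any tree $T$.

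\medskip

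The plan is to combine the two preceding results directly, since the corollary is an immediate consequence and requires essentially no new work. First I would invoke Proposition~\ref{P(T)=Z(T)}, which establishes that $P(T) = Z(T)$ for every tree $T$. Then I would invoke Theorem~\ref{M(T)=P(T)}, which states that $M(T) = P(T)$ for every tree $T$. Chaining these two equalities gives $M(T) = P(T) = Z(T)$, as desired.

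\medskip

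There is really no obstacle here: the corollary is stated precisely as ``combining Proposition~\ref{P(T)=Z(T)} and Theorem~\ref{M(T)=P(T)}''. The only thing worth a sentence of commentary is that both cited results hold for \emph{all} trees with no extra hypotheses, so the transitivity of equality applies without any case analysis or side conditions. If one wanted to be slightly more self-contained, one could also note that the inequality $M(T) \le Z(T)$ already follows from Theorem~\ref{M(G)<=Z(G)} (which holds for arbitrary graphs), so that only the reverse inequality $Z(T) \le M(T)$ would need the tree-specific input; but this refinement is not necessary and the two-line chaining argument is cleanest.

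\medskip

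So the proof I would write is simply: by Proposition~\ref{P(T)=Z(T)}, $Z(T) = P(T)$, and by Theorem~\ref{M(T)=P(T)}, $P(T) = M(T)$; hence $M(T) = Z(T)$.
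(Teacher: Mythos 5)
Your proposal is correct and matches the paper exactly: the paper derives this corollary by the same two-line chaining, introducing it with the phrase ``Combining Proposition~\ref{P(T)=Z(T)} and Theorem~\ref{M(T)=P(T)} we obtain the following.'' Nothing further is needed.
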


\section{Graphs with $Z(G)=P(G)$}\label{graphs_with_Z=P}

 The most famous family of graphs satisfying $Z(G)=P(G)$ are trees (see Proposition~\ref{P(T)=Z(T)}). In this section we establish this equality for another family of graphs namely the block-cycle graphs. We will, also, try to give some evidence to show that not so many families of graphs satisfy this equality.
\subsection{Block-cycle graphs} 
 A graph is called \textsl{non-separable}\index{non-separable graph} if it is connected and has no cut-vertices. 
A \txtsl{block} of a graph is a maximal non-separable induced subgraph. A \txtsl{block-cycle} graph is a graph in which every block is either an edge or a cycle (see Figure~\ref{block-cycle}). A block-cycle graph with only one cycle is a \txtsl{unicycle} graph. 

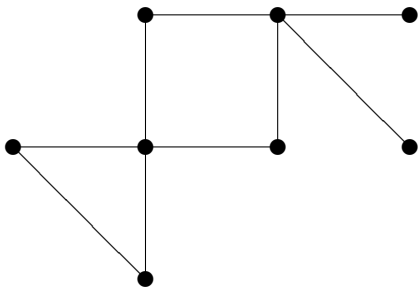
\begin{figure}[h!]
\begin{center}
\begin{picture}(50,0)
\multiput(0,0)(50,0){3}{\circle*{6}}
\multiput(-50,-50)(50,0){4}{\circle*{6}}
\put(0,0){\line(1,0){100}}
\put(-50,-50){\line(1,0){100}}

\put(0,0){\line(0,-1){100}}
\put(50,0){\line(0,-1){50}}
\put(50,0){\line(1,-1){50}}

\put(0,-100){\circle*{6}}
\put(-50,-50){\line(1,-1){50}}

\end{picture}
\vspace{3.5cm}
\end{center}
\caption{A block-cycle graph}
\label{block-cycle}
\end{figure}

Let $b(G)$ be the number of blocks in a block-cycle graph $G$.
 According to the definition, the only block-cycle graph with no cut vertex is either a cycle or an edge. 
In a block-cycle graph each pair of cycles can intersect in at most one vertex, otherwise there will exist a block in the graph which is neither a cycle nor an edge.  
Two blocks are called \txtsl{adjacent} if they have one vertex in common. 
A block in a block-cycle graph is {\it  pendant}\index{pendant block} if it shares only one of its vertices with the other blocks.   
\begin{lem}\label{pendant block}
 Any block-cycle graph has at least two pendant blocks.
\end{lem}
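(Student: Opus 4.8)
The plan is to use the block-cut tree of $G$ and a standard leaf-counting argument. First I would form the \emph{block-cut tree} $\mathcal{T}(G)$: its vertices are the blocks of $G$ together with the cut-vertices of $G$, and a block $B$ is joined to a cut-vertex $c$ precisely when $c \in V(B)$. Since $G$ is connected, $\mathcal{T}(G)$ is connected; and it is acyclic because a cycle in $\mathcal{T}(G)$ would force two blocks to share more than one vertex (or a block to share a cut-vertex with itself via two different routes), contradicting maximality of blocks. So $\mathcal{T}(G)$ is a genuine tree.

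Next I would observe that $G$ has more than one block — this is where the hypothesis that $G$ is a block-cycle graph (hence, implicitly in this subsection, not itself a single cycle or edge) matters; if $G$ had only one block there would be nothing to prove, or rather the statement would need reinterpretation, so I would note the relevant case is $b(G) \geq 2$. A tree with at least two vertices has at least two leaves. A leaf of $\mathcal{T}(G)$ cannot be a cut-vertex node: every cut-vertex of $G$ lies in at least two blocks by definition, so its degree in $\mathcal{T}(G)$ is at least two. Hence every leaf of $\mathcal{T}(G)$ is a block node, and a block node that is a leaf of $\mathcal{T}(G)$ is adjacent to exactly one cut-vertex, i.e.\ it shares exactly one of its vertices with the rest of $G$ — that is precisely the definition of a pendant block. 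Therefore the (at least two) leaves of $\mathcal{T}(G)$ give at least two pendant blocks of $G$.

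The only real subtlety — and the step I'd expect to need the most care — is verifying that $\mathcal{T}(G)$ is acyclic, since this is exactly the place the hypotheses on $G$ enter (two cycles meeting in at most one vertex, no block that is neither an edge nor a cycle). An alternative, more self-contained route avoiding block-cut tree machinery is a direct induction on $b(G)$: the base cases $b(G) = 1$ (a single cycle or edge) and $b(G) = 2$ are immediate; for the inductive step, remove a pendant block $B$ (which exists, say, by taking a block containing a vertex of maximum distance from a fixed root block in the adjacency structure of blocks) together with its non-cut vertices, obtaining a block-cycle graph $G'$ with $b(G') = b(G) - 1$ blocks; apply the hypothesis to $G'$ to get two pendant blocks of $G'$, at least one of which is not the block that became pendant after deleting $B$, and argue that this block remains pendant in $G$ and that $B$ itself is a second pendant block. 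I would present the block-cut tree argument as the main proof since it is cleaner, relegating the induction to a remark if needed.
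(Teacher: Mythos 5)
Your proof is correct, but it takes a genuinely different route from the one in the thesis. The thesis argues by contracting every cycle-block of $G$ to a single vertex to obtain a minor $G'$, taking a longest induced path in $G'$, and showing by contradiction that its two endpoints must correspond to pendant blocks (otherwise some block would meet another block in more than one vertex, violating the block-cycle hypothesis). You instead invoke the block--cut tree $\mathcal{T}(G)$: since every cut-vertex node has degree at least two, every leaf of $\mathcal{T}(G)$ is a block node, and a leaf block is adjacent to exactly one cut-vertex, i.e.\ is pendant; a tree on at least two vertices has at least two leaves. Your argument is cleaner and strictly more general --- it shows that \emph{every} connected graph with at least two blocks has at least two leaf blocks, with no use of the block-cycle hypothesis at all. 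Indeed, the one place where your write-up is slightly off is the claim that acyclicity of $\mathcal{T}(G)$ is ``exactly the place the hypotheses on $G$ enter'': in any connected graph two distinct blocks share at most one vertex and that vertex is a cut-vertex, so the block--cut tree is acyclic unconditionally; the block-cycle assumption plays no role in your proof. That is not a gap, just an overstatement of where the hypothesis is used. Both proofs (and the lemma itself) tacitly assume $G$ is connected and has at least two blocks, which you correctly flag; the thesis's own proof, by contrast, is tied to the specific structure of block-cycle graphs and leans on an informal ``longest induced path'' construction in the contracted graph, so your version is arguably the more robust of the two.
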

\begin{proof}
Assume $G$ is a given block-cycle graph. To prove this lemma, we will construct a minor $G'$ of the graph $G$  and we will show that the end-points of the longest induced path in $G'$ are associated to the pendant blocks in the original graph.  Let $B_1,\ldots, B_N$ be the blocks in $G$. Note that $B_i$ is either an edge or a cycle, for any $1\leq i\leq N$. If $B_i$ is a cycle, contract all edges in $B_i$ until all that remains is a single vertex, call this $v_i$. In this case we say the vertex $v_i$ in $G'$ is associated to the cycle $B_i$ in $G$. 
Also we say $v_i \in G'$ is associated to the edge $B_i$ in $G$, if $v_i$ is an end-point of the edge $B_i$ in $G$. 
 Note that all the edges adjacent to $B_i$, are adjacent to $v_i$ after this operation. If two cycles $B_i$ and $B_j$ share a vertex in $G$, draw an edge between the associated vertices $v_i$ and $v_j$ in $G'$. 
 
Let $P=\{u_1,u_2,\cdots, u_k\}$ be the longest induced path in $G'$.  Let $u_1=v_i$ and $u_2=v_j$  for some $i$ and $j$ and $v_i$ and $v_j$ correspond to blocks $B_i$ and $B_j$. We first consider the case where  $B_i$ is a cycle while $B_j$ is either an edge or a cycle. We claim that the cycle $B_i$ is a pendant block in $G$. Otherwise there is another block, $B$, which can be an edge or a cycle that shares a vertex with $B_i$ which is different from the vertex that $B_i$ shares with $B_j$.  First, assume that $B$ is a cycle in $G$. 
Let $z$ be the vertex associated to $B$. The only reason $z$  is not in $P$ is that there is an edge between $z$ and some other vertices, say $u_i$, in $P$. The translation of this in the graph $G$ is that there is either an edge or a cycle associated with $u_i$, with $i\neq 1$, which $B$ shares a vertex with. Thus, there exists a cycle in $G$ which shares more than one vertex with at least one other cycle in $G$. This means that there is a block in $G$ that is neither a cycle nor an edge which contradicts with the fact that $G$ is a block-cycle graph. A similar argument applies when $B$ is an edge in $G$. 

Using a similar reasoning  when $B_i$ is an edge in $G$, we get the same contradiction which proves the theorem. 
\end{proof}
The following lemma is straightforward to prove.
\begin{lem}\label{P(G-B), G is a block-cycle}
If $B$ is a pendant block in a block-cycle graph, then 
\[
P(G\backslash B)\leq P(G).\qed
\]
\end{lem}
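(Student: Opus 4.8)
The plan is to show that if $B$ is a pendant block in a block-cycle graph $G$, then deleting the vertices of $B$ that do not belong to any other block cannot increase the path cover number. Let me set up the notation carefully first. Since $B$ is a pendant block, it shares exactly one vertex, say $v$, with the rest of $G$; write $B' = V(B)\setminus\{v\}$, so that $G\backslash B$ is the induced subgraph on $V(G)\setminus B'$ (note $v$ remains). The block $B$ is either an edge or a cycle.

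First I would take a minimum path covering $\PP$ of $G$ with $|\PP| = P(G)$ and explain how to restrict it to $G\backslash B$. The key point is that the vertices of $B'$ lie on some of the paths in $\PP$; removing those vertices from each path breaks each affected path into at most a few sub-paths, but we have to be careful not to let the count go up. The cleaner approach is the reverse: take a minimum path covering of $G\backslash B$ and \emph{extend} it to a path covering of $G$ of the same size — but since $B$ adds vertices, that would show $P(G) \le P(G\backslash B)$, the wrong direction. So instead I would genuinely restrict. Here the structure of $B$ helps: if $B$ is an edge $vu$, then $B' = \{u\}$ is a single vertex, and $u$ either is a path of length $0$ in $\PP$ (delete it, count drops or stays), or $u$ is an endpoint of its path in $\PP$ (delete it, the path shortens but survives), or $u$ is interior to its path $Q$ — but $u$ has degree $1$ in $G$ except for the edge to $v$, wait, no: $u$ can have other neighbors only within $B$, and $B$ is just the edge $vu$, so $\deg_G(u)=1$; hence $u$ cannot be interior to any induced path, so only the first two cases occur, and $P(G\backslash B)\le P(G)$.

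The remaining case is $B$ a cycle through $v$. Now $B' = V(B)\setminus\{v\}$ induces a path $P_k$ in $G$ (the cycle minus one vertex), and every vertex of $B'$ has all its $G$-neighbours inside $B$ (two neighbours each, except the two ends of this path which have $v$ as a neighbour). Given a minimum path covering $\PP$ of $G$, consider the paths of $\PP$ meeting $B'$. Since each vertex of $B'$ has degree $2$ in $G$, an induced path in $G$ passing through such a vertex uses at most its two $B$-neighbours; so each path $Q\in\PP$ intersects $B'$ in a set that is a union of sub-intervals of the path $B\backslash v$. Deleting $B'$ from $Q$ leaves the portion of $Q$ outside $B'$; I would argue this portion is either empty or a single sub-path of $Q$ contained in $V(G)\setminus B'$ — the point being that $Q$ can leave and re-enter $B'$ only through $v$, and $v$ appears on $Q$ at most once, so $Q$ is split into at most two pieces, but if it splits into two pieces the "bridge" between them is $v$ alone, and one checks (using that $v$'s two $Q$-neighbours lie in $B'$) that in fact $Q\cap(V(G)\setminus B')$ is a single path through $v$ plus possibly isolated leftover structure — careful case analysis shows no net increase. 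Collecting these restricted pieces over all $Q\in\PP$, together with the unaffected paths, yields a path covering of $G\backslash B$ of size at most $|\PP| = P(G)$.

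The main obstacle is precisely the bookkeeping in the cycle case: making sure that when a path $Q$ of the covering winds into the cycle $B$, passes through $v$, comes back out — or enters $B$ from a neighbour of $v$ outside $B$ and threads through part of the cycle — deleting $B'$ does not shatter $Q$ into more pieces than we started with. The resolution leans on two facts proved or available earlier: every vertex of $B'$ has degree exactly $2$ in $G$ (so induced paths behave rigidly there), and $v$ is a cut-vertex separating $B'$ from the rest (so any induced path visits the $B'$-side at most "once" in a connected stretch relative to $v$). With those in hand the deletion merely truncates or at worst splits-and-remerges through $v$, and the inequality $P(G\backslash B)\le P(G)$ follows; I would then remark that the reverse-type extension arguments used in the proof of Proposition~\ref{P(T)=Z(T)} are not needed here, only the restriction direction.
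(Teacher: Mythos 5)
The paper offers no argument for this lemma at all --- it is introduced with ``The following lemma is straightforward to prove'' and stated with a \textsl{qed} symbol --- so there is no proof of record to compare yours against; what you have written is a genuine filling-in of the omitted details, and it is correct. Your restriction strategy is the right one: take a minimum path covering $\PP$ of $G$ and delete $B'=V(B)\setminus\{v\}$ from each path, and the whole issue is to check that no path of $\PP$ shatters into more than one piece. The edge case is airtight as you state it, since the non-cut endpoint of a pendant edge-block has degree $1$ in $G$. In the cycle case your write-up is looser than it needs to be (``careful case analysis shows no net increase'' is doing real work there); the clean way to close it is to observe that every vertex of $B'$ has all its $G$-neighbours inside $B$, so any induced path $Q\in\PP$ can pass between $V(B)$ and $V(G)\setminus V(B)$ only at the cut vertex $v$, which occurs at most once on $Q$; hence $Q\cap V(B)$ is a single contiguous subpath of $Q$, and at most one of its two ends (namely the one equal to $v$) can have a continuation of $Q$ outside $B$. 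Consequently $Q\cap\bigl(V(G)\setminus B'\bigr)$ is either empty or a single subpath of $Q$, which is still induced in $G\backslash B$, and the surviving pieces form a path covering of $G\backslash B$ of size at most $|\PP|=P(G)$. With that one sentence in place of the hand-waving, your proof is complete and is exactly the sort of argument the authors presumably had in mind.
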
 

\begin{thm}\label{For block-cycle Z(G)=P(G)}
Let $G$ be a block-cycle graph. Then 
\[
Z(G)=P(G).
\]
Furthermore, the paths in any minimal path covering of $G$ are precisely the forcing chains in a minimal zero forcing process initiated by a proper selection of the end-points of the paths in this collection.  
\end{thm}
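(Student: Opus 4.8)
The plan is to mimic the argument used for trees in Proposition~\ref{P(T)=Z(T)}, with \emph{pendant blocks} playing the role of pendant paths. I would induct on the number of blocks $b(G)$; the base case $b(G)=1$ is exactly a single cycle or a single edge, for which $Z=P$ holds trivially ($Z(C_n)=P(C_n)=2$ and $Z(K_2)=P(K_2)=1$). For the inductive step, by Lemma~\ref{pendant block} the graph $G$ has a pendant block $B$, joined to the rest of $G$ at a single cut-vertex $w$. The rough idea: take a minimal path covering $\PP$ of $G$; the key structural step is to argue that, after possibly re-choosing $\PP$, the vertices of $B$ are covered in a controlled way — namely, $B$ (a cycle on $k$ vertices, say $w,x_1,\dots,x_{k-1}$) contributes exactly one path covering $\{x_1,\dots,x_{k-1}\}$ (the cycle minus $w$) when $B$ is a cycle, or the pendant vertex of $B$ sits at the end of a path through $w$ when $B$ is an edge. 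One then deletes $B\setminus w$ to get a smaller block-cycle graph $G'=G\backslash B$ (or $G-v$ for the pendant edge case), applies Lemma~\ref{P(G-B), G is a block-cycle} together with the induction hypothesis to $G'$, and stitches the forcing process back together: start a force along the removed path of $B$, reach (or re-reach) $w$, and then run the forcing process guaranteed on $G'$.

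More concretely, here is the order of steps. First I would handle the combinatorics of path coverings near a pendant block: show that some minimum path covering of $G$ restricts to a path covering of $G'=G\backslash B$ whose size is $P(G)-1$ when $B$ is a cycle (the single path $B-w$ is discarded), matching $P(G')\le P(G)-1$; and that $Z(G')\ge Z(G)-1$ by a similar bookkeeping on forcing chains, or more cleanly, use $P(G')\le Z(G')$ (Proposition~\ref{P(G)<=Z(G)}), $M(G)\le Z(G)$, and the desired equality on $G'$ by induction. Second, for the \emph{lower bound direction} $Z(G)\le P(G)$: take a minimal path covering $\PP(G)$, pick one endpoint from each path to form $Z$, and show $Z$ forces all of $G$. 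The pendant block $B$ is where the process begins — if $B$ is a cycle, its two "free" vertices adjacent along $B$ must both be endpoints of paths (so two of the $Z$-vertices lie in $B$), and these force around the cycle up to $w$; if $B$ is a pendant edge, the pendant vertex forces its neighbor. After $B$ is fully black, $B$ becomes irrelevant to the analysis of $G'$ exactly as in the tree proof, and by induction the chosen endpoints in $G'$ finish the forcing. Third, for $P(G)\le Z(G)$ this is already Proposition~\ref{P(G)<=Z(G)} and needs nothing new. Combining gives $Z(G)=P(G)$, and tracking the forces shows they run precisely along the paths of $\PP(G)$, which is the "furthermore" claim.

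The main obstacle I anticipate is the bookkeeping when the pendant block $B$ is a \emph{cycle}: unlike a pendant path in a tree, a pendant cycle needs \emph{two} vertices in the zero forcing set, and one must verify that a minimal path covering is forced to use two path-endpoints inside $B\setminus w$ (equivalently, that $B\setminus w$ is covered by paths none of which leaves $B$ — this uses that $w$ is the only attachment point). Showing the minimal path covering can always be taken in this "nice" form — and that deleting $B\setminus w$ decreases both $P$ and $Z$ by exactly the same amount — is the crux; the rest is a faithful adaptation of the tree argument. A secondary subtlety is the case analysis distinguishing $B$ a pendant edge (one new $Z$-vertex, a path through $w$) from $B$ a pendant cycle (two new $Z$-vertices, a separate path $B\setminus w$), but each case is routine once the path-covering normal form is established.
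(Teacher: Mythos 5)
Your skeleton is the paper's: induction on $b(G)$ via Lemma~\ref{pendant block}, base case a single edge or cycle, $P(G)\leq Z(G)$ from Proposition~\ref{P(G)<=Z(G)}, and the substance concentrated in showing $Z(G)\leq P(G)$ by normalizing a minimum path covering near a pendant block and stitching the forcing together. The paper organizes the case analysis slightly differently (on whether the attachment vertex $u$ is an end-point of a path of length at least one in some minimal covering of $G'=G\backslash B$, versus an inner vertex in every such covering, each split into $B$ an edge or a cycle), but that is the same route, not a different one.

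There is, however, a concrete misstep in your pendant-cycle case that would break the count. You assert that when $B$ is a pendant cycle the process \emph{begins} in $B$: ``its two free vertices adjacent along $B$ must both be endpoints of paths (so two of the $Z$-vertices lie in $B$), and these force around the cycle up to $w$.'' Only one end-point of each covering path enters $Z$; if you place two vertices of $B\setminus\{w\}$ in $Z$ you get $|Z|=P(G)+1$ and only prove $Z(G)\leq P(G)+1$. If instead you place just one (say an end-point $x_1$ of the path $B-w$), then while $w$ is still white \emph{no} vertex of $B$ can force: $x_1$ has the two white neighbours $w$ and $x_2$, and the non-adjacent end-points $x_1,x_{k-1}$ of $B-w$ cannot jump-start each other. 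So the forcing cannot begin in $B$ at all. The repair is to reverse the order: run the forcing in $G'$ first and let it colour $u=w$; the only interference from $B$ is that $u$ acquires one extra white neighbour in the cycle, which prevents $u$ from \emph{performing} a force but not from \emph{being} forced, and once $u$ is black the single black vertex $x_1$ forces $x_2,\dots,x_{k-1}$ around the cycle, after which $u$ is free to continue its force in $G'$ (no deadlock, since $u$ must be black before it forces anything anyway). This is exactly how the paper's proof injects precisely one extra black vertex per pendant cycle — a singleton-path vertex $v$ when the chain from $G'$ extends through $u$ into $B$, or one end-point of $B-u$ when $u$ is an inner vertex — keeping $Z(G)\leq Z(G')+1=P(G')+1=P(G)$. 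Your pendant-edge case and your normal-form claim for the covering are fine; it is only this ordering-and-counting step for pendant cycles that needs to be redone.
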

\begin{proof}
We prove the equation by induction on the number of blocks in $G$ and applying Lemma~\ref{pendant block}. The only block-cycle graph with $b(G)=1$ is either an edge or a cycle and the theorem is clearly true for these graphs. Assume that it is true for all graphs $G'$ with $b(G')< n$. Let $b(G)=n$. According to Lemma~\ref{pendant block}, there is a pendant block, $B$, in $G$ which is connected to the other blocks through a single vertex, $u$. Let $G=G'\,\,\stackplus{u}\,B$. The induction hypothesis holds for $G'$, that is $Z(G')=P(G')$ and a proper selection of the end-points of the paths in a minimal path covering $\PP$ of $G'$ constructs a zero forcing set for it. By Lemma~\ref{P(G-B), G is a block-cycle}, we have $P(G')\leq P(G)$. 
Two cases are then possible: 
\begin{enumerate}[(a)]
\item There is a path-cover $\PP$ for  the graph $G'=G\backslash B$ in which there is a path $P$ of length more than one such that $u$ is an end-point of $P$. 

First assume that $B$ is the edge $uv$. Then $G\backslash B$ is the graph obtained from $G$ by removing the pendant vertex $v$ of $G$. Since $u$ is an end-point of $P$ and $v$ is only connected to $u$, returning $B$ to $G'$ doesn't change the path cover number of the graph. By the induction hypothesis, the paths in the path-cover $\PP$ are the forcing chains of the forcing process initiated by the end-points of the paths in $\PP$. Also since $u$ is an end-point of $P$, we can assume that it doesn't perform any force. Therefore, the zero forcing process will be continued by using $u$ to force $v$. Thus, 
\[
P(G')=P(G)\leq Z(G)\leq Z(G')=P(G'),
\]
which implies $Z(G)=P(G)$.

If $B$ is a cycle, then since it is a pendant cycle, returning it to $G'$ increases the path cover number of $G'$ at least by one, since we will need at least two paths to cover the vertices of a cycle. Assume that $v$ and $w$ are two neighbours of $u$ in $B$. Then, since $u$ is an end-point of $P$, $P\cup \left(V(B) \backslash \{v\}\right)$ can cover all vertices in the cycle $B$ except $v$. Thus the path cover for $G'$ along with $\{v\}$ is a path cover for $G$ and $P(G)=P(G')+1$. Also we need at least two initial black vertices in the cycle to colour its vertices. By assigning colour black to the vertex $v$, all vertices in $B$ will be coloured by continuing the forcing process in $P$ through $u$. Thus,
\[
P(G')+1=P(G)\leq Z(G)\leq Z(G')+1=P(G')+1,
\]
which implies $Z(G)=P(G)$.
\item In every minimal path covering $\PP$ of $G'$, $u$ is an inner vertex of a path in $P$.

 If $B$ is an edge $uv$, then $P(G)\neq P(G')$. Otherwise $v$ is covered in the same path as $u$ is, which contradicts the fact that $u$ is not an end-point of any path in any path covering of $G$. Thus $P(G)=P(G')+1$ since $\PP\cup \{v\}$ covers all vertices of $G$. Also by assigning a black colour to the vertex $v$ we are able to colour the graph $G$ following the same forcing process which we followed to colour the graph $G'$. Thus,
\[
P(G')+1=P(G)\leq Z(G)\leq Z(G')+1=P(G')+1,
\]
which implies $Z(G)=P(G)$.

If $B$ is a cycle, then $P(G)= P(G')+1$, since $\PP$ along with a path covering all vertices of $B$, except $u$, covers all vertices of $G$. Also, by assigning a black colour to one of the neighbours of $u$ in $B$, $w$, we can colour $G$ using the initial set of black vertices in the zero forcing set of $G'$ along with $w$. Thus,
\[
P(G')+1=P(G)\leq Z(G)\leq Z(G')+1=P(G')+1,
\]
which implies $Z(G)=P(G)$.\qedhere
\end{enumerate} 
\end{proof}

The following corollary is obtained from the fact that any unicycle graph is a block-cycle graph. 
\begin{cor}\label{unicycles}
 If $G$ is a unicycle graph, then $Z(G)=P(G)$.\qed
\end{cor}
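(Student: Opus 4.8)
The claim is Corollary~\ref{unicycles}: if $G$ is a unicycle graph, then $Z(G) = P(G)$.

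\medskip

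\textbf{Approach.} The plan is to derive this immediately from Theorem~\ref{For block-cycle Z(G)=P(G)}, which establishes $Z(G) = P(G)$ for all block-cycle graphs. So the only thing to verify is that every unicycle graph is in fact a block-cycle graph, and then the conclusion follows by direct appeal to that theorem. This matches the sentence preceding the corollary in the excerpt, which already announces that the argument rests on this containment of graph classes.

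\medskip

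\textbf{Key steps.} First I would recall the definitions: a block of a graph is a maximal non-separable (connected, cut-vertex-free) induced subgraph; a block-cycle graph is one in which every block is an edge or a cycle; and a unicycle graph is, per the definition given just before the corollary, a block-cycle graph with exactly one cycle. Second, I would observe that under this definition the containment is essentially immediate: a unicycle graph \emph{is} a block-cycle graph by definition, so every block is an edge or a cycle. (If instead one works from the more common definition of a unicycle graph as a connected graph with exactly one cycle, i.e. $|E(G)| = |V(G)|$, one would need the small additional observation that each block is either an edge or a cycle: the unique cycle lies inside a single block, every other block is a bridge and hence a $K_2$, and no block can be a non-trivial 2-connected graph other than that one cycle, since such a block would contain two distinct cycles, contradicting uniqueness.) Third, having placed $G$ in the class of block-cycle graphs, I would simply invoke Theorem~\ref{For block-cycle Z(G)=P(G)} to conclude $Z(G) = P(G)$.

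\medskip

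\textbf{Main obstacle.} There is essentially no obstacle here — the corollary is a one-line consequence of Theorem~\ref{For block-cycle Z(G)=P(G)} once the class inclusion is noted, which is why the excerpt marks it with \texttt{\textbackslash qed} rather than a proof. The only point requiring the slightest care is the definitional one above: making sure that ``exactly one cycle'' genuinely forces the block structure, which, as noted, reduces to the fact that any 2-connected graph other than a single cycle contains more than one cycle. Everything substantive was already done in the proof of the block-cycle theorem (the induction on the number of blocks, peeling off a pendant block via Lemma~\ref{pendant block} and Lemma~\ref{P(G-B), G is a block-cycle}), so the corollary inherits all of that for free.
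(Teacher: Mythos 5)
Your proposal is correct and matches the paper's own (implicit) argument exactly: the paper defines a unicycle graph as a block-cycle graph with only one cycle, so the corollary follows immediately from Theorem~\ref{For block-cycle Z(G)=P(G)}, which is precisely the one-line deduction you give. The extra remark about reconciling the more common definition of a unicycle graph is a nice touch but not needed given the paper's definition.
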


\subsection{Graphs for which the path cover number is two}
It seems that for a general graph it is rare to satisfy the equality $Z(G)=P(G)$. To show this along with the fact that the discrepancy between $Z(G)$ and $P(G)$ can be arbitrarily large we focus on the family of graphs with $P(G)=2$.  
\begin{prop}
Let $G$ be a graph with $P(G)=2$ and two covering paths $P_1$ and $P_2$ with $|P_1|=m$ and $|P_2|=n$.  Then 
\[
2 \leq Z(G)\leq \text{min}\{n,m\}+1.
\]
Moreover for any number $k$ in this interval, there is a graph $G$ satisfying $P(G)=2$ with $Z(G)=k$.
\end{prop}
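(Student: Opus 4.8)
The plan is to break the statement into three separate tasks: the lower bound $Z(G)\ge 2$, the upper bound $Z(G)\le \min\{m,n\}+1$, and the realizability of every intermediate value. The lower bound is immediate: if $P(G)=2$ then $G$ is not a single path, so $Z(G)\ge P(G)=2$ by Proposition~\ref{P(G)<=Z(G)}, and in fact $Z(G)=1$ forces $G$ to be a path, contradiction. For the upper bound, let $P_1=u_1u_2\cdots u_m$ and $P_2=w_1w_2\cdots w_n$ be the two covering induced paths, and without loss of generality assume $m=\min\{m,n\}$. The idea is to exhibit an explicit zero forcing set of size $m+1$. A natural candidate is to take the endpoint $w_1$ of the longer path together with \emph{all} vertices of the shorter path $P_1$; that is, $Z_0=\{w_1\}\cup V(P_1)$, which has size $m+1$. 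I would then argue that starting the forcing process from $Z_0$ colours everything: the vertex $w_1$ has exactly one white neighbour along $P_2$ once we check that all its neighbours in $P_1$ are already black (they are, since $V(P_1)\subseteq Z_0$), so $w_1\to w_2$; then $w_2$ forces $w_3$ after checking its $P_1$-neighbours are black, and so on down $P_2$. The only subtlety is that a vertex $w_i$ may have several white neighbours \emph{on $P_2$} — but on an induced path $P_2$ each $w_i$ has at most two $P_2$-neighbours, $w_{i-1}$ (already black) and $w_{i+1}$, so this is fine; adjacencies from $w_i$ back into $P_1$ never obstruct the force because $P_1$ is entirely black from the start. Hence $Z_0$ is a ZFS and $Z(G)\le m+1$.

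For the realizability part, I would, for each $k$ with $2\le k\le m+1$ (equivalently, ranging over admissible path lengths), construct a concrete graph $G_k$ with $P(G_k)=2$ and $Z(G_k)=k$. The two extreme cases guide the construction: $k=2$ is realized by simple examples such as a cycle $C_{m+n}$ (which decomposes into two paths and has $Z=2$) or a ``ladder'' / prism-type graph on $2$ paths, and $k=m+1$ should be realized by making the two paths so heavily interconnected that no forcing can proceed without blackening almost all of $P_1$ — the complete bipartite-like join of the two paths, or a graph where each $w_i$ is adjacent to two consecutive vertices of $P_1$, is the kind of gadget I would use, and one checks its zero forcing number by a counting/obstruction argument analogous to the one in Proposition~\ref{Z of complete multipartite graph}. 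For intermediate $k$, I would interpolate: take a graph that behaves ``freely'' (forcing-wise) on part of the structure and ``rigidly'' on the rest, e.g. attach a rigid gadget to only $k-1$ of the vertices so that $k-1$ vertices of $P_1$ plus one endpoint of $P_2$ are forced into any ZFS, while the remaining part of $P_1$ gets coloured for free.

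The main obstacle I anticipate is the realizability half, specifically proving the lower bound $Z(G_k)\ge k$ for the interpolating family: upper bounds on $Z$ are easy (exhibit a set), but lower bounds require showing that \emph{every} set of size $k-1$ fails to force, which means arguing about all possible colour-change sequences. I would handle this the way Proposition~\ref{Z of complete multipartite graph} does — by identifying, for any too-small initial black set, a vertex configuration (two ``twin-like'' white vertices with identical black neighbourhoods, or a white vertex all of whose neighbours have a second white neighbour) that stalls the process — and by using Proposition~\ref{P(G)<=Z(G)} together with the structure of the gadget to pin down which vertices any ZFS must contain. The verification that each $G_k$ genuinely has $P(G_k)=2$ (not $1$, not $3$) is routine but must not be skipped: one exhibits the two covering paths and notes $G_k$ contains an induced even cycle or otherwise is non-traceable to rule out $P=1$.
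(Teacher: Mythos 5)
Your treatment of the two bounds is correct and is exactly the paper's argument: the paper also declares $Z(G)\ge 2$ trivial and takes $B=V(P_1)\cup\{w_1\}$ as the forcing set, so your step-by-step verification that the process runs down $P_2$ and that back-edges into the all-black $P_1$ never obstruct a force is just the detail the paper omits.

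The only place you diverge is the realizability half, and there your proposal is a plan rather than a proof: you never fix the interpolating gadget, and you explicitly leave the lower bound $Z(G_k)\ge k$ as an open obstacle. The paper commits to a concrete family --- take the two disjoint paths and join each of the first $k$ consecutive vertices of $P_1$ to every vertex of $P_2$ --- and asserts $Z=k$ without proof. Your sketch is this construction in spirit, and the obstacle you flag is easy to discharge with tools already in the paper rather than with the twin/stalling case analysis you propose: if the first $j$ vertices of $P_1$ are joined to all of $P_2$, then $K_{j,n}$ is a subgraph, so the strengthened complete-bipartite lower bound gives $Z\ge\min\{j,n\}+1=j+1$ (outside the four small exceptional pairs, where $Z\ge P(G)=2$ already suffices), while $\{u_1,\dots,u_j,w_1\}$ visibly forces the whole graph, so $Z=j+1$ exactly. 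Taking $j=k-1$ realizes every $k$ in $[2,m+1]$; note this also shows that the paper's construction, read literally with $k$ joined vertices, yields $Z=k+1$ rather than $k$ (and is not even well defined at $k=m+1$). Your closing point that $P(G_k)=2$ must be verified is correct, and for this family it is immediate: no edges are added inside either path, so both remain induced, and the graph is not itself a path. In short: right approach throughout, but the realizability argument needs you to commit to the gadget and run the $K_{j,n}$ bound rather than gesturing at an obstruction argument.
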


\begin{proof}
The claim that $Z(G)\geq 2$ is trivial. Suppose $m\leq n$ and let $B$ be the set consisting of $V(P_1)$ and an end-point of $P_2$.  Obviously $B$ is a ZFS for $G$. Thus $Z(G)\leq |B|=m+1= \text{min}\{n,m\}+1$.
For the given number $k$ in the interval $[2,\text{min}\{m,n\}+1]$ let $G$ be the following  graph. Set two paths $P_1$ and $P_2$ with $|P_1|=m$ and $|P_2|=n$ and assume that $m\leq n$. Starting with an end-point of $P_1$ make each of $k$ consecutive vertices of $P_1$ adjacent to all of the vertices of $P_2$.  Then, it is easy to observe that $Z(G)=k$.      
\end{proof}

Among all the graphs with $P(G)=2$, only those that are also outerplanar satisfy $Z(G)=2$ (see \cite[Theorem 5.1]{johnson2009graphs}).

\section{Graphs with $Z(G)=M(G)$}\label{graph_with_Z=M}
One of the most important questions here, which is motivated by Theorem~\ref{M(G)<=Z(G)},  is to characterize the graphs for which $M(G)=Z(G)$ holds.
Although, all families of graphs with $M(G)=Z(G)$ have not yet  been fully characterized, there are some families of graphs for which this equality has been established. For instance, all the graphs with less than seven vertices (see \cite[Proposition 4.4]{MR2388646}), and the families of graphs are listed in \cite[Table 1]{MR2388646}. In this chapter we expand this list.

A graph $G$ is said to be \txtsl{chordal}, if it has no induced cycle of length more than three. In \cite[Proposition 3.23]{MR2388646} as well as in \cite[Theorem 7]{MR2639259}, it has been shown that the equality $M(G)=Z(G)$ also holds for some families of chordal graphs. In this section, also, we prove that this equality holds for some new families of chordal graphs.

For any graph $G$, assume that $Z_G$ is a minimum zero forcing set of $G$. Given graph $G$ with  $|G|=n$ and the graphs $H_1,\ldots, H_n$, let
\[
G \prec H_1,\ldots,H_n\succ
\]
be the graph obtained by joining all the vertices of the graph $H_i$ to the $i$-th vertex of $G$, where $i=1,\ldots, n$. Note that  $|H_i|$ can be zero, in which case no extra vertices will be joined to the vertex $i$.  We call the graph 
\[
G \prec H_1,\ldots,H_n\succ
\]
a \txtsl{generalized corona} of $G$ with $H_1,\ldots,H_n$. In the next theorem, we use the convention that $Z(H_i)=0$ if $|H_i|=0$.
\begin{thm}\label{generalized corona} For graph $G$ with $|G|=n$ and graphs $H_1,\ldots, H_n$ we have
\[
Z(G \prec H_1,\ldots,H_n\succ)\leq Z(G)+\sum_{i=1}^nZ(H_i).
\]
\end{thm}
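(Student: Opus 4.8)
The plan is to exhibit an explicit zero forcing set of $G\prec H_1,\ldots,H_n\succ$ of size $Z(G)+\sum_{i=1}^n Z(H_i)$, by gluing together a minimum zero forcing set of $G$ and minimum zero forcing sets of each $H_i$. Write $\widehat G = G\prec H_1,\ldots,H_n\succ$, let $v_1,\ldots,v_n$ be the vertices of $G$, and for each $i$ let $Z_{H_i}$ be a minimum zero forcing set of $H_i$ (with $Z_{H_i}=\emptyset$ when $|H_i|=0$). Let $Z_G$ be a minimum zero forcing set of $G$. I would propose the candidate set
\[
W \;=\; Z_G \;\cup\; \bigcup_{i=1}^n Z_{H_i},
\]
which has size at most $Z(G)+\sum_{i=1}^n Z(H_i)$ (it is exactly this if the sets are chosen disjointly, which they are, since the vertex sets of $G,H_1,\ldots,H_n$ are pairwise disjoint in $\widehat G$).

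First I would run the forcing process inside each $H_i$ separately. The key observation is that within the induced subgraph $H_i$, a vertex $u$ of $H_i$ that is still white has, in $\widehat G$, exactly one extra neighbour outside $H_i$, namely $v_i$; so a black vertex of $H_i$ forces inside $\widehat G$ in the same circumstances as inside $H_i$ \emph{provided $v_i$ is already black} (or provided the forcing vertex's unique white neighbour is inside $H_i$ and $v_i$ is black). Hence I would first colour each $v_i$ black — this is where $Z_G$ comes in — and then use $Z_{H_i}$ to force all of $H_i$. Concretely: start with $W$ black; since $Z_G$ is a zero forcing set of $G$ and each $v_i$ in $\widehat G$ has all of $H_i$ as extra neighbours, I need to check that the forces of the $G$-process can still be carried out in $\widehat G$. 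A vertex $v_i\in Z_G$ forcing $v_j$ in $G$ requires $v_j$ to be its unique white neighbour; in $\widehat G$, $v_i$ additionally sees all of $H_i$, but those can be made black first. So the right order is: (1) for each $i$ with $Z_{H_i}\neq\emptyset$, this does not yet help since $v_i$ may be white; instead I would interleave — better, observe that the $G$-forces and $H_i$-forces can be scheduled as follows. Reverse-engineer: make all $v_i$ black using $Z_G$ by performing the $G$-forcing chain but, whenever a vertex $v_i$ is about to force and still has white neighbours inside $H_i$, first complete the $H_i$-process (legal because $v_i$ is black by the time it is about to force, its only relevant out-of-$H_i$ neighbour). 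Then once all of $H_i$ is black, $v_i$'s only remaining white neighbours lie in $G$, so the $G$-force $v_i\to v_j$ proceeds exactly as in $G$.

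The main obstacle — and the step deserving the most care — is making this scheduling argument airtight: I must verify that at the moment a $G$-vertex $v_i$ is called upon to force along the chosen chronological list of $Z_G$ in $G$, (a) $v_i$ is already black in $\widehat G$, and (b) all of $H_i$ can be turned black using $Z_{H_i}$ before $v_i$ forces, using only forces that are legal in $\widehat G$. For (a): in the $G$-process, a vertex forces only after it is black, and the forces that blackened it are simulated identically in $\widehat G$ by induction on the position in the chronological list. For (b): inside $H_i$ with $v_i$ black, every white vertex of $H_i$ has its full $H_i$-neighbourhood plus the black vertex $v_i$; a black vertex $u\in H_i$ with a unique white $H_i$-neighbour $w$ may, in $\widehat G$, have $v_i$ as an additional neighbour, but $v_i$ is black, so $w$ is still $u$'s unique white neighbour and the force $u\to w$ is legal. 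Thus the $H_i$-process transplants verbatim. Running all $n$ of these sub-processes (each triggered as soon as its $v_i$ goes black) together with the simulated $G$-process colours every vertex of $\widehat G$, so $W$ is a zero forcing set and $Z(\widehat G)\le |W|\le Z(G)+\sum_{i=1}^n Z(H_i)$. I would also remark that the $Z(H_i)=0$ convention for $|H_i|=0$ is consistent, since then no vertices are attached at $v_i$ and there is nothing to force there.
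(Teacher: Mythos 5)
Your proposal is correct and follows essentially the same route as the paper's proof: take $Z_G\cup\bigcup_i Z_{H_i}$ as the candidate set, and interleave the process so that each $H_i$ is fully blackened via $Z_{H_i}$ (legal once $v_i$ is black, since $v_i$ is the only outside neighbour of the vertices of $H_i$) before $v_i$ performs its force from the $G$-process. The extra care you take in verifying that forces transplant verbatim in both directions matches what the paper does implicitly.
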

\begin{proof}
Let $G=\{v_1,\ldots,v_n\}$. Without loss of generality, we may assume that $Z_G=\{v_1,\ldots,v_k\}$ is a zero forcing set for $G$, and that one can colour all the other vertices of $G$ by a sequence of forces in the following order:
\begin{equation}\label{coloring_order}
v_{k+1},\, v_{k+2},\ldots, v_n.
\end{equation}
Define
\[
\Delta=Z_G\cup Z_{H_1}\cup\ldots\cup Z_{H_n}.
\]
We claim that $\Delta$ is a zero forcing set for $G \prec H_1,\ldots,H_n\succ$, which is of size
\[
|\Delta|=Z(G)+\sum_{i=1}^nZ(H_i).
\]
To prove the claim,  first assume that $\Delta$ is initially coloured black and we colour all the vertices of the graphs $H_i$, with $i=1,\dots,k$, which are associated to the vertices of $Z_G$ using the corresponding set $Z_{H_i}$'s. Now note that  there is a vertex $v_i\in Z_G$ whose only white neighbour in $G$ is $v_{k+1}$; because we were able to colour the vertices of $G$ using $Z_G$ in the order (\ref{coloring_order}). Now, since all the vertices in the graph $H_i$ associated with $v_i$  are black, $v_{k+1}$ is, still, the only white neighbour of $v_i$ in $G \prec H_1,\ldots,H_n\succ$. Thus $v_{k+1}$ can be forced by $v_i$. Then we colour all the vertices in the $H_{k+1}$  using the black vertices in $Z_{H_{k+1}}$. Continuing this process, therefore, we can colour all the vertices of $G \prec H_1,\ldots,H_n\succ$ which proves the claim.
\end{proof}
The following is an immediate consequence of  Theorem~\ref{generalized corona}.
\begin{cor}\label{improve}  For graphs $G$ and $H$  we have
\[
Z(G \prec H,\ldots,H\succ)\leq Z(G)+|G|Z(H).\qed
\]
\end{cor}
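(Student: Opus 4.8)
The plan is to derive Corollary~\ref{improve} directly from Theorem~\ref{generalized corona} by specializing. Since the statement simply asks to bound the zero forcing number of the generalized corona $G\prec H,\ldots,H\succ$, the natural approach is to take $H_1=H_2=\cdots=H_n=H$ in Theorem~\ref{generalized corona}, where $n=|G|$.

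First I would recall that by definition of the generalized corona, $G\prec H,\ldots,H\succ$ is precisely the graph $G\prec H_1,\ldots,H_n\succ$ with each $H_i$ equal to the fixed graph $H$. Then Theorem~\ref{generalized corona} immediately gives
\[
Z(G\prec H,\ldots,H\succ)\leq Z(G)+\sum_{i=1}^n Z(H_i)=Z(G)+\sum_{i=1}^{|G|}Z(H).
\]
The final step is the trivial observation that $\sum_{i=1}^{|G|}Z(H)=|G|\,Z(H)$, since the summand does not depend on $i$ and there are $|G|=n$ terms. This yields $Z(G\prec H,\ldots,H\succ)\leq Z(G)+|G|\,Z(H)$, as claimed.

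There is essentially no obstacle here: the corollary is a one-line consequence of the theorem, obtained by collapsing a sum of identical terms into a product. The only point worth a word of care is the convention established just before Theorem~\ref{generalized corona} that $Z(H_i)=0$ when $|H_i|=0$; in the corollary $H$ is a fixed graph, so if one wished to include the degenerate case $|H|=0$ one would simply note the bound reads $Z(G)\leq Z(G)$, which holds vacuously. Otherwise the argument is complete, and no induction, case analysis, or new construction is needed beyond what Theorem~\ref{generalized corona} already supplies.
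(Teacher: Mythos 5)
Your proposal is correct and matches the paper exactly: the paper presents this corollary as an immediate consequence of Theorem~\ref{generalized corona}, obtained by taking $H_1=\cdots=H_n=H$ so that the sum $\sum_{i=1}^n Z(H_i)$ collapses to $|G|\,Z(H)$. Nothing further is needed.
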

The  bound in Corollary~\ref{improve} is, in fact, an improvement of the bound in \cite[Proposition 2.12]{MR2388646}.
Using Theorem~\ref{generalized corona}, Theorem~\ref{M(G)<=Z(G)} and Corollary~\ref{mr&cc},  we can prove that equality in Theorem~\ref{M(G)<=Z(G)} holds for the following families of chordal graphs.

\begin{thm}\label{Z=M Generalized corona}
Let $G$ be a graph satisfying the equalities $Z(G)=M(G)$ and $\mr(G)=\CC(G)$ and $H_1,\ldots, H_n$ be graphs satisfying the equalities $Z(H_i)=M(H_i)$ and $\mr(H_i)=\CC(H_i)$ with $i=1,\dots,n$. Then 
\[
Z(G \prec H_1,\ldots,H_n\succ)=M(G \prec H_1,\ldots,H_n\succ),
\] 
and
\[
\mr(G \prec H_1,\ldots,H_n\succ)=\CC(G \prec H_1,\ldots,H_n\succ).
\]
\end{thm}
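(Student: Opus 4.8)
The plan is to prove the two equalities by sandwiching each parameter between the relevant upper and lower bounds and showing these bounds coincide. For the first equality, the chain of inequalities I would establish is
\[
M(G \prec H_1,\ldots,H_n\succ) \;\le\; Z(G \prec H_1,\ldots,H_n\succ) \;\le\; Z(G)+\sum_{i=1}^n Z(H_i) \;=\; M(G)+\sum_{i=1}^n M(H_i),
\]
where the first inequality is Theorem~\ref{M(G)<=Z(G)}, the second is Theorem~\ref{generalized corona}, and the last equality uses the hypotheses $Z(G)=M(G)$ and $Z(H_i)=M(H_i)$. To close the loop I need a matching lower bound $M(G \prec H_1,\ldots,H_n\succ) \ge M(G)+\sum_{i=1}^n M(H_i)$. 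The natural route is via the rank side: since $\mr+M=|V|$ always, and $|V(G \prec H_1,\ldots,H_n\succ)| = |V(G)|+\sum_i |V(H_i)|$, it suffices to prove the upper bound $\mr(G \prec H_1,\ldots,H_n\succ) \le \mr(G)+\sum_i \mr(H_i)$, and then substitute. So really the two assertions of the theorem are intertwined: the clique-cover equality will deliver the rank upper bound I need for the nullity lower bound.

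For the second equality I would run the analogous sandwich:
\[
\mr(G \prec H_1,\ldots,H_n\succ) \;\le\; \CC(G \prec H_1,\ldots,H_n\succ) \;\le\; \CC(G)+\sum_{i=1}^n \CC(H_i) \;=\; \mr(G)+\sum_{i=1}^n \mr(H_i),
\]
where the first inequality is Corollary~\ref{mr&cc}, and the last equality uses $\mr(G)=\CC(G)$, $\mr(H_i)=\CC(H_i)$. The middle inequality requires showing that a clique cover of the generalized corona can be assembled from clique covers of $G$ and of each $H_i$: take an optimal clique cover of $G$, optimal clique covers of each $H_i$, and then for each $i$ cover the ``join'' edges between vertex $v_i$ and $V(H_i)$ by enlarging each clique of the $H_i$-cover with the single vertex $v_i$ (since $v_i$ is adjacent to all of $H_i$, each such enlargement is still a clique, and collectively they cover all join edges as well as the internal edges of $H_i$ that were already covered) — so no extra cliques beyond $\CC(G)+\sum_i\CC(H_i)$ are needed. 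Then I need the reverse inequality $\CC \le \mr$-type bound in the other direction, i.e. $\mr(G \prec H_1,\ldots,H_n\succ)$ must be shown to be \emph{at least} $\mr(G)+\sum_i\mr(H_i)$; the cleanest way is again to deduce it from the nullity upper bound $M(G \prec H_1,\ldots,H_n\succ) \le Z(G)+\sum_i Z(H_i) = M(G)+\sum_i M(H_i)$ via $\mr+M=|V|$. Thus the whole argument is a single interlocking computation: $Z$ and $\CC$ provide upper bounds that, under the hypotheses, equal $M(G)+\sum M(H_i)$ and $\mr(G)+\sum\mr(H_i)$ respectively; these two target values sum to $|V(G \prec H_1,\ldots,H_n\succ)|$; hence neither bound can have any slack, forcing equality everywhere.

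The step I expect to be the main obstacle is verifying the clique-cover superadditivity $\CC(G \prec H_1,\ldots,H_n\succ) \le \CC(G)+\sum_i\CC(H_i)$ cleanly, in particular handling the join edges without spending extra cliques and dealing with the degenerate cases $|H_i|=0$ (no vertices joined to $v_i$, contributing $\CC(H_i)=0$, consistent with the convention $Z(H_i)=0$) and $|H_i|=1$ (a single pendant vertex at $v_i$, where the lone join edge is itself a $K_2$ clique, matching $\CC(H_i)=\mr(H_i)=0$ only if one is careful about the empty-edge-set convention — this boundary bookkeeping is where I would be most cautious). Everything else is the routine linear-algebra fact $\rank$ is subadditive over the block/union decomposition used implicitly in Observation~\ref{union}, combined with the already-proven Theorem~\ref{generalized corona} and the $\mr+M=|V|$ identity. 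Once the clique-cover inequality is in hand, the argument closes with no further input.
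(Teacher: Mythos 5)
Your proposal is correct and follows essentially the same route as the paper: both arguments sandwich the corona's parameters between $M\le Z\le Z(G)+\sum_i Z(H_i)$ (via Theorem~\ref{M(G)<=Z(G)} and Theorem~\ref{generalized corona}) and $\mr\le\CC\le\CC(G)+\sum_i\CC(H_i)$ (via Corollary~\ref{mr&cc} and the same clique-enlargement construction, attaching $v_i$ to each clique of an optimal cover of $H_i$), then close the loop with $\mr+M=|V|$ so that no inequality has slack. The paper writes this as a single chain of inequalities rather than two parallel ones, but the ingredients and logic are identical.
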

\begin{proof}
By (\ref{mr&M}) and Corollary~\ref{mr&cc}  we have 
\begin{equation}\label{eq_1}
M(G \prec H_1,\ldots,H_n\succ)\geq |V(G \prec H_1,\ldots,H_n\succ)|-\CC(G \prec H_1,\ldots,H_n\succ).
\end{equation}
Note that
\[
|V(G \prec H_1,\ldots,H_n\succ)|=|V(G)|+ \sum_{i=1}^{n} |V(H_i)|.
\]
On the other hand we have 
\begin{equation}\label{cc(Generalized corona)}
\CC(G \prec H_1,\ldots,H_n\succ)\leq \CC(G)+ \sum_{i=1}^{n} \CC(H_i). 
\end{equation}
In order to show (\ref{cc(Generalized corona)}) note that each clique $K_{j_i}$ in a minimal clique covering of $H_j$ joined to the vertex $j$ of $G$ constructs a clique isomorphic to $K_{j_i+1}$. Thus by using the same number of cliques we use to cover the edges of $G$ and $H_i$ with $i=1,\dots,n$ we can cover the edges of $G \prec H_1,\ldots,H_n\succ$.  Also by Theorem~\ref{generalized corona} and (\ref{mr&M}),  and the assumptions of the theorem we have
\begin{equation}\label{eq_2}
\begin{tabular}{ll}
$M(G \prec H_1,\ldots,H_n\succ)$&\\
$=|V(G \prec H_1,\ldots,H_n\succ)|-\mr(G \prec H_1,\ldots,H_n\succ)$&\\
$\leq Z(G \prec H_1,\ldots,H_n\succ)$&\\
$\leq Z(G)+\sum_{i=1}^nZ(H_i)$&\\
$=M(G)+\sum_{i=1}^n M(H_i)$&\\
$=|V(G)|+ \sum_{i=1}^{n} |V(H_i)|-\left[\CC(G)+ \sum_{i=1}^{n} \CC(H_i)\right]$&\\
$\leq |V(G \prec H_1,\ldots,H_n\succ)|- \CC(G \prec H_1,\ldots,H_n\succ).$&\\
\end{tabular}
\end{equation}
The theorem follows from  (\ref{eq_1}) and (\ref{eq_2}).
\end{proof}
Theorem~\ref {Z=M Generalized corona} provides a recursive construction  to generate an infinite family of graphs that satisfy the equality $Z(G)=M(G)$. The following is a consequence of Theorem~\ref{Z=M Generalized corona}.    

\begin{cor}\label{Z=M for P_s o K_t}
For the following graphs, $Z(G)=M(G)$:
\begin{enumerate}[(a)]
\item $G_1=K_t\prec P_{s_1},\ldots,P_{s_r}, K_{q_1},\ldots,K_{q_\ell}\succ$, where $s_i,q_j\geq 2$;
\item $G_2=P_t\prec P_{s_1},\ldots,P_{s_r}, K_{q_1},\ldots,K_{q_\ell}\succ$, where $s_i,q_j\geq 2$.
\end{enumerate}
Moreover 
\begin{enumerate}[(1)]
\item $M(G_1)= Z(G_1)=t-1-\ell+q_1+\cdots+q_\ell+r;$
\item $\mr(G_1)=\CC(G_1)= 1+\ell+s_1+\cdots+s_r-r;$
\item $M(G_2)= Z(G_2)=1-\ell+q_1+\cdots+q_\ell+r;$
\item $\mr(G_2)=\CC(G_2)= t-1+\ell+s_1+\cdots+s_r-r.$
\end{enumerate}
\end{cor}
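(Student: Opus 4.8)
The plan is to apply Theorem~\ref{Z=M Generalized corona} directly, after verifying that the building blocks satisfy its hypotheses, and then to compute the four numerical formulas from the known minimum ranks (equivalently clique cover numbers) of paths and complete graphs. First I would record the base facts: for a path $P_s$ with $s\geq 2$ we have $\mr(P_s)=\CC(P_s)=s-1$ and $Z(P_s)=M(P_s)=1$; for a complete graph $K_q$ with $q\geq 2$ we have $\mr(K_q)=\CC(K_q)=1$ and $Z(K_q)=M(K_q)=q-1$; and for $K_t$ we have $\mr(K_t)=\CC(K_t)=1$, $Z(K_t)=M(K_t)=t-1$, while for $P_t$ we have $\mr(P_t)=\CC(P_t)=t-1$, $Z(P_t)=M(P_t)=1$. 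Each of these is either noted in the examples of Chapter~\ref{min_rank} or follows from Corollary~\ref{mr&cc} together with $\mr+M=|V|$.

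Next I would invoke Theorem~\ref{Z=M Generalized corona} with outer graph $G=K_t$ (respectively $G=P_t$) and the list of inner graphs $H_1,\dots,H_n$ taken to be $P_{s_1},\dots,P_{s_r},K_{q_1},\dots,K_{q_\ell}$ (so $n=r+\ell$, and here the outer graph must have $t=r+\ell$ vertices for the corona notation to make sense). Since $K_t$, $P_t$, every $P_{s_i}$ and every $K_{q_j}$ satisfies both $Z=M$ and $\mr=\CC$, the theorem immediately yields
\[
Z(G_1)=M(G_1),\qquad \mr(G_1)=\CC(G_1),
\]
and likewise for $G_2$. That disposes of the qualitative claim with no further work.

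For the quantitative formulas I would compute $\CC$ and $Z$ additively using the inequalities established inside the proof of Theorem~\ref{Z=M Generalized corona}, which there became equalities. By~(\ref{cc(Generalized corona)}),
\[
\CC(G_1)=\CC(K_t)+\sum_{i=1}^r\CC(P_{s_i})+\sum_{j=1}^\ell\CC(K_{q_j})=1+\sum_{i=1}^r(s_i-1)+\ell,
\]
which rearranges to $1+\ell+s_1+\cdots+s_r-r$, giving formula~(2); formula~(4) is identical except $\CC(K_t)=1$ is replaced by $\CC(P_t)=t-1$. Then $M=|V|-\mr=|V|-\CC$ with $|V(G_1)|=t+\sum(s_i)+\sum(q_j)$ and $t=r+\ell$ yields
\[
M(G_1)=\Bigl(r+\ell+\sum s_i+\sum q_j\Bigr)-\Bigl(1+\ell+\sum s_i-r\Bigr)=2r+\sum q_j-1=t-1-\ell+q_1+\cdots+q_\ell+r,
\]
which is formula~(1); formula~(3) comes out the same way with $|V(G_2)|$ using $\mr(P_t)=t-1$ so that the leading $t$ cancels.

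The only real subtlety — and the step I would be most careful about — is bookkeeping: making sure the outer graph's order equals the number of inner graphs ($t=r+\ell$ in both parts, which is why the formulas feature $t$, $r$, $\ell$ together), keeping track of which inner graphs are empty versus nonempty (the hypothesis $s_i,q_j\geq 2$ guarantees all listed ones are genuinely present, so the convention $Z(H_i)=0$ for empty $H_i$ is irrelevant here), and confirming that the chain of inequalities in~(\ref{eq_2}) collapses to equality precisely because all the component equalities $Z=M$, $\mr=\CC$ hold. No genuinely new idea is needed beyond Theorem~\ref{Z=M Generalized corona}; this corollary is a substitution-and-arithmetic exercise.
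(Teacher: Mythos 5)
Your proposal is correct and follows exactly the paper's route: the paper's own (one-line) proof likewise derives all four formulas by feeding the base facts $Z(K_t)=t-1$, $Z(P_t)=1$, $\CC(K_t)=1$, $\CC(P_t)=t-1$ into Theorem~\ref{Z=M Generalized corona}. Your write-up simply makes explicit the bookkeeping (in particular $t=r+\ell$ and the additivity of $\CC$ and $Z$ from the proof of that theorem) that the paper leaves implicit.
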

\begin{proof}
Statements $(1)$, $(2)$, $(3)$ and $(4)$ follow from the facts  $Z(K_t)=t-1$, $Z(P_t)=1$, $\CC(K_t)=1$ and $\CC(P_t)=t-1$, for any $t$. 
\end{proof}

The next theorem demonstrates the existence of another family of chordal graphs for which  the equality holds in Theorem~\ref{M(G)<=Z(G)}.
\begin{thm}\label{another M=Z}
Let $G$ be a chordal graph consisting of $N\geq 1$ cliques, $K_{n_1},\ldots, K_{n_N}$, such that the intersection of any two cliques is a complete graph and no vertex is contained in more than two cliques. Also, let $k_{i,j}$ be the size of the clique in the intersection of $K_{n_i}$ and $K_{n_j}$ then
\[
M(G)=Z(G)=\sum_{i=1}^{N}n_i-\sum_{i<j}k_{ij}-N,
\]
and
\[
\mr(G)=\CC(G)=N.
\]
\end{thm}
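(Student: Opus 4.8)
The plan is to prove the three-way chain $M(G)=Z(G)$ and $\mr(G)=\CC(G)$ together, using the same ``squeeze'' strategy as in Theorem~\ref{Z=M Generalized corona}. The key inequalities available are $\mr(G)+M(G)=|V(G)|$ from~(\ref{mr&M}), $M(G)\le Z(G)$ from Theorem~\ref{M(G)<=Z(G)}, and $\mr(G)\le\CC(G)$ from Corollary~\ref{mr&cc}. So it suffices to establish two things: an \emph{upper} bound $Z(G)\le \sum n_i-\sum_{i<j}k_{ij}-N$ (via an explicit zero forcing set), and an upper bound $\CC(G)\le N$ together with the matching lower bound $\CC(G)\ge N$ so that the counting identity $|V(G)|-\CC(G)=\sum n_i-\sum_{i<j}k_{ij}-N$ makes the two chains meet. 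Indeed, $|V(G)|=\sum_{i=1}^N n_i-\sum_{i<j}k_{ij}$ by inclusion--exclusion (here the hypothesis that no vertex lies in more than two cliques is exactly what makes ordinary inclusion--exclusion collapse to pairwise terms), so $|V(G)|-N=\sum n_i-\sum_{i<j}k_{ij}-N$, which is the target value.

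First I would set up the clique structure carefully. Since $G$ is chordal and is a union of the $K_{n_i}$ with pairwise intersections complete and each vertex in at most two cliques, $G$ behaves like a ``clique-tree'' (or clique-cactus): think of a graph $H$ on vertex set $\{1,\dots,N\}$ with $ij\in E(H)$ iff $k_{ij}>0$. Because no vertex is in three cliques and each intersection is a single complete ``interface'', $H$ is a forest (a cycle in $H$ would force either a shared vertex among three cliques or a non-chordal hole in $G$); I would prove $H$ is a forest and reduce to the connected (tree) case, summing over components via Observation~\ref{M of disconnected graphs}. For $\CC(G)$: the $N$ cliques themselves cover every edge, so $\CC(G)\le N$; for the reverse, I would argue that each clique $K_{n_i}$ contains an edge (or, in the degenerate $n_i=1$ handling, adjust the count) lying in no other clique — this is where the interface-complete hypothesis is used — so no clique covering can use fewer than $N$ cliques, giving $\CC(G)=N$. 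Then $\mr(G)\le\CC(G)=N$ and $M(G)=|V(G)|-\mr(G)\ge|V(G)|-N$.

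For the zero forcing bound $Z(G)\le|V(G)|-N$, I would build the set explicitly and run the forcing process along the clique-tree $H$. Root $H$ at some clique and process cliques from the leaves inward; for a leaf clique $K_{n_i}$ attached to its parent through the interface of size $k_{ij}$, colour black all of $K_{n_i}$ except one vertex not in the interface (possible since a leaf clique has at most one interface). That white vertex is forced by any of the black vertices of the clique (a clique of size $m$ with $m-1$ black vertices forces the last one). Inductively, after a clique is fully black, each interface vertex shared with a not-yet-processed neighbour is black, so that neighbouring clique again has all but one vertex black and can be completed. Counting: we leave exactly one vertex white per clique that is not determined by an already-processed interface, and a bookkeeping argument along the tree gives exactly $N$ forced vertices total, hence $|Z|=|V(G)|-N$. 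Combining: $|V(G)|-N\le M(G)\le Z(G)\le|V(G)|-N$, forcing equality throughout, and then $\mr(G)=|V(G)|-M(G)=N=\CC(G)$.

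The main obstacle I expect is the bookkeeping in the case analysis: carefully justifying that $H$ is a forest purely from the three stated hypotheses (chordality, pairwise-complete intersections, no vertex in three cliques) and handling degenerate configurations — cliques with $n_i=1$ or $n_i=2$, interfaces $k_{ij}=0$, and especially making sure the count of ``one white vertex per clique that has an unprocessed side'' really sums to $N$ rather than $N$ minus the number of tree components of $H$ (so the disconnected reduction via Observation~\ref{M of disconnected graphs} is essential, and within one component one expects $N_{\text{comp}}$ forces because a tree on $N_{\text{comp}}$ nodes, processed inward, has every node but... — actually every clique gets exactly one vertex forced into it, including the root, since the root clique also has all-but-one black after its children are done). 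Pinning down this off-by-one is where I would spend the most care; the linear-algebra and clique-cover halves are routine given Theorems~\ref{M(G)<=Z(G)} and Corollary~\ref{mr&cc}.
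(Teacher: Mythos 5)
Your proposal follows essentially the same route as the paper's proof: the paper argues by induction on $N$, peeling off a pendant clique (whose existence it likewise deduces from chordality plus the no-vertex-in-three-cliques condition), adding all but one vertex of the stripped part to the zero forcing set, and closing with exactly your squeeze $|V(G)|-\CC(G)\le M(G)\le Z(G)\le |V(G)|-N$ together with $\CC(G)\le N$ and $|V(G)|=\sum_{i=1}^N n_i-\sum_{i<j}k_{ij}$. The only inessential difference is that you propose a separate direct proof of $\CC(G)\ge N$, which the paper instead obtains for free from the squeeze (so you can drop that step, along with its worries about cliques with $n_i=1$).
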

\begin{proof}
 The proof is by induction on the number of cliques. It is clearly true for $N=1$. Since no vertex of $G$ is contained in more than two cliques, it has a pendant clique,  that is, a clique which intersects only one other clique in the graph. Otherwise we have an induced cycle of order more than three in $G$ which contradicts with $G$ being chordal. Without loss of generality, assume that, this clique is  $K_{n_1}$ and that it only intersects with $K_{n_2}$. Let $H=K_{n_1}\backslash \left(K_{n_1}\cap K_{n_2}\right)$ and $G'=G\backslash H$. By the induction hypothesis we have,
\[
M(G')= Z(G')= \sum_{i=2}^{N}n_i-\sum_{1<i<j}k_{ij}-(N-1).
\]
It is clear that
\[
\CC(G')\leq N-1.
\]
Then it follows that a ZFS from $G'$ with an additional $n_1-k_{1,2}-1$  black vertices from $H$ forms a ZFS for $G$. Thus,
\[
M(G)\leq Z(G)\leq \sum_{i=2}^{N}n_i-\sum_{1<i<j}k_{ij}-N+1+ n_1-k_{1,2}-1=\sum_{i=1}^{N}n_i-\sum_{i<j}k_{ij}-N,
\]
and
\[
\CC(G)\leq N.
\]
 It is clear that
\[
|G|=\sum_{i=1}^{N}n_i-\sum_{i<j}k_{ij}.
\]
Thus, using (\ref{mr&M}) and Corollary~\ref{mr&cc}, we obtain the following lower bound for $M(G)$:
\[
M(G)\geq \sum_{i=1}^{N}n_i-\sum_{i<j}k_{ij}-N,
\]
which completes the proof.
\end{proof}
Note that the equality in Theorem~\ref{M(G)<=Z(G)} does not hold for all chordal graphs. See for instance \cite[Example 2.11]{MR2645093}.

\section{The Colin de Verdi$\grave{\text{e}}$re graph parameter}\label{colin de} 
In 1990, Colin de Verdi$\grave{\text{e}}$re \cite{MR1224700} introduced an interesting new parameter $\mu(G)$ for any undirected graph $G$. The parameter $\mu(G)$ can be fully described in terms of properties of matrices related to $G$.
One of the interesting applications of $\mu$ is that certain topological properties of a graph $G$ can be characterized by spectral properties of matrices associated with $G$ including values of $\mu$, see \cite{MR1224700,MR1673503}. Another interesting graph theoretical property of $\mu$  is that it is monotone on graph minors. Before defining this parameter, we need  to introduce the Strong Arnold Property, which will be abbreviated by SAP throughout the thesis.

The \txtsl{Hadamard product} of two matrices $A=[a_{ij}]$ and $B=[b_{ij}]$ of the same size is just their element-wise product $A\circ B\equiv [a_{ij}b_{ij}]$.
Let $A$ and $X$ be symmetric $n\times n$ matrices. We say that $X$ \textsl{fully annihilates}\index{fully annihilation} $A$ if
\begin{enumerate}
\item $AX=0$,
\item $A\circ X=0$, and
\item $I_n \circ X=0$.
\end{enumerate}
The matrix $A$ has the Strong Arnold Property (SAP) if the zero matrix is the only symmetric matrix that fully annihilates $A$.
The following is a basic, yet useful, observation concerning symmetric matrices with small nullity. In particular, it demonstrates that there are matrices in $\mathcal{S}(G)$ that satisfy SAP, for any graph $G$.
\begin{lem}\label{null<=1 has SAP}
If $\nul(A)\leq1$, then $A$ has SAP.
\end{lem}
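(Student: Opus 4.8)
The plan is to argue by contradiction. Suppose $A$ is a symmetric $n \times n$ matrix with $\nul(A) \le 1$ that does not have SAP; then there is a nonzero symmetric matrix $X$ that fully annihilates $A$, i.e. $AX = 0$, $A \circ X = 0$, and $I_n \circ X = 0$. The condition $AX = 0$ says every column of $X$ lies in $\ker(A)$. Since $\nul(A) \le 1$, the kernel is either $\{0\}$ or spanned by a single vector $v$. In the first case $X = 0$ immediately, a contradiction, so we may assume $\ker(A) = \mathrm{span}\{v\}$ with $v \ne 0$, and each column $X_{\cdot j}$ equals $c_j v$ for some scalar $c_j$. Writing $c = (c_1,\dots,c_n)^\top$, this means $X = v c^\top$.

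Next I would use symmetry of $X$ together with the two Hadamard conditions to force $X = 0$. From $X = X^\top$ we get $v c^\top = c v^\top$, so $c$ is a scalar multiple of $v$; hence $X = \alpha\, v v^\top$ for some scalar $\alpha$. Now the condition $I_n \circ X = 0$ says the diagonal of $X$ vanishes: $\alpha\, v_i^2 = 0$ for all $i$. Since $v \ne 0$, some $v_i \ne 0$, and therefore $\alpha = 0$, giving $X = 0$ — contradicting the choice of $X$ as a nonzero annihilator. (The condition $A \circ X = 0$ turns out not even to be needed here, though one could also note it is automatically consistent.) This completes the proof that $A$ has SAP.

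The main thing to get right is the step $X = vc^\top \Rightarrow c \parallel v$: from $v c^\top = c v^\top$, pick an index $i$ with $v_i \ne 0$; comparing the $i$-th rows gives $v_i c^\top = c_i v^\top$, so $c = (c_i / v_i) v$, as claimed. There is no real obstacle here — the argument is entirely elementary linear algebra — but one should be slightly careful to handle the degenerate case $\ker(A) = \{0\}$ (equivalently $\nul(A) = 0$) separately at the outset, since then there is no vector $v$ to work with and the conclusion $X = 0$ is immediate from $AX = 0$ alone. It is also worth remarking that the hypothesis $\nul(A) \le 1$ is exactly what makes $\ker(A)$ at most one-dimensional, which is the single place the bound is used.
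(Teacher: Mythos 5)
Your proof is correct and rests on the same two facts as the paper's: that $AX=0$ forces the column space of $X$ into the (at most one-dimensional) kernel of $A$, and that a nonzero symmetric matrix with zero diagonal cannot have rank one. The paper phrases the second fact as ``rank$(X)\geq 2$ would force $\nul(A)\geq 2$,'' while you verify it explicitly by writing $X=\alpha\,vv^{\top}$ and reading off the diagonal, but the argument is essentially the same.
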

\begin{proof}
If null$(A)=0$, then $A$ is nonsingular and the only matrix $X$ that fully annihilates $A$ is the zero matrix. Suppose, now, that null$(A)=1$ and let $X$ fully annihilate $A$. Thus by condition ($3$), the diagonal of $X$ is $0$. Since $X$ is symmetric, this implies that $X$ is not a rank $1$ matrix. Hence,  if $X\neq 0$, then rank$(X)\geq 2$ and $AX=0$ would imply null$(A)\geq 2$. Therefore $X=0$ and $A$ has SAP.
\end{proof}

Now we can define the Colin de Verdi$\grave{\text{e}}$re parameter. For a given graph $G$, $\mu(G)$ is defined to be the maximum multiplicity of $0$ as an eigenvalue of $L$, where $L=[l_{i,j}]$ satisfies all of the following conditions:
\begin{enumerate}
\item  $L\in \mathcal {S}(G)$ and $l_{i,j}\leq 0$, for $i\neq j$;
\item $L$ has exactly one negative eigenvalue (with multiplicity one);
\item  $L$ has SAP.
\end{enumerate}
In other words $\mu(G)$ is the maximum nullity among the matrices satisfying (1)-(3) above. Further, observe that
\begin{equation}\label{mu&M}
\mu(G)\leq M (G) = n-\mr(G)\leq Z(G).
\end{equation}
Hence there is an obvious relationship between $\mu(G)$ and mr$(G)$.

the \txtsl{linking number} is a numerical invariant that describes the linking of two closed curves in three-dimensional space. Intuitively, the linking number represents the number of times that each curve winds around the other. A \txtsl{linkless embedding} of an undirected graph is an embedding of the graph into Euclidean space in such a way that no two cycles of the graph have nonzero linking number.

Colin de Verdi$\grave{\text{e}}$re et al \cite{MR1224700,MR1673503}, through some  sophisticated theorems, have shown that :
\begin{itemize}
\item $\mu(G)\leq 1$ if and only if $G$ is a disjoint union of paths;
\item $\mu(G)\leq 2$ if and only if $G$ is outerplanar;
\item $\mu(G)\leq 3$ if and only if $G$ is planar;
\item $\mu(G)\leq 4$ if and only if $G$ is linklessly embeddable.
\end{itemize}

A related parameter, also introduced by Colin de Verdi$\grave{\text{e}}$re (see \cite{MR1654157}), is denoted by $\nu(G)$ and is defined to be the maximum nullity among matrices $A$ that satisfy the following conditions:
\begin{enumerate}
\item $A\in\mathcal{S}(G)$;
\item  $A$ is positive semidefinite;
\item $A$ has SAP.
\end{enumerate}
Note, also, that
\begin{equation}\label{nu&M}
\nu(G)\leq M_+ (G) = n-\mr_+(G).
\end{equation}
Thus, there is an obvious relationship between $\nu(G)$ and $\mr_+(G)$.

Properties analogous to $\mu(G)$ have been established for $\nu(G)$. For example, $\nu(G)\leq 2$ if the dual of $G$ is outerplanar, see \cite{MR1654157}. Furthermore, $\nu(G)$, like $\mu(G)$ is a minor-monotone graph parameter.

Consequently, in order to learn more about the minimum rank of graphs, another related parameter which is denoted by $\xi(G)$ is been introduced by Fallat et al. \cite{barioli2005variant} with the following definition.   
\begin{defn}
 For a graph $G$, $\xi(G)$ is the maximum nullity among matrices $A \in \mathcal{S}(G)$ having SAP.
 \end{defn}
 For example, based on Lemma~\ref{null<=1 has SAP} we have :
 \begin{equation}\label{xi(K_n)=n-1} 
 \xi(K_n)=n-1.
 \end{equation}
 One of the most important properties of $\xi(G)$ analogous to $\mu(G)$ is its minor-monotonicity \cite[Corollary 2.5]{barioli2005variant} . Also, note that
 \begin{equation}\label{zi&mu}
\mu(G)\leq \xi(G)\leq M(G) \leq Z(G) ,
\end{equation}
and
 
 \begin{equation}\label{zi&M}
\nu(G)\leq \xi(G)\leq M(G) \leq Z(G) .
\end{equation}
 
Two of the most interesting open questions in this area that have been proposed by Colin de Verdi$\grave{\text{e}}$re in 1998,  are the following.
\begin{conj}\label{chi&mu} For any graph $G$,
$\chi(G)\leq\mu(G)+1.$
\end{conj}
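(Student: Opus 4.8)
The natural strategy exploits two properties of $\mu$ recorded above: it is monotone under taking minors, and $\mu(K_t)=t-1$ (the latter follows, for instance, from $\mu(K_t)\le M(K_t)=t-\mr(K_t)=t-1$ together with a suitable Laplacian-type witness of nullity $t-1$ satisfying SAP). Combining the two, if a graph $G$ has $K_{\chi(G)}$ as a minor, then $\mu(G)\ge\mu(K_{\chi(G)})=\chi(G)-1$, which is exactly $\chi(G)\le\mu(G)+1$. Hence the conjecture would follow immediately from Hadwiger's conjecture, which asserts that every graph contains $K_{\chi(G)}$ as a minor. So the plan splits into a routine reduction (the minor-monotonicity argument above) and the genuinely hard ingredient, namely producing the required clique minor.

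Since Hadwiger's conjecture is open for large clique sizes, I would then settle the inequality unconditionally for graphs of small Colin de Verdi\`ere parameter, using the topological characterisations listed just before the statement, by a case analysis on the value of $\mu(G)$. If $\mu(G)=0$ then $G$ is edgeless and $\chi(G)=1=\mu(G)+1$. If $\mu(G)\le 1$ then $G$ is a disjoint union of paths, which satisfy $\chi(G)\le 2\le\mu(G)+1$. If $\mu(G)\le 2$ then $G$ is outerplanar, hence $3$-colourable, so $\chi(G)\le 3$. If $\mu(G)\le 3$ then $G$ is planar, and by the Four Colour Theorem $\chi(G)\le 4$. If $\mu(G)\le 4$ then $G$ is linklessly embeddable, hence has no $K_6$ minor, and by the resolution of Hadwiger's conjecture for $K_6$-minor-free graphs (itself a consequence of the Four Colour Theorem) we get $\chi(G)\le 5$. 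In each case $\chi(G)\le\mu(G)+1$, so the conjecture holds whenever $\mu(G)\le 4$.

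The main obstacle is that for $\mu(G)\ge 5$ both halves of the argument fail: there is no comparably clean structural or topological description of the graphs with $\mu(G)=k$ for $k\ge 5$, and even granting one, the corresponding case of Hadwiger's conjecture would still be needed. A direct matrix-theoretic attack would try to build, from a proper $\chi(G)$-colouring (or from a lower bound on $\chi(G)$), an explicit $L\in\mathcal{S}(G)$ with nonpositive off-diagonal entries, exactly one negative eigenvalue, the Strong Arnold Property, and nullity at least $\chi(G)-1$; but simultaneously controlling SAP together with the sign and eigenvalue constraints is precisely where such constructions tend to break down, so I do not expect this route to yield more than the cases above without new ideas.
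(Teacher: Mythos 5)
The statement you are asked to prove is labelled in the paper as a \emph{conjecture} (Colin de Verdi\`ere, 1998); the paper offers no proof of it, and it remains open. So there is no proof in the paper to compare against, and your proposal, as you yourself acknowledge, is not a proof of the statement either: it reduces the general case to Hadwiger's conjecture and then settles only the cases $\mu(G)\le 4$. That reduction is correct --- $\mu$ is minor-monotone, $\mu(K_t)=t-1$ (e.g.\ $-J$ witnesses nullity $t-1$ with one negative eigenvalue, nonpositive off-diagonal entries, and SAP since it has no zero off-diagonal entries), so a $K_{\chi(G)}$ minor forces $\mu(G)\ge\chi(G)-1$ --- and your case analysis for $\mu(G)\le 4$ via the topological characterisations, the Four Colour Theorem, and the Robertson--Seymour--Thomas resolution of the $K_6$ case of Hadwiger is the standard state of the art on this conjecture. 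Your assessment of where the argument stalls (no structural characterisation of $\mu(G)=k$ for $k\ge 5$, and the difficulty of building an SAP matrix with the required inertia directly from a colouring) is accurate.

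The genuine gap, then, is simply that the general statement is not established: everything beyond $\mu(G)\le 4$ rests on Hadwiger's conjecture, which is itself open, so no unconditional proof is obtained. This is not a flaw in your reasoning so much as a reflection of the status of the problem; the paper itself records the statement only as a conjecture and, in its concluding chapter, lists its investigation (via Hadwiger's conjecture) as future work. If you present this material, it should be framed as a partial result and a conditional reduction, not as a proof.
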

\begin{conj}\label{chi&nu} For any graph $G$,
$\chi(G)\leq\nu(G)+1.$
\end{conj}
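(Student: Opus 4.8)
The natural approach is to reduce Conjecture~\ref{chi&nu} to Hadwiger's conjecture by exploiting the minor-monotonicity of $\nu$ recorded above; the very same reduction applies verbatim to Conjecture~\ref{chi&mu}. The first ingredient is the value of $\nu$ on complete graphs. Let $J$ denote the $n\times n$ all-ones matrix. Every off-diagonal entry of $J$ is nonzero, so $\mathcal{G}(J)=K_n$; the matrix $J$ is positive semidefinite of rank $1$, so $\nul(J)=n-1$; and $J$ has SAP, because $J\circ X=X$ for every symmetric $X$, so that condition~(2) of full annihilation, $J\circ X=0$, already forces $X=0$. Hence $\nu(K_n)\geq n-1$. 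Since $\mr_+(K_n)=1$ (again witnessed by $J$), (\ref{nu&M}) gives $\nu(K_n)\leq M_+(K_n)=n-1$, so $\nu(K_n)=n-1$.

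Next, let $h(G)$ be the Hadwiger number of $G$, that is, the largest $t$ for which $K_t$ is a minor of $G$. Because $\nu$ is minor-monotone, a $K_{h(G)}$-minor of $G$ yields $\nu(G)\geq\nu(K_{h(G)})=h(G)-1$. Granting Hadwiger's conjecture, $\chi(G)\leq h(G)$, and therefore
\[
\chi(G)\leq h(G)\leq\nu(G)+1,
\]
which is the assertion of Conjecture~\ref{chi&nu}. The identical computation with $\mu$ in place of $\nu$, using $\mu(K_n)=n-1$, proves Conjecture~\ref{chi&mu}.

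The obstruction is that this argument rests on Hadwiger's conjecture, which is still open: it is a theorem only for $h(G)\leq 5$ --- the case $h(G)=4$ being equivalent to the Four Colour Theorem and $h(G)=5$ being due to Robertson, Seymour and Thomas --- so the route above establishes Conjecture~\ref{chi&nu} unconditionally only for graphs with no $K_6$-minor. An unconditional proof in full generality would seem to require producing a proper colouring of $G$ with $\nu(G)+1$ colours directly from a matrix $A\in\mathcal{S}_+(G)$ that has SAP and satisfies $\nul(A)=\nu(G)$; I do not see how to manufacture a colouring out of such purely linear-algebraic data, and I expect that to be the genuine difficulty. A more limited but realistic target, mirroring the topological characterizations of small values of $\mu$ and $\nu$ quoted above, is to confirm Conjecture~\ref{chi&nu} on the graph families pinned down by small $\nu$ --- for instance $\nu(G)\leq 2$ --- where the bound $\chi(G)\leq\nu(G)+1$ can be checked directly.
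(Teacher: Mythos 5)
The statement you were asked about is stated in the paper as a \emph{conjecture} (one of Colin de Verdi\`ere's open questions from 1998), and the paper offers no proof of it; indeed, the concluding chapter explicitly records that a proof of Hadwiger's conjecture ``would directly verify Conjecture~\ref{chi&mu} and Conjecture~\ref{chi&nu}.'' Your proposal is exactly that reduction made explicit, so it does not establish the statement: the final inequality $\chi(G)\leq h(G)\leq\nu(G)+1$ rests on Hadwiger's conjecture, which is open, and you say so yourself. The ingredients you do prove are correct and consistent with the paper's framework: the all-ones matrix $J$ lies in $\mathcal{S}_+(K_n)$, is positive semidefinite of rank $1$, and has SAP because the condition $J\circ X=0$ already forces $X=0$, giving $\nu(K_n)=n-1$ together with the upper bound from (\ref{nu&M}); and minor-monotonicity of $\nu$ then yields $\nu(G)\geq h(G)-1$. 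So the unconditional content of your argument is only the special case of graphs with no $K_6$ minor (where $\chi(G)\leq h(G)$ is known via the Four Colour Theorem and Robertson--Seymour--Thomas), plus the observation that Hadwiger implies the conjecture --- an observation the paper already makes.

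To be clear about where the genuine gap lies: nothing in your write-up is wrong, but the step $\chi(G)\leq h(G)$ is not a theorem, and without it the chain of inequalities does not close. Since you were asked for a proof of the conjecture and the conjecture remains open, the honest verdict is that no proof exists here to compare against the paper's (nonexistent) one; your contribution is a correct conditional reduction and a correct computation of $\nu(K_n)$, both of which are worth keeping as remarks but neither of which resolves the statement.
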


According to Conjecture~\ref{chi&mu} and Equations (\ref{M(G)<=Z(G)}) and (\ref{mu&M}), a weaker comparison can be stated as follows:
\newline{\bf Question.} Does the following inequality hold for any graph $G$;
$$\chi(G)\leq Z(G)+1?$$
The next section is devoted to answering this question.

\section{The zero forcing number and the chromatic number}
Since the zero forcing number of a graph is a graph parameter associated with a new type of graph colouring, it seems natural  to ask if there is any relationship between this parameter and the traditional graph colouring parameter called the chromatic number of the graph. This, along with the second last question of  Section~\ref{colin de} motivates us to find a relationship between the zero forcing number and the chromatic number of a graph.

\begin{thm}\label{Z&delta}
Let $G$  be a graph. Then we have
\[
Z(G) \geq  \max \{ \delta(H') \,\, | \,\, H'\,\, \text{is any induced subgraph of} \,\,G \}.
\]
\end{thm}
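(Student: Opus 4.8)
The plan is to show that for any induced subgraph $H'$ of $G$, we have $Z(G) \ge \delta(H')$, which immediately yields the claimed inequality with the maximum. The key observation is a monotonicity-type fact: if $H'$ is an induced subgraph of $G$, then a minimum zero forcing set of $G$, when restricted appropriately, still controls $H'$; more precisely, I would first establish the auxiliary lemma that $Z(H') \le Z(G)$ whenever $H'$ is an induced subgraph of $G$. To see this, let $Z$ be a minimum zero forcing set of $G$ and run the forcing process on $G$. The idea is that the restriction of the forcing process to $V(H')$ gives a valid forcing process on $H'$ starting from $Z \cap V(H')$ together with, for each maximal forcing chain of $G$ that enters and leaves $V(H')$ repeatedly, one extra vertex per ``re-entry'' — but a cleaner route is: when a vertex $u \notin V(H')$ forces $v \in V(H')$, we instead simply add $v$ to the initial black set. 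Since each vertex of $G$ is forced at most once, and each force into $V(H')$ from outside costs one such added vertex while removing at least that vertex from needing to be forced, a careful bookkeeping shows the resulting black set for $H'$ has size at most $|Z|$. (This is essentially \cite[Corollary 2.15]{MR2388646}-type reasoning; I would cite or reprove the induced-subgraph monotonicity of $Z$.)

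Granting $Z(H') \le Z(G)$ for every induced subgraph $H'$, the theorem reduces to the single-graph inequality $Z(H') \ge \delta(H')$, and this is exactly the Observation already recorded in the excerpt ($Z(G) \ge \delta(G)$ for every graph $G$), applied to $H'$ in place of $G$. Chaining the two facts gives $Z(G) \ge Z(H') \ge \delta(H')$ for each induced subgraph $H'$, and taking the maximum over all induced subgraphs $H'$ of $G$ completes the proof.

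The step I expect to be the main obstacle is the induced-subgraph monotonicity $Z(H') \le Z(G)$: one must argue carefully that converting each ``external force'' into an added initial black vertex does not increase the total count, and that the surviving list of forces is still a legitimate chronological list of forces for $H'$ (the colour-change rule must still apply inside $H'$, using that $H'$ is \emph{induced} so a black vertex of $H'$ has no extra white neighbours in $H'$ beyond those it has in $G$ restricted to $V(H')$). Once this bookkeeping is set up — tracking the maximal forcing chains of $G$ and how they intersect $V(H')$ — the rest is immediate. If one prefers to avoid re-deriving monotonicity of $Z$, an alternative is a direct argument: take any minimum ZFS $Z$ of $G$; if $|Z| < \delta(H')$ for some induced subgraph $H'$, consider the first force $u \to v$ in a chronological list of forces of $G$ for which $v \in V(H')$ and all of $V(H')$ seen so far is covered — then $u$ has at least $\delta(H') - 1 \ge |Z|$ neighbours in $H'$ that are still white at that stage, contradicting that $u$ forces $v$; making this precise requires choosing the right moment in the process, which is again the delicate point.
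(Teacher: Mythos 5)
There is a genuine gap, and it sits exactly where you placed your main weight: the auxiliary lemma ``$Z(H')\le Z(G)$ for every induced subgraph $H'$'' is \emph{false}. The zero forcing number is not monotone on induced subgraphs (or on vertex deletion); the paper itself exhibits a graph with $Z(G)=2$ and $Z(G-v)=3$ in Figure~\ref{M(G)=M(G-v)-1}, and a starker counterexample is the path $P_5=v_1v_2v_3v_4v_5$ with $Z(P_5)=1$, whose induced subgraph on $\{v_1,v_3,v_5\}$ is the empty graph $\overline{K_3}$ with zero forcing number $3$. Your bookkeeping scheme (``add $v$ to the initial black set whenever an external vertex forces $v$'') breaks precisely because a single maximal forcing chain of $G$ can enter $V(H')$ many times, and each re-entry costs a new initial black vertex in $H'$ while only one vertex of the chain came from $Z$; there is no way to make the count close. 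So the first route cannot be repaired, and any argument that passes through $Z(H')$ is doomed.

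Your fallback ``direct argument'' is the right general direction but is not correct as sketched: the vertex $u$ performing the first force into $V(H')$ need not lie in $H'$, so its number of neighbours in $H'$ is not bounded below by $\delta(H')$, and no contradiction from ``too many white neighbours'' is available. The paper's proof instead looks at the first vertex $v$ \emph{of $H'$ that performs a force} (after disposing of two degenerate cases: if $Z\supseteq V(H')$ the bound is immediate, and if no vertex of $H'$ ever forces, the reversal of $Z$ is a zero forcing set of the same size containing all of $V(H')$). At the moment $v$ forces, all but one of its $d_{H'}(v)$ neighbours in $H'$ are already black, and since none of them has yet performed a force, each such neighbour is currently the terminal vertex of a distinct forcing chain; together with the chain containing $v$ this accounts for at least $(d_{H'}(v)-1)+1$ distinct chains, and the number of chains equals $|Z|$, giving $Z(G)\ge d_{H'}(v)\ge\delta(H')$. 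That chain-counting step, applied to the right vertex, is the idea your proposal is missing.
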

\begin{proof}
Let $H$ be an induced subgraph of $G$ such that
\[
\delta(H)=\max \{ \delta(H')  \,\,| \,\, H'\,\, \text{is any induced subgraph of}\,\, G \}.
\]
Let $Z$ be a zero forcing set of $G$ with $|Z|=Z(G)$. If $Z$ includes all vertices of $H$, then there is nothing to prove. If not, and no vertex of $H$ performs a force in the zero forcing process with the initial black vertices in $Z$, then there is a zero forcing set $Z'$ (the reversal of $Z$) with $|Z'|=|Z|$ that includes all vertices in $H$ and again the result follows.
Now assume that  there is at least one vertex in $H$ performing a force and let $v$ be the first vertex of $H$ which performs a force in the process (this vertex is either in $Z$ or it is forced by a vertex not in $H$).
Since $v$ is the first one which performs a force, all the neighbours of $v$ in $H$, except one, should already have been forced in some distinct forcing chains. The initial vertices of these chains are in $Z$. Thus,
\[
Z(G) \geq d_H(v)-1+1 \geq \delta(H),
\]
which completes the proof.
\end{proof}
The following inequality shows the connection between the chromatic number of a graph and the minimum degree among all the induced subgraphs of the graph (see \cite[Section 4.2]{MR2368647}).
\begin{lem}\label{chi&delta} For any graph $G$,
\[
\chi(G)  \leq 1 + \max  \{ \delta(H')\,\,  |\,\,  H\,\,'\text{is any induced subgraph of}\,\,G \}.\qed
\]
\end{lem}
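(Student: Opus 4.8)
The plan is to prove this by induction on the number of vertices of $G$, exploiting the fact that $G$ is an induced subgraph of itself. Set $k = \max\{\delta(H') : H' \text{ is an induced subgraph of } G\}$; the goal is to exhibit a proper $(k+1)$-colouring of $G$.

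First I would dispose of the base case $|V(G)| = 1$, where $\chi(G) = 1$ and the inequality holds trivially. For the inductive step, observe that since $G$ itself counts as an induced subgraph of $G$, we have $\delta(G) \leq k$, so $G$ has a vertex $v$ with $d_G(v) \leq k$. Form $G - v$. Every induced subgraph of $G - v$ is also an induced subgraph of $G$, so the corresponding maximum of minimum degrees taken over $G - v$ is still at most $k$; by the induction hypothesis, $\chi(G - v) \leq k+1$.

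Next I would lift a proper $(k+1)$-colouring of $G - v$ to one of $G$. The vertex $v$ has at most $k$ neighbours, all of them lying in $G - v$, so these neighbours occupy at most $k$ of the $k+1$ available colours; hence at least one colour is free, and we assign it to $v$. This produces a proper $(k+1)$-colouring of $G$, giving $\chi(G) \leq k+1$ as required.

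The argument has essentially no real obstacle; the only points to get right are that the right-hand parameter does not increase when passing from $G$ to $G - v$ (so the induction hypothesis applies with the same $k$), and that $v$ is chosen to be a minimum-degree vertex of $G$, so the single re-coloured vertex has at most $k$ already-coloured neighbours. Alternatively, one could phrase the whole thing non-recursively: repeatedly deleting a minimum-degree vertex yields an ordering $v_1, \dots, v_n$ of $V(G)$ in which each $v_i$ has at most $k$ neighbours among $\{v_1, \dots, v_{i-1}\}$, and greedily colouring the vertices in this order uses at most $k+1$ colours.
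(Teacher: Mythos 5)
Your proof is correct and complete: this is the standard degeneracy (Szekeres--Wilf) argument, and both the recursive version and the greedy-ordering rephrasing you sketch are sound. The paper itself offers no proof of this lemma --- it is stated as a known fact with a citation to a textbook --- so there is nothing to compare against; the two points you flag as the ones "to get right" (that the parameter $k$ does not increase under vertex deletion, and that the deleted vertex is one of minimum degree in the current graph) are indeed exactly where the argument could otherwise go wrong, and you handle both correctly.
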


\begin{cor}\label{Z& chi}
For any graph $G$ we have
\[
\chi(G) \leq Z(G)+1.
\]
\end{cor}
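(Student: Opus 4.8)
The plan is to simply chain together the two results immediately preceding this corollary. First I would invoke Lemma~\ref{chi&delta}, which bounds the chromatic number in terms of the maximum over all induced subgraphs $H'$ of $G$ of the minimum degree $\delta(H')$; this gives
\[
\chi(G) \leq 1 + \max\{\delta(H') \,\,|\,\, H' \text{ is an induced subgraph of } G\}.
\]
Next I would apply Theorem~\ref{Z&delta}, which says precisely that $Z(G)$ dominates that same maximum, i.e.
\[
\max\{\delta(H') \,\,|\,\, H' \text{ is an induced subgraph of } G\} \leq Z(G).
\]
Substituting the second inequality into the first yields $\chi(G) \leq 1 + Z(G)$, which is the claim.

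There is essentially no obstacle here: the corollary is a one-line consequence of the two displayed facts, and the only thing to be careful about is that both Lemma~\ref{chi&delta} and Theorem~\ref{Z&delta} are stated with exactly the same quantity $\max\{\delta(H') : H' \text{ induced subgraph of } G\}$, so the transitivity step is legitimate with no adjustment of constants. (All of the real work was done in the proof of Theorem~\ref{Z&delta}, where the forcing-chain analysis shows that a vertex of $H$ which is the first to perform a force must have all but one of its $H$-neighbours lying on distinct forcing chains whose initial vertices are in the zero forcing set, giving $Z(G) \geq d_H(v) - 1 + 1 \geq \delta(H)$; and the proof of Lemma~\ref{chi&delta} is the standard greedy-colouring / degeneracy argument referenced in~\cite{MR2368647}.) Hence the corollary follows.
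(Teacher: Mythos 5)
Your proposal is correct and follows exactly the paper's own argument: the corollary is obtained by chaining Lemma~\ref{chi&delta} with Theorem~\ref{Z&delta}, both stated in terms of the same quantity $\max\{\delta(H')\}$ over induced subgraphs. Nothing further is needed.
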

\begin{proof}
The inequality follows directly from Theorem~\ref{Z&delta} and Lemma~\ref{chi&delta}.
\end{proof}
Note that the bound in the above inequality is tight for some graphs such as  paths and complete graphs.
The following corollary is a direct result of Theorem~\ref{Z&delta}.
\begin{cor}\label{subgraphs K_n & K_(p,q)}
Let $G$ be a graph.
\begin{enumerate}
\item If $K_n$ is a subgraph of $G$ then $Z(G)\geq n-1$.
\item If $K_{p,q}$ is a subgraph of $G$ then $Z(G)\geq \min\{p,q\}$.\qed
\end{enumerate}
\end{cor}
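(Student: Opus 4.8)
The plan is to derive both inequalities directly from Theorem~\ref{Z&delta}, which already gives $Z(G) \geq \max\{\delta(H') \mid H' \text{ an induced subgraph of } G\}$. So the only thing that needs to be done is to exhibit, in each case, an induced subgraph of $G$ whose minimum degree is at least the required quantity. The key observation throughout is that ``subgraph'' is weaker than ``induced subgraph'', but passing from a subgraph to the induced subgraph on the same vertex set can only add edges, hence can only increase degrees — which is exactly the direction we want for a lower bound.

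For part (1), I would fix a copy of $K_n$ in $G$ and let $S$ be its vertex set, $|S| = n$. Since $K_n$ already contains every edge between vertices of $S$, the induced subgraph $G[S]$ coincides with $K_n$, so $\delta(G[S]) = n-1$. Applying Theorem~\ref{Z&delta} to the particular induced subgraph $G[S]$ then gives $Z(G) \geq \delta(G[S]) = n-1$.

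For part (2), I would fix a copy of $K_{p,q}$ in $G$ with parts $X$ (of size $p$) and $Y$ (of size $q$), and set $S = X \cup Y$. The induced subgraph $G[S]$ contains all edges of this $K_{p,q}$ together with possibly some additional edges; in particular every vertex of $X$ has degree at least $q$ in $G[S]$ and every vertex of $Y$ has degree at least $p$ in $G[S]$. Hence $\delta(G[S]) \geq \min\{p,q\}$, and Theorem~\ref{Z&delta} yields $Z(G) \geq \min\{p,q\}$.

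There is essentially no obstacle here; all the substantive work is already contained in Theorem~\ref{Z&delta}, and the present statement is a genuine corollary. The only point worth flagging explicitly in the write-up is the remark above about the difference between a subgraph and the induced subgraph on its vertex set, so that the reader sees why no ``induced'' hypothesis on the clique or biclique is needed.
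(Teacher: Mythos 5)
Your proposal is correct and matches the paper's approach: the paper states this corollary as a direct consequence of Theorem~\ref{Z&delta} with no further argument, and your write-up simply supplies the (correct) details — passing to the induced subgraph on the vertex set of the clique or biclique can only increase degrees, so its minimum degree is at least $n-1$ or $\min\{p,q\}$ respectively.
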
 
In fact there is a stronger result than the first statement of Corollary~\ref {subgraphs K_n & K_(p,q)} based on the minor monotonicity property of $\xi(G)$ and (\ref{zi&M}) and (\ref {xi(K_n)=n-1}) which is as follows:
\begin{prop}
If $G$ has a $K_n$ minor then 
\[
Z(G)\geq n-1.   
\]
\end{prop}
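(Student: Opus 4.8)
The plan is to obtain the bound directly from the minor\-monotonicity of the parameter $\xi$, together with the inequality chain already recorded in (\ref{zi&M}). Nothing new needs to be constructed; the whole argument is an assembly of facts stated earlier in the excerpt.

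First I would recall that $\xi$ is a minor\-monotone graph parameter \cite[Corollary 2.5]{barioli2005variant}: if $H$ is a minor of $G$, then $\xi(H)\leq\xi(G)$. Since by hypothesis $G$ has a $K_n$ minor, applying this with $H=K_n$ gives $\xi(K_n)\leq\xi(G)$. Next I would substitute the evaluation $\xi(K_n)=n-1$ from (\ref{xi(K_n)=n-1}) to conclude $\xi(G)\geq n-1$. Finally, the chain $\nu(G)\leq\xi(G)\leq M(G)\leq Z(G)$ from (\ref{zi&M}) yields $Z(G)\geq\xi(G)\geq n-1$, which is exactly the claim. One could equally route the argument through $\mu$ instead of $\xi$, since $\mu$ is also minor\-monotone, $\mu(K_n)=n-1$ (e.g.\ witnessed by $L=-J$, which has a single negative eigenvalue and the Strong Arnold Property), and $\mu(G)\leq Z(G)$ by (\ref{zi&mu}).

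There is no genuine obstacle here: all the substance is packaged into the minor\-monotonicity of $\xi$, which is quoted from \cite{barioli2005variant}, and into the already established identity $\xi(K_n)=n-1$. The only point that requires a moment of care is the orientation of the monotonicity inequality: $K_n$ plays the role of the \emph{minor} and $G$ the role of the larger graph, so the form actually used is $\xi(\text{minor})\leq\xi(\text{graph})$, which is precisely what minor\-monotonicity asserts for this parameter. It is worth remarking that this statement genuinely strengthens the first part of Corollary~\ref{subgraphs K_n & K_(p,q)}, since a $K_n$ subgraph is in particular a $K_n$ minor but not conversely; the subgraph version can be seen as the special case of the proof in which the contraction operations are trivial.
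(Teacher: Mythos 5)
Your proposal is correct and follows essentially the same route as the paper, which derives this proposition precisely from the minor-monotonicity of $\xi$, the evaluation $\xi(K_n)=n-1$, and the chain $\xi(G)\leq M(G)\leq Z(G)$. The alternative detour through $\mu$ and your remark about the orientation of the monotonicity inequality are fine but add nothing beyond the paper's argument.
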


Also we can improve the bound in the second statement of Corollary~\ref {subgraphs K_n & K_(p,q)} as follows.
\begin{prop}
If the complete bipartite graph $K_{p,q}$ is a  subgraph of $G$ then,
\[Z(G)\geq \min\{p,q\}+1,\] 
provided that $(p,q)\neq (1,1), (1,2), (2,1), (2,2)$.
\end{prop}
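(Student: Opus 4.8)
The plan is to normalise first: interchanging $p$ and $q$ if necessary, assume $p\le q$; the excluded pairs are exactly those with $\max\{p,q\}\le 2$, so the hypothesis becomes $q\ge 3$ and the goal is $Z(G)\ge p+1$. The case $p=1$ I would handle separately: $K_{1,q}\subseteq G$ with $q\ge 3$ means $G$ has a vertex of degree at least $3$, so $G$ is not a disjoint union of paths; but $Z(G)\le 1$ would give $\mu(G)\le Z(G)\le 1$ by~(\ref{zi&mu}), hence $G$ \emph{is} a disjoint union of paths, a contradiction, so $Z(G)\ge 2=p+1$. Henceforth take $p\ge 2$, $p\le q$, $q\ge 3$.

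Suppose for contradiction that $G$ has a zero forcing set $Z$ with $|Z|=Z(G)\le p$. By Corollary~\ref{subgraphs K_n & K_(p,q)}(2) we have $Z(G)\ge\min\{p,q\}=p$, so $|Z|=p$, and the maximal forcing chains of a fixed chronological list of forces form $p$ paths partitioning $V(G)$. Fix a copy of $K_{p,q}$ in $G$ with parts $X$ ($|X|=p$), $Y$ ($|Y|=q$), and let $H=G[X\cup Y]$; since every vertex of $Y$ is adjacent in $H$ to all of $X$ and every vertex of $X$ to all of $Y$, $\delta(H)\ge p$. Now I would mimic the proof of Theorem~\ref{Z&delta}. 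First, some vertex of $H$ must perform a force: otherwise every vertex of $H$ is in $Z$ or is terminal in its chain, hence lies in the reversal $Z'$, so by Theorem~\ref{Reversal is a ZFS} we get $p+q=|V(H)|\le|Z'|=p$, absurd. Let $v$ be the first vertex of $H$ to perform a force, say $v\to x$. As in that proof, the $d_H(v)-1$ neighbours of $v$ in $H$ other than $x$ (all $d_H(v)$ of them if $x\notin H$) are already black when $v$ forces, each is the earliest $H$-vertex on its chain, these chains are pairwise distinct, and the chain of $v$ is distinct from all of them — yielding at least $d_H(v)$ distinct chains when $x\in H$, and at least $d_H(v)+1$ when $x\notin H$.

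If $x\notin H$, or if $d_H(v)\ge p+1$, this contradicts $|Z|=p$ and we are done. The only case left is $x\in H$ and $d_H(v)=p$; since a vertex of $X$ has $H$-degree at least $q\ge p$, this means $v\in Y$ with $N_H(v)=X$ (the alternative $v\in X$, $N_H(v)=Y$ occurs only when $q=p$ and is handled by swapping $X$ and $Y$), and then $x\in N_H(v)=X$. So we have pinned down exactly $p$ chains, whose earliest $H$-vertices are precisely $\{v\}\cup(X\setminus\{x\})$, and after the force $v\to x$ all of $X$ is black.

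The remaining task — which I expect to be the main obstacle — is to extract a $(p+1)$-st chain from the large part $Y$. The underlying mechanism is that, once all of $X$ is black, no vertex of $X$ can perform a force until at most one vertex of $Y$ is still white; hence among the at least $q-1\ge 2$ vertices of $Y$ still white after $v\to x$, all but possibly the last to be coloured are forced by a vertex of $Y$ or by a vertex outside $H$. Chasing the chronology, at least one of these vertices of $Y\setminus\{v\}$ is the earliest $H$-vertex of its chain, and that chain is therefore none of the $p$ already counted (whose earliest $H$-vertices all lie in $\{v\}\cup X$), producing the contradiction. Making this chronological bookkeeping airtight — in particular ruling out that, after $v\to x$, the remaining white vertices of $Y$ are all absorbed into the $p$ chains already found — is the delicate point, and it is exactly where $q\ge 3$ is needed: for $q=2$, i.e. $K_{2,2}=C_4$, the argument genuinely fails, which is why $(2,2)$ is excluded.
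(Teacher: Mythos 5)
Your proposal is correct and takes essentially the paper's route: both proofs fix the first vertex $v$ of the $K_{p,q}$ to perform a force and count the pairwise-distinct forcing chains whose earliest $H$-vertices are $v$ and its already-black neighbours (your reduction to the single surviving case $v\in Y$, $N_H(v)=X$, $x\in X$ is in fact tidier than the paper's split on $v\in X$ versus $v\in Y$, the latter being dismissed with ``a similar argument''). The step you flag as delicate does close: the first vertex $w$ of $Y\setminus\{v\}$ to turn black can have no $H$-vertex preceding it on its chain --- a preceding vertex of $X$ (and a preceding $v$ would force $x\in X$ to precede $w$ as well) would have to perform a force while $w$ and at least one other of the $q-1\ge 2$ vertices of $Y\setminus\{v\}$ are still white, which is impossible for a vertex adjacent to all of $Y$ --- so $w$ is the earliest $H$-vertex of its own chain, which therefore cannot be any of the $p$ chains already accounted for by $\{v\}\cup(X\setminus\{x\})$, giving the required $(p+1)$-st chain.
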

\begin{proof}
Let the graph $K_{p,q}$ be a subgraph of the graph $G$. Assume that $(X,Y)$ are the partitions of $K_{p,q}$ with $|X|=p$ and $|Y|=q$ where $p\leq q$. 
Let $Z$ be a zero forcing set of $G$ with $|Z|=Z(G)$. If $Z$ includes all vertices of $K_{p,q}$, there is nothing to prove. If not and no vertex of $K_{p,q}$ performs a force in the zero forcing process with the initial black vertices in $Z$, then there is a zero forcing set $Z'$ (the reversal of $Z$) with $|Z'|=|Z|$ that includes all vertices in $K_{p,q}$ and the inequality follows.

Now assume that  there is at least one vertex in $K_{p,q}$ performing a force. Let $v$ be the first vertex of $K_{p,q}$ which is forcing a vertex $u$ in a zero forcing process starting with the vertices in $Z$ ($v$ is either in $Z$ or it is forced by a vertex not in $K_{p,q}$). First assume that $v\in X$. 
If $u$ is not a vertex of $K_{p,q}$, since $v$ is the first vertex of $K_{p,q}$ which performs a force, all $q$ neighbours of $v$ in $K_{p,q}$ should already have been forced in some distinct forcing chains and the result follows. If $u$ is in $K_{p,q}$, then there are $q-1$ distinct forcing chains having the neighbours of $v$ in $K_{p,q}$ except $u$ as their end-points. If $u$ has no other neighbours in $X$, then $\min\{p,q\}=1$ and the inequality follows from the fact that $Z(G)\geq q\geq 2=\min\{p,q\}+1$. If not and $u$ is the second vertex of $K_{p,q}$ that performs a force, this requires at least all the neighbours of $u$ in $K_{p,q}$, except one, are already coloured black in some distinct forcing chains. Therefore, $Z(G)\geq q+1\geq \min\{p,q\}+1$. If not and the forcing chain containing $v$ ends with $u$, then there is at least one more forcing chain to colour the rest of the vertices in $X$. Thus $Z(G)\geq q+1\geq \min\{p,q\}+1$. A similar argument applies when $v\in Y$.         
\end{proof}


\chapter{Zero forcing number and graph operations}\label{zfs_and_graph_operations}

In this chapter we study effects of some graph operations, including operations on a single graph or operations on several graphs, on the zero forcing number of the resulting graph.    
The impact of the graph operations on zero forcing number shows this parameter, similar to maximum nullity, is not monotone on subgraphs.
 \section{Simple operations}\label{Simple operations}
 Deleting a vertex from a graph can increase or decrease the zero forcing number, but by at most one in either direction. It is somehow predictable that vertex  deletion could either decrease the zero forcing number or  keep it the same, but surprisingly, there are examples in which vertex deletion increases the zero forcing number.   
\begin{prop}[{{see \cite[Theorem 6.4]{owens2009properties}}}]\label{G-v}
Let $v$ be a vertex of the graph $G$. Then $Z(G - v) - 1\leq Z(G) \leq Z(G - v) + 1$.\qed
\end{prop}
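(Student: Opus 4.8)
The plan is to prove the two inequalities $Z(G-v)-1 \le Z(G)$ and $Z(G) \le Z(G-v)+1$ separately, since neither direction is obviously symmetric.

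For the upper bound $Z(G) \le Z(G-v)+1$, I would start with a minimum zero forcing set $Z'$ of $G-v$, so $|Z'| = Z(G-v)$, and claim that $Z = Z' \cup \{v\}$ is a zero forcing set of $G$. The idea is that inside $G$, once $v$ is black, any force that was legal in $G-v$ is still ``almost'' legal in $G$: the only obstruction is that a forcing vertex $u$ might now have $v$ as an additional white neighbour. But $v$ starts black in $Z$, so whenever $u \to w$ was performed in $G-v$, the vertex $u$ still has exactly one white neighbour (namely $w$) in $G$, because the only new neighbour $v$ is already black. Hence the entire chronological list of forces for $Z'$ in $G-v$ can be replayed in $G$, colouring all of $V(G-v)$, and $v$ is already black, so $Z$ forces all of $G$. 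This gives $Z(G) \le |Z'|+1 = Z(G-v)+1$.

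For the lower bound $Z(G-v) - 1 \le Z(G)$, equivalently $Z(G-v) \le Z(G)+1$, I would take a minimum zero forcing set $Z$ of $G$ and show that $(Z \setminus \{v\}) \cup \{w\}$ is a zero forcing set of $G-v$, where $w$ is a suitably chosen neighbour of $v$ (or, in degenerate cases, $Z\setminus\{v\}$ itself works, or we just add any vertex). The natural approach: run the forcing process for $Z$ in $G$. Consider the force in the chronological list in which $v$ is involved — either $v$ forces some vertex, or some vertex forces $v$, or $v$ performs no force and lies in $Z$. One wants to delete $v$ from the picture and argue that the remaining black set in $G-v$, after adding at most one extra vertex to compensate for the ``broken'' force involving $v$, still forces everything. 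The extra vertex is needed precisely to replace the role $v$ played: if $x \to v$ was a force and later $v \to y$, then removing $v$ breaks the chain $\cdots x \to v \to y \cdots$ at one spot, and colouring $y$ (or $x$) black directly repairs it. Using Theorem~\ref{uniqueness} (uniqueness of the derived set) and the reversal machinery of Theorem~\ref{Reversal is a ZFS} should let me normalize which force involves $v$, e.g. by reversing so that $v$ is an endpoint of its forcing chain.

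The main obstacle will be the lower bound direction, specifically handling the bookkeeping of the one ``broken'' force cleanly: after removing $v$ and its incident edges, a vertex $u$ that previously forced across an edge to $v$ may suddenly have a different number of white neighbours, so I must be careful that replaying the surviving forces in $G-v$ remains valid and that a single added black vertex genuinely suffices. I expect the cleanest route is to split into cases according to the position of $v$ in its maximal forcing chain (interior, initial-and-active, or initial-and-inactive), use a reversal to reduce the number of cases, and in each case exhibit the replacement vertex explicitly and verify the forces survive; the interior case, where $v$ both gets forced and then forces, is the one requiring the compensating vertex and the most care.
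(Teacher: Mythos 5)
The paper states Proposition~\ref{G-v} without proof, citing \cite[Theorem 6.4]{owens2009properties}, so there is no in-text argument to compare against; judged on its own, your proposal is correct and is essentially the standard proof of this spread bound. The upper-bound half is complete as written: adding $v$ to a minimum zero forcing set of $G-v$ works because $v$, being black from the start, can never be the offending extra white neighbour. The lower-bound half is also sound: forces $u\rightarrow w$ with $u,w\neq v$ survive deletion of $v$ (removing $v$ only deletes a \emph{black} neighbour of $u$ at the moment $u$ forces, since $u$ had a unique white neighbour), so replaying the chronological list and skipping the at most two forces involving $v$ leaves exactly one vertex to compensate for, namely the vertex $y$ with $v\rightarrow y$ in the interior case. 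One small caution: the reversal of Theorem~\ref{Reversal is a ZFS} will not let you avoid that case, since an interior vertex of a forcing chain remains interior in the reversed process; but the direct three-case analysis you describe (in $Z$ and inactive, endpoint of a chain, interior) goes through without it.
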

The following are examples of all three possible cases in Proposition~\ref{G-v}.
\begin{itemize}
\item Decrease:
\bigskip
\begin{figure}[H]
\centering
\unitlength=1pt
\begin{picture}(150,0)
\put(0,0){\circle*{6}}
\put(-25,-50){\circle*{6}}
\put(25,-50){\circle{6}}

\put(1.7,-2.7){\line(1,-2){22.5}}
\put(-1.7,-2.7){\line(-1,-2){22.5}}
\put(-22.2,-50){\line(1,0){44}}

\put(0,8){\makebox(0,0){$v$}}
\put(0,-70){\makebox(0,0){$Z(G)=2$}}


\put(125,-25){\circle*{6}}
\put(175,-25){\circle{6}}
\put(125,-25){\line(1,0){47}}

\put(150,-70){\makebox(0,0){$Z(G-v)=1$}}

\end{picture}
\vspace{2cm}
\caption{An example of a graph for which removing a vertex decreases the zero forcing number}
\end{figure}

\item No change:
\bigskip

\begin{figure}[H]
\centering
\unitlength=1pt
\begin{picture}(150,0)
\put(-25,0){\circle*{6}}
\put(25,0){\circle{6}}

\put(-22.2,0){\line(1,0){44}}

\put(-25,8){\makebox(0,0){$v$}}
\put(0,-20){\makebox(0,0){$Z(G)=1$}}


\put(125,0){\circle*{6}}
\put(125,-20){\makebox(0,0){$Z(G-v)=1$}}

\end{picture}
\vspace{.5cm}
\caption{An example of a graph for which removing a vertex does not change the zero forcing number}
\end{figure}

\item Increase: 
\medskip

\begin{figure}[H]
\centering
\unitlength=1pt
\begin{picture}(150,50)

\put(-25,50){\circle{6}}
\put(25,50){\circle*{6}}
\put(-22.2,50){\line(1,0){44}}

\put(-50,0){\circle{6}}
\put(0,0){\circle*{6}}
\put(-47.2,0){\line(1,0){44}}

\multiput(-25,-50)(50,0){2}{\circle{6}}
\put(-22.2,-50){\line(1,0){44}}

\put(1.7,2.7){\line(1,2){22.2}}
\put(-1.7,2.7){\line(-1,2){22.2}}

\put(1.7,-2.7){\line(1,-2){22.2}}
\put(-1.7,-2.7){\line(-1,-2){22.2}}

\put(-48.3,2.7){\line(1,2){22.2}}
\put(-48.3,-2.7){\line(1,-2){22.2}}

\put(-50,8){\makebox(0,0){$v$}}
\put(0,-70){\makebox(0,0){$Z(G)=2$}}


\put(125,50){\circle{6}}
\put(175,50){\circle*{6}}
\put(127.8,50){\line(1,0){47}}
\put(150,0){\circle*{6}}
\put(125,-50){\circle*{6}}
\put(175,-50){\circle{6}}

\put(125,-50){\line(1,0){47}}

\put(150,0){\line(1,2){25}}
\put(148.3,2.7){\line(-1,2){22.2}}

\put(150,0){\line(1,-2){23.7}}
\put(150,0){\line(-1,-2){25}}

\put(150,-70){\makebox(0,0){$Z(G-v)=3$}}

\end{picture}
\vspace{2.3cm}
\caption{An example of a graph for which removing a vertex increases the zero forcing number}
\label{M(G)=M(G-v)-1}
\end{figure}
\end{itemize}

It is not hard to observe the following.
\begin{prop}
Let $G$ be a graph with a vertex labeled $v$. 
\begin{enumerate}[(a)]
\item If $v\in Z$ where $Z$ is a minimal ZFS of $G$, then $Z(G)-1\leq Z(G-v)\leq Z(G)$.
\item If $Z(G-v)=Z(G)+1$, then there is no minimal ZFS of $G$ including $v$. \qed
\end{enumerate}
\end{prop}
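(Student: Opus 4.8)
The plan is to prove part (a) and then obtain part (b) as its contrapositive. In part (a) the lower bound $Z(G)-1\le Z(G-v)$ needs no hypothesis on $v$: it is precisely the inequality $Z(G)\le Z(G-v)+1$ already recorded in Proposition~\ref{G-v}. So the real content is the upper bound $Z(G-v)\le Z(G)$, and that is where the assumption $v\in Z$ for a minimum zero forcing set $Z$ (so $|Z|=Z(G)$) gets used.

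First I would fix a chronological list of forces for the forcing process of $G$ started from $Z$. Since each vertex forces at most one other vertex, either $v$ performs no force, in which case I set $Z'=Z\setminus\{v\}$, or $v$ performs exactly one force $v\to u$, in which case I set $Z'=(Z\setminus\{v\})\cup\{u\}$; here $u\notin Z$ because $u$ is white when it is forced, so $|Z'|=|Z|$. In either case $Z'\subseteq V(G-v)$ and $|Z'|\le|Z|=Z(G)$. The claim is that $Z'$ is a zero forcing set for $G-v$; granting this, $Z(G-v)\le|Z'|\le Z(G)$, as wanted.

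The heart of the proof is this claim, and the key observation is that deleting a vertex can only shrink a neighbourhood, never enlarge it, and so it can never turn a black neighbour into a white one or create a new white neighbour. Thus, whenever a vertex $w\ne v$ performs a force $w\to x$ at some stage of the $G$-process, every neighbour of $w$ other than $x$ is black at that moment, so $v$ (if adjacent to $w$) is black there, and hence $x$ is still the unique white neighbour of $w$ in $G-v$ at the corresponding stage. I would then replay the chronological list of $G$ in its original order, deleting the now-redundant force $v\to u$ if present, and show by induction on position in the list that each surviving force $w\to x$ remains legal in $G-v$ when started from $Z'$: the induction hypothesis guarantees that every vertex black at the corresponding stage of the $G$-process except $v$ is already black in the $G-v$-process, so $w$ is black and, by the observation, $x$ is its only white neighbour. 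Once the list is exhausted every vertex of $V(G)\setminus\{v\}$ is black, so $Z'$ is a zero forcing set of $G-v$. Proposition~\ref{uniqueness} guarantees the derived set does not depend on the order of replay, which keeps this bookkeeping harmless.

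Part (b) then follows at once by contraposition: if some minimum zero forcing set of $G$ contained $v$, part (a) would give $Z(G-v)\le Z(G)$, contradicting $Z(G-v)=Z(G)+1$. I expect the main (and only mildly delicate) obstacle to be the induction in the third paragraph: one must confirm that removing $v$ never converts a previously illegal force into a legal one in a way that desynchronises the two processes. Since neighbourhoods only shrink under deletion, a black vertex stays surrounded by black vertices and no spurious forces are created, so this is more a matter of careful statement than of genuine difficulty.
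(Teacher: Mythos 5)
Your argument is correct. Note that the paper states this proposition without proof (it is introduced with ``It is not hard to observe the following''), so there is nothing to compare against; your replay-the-chronological-list argument is the natural one and is presumably what the authors had in mind. The one small point worth making explicit in the write-up is that since $v\in Z$ is initially black, no force in the $G$-process ever has $v$ as its \emph{target}, so the only force that can involve $v$ at all is the single possible force $v\rightarrow u$; this is what guarantees that every remaining force $w\rightarrow x$ has $w,x\neq v$ and hence survives in $G-v$, and your induction (together with Proposition~\ref{uniqueness}) then closes the argument. Part (b) by contraposition is likewise correct.
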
 

 
 Similarly edge deletion can change the zero forcing number by at most $1$.  
\begin{prop}[{{see \cite[Theorem 5.2]{owens2009properties}}}]\label{Z(G-e)}
 Let $G$ be a connected graph. If $e = uv$ is an edge of $G$, then
\[Z(G)-1 \leq Z(G-e) \leq Z(G) + 1. \qed\] 
\end{prop}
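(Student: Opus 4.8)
The plan is to prove the two inequalities $Z(G-e)\le Z(G)+1$ and $Z(G)\le Z(G-e)+1$ separately. In each case I would start from a minimum zero forcing set of one of the two graphs, together with a chronological list of forces $L$ for it, adjoin a single well-chosen endpoint of $e=uv$, and then verify that the enlarged set is a zero forcing set of the other graph by replaying the forces of $L$ in it. The one observation that drives everything is that $G$ and $G-e$ have the same vertex set and identical neighbourhoods except at $u$ and $v$, so the only forces in $L$ that can behave differently in the other graph are those performed by $u$ or by $v$.

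For $Z(G-e)\le Z(G)+1$, let $Z$ be a minimum zero forcing set of $G$ with chronological list $L$, and replay $L$ in $G-e$. Any force $w\to x$ with $w\notin\{u,v\}$ remains valid since $N_{G-e}(w)=N_G(w)$; a force $u\to x$ with $x\ne v$ remains valid because $v\in N_G(u)$ was already black at that step, hence stays $u$'s only white neighbour after $v$ is dropped from its neighbourhood; likewise for $v\to x$ with $x\ne u$. So the only force of $L$ that can fail is $u\to v$, or (exclusively) $v\to u$. Since $L$ uses the edge $uv$ at most once, I would adjoin to $Z$ exactly the vertex forced across $e$ (namely $v$ if $u\to v\in L$, $u$ if $v\to u\in L$, and nothing if $L$ never forces across $e$); that vertex then starts black, the offending force becomes a no-op, and the replay colours all of $G-e$, giving a zero forcing set of $G-e$ of size at most $|Z|+1$.

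For $Z(G)\le Z(G-e)+1$, let $W$ be a minimum zero forcing set of $G-e$ with chronological list $L$, and consider replaying $L$ in $G$ starting from $W\cup\{u\}$, and separately from $W\cup\{v\}$. Again only forces performed by $u$ or $v$ can fail, and since $G$ merely adds the single neighbour $v$ to $u$ (and $u$ to $v$), a force $u\to x$ can fail only if $v$ is still white when it is performed, and a force $v\to x$ only if $u$ is still white. With $W\cup\{u\}$ the vertex $u$ is black throughout, so no force of $v$ ever fails, and $W\cup\{u\}$ fails to be a zero forcing set of $G$ only if $u$ performs a force, at step $\phi(u)$ say, while $v$ is not yet black, i.e.\ $\tau(v)\ge\phi(u)$, where $\tau(\cdot)$ records the step at which a vertex turns black under $L$ and $\phi(\cdot)$ the step at which it forces. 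Symmetrically, $W\cup\{v\}$ fails only if $\tau(u)\ge\phi(v)$. If both failed we would obtain $\tau(u)\ge\phi(v)>\tau(v)\ge\phi(u)>\tau(u)$ (using that a vertex turns black strictly before it forces), a contradiction; hence one of $W\cup\{u\}$, $W\cup\{v\}$ is a zero forcing set of $G$, of size $|W|+1$.

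The routine ingredient, used in both halves, is the bookkeeping claim that as long as no force has yet failed, the black set of the replayed process always contains the black set of the original process (plus the one adjoined vertex), so that finishing the replay really does blacken every vertex. The step I expect to require the most care is the timing argument in the second inequality: one must insist that the replay follows $L$ in its original order and never performs a force out of turn, so that each vertex turns black at exactly the step $\tau(\cdot)$ it did originally and the inequalities among the $\tau$'s and $\phi$'s transfer faithfully. Everything else reduces to the elementary neighbourhood checks above, and Theorem~\ref{Reversal is a ZFS} can be invoked to handle any degenerate cases in which all of the relevant vertices already lie in the chosen set.
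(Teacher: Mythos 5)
Your argument is correct. Note, however, that the thesis does not actually prove this proposition: it is stated with a \qed and attributed to \cite[Theorem 5.2]{owens2009properties}, so there is no in-paper proof to match. What the thesis does prove is the refinement immediately following (the proposition distinguishing whether some minimal forcing process sends a force across $uv$), and its argument there is phrased in terms of cutting and re-merging forcing chains: a chain $P$ using the edge $uv$ splits as $P_1\cup P_2$ when $e$ is deleted, and conversely chains of $G-e$ are spliced back together along $uv$. Your route is different in flavour: you replay a fixed chronological list of forces in the modified graph, observe that only forces performed by $u$ or $v$ can be affected, repair the single possible failure by adjoining one endpoint of $e$, and, for the harder direction $Z(G)\le Z(G-e)+1$, use the timing quantities $\tau(\cdot)$ and $\phi(\cdot)$ to show that at least one of $W\cup\{u\}$, $W\cup\{v\}$ must succeed. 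The key inequalities $\tau(v)\ge\phi(u)$, $\tau(u)\ge\phi(v)$, combined with $\tau(x)<\phi(x)$, do yield the required contradiction, and your bookkeeping claim (the replayed black set always contains the original one) is the standard monotonicity of the colour-change rule, so the replay genuinely terminates with all vertices black. Your approach buys a clean, self-contained proof that never mentions chains; the chain-splicing approach of the thesis buys the finer case analysis (when the bound can be sharpened to $Z(G-e)\le Z(G)$ versus $Z(G-e)\ge Z(G)$). Two minor remarks: the appeal to Theorem~\ref{Reversal is a ZFS} at the end is unnecessary (if both $u$ and $v$ already lie in the relevant set, no force by either can fail), and you never use the connectivity hypothesis, which is indeed not needed for these inequalities.
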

The following are examples of all three possible cases in Proposition~\ref{Z(G-e)}.
\begin{itemize}
\item Decrease:
\bigskip

\begin{figure}[H]
\centering
\unitlength=1pt
\begin{picture}(150,0)
\put(0,0){\circle*{6}}
\put(-25,-50){\circle*{6}}
\put(25,-50){\circle{6}}

\put(0,0){\line(1,-2){23.6}}
\put(0,0){\line(-1,-2){25}}
\put(-22.2,-50){\line(1,0){44}}

\put(20,-23){\makebox(0,0){$e$}}
\put(0,-70){\makebox(0,0){$Z(G)=2$}}


\put(150,0){\circle{6}}
\put(125,-50){\circle{6}}
\put(175,-50){\circle*{6}}

\put(148.3,-2.7){\line(-1,-2){22.2}}
\put(127.8,-50){\line(1,0){45}}

\put(20,-23){\makebox(0,0){$e$}}
\put(150,-70){\makebox(0,0){$Z(G-e)=1$}}

\end{picture}
\vspace{2.3cm}
\caption{An example of a graph for which removing an edge decreases the zero forcing number}
\end{figure}

\newpage
\item No change:
\bigskip

\begin{figure}[H]
\centering
\unitlength=1pt
\begin{picture}(200,0)
\put(0,0){\circle{6}}
\put(50,0){\circle*{6}}
\put(0,-50){\circle{6}}
\put(50,-50){\circle*{6}}

\put(2.2,-2.2){\line(1,-1){45.5}}

\put(2.8,0){\line(1,0){44}}
\put(2.8,-50){\line(1,0){44}}

\put(0,-2.8){\line(0,-1){44}}
\put(50,-2.8){\line(0,-1){44}}

\put(30,-22){\makebox(0,0){$e$}}
\put(25,-70){\makebox(0,0){$Z(G)=2$}}


\put(150,0){\circle{6}}
\put(200,0){\circle*{6}}
\put(150,-50){\circle{6}}
\put(200,-50){\circle*{6}}

\put(152.8,0){\line(1,0){44}}
\put(152.8,-50){\line(1,0){44}}

\put(150,-2.8){\line(0,-1){44}}
\put(200,-2.8){\line(0,-1){44}}

\put(175,-70){\makebox(0,0){$Z(G-e)=2$}}

\end{picture}
\vspace{2.3cm}
\caption{An example of a graph for which 
removing an edge does not change the zero forcing number}
\end{figure}

\item Increase:
\bigskip

\begin{figure}[H]
\centering
\unitlength=1pt
\begin{picture}(150,50)

\multiput(-25,50)(50,0){2}{\circle{6}}
\put(-22.2,50){\line(1,0){44}}

\put(0,0){\circle*{6}}

\put(-25,-50){\circle*{6}}
\put(25,-50){\circle{6}}
\put(-22.2,-50){\line(1,0){44}}

\put(1.6,2.6){\line(1,2){22.3}}
\put(-1.6,2.6){\line(-1,2){22.3}}

\put(1.6,-2.6){\line(1,-2){22.3}}
\put(-1.6,-2.6){\line(-1,-2){22.3}}

\put(25,47.2){\line(0,-1){94}}

\put(33,0){\makebox(0,0){$e$}}
\put(0,-70){\makebox(0,0){$Z(G)=2$}}


\put(125,50){\circle{6}}
\put(175,50){\circle*{6}}
\put(127.8,50){\line(1,0){44.1}}

\put(150,0){\circle*{6}}

\put(125,-50){\circle*{6}}
\put(175,-50){\circle{6}}
\put(127.8,-50){\line(1,0){44}}

\put(151.6,2.6){\line(1,2){22.3}}
\put(148.4,2.6){\line(-1,2){22.3}}

\put(151.6,-2.6){\line(1,-2){22.3}}
\put(148.4,-2.6){\line(-1,-2){22.3}}

\put(150,-70){\makebox(0,0){$Z(G-e)=3$}}

\end{picture}
\vspace{2.3cm}
\caption{An example of a graph for which 
removing an edge increases the zero forcing number}
\end{figure}
\end{itemize}

It is not hard to observe the following
\begin{prop}
Let $e=uv$ be an edge in a graph $G$. If there is a minimal zero forcing process for $G$ in which neither $u$ forces $v$ nor $v$ forces $u$, then $Z(G)-1\leq Z(G-e)\leq Z(G)$. Otherwise $Z(G)\leq Z(G-e)\leq Z(G)+1$. 
\end{prop}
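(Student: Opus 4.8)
\noindent\textit{Proof proposal.}
The bounds $Z(G-e)\le Z(G)+1$ and $Z(G-e)\ge Z(G)-1$ are both part of Proposition~\ref{Z(G-e)}, so only two extra inequalities need to be established: $Z(G-e)\le Z(G)$ in the first case and $Z(G-e)\ge Z(G)$ in the second.

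\textit{First case.} I would take a minimum zero forcing set $Z$ of $G$ together with a chronological list of forces $\mathcal{L}$ for $Z$ in which neither $u\to v$ nor $v\to u$ occurs, and show that $\mathcal{L}$ is still a valid list of forces when carried out in $G-e$. Since passing from $G$ to $G-e$ only deletes $v$ from $N(u)$ and $u$ from $N(v)$, running $\mathcal{L}$ in $G-e$ produces, step by step, the same black sets as in $G$: a force $w\to x$ of $\mathcal{L}$ could be spoiled only if $w\in\{u,v\}$ and the deleted neighbour were the unique white neighbour of $w$, but $x\ne v$ when $w=u$ and $x\ne u$ when $w=v$ (which is exactly what $\mathcal{L}$ avoids), and every neighbour of $w$ other than $x$ is already black at that moment. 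Hence $Z$ is a zero forcing set of $G-e$ and $Z(G-e)\le|Z|=Z(G)$.

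\textit{Second case.} Assume the second hypothesis and, for contradiction, that $Z(G-e)<Z(G)$; by Proposition~\ref{Z(G-e)} this forces $Z(G-e)=Z(G)-1$. I would manufacture a minimum zero forcing set of $G$ carrying a list of forces with neither $u\to v$ nor $v\to u$, contradicting the hypothesis. Start from a minimum zero forcing set $W$ of $G-e$ (so $|W|=Z(G)-1$) with a fixed chronological list of forces and its forcing chains; since $u$ and $v$ are non-adjacent in $G-e$, the forcee of $u$ (if any) is not $v$ and the forcee of $v$ (if any) is not $u$. Relabel the ends of $e$ so that $u$ precedes $v$ on any common chain and so that $u$ forces before $v$ if both perform a force (compatible conventions). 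If neither $u$ nor $v$ performs a force, replaying the list in $G$ from $W$ is valid --- the only obstruction would be a force issued by $u$ with $v$ white, or by $v$ with $u$ white --- so $W$ is already a zero forcing set of $G$, contradicting $|W|<Z(G)$. Otherwise $u$ performs a force; letting $u'$ be the vertex right after $u$ on its chain, I set $\hat W=W\cup\{u'\}$ (of size $Z(G)$, as $u'\notin W$), split that chain at $u$ into $(\dots,u)$ and $(u',\dots)$, and replay the modified list in $G$ from $\hat W$, dropping the step $u\to u'$. This replay is valid: $u$ no longer forces, and when $v$ performs its force the vertex $u$ is already black, because $u$ is coloured strictly before the step at which it originally forced, which by our labelling precedes the step at which $v$ forces. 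The replayed list avoids $u\to v$ (as $u$ forces nothing) and $v\to u$ (the forcee of $v$ is not $u$, and the predecessor of $u$ on its chain is not $v$). Thus $\hat W$ is a minimum zero forcing set of $G$ carrying such a list --- the required contradiction.

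\textit{Main obstacle.} The first case is a routine replay. The crux is the second: the edge $e$ can invalidate a replayed process in two independent ways --- a force issued by $u$ across $e$, and a force issued by $v$ across $e$ while $u$ is still white --- and both must be eliminated at a cost of only one extra vertex; splitting the chain through $u$ disposes of the first, and the ordering convention (that $u$ forces before $v$) disposes of the second. Checking that $u'\notin W$, so that $\hat W$ has exactly $Z(G)$ vertices, and that the new list genuinely avoids both forbidden forces, are the remaining points of care.
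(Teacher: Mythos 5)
Your proposal is correct, and it differs from the paper's own argument at the decisive point. The paper proves only the ``otherwise'' case and, for the lower bound there, supposes $Z(G-e)\le Z(G)-1$, takes a forcing-chain collection $\PP'$ of $G-e$ of size $Z(G)-1$, and performs surgery on the chain(s) through $u$ and $v$ so as to produce a minimal collection of forcing chains of $G$ \emph{in which $u$ forces $v$}. That conclusion does not contradict the case hypothesis (which asserts that \emph{every} minimal process contains one of the forces $u\to v$ or $v\to u$), so as written the paper's argument does not close. Your surgery is aimed at the opposite target: from the hypothetical small zero forcing set $W$ of $G-e$ you build a minimum zero forcing set $W\cup\{u'\}$ of $G$ whose replayed list contains \emph{neither} $u\to v$ \emph{nor} $v\to u$, which is exactly the contradiction the case hypothesis demands; the ordering convention (the endpoint that forces first is called $u$) is what guarantees $u$ is already black when $v$ forces across the restored edge, and splitting $u$'s chain at $u$ removes the only force $u$ could issue. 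You also make explicit the routine replay for the first case and simply cite Proposition~\ref{Z(G-e)} for the two outer bounds, where the paper instead re-derives $Z(G-e)\le Z(G)+1$ by breaking the chain containing the force $u\to v$ at $e$. In short, the two proofs share the same toolkit (chain surgery plus one extra initial black vertex), but your version supplies the logical endgame that the paper's version, at least as stated, is missing.
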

\begin{proof}
In order to prove the case ``otherwise'', assume that $\PP$ is a minimal collection of forcing chains of $G$ in which $u$ forces $v$ in the chain $P$. Let $P\backslash \{e\}=P_1\cup P_2$. Then by assigning the black colour to $v$, $\left(\PP\backslash \{P\}\right)\cup\{P_1,P_2\}$ is a minimal collection of forcing chains for $G-e$. Thus $Z(G-e)\leq Z(G)+1$. To show that $Z(G)\leq Z(G-e)$, suppose that $Z(G-e)\leq Z(G)-1$. Then there is a collection of forcing chains $\PP'$ of $G-e$ of size $Z(G)-1$. First suppose that $u$ and $v$ are in the same chain $P'\in\PP'$ and $u$ is in a  lower level than $v$. Assume $u$ forces $w$ and $w'$ forces $v$ in this chain. Let $P'\backslash \{uw,w'v\}=P'_1\cup P'_2\cup P'_3$ in which $P'_1$  ends with $u$ and $P'_2$ starts with $w$ and ends with $w'$ and $P'_3$ starts with $v$. Let $P''$ be the chain obtained by merging two chains $P'_1$ and $P'_3$ by adding the edge $uv$. Then by assigning the black colour to $w$, $\left(\PP\backslash \{P\}\right)\cup\{P'_3,P''\}$ is a minimal collection of forcing chains of $G$ in which $u$ forces $v$. A similar argument applies when $u$ and $v$ are in different chains.
\end{proof}

Edge contraction affects the zero forcing number in the following ways.
 \begin{prop}[{{see \cite[Theorem 5.1]{owens2009properties}}}]\label{Z(G/e)}
 Let $e = uv$ be an edge in a graph $G$. Then
\[
Z(G)-1\leq Z(G/e) \leq Z(G) + 1. \qed
\]
\end{prop}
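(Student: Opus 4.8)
The plan is to prove the two inequalities $Z(G/e)\le Z(G)+1$ and $Z(G)\le Z(G/e)+1$ separately, each time by starting from a minimum zero forcing set of one graph together with a chronological list of forces and its forcing chains, and converting it into a zero forcing set of the other graph at the cost of at most one additional vertex. Throughout, write $e=uv$ and let $w$ be the vertex of $G/e$ obtained by identifying $u$ and $v$. The structural fact that drives both directions is that $N_{G/e}(w)=\bigl(N_G(u)\cup N_G(v)\bigr)\setminus\{u,v\}$, whereas for every vertex $x\notin\{u,v\}$ the neighbourhood changes only by replacing an occurrence of $u$ or $v$ by $w$. Hence a valid forcing process on one of the two graphs can fail to translate verbatim to the other only at the (at most two) steps that involve $u$ or $v$, and one extra precoloured vertex will always suffice to repair this.

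For $Z(G/e)\le Z(G)+1$, let $Z$ be a minimum zero forcing set of $G$ and let $\mathcal C$ be the collection of maximal forcing chains of some minimal process; let $\varphi\colon V(G)\to V(G/e)$ be the identification map, so $|\varphi(Z)|\le|Z|$. When we replay the forces of $G$ in $G/e$, every force whose forcer is not $u$ or $v$ carries over directly; the forces out of $u$ and out of $v$ both want to become forces out of the single vertex $w$, so at most one of them can survive. I would let $w$ inherit one of these two roles (say $u$'s) and, if the other vertex $v$ is not terminal in its chain, precolour the $\varphi$-image of $v$'s successor and restart that chain's tail from there; this is the one extra vertex (none is needed if $v$ is terminal). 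One checks that the resulting list of forces is a legitimate chronological list for $G/e$ --- in particular the reroute is oriented so that when $u,v$ lie in different chains the tail of the ``later'' chain becomes a fresh chain, so no cycle is created --- giving a zero forcing set of size at most $|Z|+1$.

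For $Z(G)\le Z(G/e)+1$, let $Z$ be a minimum zero forcing set of $G/e$, with forcing chains, and let $D=d_1\to\dots\to d_j=w\to\dots\to d_m$ be the chain through $w$. If $w\in Z$, put $Z'=(Z\setminus\{w\})\cup\{u,v\}$. If $w\notin Z$ then $w$ is not the first vertex of $D$, so $j\ge2$ and $w$ is forced by $d_{j-1}$, which by the displayed description of $N_{G/e}(w)$ is adjacent in $G$ to $u$ or to $v$; in this case let $Z'$ be $Z$ together with whichever of $u,v$ is \emph{not} adjacent to $d_{j-1}$ in $G$ (either one, if both are). In every case $|Z'|\le|Z|+1$. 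Now simulate the $G/e$-process in $G$ starting from $Z'$: forces not involving $w$ carry over after identifying $w$ with its neighbour among $\{u,v\}$; at the step where $d_{j-1}$ forces $w$, in $G$ the vertex $d_{j-1}$ instead forces the one of $u,v$ that is not already black, so that afterwards both are black; and at the step where $w$ forces $d_{j+1}$, whichever of $u,v$ is adjacent to $d_{j+1}$ performs that force, because $d_{j+1}$ is the only vertex of $N_{G/e}(w)$ still white. Since enlarging the set of black vertices can only create, never destroy, possible forces, precolouring one of $u,v$ when $w\notin Z$ causes no harm elsewhere, and $Z'$ colours all of $G$.

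The conceptual content of both arguments is just the identity $N_{G/e}(w)=\bigl(N_G(u)\cup N_G(v)\bigr)\setminus\{u,v\}$, which localises all possible trouble to the one or two steps touching $u$ and $v$; that part is immediate. The main obstacle is purely organisational: one must handle the cases according to whether $u$ and $v$ lie in the same forcing chain or in different ones, whether each is an interior or a terminal vertex of its chain, and which of $u,v$ each relevant forcing vertex is adjacent to in $G$, and then verify in each case that the rerouted (resp.\ simulated) sequence of forces is an admissible chronological list of forces in the target graph. This bookkeeping, rather than any single idea, is where the real length of a complete proof lies.
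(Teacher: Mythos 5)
The thesis does not actually prove this proposition --- it is quoted from the cited source with a \qed --- so I am judging your argument on its own merits. Your second direction, $Z(G)\le Z(G/e)+1$, is sound: before $w$ turns black no neighbour of $w$ can force anything except $w$ itself, so all early forces transfer verbatim, and your case split on $w\in Z$ versus $w\notin Z$ correctly arranges for both $u$ and $v$ to be black in $G$ from the moment $w$ is black in $G/e$, after which every remaining force transfers because $N_G(x)$ differs from $N_{G/e}(x)$ only in the substitution of $u,v$ for $w$.

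The first direction, $Z(G/e)\le Z(G)+1$, has a genuine gap. You locate all the trouble in the \emph{collision} of the forces out of $u$ and out of $v$, but the real obstruction is that $N_{G/e}(w)$ is the \emph{union} $\bigl(N_G(u)\cup N_G(v)\bigr)\setminus\{u,v\}$: even when $v$ performs no force at all, the force $u\to y_u$ need not become a legal force $w\to y_u$, because at that moment $v$ may still have white neighbours, and these are now white neighbours of $w$. Concretely, take $V(G)=\{u,v,a,y_u,c,y_v\}$ with edges $uv,\,ua,\,uy_u,\,vc,\,vy_v,\,y_uc$. Then $Z=\{a,y_v\}$ is a (minimum) zero forcing set with chronological list $a\to u$, $y_v\to v$, $u\to y_u$, $y_u\to c$; here $u$ forces $y_u$ and $v$ is terminal in its chain, so your recipe adds no extra vertex and outputs $\varphi(Z)=\{a,y_v\}$. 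But in $G/e$ the process from $\{a,y_v\}$ stalls after $a\to w$, since $w$ then has the two white neighbours $y_u$ and $c$; so $\varphi(Z)$ is not a zero forcing set of $G/e$, and your claim that ``the resulting list of forces is a legitimate chronological list for $G/e$'' fails. The same defect afflicts the two-force case if you let $w$ inherit the \emph{earlier} of the two forces. The repair is to precolour $\varphi(y)$ where $y$ is the target of whichever of $u,v$ performs its force \emph{first} (or is the only one to force a vertex outside $\{u,v\}$), and to let $w$ take over only the \emph{later} force: at that later time every vertex of $N_G(u)\cup N_G(v)$ other than its target is already black, so the force is legal, and the invariant that the black set of $G/e$ contains $\varphi$ of the black set of $G$ carries through. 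With that correction one extra vertex always suffices and the bound follows.
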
  
The following are examples of all three possible cases in Proposition~\ref{Z(G/e)}.
\begin{itemize}

\newpage
\item Decrease:
\medskip

\begin{figure}[H]
\centering
\unitlength=1pt
\begin{picture}(150,0)
\put(0,0){\circle*{6}}
\put(-50,-50){\circle*{6}}
\multiput(0,-50)(50,0){2}{\circle{6}}

\put(0,0){\line(0,-1){47}}
\put(-50,-50){\line(1,0){47}}
\put(2.8,-50){\line(1,0){44.1}}

\put(8,-25){\makebox(0,0){$e$}}
\put(0,-70){\makebox(0,0){$Z(G)=2$}}


\put(150,-25){\circle*{6}}
\multiput(200,-25)(50,0){2}{\circle{6}}

\put(150,-25){\line(1,0){47}}
\put(202.8,-25){\line(1,0){44.1}}

\put(200,-70){\makebox(0,0){$Z(G/e)=1$}}

\end{picture}
\vspace{2cm}
\caption{An example of a graph for which contracting an edge decreases the zero forcing number}
\end{figure}

\item No change:
\medskip

\begin{figure}[H]
\centering
\unitlength=1pt
\begin{picture}(150,0)
\put(-25,0){\circle*{6}}
\put(25,0){\circle{6}}

\put(-22.2,0){\line(1,0){44}}

\put(0,8){\makebox(0,0){$e$}}
\put(0,-20){\makebox(0,0){$Z(G)=1$}}

\put(125,0){\circle*{6}}
\put(125,-20){\makebox(0,0){$Z(G/e)=1$}}

\end{picture}
\vspace{.5cm}
\caption{An example of a graph for which 
contracting an edge does not change the zero forcing number}
\end{figure}

\item Increase: 

\begin{figure}[H]
\centering
\unitlength=1pt
\begin{picture}(150,50)

\put(-25,50){\circle{6}}
\put(25,50){\circle*{6}}

\multiput(0,0)(0,-50){2}{\circle{6}}
\put(0,-2.8){\line(0,-1){44}}
\put(-25,-100){\circle{6}}
\put(25,-100){\circle*{6}}
\put(1.7,2.7){\line(1,2){22.2}}
\put(-1.7,2.7){\line(-1,2){22.2}}
\put(1.7,-52.7){\line(1,-2){22.2}}
\put(-1.7,-52.7){\line(-1,-2){22.2}}

\put(-8,-25){\makebox(0,0){$e$}}
\put(0,-120){\makebox(0,0){$Z(G)=2$}}


\put(125,25){\circle*{6}}
\put(175,25){\circle*{6}}

\put(150,-25){\circle{6}}

\put(125,-75){\circle{6}}
\put(175,-75){\circle*{6}}
\put(151.7,-22.3){\line(1,2){22.2}}
\put(148.3,-22.3){\line(-1,2){22.2}}
\put(151.7,-27.7){\line(1,-2){22.2}}
\put(148.3,-27.7){\line(-1,-2){22.2}}

\put(150,-120){\makebox(0,0){$Z(G/e)=3$}}

\end{picture}
\vspace{3.9cm}
\caption{An example of a graph for which 
contracting an edge increases the zero forcing number}
\end{figure}
\end{itemize}

Subdividing an edge can only increase the zero forcing number by at most one.  
 \begin{prop}[{{see \cite[Theorem 5.4]{owens2009properties}}}]\label{Z(G.e)} 
If $H$ is obtained from $G$ by subdividing an edge $e = uv$, then
\[
Z(G)\leq Z(H) \leq Z(G) + 1.
\]
\end{prop}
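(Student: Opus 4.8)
The plan is to prove the two inequalities separately by transferring zero forcing processes between $G$ and $H$. Write $H$ for the graph obtained from $G$ by replacing the edge $e=uv$ by a path $u\,w\,v$ through a new vertex $w$, so that $V(H)=V(G)\cup\{w\}$, $N_H(w)=\{u,v\}$, $N_H(u)=(N_G(u)\setminus\{v\})\cup\{w\}$, $N_H(v)=(N_G(v)\setminus\{u\})\cup\{w\}$, and all other neighbourhoods are unchanged. We may assume $G$ (hence $H$) is connected, otherwise we apply the argument to the component containing $e$. We shall use Proposition~\ref{uniqueness} freely: to show that a set $S$ is a zero forcing set it suffices to exhibit one valid order of forces that turns every vertex black.

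For the upper bound $Z(H)\le Z(G)+1$, let $Z$ be a minimum zero forcing set of $G$; I claim $Z\cup\{w\}$ is a zero forcing set of $H$. Fix a chronological list of forces for the process on $G$ started from $Z$, and mimic it on $H$ while maintaining the invariant that, after each step, the black set of $H$ equals the black set of $G$ together with $w$. A force $a\to b$ of $G$ with $\{a,b\}\ne\{u,v\}$ is copied verbatim; the only thing to check is that when $a\in\{u,v\}$ the always-black vertex $w$ takes over the role of the (by then necessarily black) vertex $v$, resp. $u$, in $N_H(a)$, which is automatic since $u$ cannot force any vertex other than $v$ of $G$ while $v$ is still white. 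If some force of $G$ uses the edge $uv$, say $u\to v$, replace it by $w\to v$ on $H$; this is legal because at that moment $u$ is black (it was the forcer), hence $w$'s only other neighbour is black and $v$ is the unique white neighbour of $w$. (If no force of $G$ uses $uv$, keep all forces and leave $w$ idle.) At the end all of $V(G)$ together with $w$, that is all of $V(H)$, is black, so $Z(H)\le|Z|+1=Z(G)+1$.

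For the lower bound $Z(G)\le Z(H)$, I would first invoke the fact that a connected graph of order greater than one has no vertex contained in every minimum zero forcing set; applied to $H$ (whose order is at least $3$) this yields a minimum zero forcing set $Z_H$ of $H$ with $w\notin Z_H$, so $Z_H\subseteq V(G)$. I claim $Z_H$ is itself a zero forcing set of $G$. Fix a chronological list of forces for the process on $H$ from $Z_H$. Here $w$ is white initially and is forced at some step $p$; the key observation is that the vertex forcing $w$, say $u$, performs no force whatsoever before step $p$, because $w$ is a white neighbour of $u$ throughout the preceding steps — and likewise $v$ performs no force before step $p$. Now mimic this process on $G$: copy every force not involving $w$; at step $p$ replace $u\to w$ by $u\to v$ if $v$ is still white (legal, since by then every neighbour of $u$ other than $v$ is black in $G$) and do nothing if $v$ is already black; and discard the at most one force $w\to v$ of $H$, since $v$ is already black in $G$ by that time. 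Verifying that each copied force is legal in $G$ is the same neighbourhood bookkeeping as before, using the observation to guarantee that $u$ and $v$ stay inactive until $w$ has been dealt with; at the end all of $V(G)$ is black. Hence $Z(G)\le|Z_H|=Z(H)$, which together with the upper bound gives $Z(G)\le Z(H)\le Z(G)+1$.

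The main obstacle is the lower-bound bookkeeping: one must check that the substitution of $u\to v$ for $u\to w$ never attempts to force an already-black vertex and never leaves a vertex uncoloured, and the fact that makes this work is precisely that $w$ blocks both $u$ and $v$ from forcing until $w$ itself is coloured. Everything else is a routine induction on the step number maintaining the stated invariant.
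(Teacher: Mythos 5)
Your argument is correct. Note that the thesis does not actually prove this proposition; it only quotes it from the cited reference, so there is no in-paper proof to compare against. Your write-up supplies a complete, self-contained argument: the upper bound by adding the subdivision vertex $w$ to a minimum zero forcing set of $G$ and replaying the forces (with $u\rightarrow v$ replaced by $w\rightarrow v$), and the lower bound by using the fact that no vertex of a connected graph of order greater than one lies in every minimum zero forcing set to pick a minimum zero forcing set of $H$ avoiding $w$, then transferring the process back to $G$ via the substitution $u\rightarrow w \rightsquigarrow u\rightarrow v$. The key bookkeeping points are all in place: the forcer of $w$ and the other endpoint of $e$ are both blocked from forcing until $w$ is coloured, the invariant that the black set of $G$ contains the black set of $H$ restricted to $V(G)$ is preserved (extra black vertices never hurt, by the uniqueness of the derived set), and the discarded force $w\rightarrow v$ is harmless because $v$ is already black in $G$ by that step. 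This is the standard argument for subdivision and matches what one finds in the cited source.
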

The following are examples of the two possible cases in Proposition~\ref{Z(G.e)}.
\begin{itemize}
\item No change:
\bigskip

\begin{figure}[H]
\centering
\unitlength=1pt
\begin{picture}(150,0)
\put(-25,0){\circle*{6}}
\put(25,0){\circle{6}}
\put(-22.2,0){\line(1,0){44}}

\put(0,8){\makebox(0,0){$e$}}
\put(0,-20){\makebox(0,0){$Z(G)=1$}}


\put(125,0){\circle*{6}}
\put(175,0){\circle{6}}
\put(225,0){\circle{6}}
\put(127.8,0){\line(1,0){44}}
\put(177.8,0){\line(1,0){44}}

\put(175,-20){\makebox(0,0){$Z(H)=1$}}

\end{picture}
\vspace{.5cm}
\caption{An example of a graph for which 
subdividing an edge does not change the zero forcing number}
\end{figure}

\item Increase:  
\bigskip

\begin{figure}[H]
\centering
\unitlength=1pt
\begin{picture}(200,0)
\multiput(0,0)(50,0){2}{\circle*{6}}
\multiput(0,-50)(50,0){2}{\circle{6}}

\put(2.2,-2.2){\line(1,-1){45.5}}

\put(2.8,0){\line(1,0){44}}
\put(2.8,-50){\line(1,0){44}}

\put(0,-2.8){\line(0,-1){44}}
\put(50,-2.8){\line(0,-1){44}}

\put(30,-22){\makebox(0,0){$e$}}
\put(25,-70){\makebox(0,0){$Z(G)=2$}}


\multiput(150,0)(50,0){2}{\circle*{6}}
\multiput(150,-50)(50,0){2}{\circle{6}}
\put(152.2,-2.2){\line(1,-1){45.5}}
\put(175,-25){\circle*{6}}

\put(152.8,0){\line(1,0){44}}
\put(152.8,-50){\line(1,0){44}}

\put(150,-2.8){\line(0,-1){44}}
\put(200,-2.8){\line(0,-1){44}}

\put(175,-70){\makebox(0,0){$Z(H)=3$}}

\end{picture}
\vspace{2.3cm}
\caption{An example of a graph for which 
subdividing an edge increases the zero forcing number}
\end{figure}
\end{itemize}

In the next proposition we bound the variation of the zero forcing number of a specific type of graphs after adding edges and vertices to it. 
 A graph $G$ is called a \txtsl{semi-complete} graph if all of its forcing chains in any minimal zero forcing process are pairwise adjacent. Note that such a graph is \textsl{contractable} to a $K_{Z(G)}$ and any complete graph is semi-complete. 

\begin{prop}\label{Adding edge to a semi-complete} 
Let $G$ be a semi-complete graph and $u$ and $v$ be two nonadjacent vertices of $G$.
Let $G_1=G+uv$ and $G_2$ be the graph obtained from $G$ by adding a new vertex $z$ to $V(G)$ that is connected to the vertices of $G$ in any fashion. Then 
\begin{enumerate}
\item $Z(G)\leq Z(G_1)\leq Z(G)+1.$
\item $Z(G)\leq Z(G_2)\leq Z(G)+1.$
\end{enumerate}
\end{prop}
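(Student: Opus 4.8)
Only the two lower bounds carry real content. Since $G_1-uv=G$ and $G_2-z=G$, Proposition~\ref{Z(G-e)} gives $Z(G_1)\le Z(G)+1$ and Proposition~\ref{G-v} gives $Z(G_2)\le Z(G)+1$, and neither of these needs any hypothesis on $G$. So the plan is to prove $Z(G)\le Z(G_1)$ and $Z(G)\le Z(G_2)$ by contradiction, and this is the only place semi-completeness of $G$ — equivalently, the fact that $G$ contracts onto $K_{Z(G)}$ along the chains of every one of its minimum zero forcing processes — is used.

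For part (1), assume $Z(G_1)\le Z(G)-1$ and fix a minimum zero forcing set $W$ of $G_1$. If some chronological list of forces for $W$ uses neither $u\to v$ nor $v\to u$, then, since deleting $uv$ changes only the neighbourhoods of $u$ and $v$ and neither of them forces the other, every force of that list is still valid in $G$; hence $W$ is a zero forcing set of $G$ of size at most $Z(G)-1$, a contradiction. Otherwise every such list uses $uv$; fix one and, by Theorem~\ref{Reversal is a ZFS} (passing to the reversal if needed), assume it contains the force $u\to v$. Its maximal forcing chain through that force has the form $A\cdot B$ with $A$ ending at $u$ and $B$ starting at $v$ (so $v\notin W$). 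One checks that every force of this process other than $u\to v$ stays valid in $G$, so colouring $v$ black in addition to $W$ yields a zero forcing set $W\cup\{v\}$ of $G$ whose forcing chains are $A$, $B$ and the remaining chains of $W$. Its size is $Z(G_1)+1\le Z(G)$, so it is a minimum zero forcing set of $G$ and $Z(G_1)=Z(G)-1$.

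Semi-completeness now forces these $Z(G)$ chains of $G$ to be pairwise adjacent, so there is an edge $\alpha\beta\in E(G)$ with $\alpha\in A$ and $\beta\in B$; as $u\not\sim v$ in $G$ we cannot have $\alpha=u$ and $\beta=v$ simultaneously. But along $A\cdot B$ every vertex of $A$ performs a force (the last one being $u\to v$), all forces internal to $A$ precede the step $u\to v$, and no vertex of $B$ turns black before that step; hence at the moment $\alpha$ performs its force $\beta$ is still white, $\beta$ is a neighbour of $\alpha$ in $G\subseteq G_1$, and $\beta$ is not the vertex $\alpha$ forces (which lies in $A$, or is $v$ when $\alpha=u$) — so $\alpha$ would have two white neighbours, contradicting the colour change rule. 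Part (2) follows the same pattern: if the new vertex $z$ lies in a minimum zero forcing set $W$ of $G_2$, deleting $z$ (and promoting the vertex it forces, if any) already gives a zero forcing set of $G$ that is too small; if $z\notin W$ and $z$ performs no force, then every force except the one forcing $z$ survives in $G=G_2-z$, again too small; and if $z$ is forced by $w$ and forces $x$, splitting the chain through $z$ and promoting $x$ yields a minimum zero forcing set of $G$ with chains $Q_{\mathrm{pre}}$ (ending at $w$) and $Q_{\mathrm{post}}$ (starting at $x$) among the rest, and semi-completeness plus the identical chronology argument — with $w\to z\to x$ playing the role of $u\to v$ — produces the same ``two white neighbours'' contradiction.

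The main obstacle in both parts is exactly the case where the added edge $uv$ (respectively the vertex $z$) is \emph{essential}, i.e.\ is used in every minimum zero forcing process of the larger graph; semi-completeness is invoked only there, and the delicate point is the bookkeeping of the chronology — which forces come before which, and which vertices are white at the instant a given vertex performs its force — that converts the pairwise adjacency of the chains of $G$ into a violation of the colour change rule. Everything outside this case reduces to the vertex- and edge-deletion bounds of Propositions~\ref{G-v} and \ref{Z(G-e)}.
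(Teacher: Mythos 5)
Your proof is correct and follows essentially the same route as the paper's: assume the lower bound fails, split the forcing chain through the new edge (or through $z$) into two pieces, observe that promoting the head of the second piece yields a minimum zero forcing set of $G$ whose chain collection contains two non-adjacent chains, and contradict semi-completeness. The only difference is one of detail: the paper simply asserts that the two pieces $P_1$ and $P_2$ are non-adjacent and dismisses part (2) as ``a similar argument,'' whereas you supply the chronological bookkeeping that actually justifies the non-adjacency and write out the cases for $z$ explicitly.
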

\begin{proof}
Suppose that $Z(G_1)<Z(G)$. Let $\PP$ be a collection of forcing chains of $G_1$ with $|\PP|=Z(G_1)$. If in this zero forcing process neither $u$ forces $v$ nor $v$ forces $u$, then $\PP$ is a ZFS for $G$. Thus  $Z(G)=Z(G_1-uv)\leq Z(G_1)$ that implies $Z(G)<Z(G)$ which is a contradiction. Assume that  there is a forcing chain in $\PP$, say $P$, that has a force through $uv$, then $P-uv=P_1 \bigcup P_2$ where $P_1$ and $P_2$ are two non-adjacent forcing chains one of them starting with $u$ (or $v$). Therefore by assigning a black colour to $u$ (or $v$), $\left(\PP\backslash \{P\}\right) \cup\{P_1, P_2\}$ is a minimal collection of forcing chains of $G$ with two non-adjacent chains $P_1$ and $P_2$. This contradicts the fact that $G$ is a semi-complete graph and therefore, proves the claim. 

A similar argument applies for the second statement.
\end{proof}
 Proposition~\ref{Adding edge to a semi-complete} and testing several examples motivates us to propose the following conjecture.
\begin{conj}
If the graph $G$ has a $K_n$ minor, then $G$ has a semi-complete subgraph, $H$, with $Z(H)\geq n-1$. 
\end{conj}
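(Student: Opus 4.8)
Since $G$ has a $K_n$ minor, I would first pass to a canonical small witness inside $G$. Fix vertex-disjoint connected branch sets $B_1,\dots,B_n$ and, for each pair $i\neq j$, one edge joining $B_i$ to $B_j$; deleting unused vertices and edges yields a subgraph $H\le G$ that is what I will call a \emph{$K_n$-expansion}: $n$ pairwise vertex-disjoint induced trees $T_1,\dots,T_n$, one connecting edge $e_{ij}=u_{ij}v_{ij}$ ($u_{ij}\in T_i$, $v_{ij}\in T_j$) for each unordered pair, and each $T_i$ chosen so that every leaf of $T_i$ is an endpoint of some $e_{ij}$ (Steiner-minimality). Contracting each $T_i$ to a point turns $H$ into $K_n$, and $H\le G$ still has a $K_n$ minor, so by the earlier proposition asserting $Z(G)\ge n-1$ whenever $G$ has a $K_n$ minor we already have $Z(H)\ge n-1$. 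Hence the conjecture would follow once one shows that \emph{every} $K_n$-expansion is semi-complete, which is what I would try to prove.

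\textbf{Key steps.} I would argue semi-completeness by induction on $|V(H)|$. The base case $|V(H)|=n$ is $H=K_n$, which is semi-complete. For the inductive step, pick a nontrivial $T_i$ and a leaf $w$ of $T_i$ with tree-neighbour $w'$. When $w$ is the endpoint of exactly one connecting edge $e_{ij}=wv_{ij}$, the vertex $w$ has degree $2$ in $H$ and $H$ is the subdivision, at the edge $w'v_{ij}$, of the smaller $K_n$-expansion $H^-:=H/ww'$; the remaining possibility is that every leaf of every nontrivial $T_i$ is incident to two or more connecting edges (for instance a tree that is a path whose two ends carry all its connecting edges), in which case I would instead contract the pendant tree-edge $ww'$ and analyse the resulting ``vertex split'' $H\to H^-=H/ww'$. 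In either case $H^-$ is a $K_n$-expansion on fewer vertices, so by induction $H^-$ is semi-complete, and the whole step reduces to the following lemma, which is the crux: \emph{if $H^-$ is a semi-complete $K_n$-expansion and $H$ is obtained from $H^-$ by subdividing a connecting edge (or by splitting a tree vertex into an edge that distributes the incident connecting edges), then $H$ is semi-complete}. To prove it I would take an arbitrary minimal zero forcing process of $H$ and follow the new vertex $w$ (of degree $2$ in the subdivision case): either $w$ is in the initial black set, or a forcing chain runs through $w$ as $(\dots,w',w,v_{ij},\dots)$ or its reverse. In the pass-through case, contracting $ww'$ (or $wv_{ij}$) turns this into a zero forcing process of $H^-$ in which $w'$ and $v_{ij}$ lie on one chain; provided this process of $H^-$ is again minimal, semi-completeness of $H^-$ gives pairwise adjacency of its chains, and since $w$ is adjacent to both $w'$ and $v_{ij}$ this pulls back to pairwise adjacency of the chains of $H$. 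In the initial-vertex case the short chain containing $w$ is adjacent to the chains meeting $w'$ and $v_{ij}$, and one again compares with $H^-$.

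\textbf{Main obstacle.} The delicate point, and the step I expect to be hardest, is keeping control of \emph{minimality} when comparing processes of $H$ and $H^-$: semi-completeness concerns minimal processes only, but Proposition~\ref{Z(G.e)} allows $Z(H^-)=Z(H)-1$, in which case the process induced on $H^-$ by a minimal process of $H$ need not be minimal and the inductive hypothesis does not directly apply. A clean way around this would be to first prove the auxiliary fact that $Z(H)=n-1$ for \emph{every} $K_n$-expansion $H$ — exhibiting an explicit zero forcing set of size $n-1$ built from one suitably chosen portal in each of $n-1$ of the trees, so that Steiner-minimality of the $T_i$ makes the forces propagate along each tree — which would force $Z(H^-)=Z(H)=n-1$ throughout and remove the minimality issue. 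The other potential difficulty is that a newly introduced degree-$2$ vertex could in principle re-route a forcing chain ``around'' a connecting edge in a way that separates two chains; ruling this out is exactly why one wants the structural fact that a minimal zero forcing set of a $K_n$-expansion can always be taken inside $\bigcup_i V(T_i)$, avoiding the connecting edges. With that in hand the whole argument can be replaced by a self-contained analysis of forcing on the ``tree-of-trees'' together with the observation that contracting the $T_i$ yields $K_n$, whose minimal forcing processes have (universally) adjacent chains. Finally, before committing to this route I would sanity-check it against the small cases that motivated the conjecture and against Proposition~\ref{Adding edge to a semi-complete}, which already shows the relevant graph operations perturb $Z$ by at most one.
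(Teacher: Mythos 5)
The first thing to be clear about is that the paper does not prove this statement: it appears there only as a conjecture, offered with no argument beyond the motivation supplied by Proposition~\ref{Adding edge to a semi-complete} and some tested examples. There is therefore no proof of record to compare yours against, and your proposal has to stand entirely on its own. At present it does not. The reduction to a Steiner\-/minimal ``$K_n$-expansion'' $H$ is sensible, and the appeal to the earlier proposition (a $K_n$ minor forces $Z(H)\geq n-1$, via minor-monotonicity of $\xi$) is correct; but everything then rests on the claim that every such expansion is semi-complete, which you propose to prove by induction through subdivision and vertex splitting. You yourself isolate the crux --- a minimal forcing process of $H$ need not induce a \emph{minimal} process of $H^{-}$ when $Z(H^{-})=Z(H)-1$, so the inductive hypothesis cannot be invoked --- and you never close it; the inductive step is left as a sketch whose critical case is exactly the one you flag as problematic.

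The patch you propose for that gap, namely that $Z(H)=n-1$ for every Steiner-minimal $K_n$-expansion, is false. Let $H$ be the full subdivision of $K_4$, with original vertices $1,2,3,4$ and subdivision vertices $x_{ij}$. Taking $B_1=\{1,x_{12},x_{13},x_{14}\}$, $B_2=\{x_{23},2,x_{24}\}$, $B_3=\{3,x_{34}\}$ and $B_4=\{4\}$ exhibits $H$ as a Steiner-minimal $K_4$-expansion: each $B_i$ induces a tree, each pair of branch sets is joined by exactly one edge, and every leaf of every $B_i$ is an endpoint of a connecting edge. Yet $Z(H)=4$: a short case analysis shows that no three initially black vertices suffice (every choice either permits no force at all, or stalls with three white subdivision vertices $x_{jk},x_{jl},x_{kl}$ remaining, each of $j,k,l$ then having two white neighbours), while $\{1,x_{12},x_{23},x_{34}\}$ is a zero forcing set of size four. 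So the zero forcing number of an expansion can exceed $n-1$, the minimality-transfer gap is real and your repair does not close it. This example does not refute the conjecture itself --- one only needs \emph{some} semi-complete subgraph with $Z\geq n-1$, and nothing here shows the subdivided $K_4$ or a subgraph of it fails that --- but it does defeat the specific induction you outline. To salvage the approach you would need either a direct argument that the chains of any minimal forcing process of a $K_n$-expansion pairwise meet across the connecting edges, or a different normal form for the witness subgraph whose zero forcing number you can actually control.
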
 

\section{Vertex-sum of graphs} 
One of the most common graph operations is the vertex-sum of two graphs. The following theorem calculates the zero forcing number of the vertex-sum of two graphs.

\begin{thm}\label{zero forcing of vertex-sum}
Let $G$ and $H$ be two graphs each with a vertex labeled $v$. Then the following hold.
\begin{enumerate} 
\item If $v$ is in some minimal ZFS of $H$ but not in any minimal ZFS of $G$, then
\[
Z(G\,\,\stackplus{v}\,H)=Z(G)+Z(H).
\]
\item If $v$ is in some minimal ZFS of $H$ and some minimal ZFS of $G$, then
\[
Z(G\,\,\stackplus{v}\,H)=Z(G)+Z(H)-1.
\]
\item If $v$ is neither in any minimal ZFS of $H$ nor in any minimal ZFS of $G$, then
\[
Z(G\,\,\stackplus{v}\,H)=Z(G)+Z(H)+1.
\]
\end{enumerate}
\end{thm}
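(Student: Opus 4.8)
The plan is to prove each of the three equalities by matching upper and lower bounds. An upper bound $Z(G\,\,\stackplus{v}\,H)\le k$ will be obtained by exhibiting an explicit zero forcing set of size $k$, and a lower bound by analysing how the forcing chains of a minimum zero forcing set of $G\,\,\stackplus{v}\,H$ decompose across the cut vertex $v$. Throughout I would use the following consequence of the reversal theorem (Theorem~\ref{Reversal is a ZFS}): $v$ lies in a minimum ZFS of a graph exactly when $v$ can be made an endpoint of a chain in some minimum forcing chain collection; equivalently, $v$ lies in \emph{no} minimum ZFS of $G$ iff in every minimum forcing process of $G$ the vertex $v$ is forced and also performs a (necessarily unique) force.

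For the upper bounds I would fix minimum zero forcing sets $Z_G$ of $G$ and $Z_H$ of $H$. The key point is that if the initial set contains $v$ then one can run a minimum forcing process of one summand to completion before touching the other: once one side of $v$ is entirely black, every legal force of the other summand's process is still legal in $G\,\,\stackplus{v}\,H$. In case~$(2)$ take $Z_G\ni v$ and $Z_H\ni v$ and colour $Z_G\cup Z_H$ (size $Z(G)+Z(H)-1$, as $v$ is counted once); running the $G$-process and then the $H$-process colours everything. In case~$(1)$ take $Z_G$ avoiding $v$ (possible by hypothesis) and $Z_H\ni v$; colour $Z_G\cup Z_H$ (size $Z(G)+Z(H)$), run the $H$-process first so $v$ turns black, then the $G$-process with $v$ as a bonus black vertex. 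In case~$(3)$, $v$ performs a unique force $v\to v^{+}$ in any minimum process of $G$; colour $Z_G\cup\{v^{+}\}\cup Z_H$ (size $Z(G)+Z(H)+1$, since $v^{+}$ lies in neither $Z_G$ nor $Z_H$), run the $G$-process with the force $v\to v^{+}$ simply omitted, and then the $H$-process, whose single force out of $v$ is legal because by then every $G$-neighbour of $v$ is black.

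For the lower bounds, let $Z$ be a minimum zero forcing set of $G\,\,\stackplus{v}\,H$ with a minimal forcing chain collection $\mathcal{P}$. Since $v$ is a cut vertex, every chain of $\mathcal{P}$ avoiding $v$ lies entirely in $V(G)\setminus\{v\}$ or entirely in $V(H)\setminus\{v\}$, and there is a unique chain $P^{*}$ through $v$, whose vertices before $v$ and after $v$ each lie on a single side of $v$. The $G$-chains of $\mathcal{P}$ together with the $V(G)$-part of $P^{*}$ (augmented by the singleton chain $(v)$ when $P^{*}\subseteq V(H)$) form a legal forcing chain collection of $G$, so their initial vertices form a ZFS of $G$; symmetrically for $H$. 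Counting chains gives $Z(G\,\,\stackplus{v}\,H)\ge Z(G)+Z(H)-1$, which already settles case~$(2)$. For cases~$(1)$ and~$(3)$ I would assume for contradiction that $Z(G\,\,\stackplus{v}\,H)=Z(G)+Z(H)-1$; then both restricted collections are \emph{minimum}. If $v$ is an endpoint of a chain of the minimum $G$-restricted collection then, after reversing that collection if necessary (Theorem~\ref{Reversal is a ZFS}), $v$ lies in a minimum ZFS of $G$, and symmetrically on the $H$ side. Exhausting the positions of $v$ along $P^{*}$ — an endpoint of $P^{*}$, an interior vertex, or $P^{*}$ lying wholly on one side — each configuration should force $v$ into a minimum ZFS of $G$ or of $H$, contradicting the hypothesis of case~$(1)$; applying the same dichotomy to both sides in case~$(3)$ also excludes $Z(G\,\,\stackplus{v}\,H)\le Z(G)+Z(H)$, yielding the extra $+1$.

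The main obstacle I anticipate is the subcase in which $P^{*}$ stays on one side with $v$ in its interior, so $v$ both receives and performs its force within that summand. There the reversal of the restricted collection on that side reveals nothing about whether $v$ can be an endpoint, so the argument must be run on the other side: legality of the force \emph{out} of $v$ forces the entire neighbourhood of $v$ in the other summand to already be black, and the point is to turn this into the statement that the other summand's restricted chain collection is strictly larger than the naive count. Pinning down this bookkeeping precisely, and checking it meshes with exactly the hypotheses of cases~$(1)$ and~$(3)$, is the delicate step.
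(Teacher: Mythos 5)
Your upper-bound constructions and your chain-decomposition lower bound $Z(G\,\,\stackplus{v}\,H)\ge Z(G)+Z(H)-1$ are sound; the decomposition across the cut vertex is in fact cleaner than the paper's ad hoc counting and it settles case~(2). (One small repair: in cases~(1) and~(2) you must run the \emph{reversal} of the $H$-process first, as the paper does with $Z'_H$, so that $v$ is never asked to perform a force while the other summand still has white neighbours of $v$; you cite the reversal theorem, so this is cosmetic.) The genuine problem is exactly the sub-case you flag at the end --- $P^{*}$ lying wholly in one summand with $v$ in its interior --- and it is not delicate bookkeeping that can be pinned down: in that configuration case~(1) of the theorem is simply false. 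Take $G=P_3=u_1vu_3$ with $v$ the centre (so $v$ is in no minimum ZFS of $G$) and $H=K_{1,3}$ with $v$ a leaf, centre $c$ and other leaves $w_1,w_2$ (so $\{v,w_1\}$ is a minimum ZFS of $H$). The hypotheses of case~(1) hold, yet $G\,\,\stackplus{v}\,H$ is a tree covered by the two induced paths $u_1vu_3$ and $w_1cw_2$, so by Proposition~\ref{P(T)=Z(T)} its zero forcing number is $2=Z(G)+Z(H)-1$, not $Z(G)+Z(H)=3$. Concretely $\{u_1,w_1\}$ forces everything via $u_1\rightarrow v$, $w_1\rightarrow c$, $c\rightarrow w_2$, $v\rightarrow u_3$; here $P^{*}=(u_1,v,u_3)$ sits entirely in $G$ with $v$ interior, and the $H$-restriction is the minimum collection $\{(w_1,c,w_2),(v)\}$ with exactly $Z(H)$ chains --- not ``strictly larger than the naive count'' as you hoped to prove.

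The inference that fails is the same in your plan and in the paper's own proof. The paper argues that, since $v$ is in no minimum ZFS of $G$, $v$ cannot force into $H$, and concludes that at least $Z(H)$ \emph{initial} black vertices are needed in $V(H)$. But $v$ can be forced from the $G$ side and thereafter serve as a free, passive black vertex enabling forces inside $H$ (above, $c\rightarrow w_2$ becomes legal only once $v$ is black), so one only gets $Z(H)-1$ initial vertices in $V(H)\setminus\{v\}$, which is consistent with $|Z|=Z(G)+Z(H)-1$. Your dichotomy correctly yields ``$v$ lies in a minimum ZFS of $G$ or of $H$''; that contradicts the hypothesis of case~(3) but \emph{not} of case~(1), which permits $v$ to lie in a minimum ZFS of $H$. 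So your argument, carried out honestly, proves case~(2) and rules out $|Z|=Z(G)+Z(H)-1$ in case~(3); it cannot prove case~(1), and for case~(3) you would still need a separate argument to exclude $|Z|=Z(G)+Z(H)$, where only one of the two restrictions is minimum and the dichotomy no longer applies to both sides. The obstacle you identified is not a gap in your write-up but a counterexample to the statement as printed.
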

\begin{proof}
Let $Z_G$ be a minimal ZFS of $G$ and $Z_H$ be a minimal ZFS of $H$. 

To verify the first case  assume that $v\in Z_H$. By Theorem~\ref{Reversal is a ZFS}, if $Z'_H$ is the reversal of $Z_H$, then $Z'_H$ is a minimal ZFS in which $v$ doesn't perform a force. Therefore starting with the initial black vertices in $Z'_H\cup Z_G$ and colouring $H$ before $G$ we can colour the entire graph $G\,\stackplus{v}\,H$. Thus 
\[
Z(G\,\stackplus{v}\,H)\leq Z(G)+Z(H).
\]
To prove the equality in the equation suppose that 
\[
Z(G\,\stackplus{v}\,H)< Z(G)+Z(H).
\]
Hence we can colour $G\,\stackplus{v}\,H$ using $Z(G)+Z(H)-1$ initial black vertices. No vertex of $G$, except $v$,  can force any vertex of $H$ in any zero forcing process. In addition, since $v$ is not a member of any minimal ZFS of $G$, $v$ is not an end-point of any zero forcing chain in $G$. Therefore $v$ can not perform any force in $H$. Thus we need at least $Z(H)$ initial black vertices from $V(H)$ to colour $H$. Hence we have at most $Z(G)-1$ black vertices of $G$ along with $v$ (that is already turned to black) to colour $G$. This means there is a minimal ZFS for $G$ that contains $v$ which contradicts the fact that $v$ is not in any minimal ZFS of $G$.

To verify the second case we use a similar reasoning as in the first case. First using $Z'_H$ we can colour all of $H$.  If $Z_G$ is a minimal ZFS for $G$ that contains $v$ then $Z'_H\cup \left(Z_G\backslash \{v\}\right)$ forms a ZFS for $G\,\stackplus{v}\,H$. Thus 
\[
Z(G\,\,\stackplus{v}\,H)\leq Z(G)+Z(H)-1.
\]
In order to prove the equality, suppose that  
\[
Z(G\,\,\stackplus{v}\,H)< Z(G)+Z(H)-1.
\]
Therefore we can colour  $G\,\,\stackplus{v}\,H$ with an initial set of black vertices of size $Z(G)+Z(H)-2$. Since at most one of the vertices of $H$, i.e. $v$, can be forced by a vertex of $G$, we need at least $Z(H)-1$ black vertices from $V(H)$ to colour $H$. Therefore there are needed at most $Z(G)-1$ initial black vertices from $V(G)$ to finish colouring of $G$. This contradicts the fact that $Z(G)$ is the zero forcing number of $G$.

To prove the third case let $u$ be a vertex that is forced by $v$ in a zero forcing process started with the vertices of $Z_G$. Then  $Z_H\cup Z_G\cup \{u\}$ is a ZFS for $G\,\,\stackplus{v}\,H$. Thus  
\[
Z(G\,\,\stackplus{v}\,H)\leq Z(G)+Z(H)+1.
\]
Since  $v$ is neither in any minimal ZFS of $H$ nor in any minimal ZFS of $G$, in all  zero forcing processes starting with any minimal set of black vertices of $V(Z(G\,\,\stackplus{v}\,H))$, $v$ must perform a force both in $G$ and in $H$. Therefore in any zero forcing process, $v$ will have two white neighbours, one in $G$ and one in $H$.  Thus we can not complete the colouring of graph  $G\,\,\stackplus{v}\,H$ using at most $Z(G)+Z(H)$ initial black vertices and this completes the proof.        
\end{proof}
In order to find a connection between the zero forcing number and the path cover number of the vertex-sum of two graphs we try to calculate the path cover number of the vertex-sum of two graphs $G$ and $H$ in terms of those of  $G$ and $H$.
\begin{prop}\label{path covering of vertex-sum}
Let $G$ and $H$ be two graphs each with a vertex labeled $v$. Then the following hold.
\begin{enumerate}
\item If there is a minimal path covering of $G$ in which $v$ is a path of length zero and $v$ in not an end-point in any minimal path covering of $H$, then
\[
P(G\,\,\stackplus{v}\,H)=P(G)+P(H)-1.
 \]
\item If $v$ is an end-point of some minimal path covering of $G$ but no minimal path covering contains $v$ as a path of length $1$ and $v$ is not an end-point in any minimal path covering of $H$, then
\[
P(G\,\,\stackplus{v}\,H)=P(G)+P(H).
 \]
\item If there are minimal path coverings of $H$ and $G$ in which $v$ is an end-point of a path in both, then 
\[
P(G\,\,\stackplus{v}\,H)=P(G)+P(H)-1.
 \]

\item  If $v$ is neither an end-point of any minimal path covering of $H$ nor the end-point in any minimal path covering of $G$, then
\[
P(G\,\,\stackplus{v}\,H)=P(G)+P(H)+1.
 \]
\end{enumerate}
\end{prop}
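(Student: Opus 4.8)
The plan is to reduce all four cases to one ``master'' identity for the path cover number of a vertex-sum and then read off each case, exactly paralleling the treatment of the zero forcing number in Theorem~\ref{zero forcing of vertex-sum}. The vertex $v$ is a cut-vertex of $G\stackplus{v}H$, and there are no edges of $G\stackplus{v}H$ joining $V(G)\setminus\{v\}$ to $V(H)\setminus\{v\}$. Hence in any path covering of $G\stackplus{v}H$ the (unique) induced path containing $v$ does exactly one of three things: it lies entirely in $G$, it lies entirely in $H$, or it is \emph{split} at $v$, i.e. it has the form $a_1\cdots a_k\,v\,b_1\cdots b_\ell$ with the $a_i\in V(G)$, the $b_j\in V(H)$ and $k,\ell\ge 1$; in the split case $a_1\cdots a_k v$ is an induced path of $G$ with $v$ an endpoint, $v b_1\cdots b_\ell$ is an induced path of $H$ with $v$ an endpoint, and conversely the concatenation of any two such paths is automatically induced in $G\stackplus{v}H$ because there are no cross edges. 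Since induced paths of $G$, of $H$, of $G-v$ and of $H-v$ are also induced in $G\stackplus{v}H$, chasing the three possibilities gives
\[
P(G\stackplus{v}H)=\min\Bigl\{\,P(G)+P(H-v),\ \ P(G-v)+P(H),\ \ P_{\mathrm e}(G)+P_{\mathrm e}(H)-1\,\Bigr\},
\]
where $P_{\mathrm e}(G)$ is the least number of paths in a path covering of $G$ in which $v$ is an endpoint of a path of positive length (taken to be $+\infty$ if no such covering exists). The first term comes from placing $v$'s path inside $G$ and covering $H-v$ separately, the second from the symmetric choice, the third from a split path; the matching lower bound for each term is immediate because the relevant restrictions of a covering of $G\stackplus{v}H$ are vertex-disjoint. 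I would prove this identity first, as a lemma.

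Next I would record the elementary facts that convert the hypotheses of Cases~1--4 into statements about $P(G-v),P(H-v),P_{\mathrm e}(G),P_{\mathrm e}(H)$. Adding a singleton path gives $P(G)\le P(G-v)+1$, and deleting $v$ from a minimum path covering gives $P(G-v)\le P(G)+1$, so $P(G-v)\in\{P(G)-1,P(G),P(G)+1\}$; moreover $P(G-v)=P(G)-1$ exactly when $v$ can be a path of length $0$ in some minimum path covering of $G$ (add or remove the singleton $(v)$). Deleting $v$ from a minimum covering also shows: if $v$ is never an endpoint in any minimum path covering of $G$ then $v$ is interior in every such covering, which forces $P(G-v)=P(G)+1$; and if $v$ is an endpoint of some minimum covering of $G$ but never a path of length $0$ there, then $P(G-v)=P(G)$ and $P_{\mathrm e}(G)=P(G)$. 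With the master identity and these translations, each of the four cases becomes bookkeeping: in Case~4 both hypotheses give $P(G-v)=P(G)+1$, $P(H-v)=P(H)+1$ and $P_{\mathrm e}(G)\ge P(G)+1$, $P_{\mathrm e}(H)\ge P(H)+1$, so every term of the minimum is at least $P(G)+P(H)+1$ while the first two attain it; Cases~1--3 are analogous, using $P(G-v)=P(G)-1$ or $P(G-v)=P(G)$ together with $P_{\mathrm e}=P$ on the appropriate side to see that one of the three terms equals the claimed value and the other two are no smaller.

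The hard part will be the converse half of the ``endpoint'' dichotomy: showing that if $v$ lies in the endpoint set of \emph{no} minimum path covering of $G$, then $P(G-v)$ is really $P(G)+1$ and not $P(G)$. The only way this can fail is $P(G-v)=P(G)$, and ruling that out requires a reattachment argument: starting from a minimum path covering $\mathcal R$ of $G-v$ with $P(G)$ paths, one must build from $\mathcal R$ a covering of $G$ of the same size in which $v$ is an endpoint, which is delicate precisely when every neighbour of $v$ in $G$ is either an interior vertex of a path of $\mathcal R$ or an endpoint whose naive extension by $v$ would create a chord. Once that point is settled, the rest is routine; establishing the master identity itself is straightforward once the trichotomy for the path through $v$ is set up.
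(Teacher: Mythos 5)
Your master identity
\[
P(G\stackplus{v}H)=\min\bigl\{P(G)+P(H-v),\ P(G-v)+P(H),\ P_{\mathrm e}(G)+P_{\mathrm e}(H)-1\bigr\}
\]
is correct, and it is a cleaner packaging of the same underlying idea the paper uses: the paper proves only Case~2 in detail, by exhibiting the upper bound and then running exactly your trichotomy (path through $v$ entirely in $G$, entirely in $H$, or split at $v$) to get a contradiction, deferring the remaining cases to ``similar arguments.'' Your translations of the hypotheses suffice to close Cases~1--3 using only the easy facts ($P(G-v)=P(G)-1$ iff $v$ is a singleton in some minimum covering; $v$ an endpoint of a positive-length path in a minimum covering forces $P(G-v)=P(G)$ and $P_{\mathrm e}(G)=P(G)$; ``$v$ never an endpoint'' forces $P_{\mathrm e}\ge P+1$), so up to that point the proposal is sound and arguably better organized than the source.

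The genuine gap is exactly the step you flag and then leave unproved: that ``$v$ is not an endpoint (nor a singleton) in any minimum path covering of $G$'' implies $P(G-v)=P(G)+1$ rather than $P(G-v)=P(G)$. Case~4 collapses without it: if $P(H-v)=P(H)$ were possible under that hypothesis, the first term of your minimum would equal $P(G)+P(H)$ and the claimed value $P(G)+P(H)+1$ would be wrong. The easy singleton argument only rules out $P(G-v)=P(G)-1$; ruling out $P(G-v)=P(G)$ requires showing that from \emph{some} minimum covering of $G-v$ one can always manufacture a minimum covering of $G$ with $v$ as an endpoint, and the obvious reattachment fails whenever every path of every minimum covering of $G-v$ meets $N(v)$ either in two or more vertices or only in interior vertices. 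Saying this ``requires a reattachment argument'' is naming the obstacle, not overcoming it, so the proof of Case~4 is incomplete as written. To be fair, the paper's own proof never confronts this either --- its detailed argument for Case~2 only ever needs the easy direction ($P(H-v)\le P(H)-1$ yields a singleton covering), and its ``similar arguments'' for Case~4 would hit the identical wall --- but that does not discharge the obligation; you should either prove the reattachment lemma or restate Case~4 with the hypothesis $P(G-v)=P(G)+1$ and $P(H-v)=P(H)+1$ built in.
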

\begin{proof}
 To prove the second case, let $\PP(G)$ and $\PP(H)$ be two minimal path coverings of $G$ and $H$ respectively. Let $P\in\PP(G)$ be the path for which $v$ is an end-point and $P'=P-v$. Then $\left(\PP(G)\backslash\{P\}\right)\cup\{P'\}\cup\PP(H)$ is a path covering for $G\,\,\stackplus{v}\,H$. Thus $P(G\,\,\stackplus{v}\,H)\leq P(G)+P(H)$. To show the equality, suppose that $P(G\,\,\stackplus{v}\,H)<P(G)+P(H)$. Hence there is a path covering for $G\,\,\stackplus{v}\,H$ with size $P(G)+P(H)-1$. Let $P_v$ be the path in a path covering of $G\,\,\stackplus{v}\,H$ with size $P(G)+P(H)-1$ that covers $v$. Three following cases are possible:
\begin{enumerate}
\item $P_v$ is entirely in $G$.
\item $P_v$ is entirely in $H$.
\item $P_v=P_1\,\,\stackplus{v}\,P_2$ such that $P_1$ and $P_2$ are two paths in $G$ and $H$ respectively.
\end{enumerate}
In the first case the vertices of $G\backslash P_v$ are covered by at least $P(G)-1$ paths. On the other hand, since the number of paths in this covering is at most $P(G)+P(H)-1$, the vertices of $H-v$ are covered by at most $P(H)-1$ paths. Therefore there is a path covering of $H$ of size $P(H)$ in which $v$ is an end-point. In the second case  the vertices of $H\backslash P_v$ are covered by at least $P(H)-1$ paths. Since the number of paths in this covering is at most $P(G)+P(H)-1$, the vertices of $G-v$ are covered by at most $P(G)-1$ paths. Thus there is a path covering of $G$ of size $P(G)$ in which there is a path of length $1$ consisting of $v$. Similarly for the third case the vertices of $G\backslash P_v$ are covered by at least $P(G)-1$ paths. On the other hand since the number of paths in this covering is at most $P(G)+P(H)-1$, the vertices of $H-P_v$ are covered by at most $P(H)-1$ paths. Therefore there is a path covering of $H$ of size $P(H)$ in which $v$ is an end-point. We have shown that in all cases we have a contradiction with the conditions in the original statement. Thus $P(G\,\,\stackplus{v}\,H)=P(G)+P(H)$.     

Using similar arguments, it is not hard to prove the other cases.  
\end{proof}

 In Proposition~\ref{P(T)=Z(T)}, it was shown that for trees, $P(T)=Z(T)$. Also for any cycle $C_n$ we have $P(C_n)=Z(C_n)=2$. In Corollary~\ref{unicycles}, we see that for any \textsl{unicycle} graph $G$, $P(G)=Z(G)$. On the other hand, any unicycle graph can be obtained by a sequence of vertex-sums of a cycle and a number of trees. Also it was shown that for block-cycle graphs $G$, $P(G)=Z(G)$. Furthermore, any block-cycle graph can be obtained by vertex-sums of a number of unicycle graphs. These observations along with Theorem~\ref{zero forcing of vertex-sum}, Proposition~\ref{path covering of vertex-sum} and Proposition~\ref{P(T)=Z(T)} lead us to the following conjecture:  
 
\begin{conj}\label{conj_Z&P_vertex_sum}
Let $G$ and $H$ be two graphs with $Z(G)=P(G)$ and $Z(H)=P(H)$. Then
\[
 Z(G\,\,\stackplus{v}\,H)=P(G\,\stackplus{v}\,H).
\]
\end{conj}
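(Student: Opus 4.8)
The plan is to reduce the claimed identity to a statement purely about the vertex $v$. By Theorem~\ref{zero forcing of vertex-sum}, $Z(G\,\,\stackplus{v}\,H)=Z(G)+Z(H)+\varepsilon$, where $\varepsilon\in\{-1,0,1\}$ depends only on whether $v$ lies in a minimum zero forcing set of $G$ and of $H$. Similarly, by Proposition~\ref{path covering of vertex-sum} (together with the cases not written out there), $P(G\,\,\stackplus{v}\,H)=P(G)+P(H)+\varepsilon'$, where $\varepsilon'\in\{-1,0,1\}$ depends only on whether $v$ can be realized as an end-point --- or as a trivial one-vertex path --- of a minimum path covering of $G$ and of $H$. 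Under the hypothesis $Z(G)=P(G)$, $Z(H)=P(H)$, the conjecture is therefore equivalent to $\varepsilon=\varepsilon'$. So my first step would be to complete the case analysis begun in Proposition~\ref{path covering of vertex-sum}, obtaining $\varepsilon'$ explicitly as a function of the end-point status of $v$ with respect to minimum path coverings of $G$ and $H$.

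The second step is to build a dictionary, valid for any graph $K$ with $Z(K)=P(K)$, between ``$v$ lies in a minimum zero forcing set of $K$'' and ``$v$ is an end-point of a path in some minimum path covering of $K$'' (with an analogous pairing for the trivial-path case). One direction is immediate and uses only the general theory: the maximal forcing chains of a minimum zero forcing set $Z$ are vertex-disjoint induced paths covering $V(K)$, hence --- since there are $|Z|=Z(K)=P(K)$ of them --- a minimum path covering, in which the vertices of $Z$ appear as end-points; invoking Theorem~\ref{Reversal is a ZFS} handles the other end-points as well. Granting the dictionary as a biconditional, $\varepsilon=\varepsilon'$ follows by a routine match of cases: for instance, if $v$ lies in a minimum ZFS of both $G$ and $H$, merge the two corresponding minimum path coverings along $v$ to see $\varepsilon'=-1=\varepsilon$; the cases $\varepsilon=0$ and $\varepsilon=1$ are symmetric. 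This bookkeeping is exactly the one already carried out in Proposition~\ref{P(T)=Z(T)} for trees and in Theorem~\ref{For block-cycle Z(G)=P(G)} for block-cycle graphs, where the minimum path coverings literally are the forcing chains.

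The hard part --- and presumably why the statement is still a conjecture --- is the converse half of the dictionary: if $v$ is an end-point of a path in some minimum path covering of $K$, must $v$ lie in some minimum zero forcing set of $K$? For arbitrary graphs this is false (the end-points of a minimum path covering need not even form a zero forcing set, as the two opposite edges of $C_4$ show), so one must genuinely exploit $Z(K)=P(K)$ to argue that the relevant minimum path coverings are ``forcing-realizable''. I would try to prove this by taking a minimum path covering $\PP$ with $v$ as an end-point, orienting each path away from its $v$-side end-point, and showing that if $Z(K)=P(K)$ then no persistent obstruction (a white vertex having two or more white neighbours at every stage of the induced colouring) can survive --- possibly after rerouting $\PP$ locally to another minimum path covering. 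If a fully general argument proves elusive, a realistic fallback is to establish the conjecture for all graphs $G$ and $H$ obtained from trees and cycles by repeated vertex-sums, which already accounts for all of the motivating evidence (trees, unicycles, block-cycle graphs) and for which the dictionary is known.
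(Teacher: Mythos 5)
There is no proof in the paper to compare against: the statement you are proving is posed as Conjecture~\ref{conj_Z&P_vertex_sum}, and the author's only remark on it is exactly your first step --- that it would follow ``if we can show that the conditions in the cases of Theorem~\ref{zero forcing of vertex-sum} and Proposition~\ref{path covering of vertex-sum} coincide properly.'' Your plan therefore mirrors the paper's intended reduction, and you correctly isolate the missing ingredient: the converse half of your ``dictionary'' (that an end-point of some minimum path covering of a graph $K$ with $Z(K)=P(K)$ must belong to some minimum zero forcing set of $K$) is precisely Problem~3 in the paper's concluding chapter, i.e.\ it is itself open. So the proposal is a programme, not a proof, and you say so; that is the honest assessment, but it means the statement is not established.

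Beyond the acknowledged gap, be warned that even the step you call ``routine bookkeeping'' is not routine, because the case lists of the two cited results do not align under the naive dictionary. Proposition~\ref{path covering of vertex-sum} distinguishes ``$v$ is a path of length zero'' from ``$v$ is an end-point of a longer path,'' and its cases~(1) and~(3) both give $\varepsilon'=-1$, whereas the configuration of case~(1) (``$v$ in a minimum ZFS of $G$, in no minimum ZFS of $H$'' under your dictionary) is assigned $\varepsilon=0$ by Theorem~\ref{zero forcing of vertex-sum}. A concrete instance: let $G=K_{1,3}$ with $v$ a leaf and $H=K_{1,3}$ with $v$ the centre. Then $v$ lies in a minimum ZFS of $G$ but in none of $H$, so the theorem's case~(1) would give $Z(G\,\stackplus{v}\,H)=Z(G)+Z(H)=4$; yet $G\,\stackplus{v}\,H$ is a double star with $P=Z=3$ (take $\{a,x,z\}$ with chains $a\to c\to b$, $x\to v\to y$, $\{z\}$). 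Here $v$ is forced from inside $H$ and then performs a force inside $H$ as an \emph{interior} vertex of a chain, which the proof of that theorem's case~(1) implicitly rules out. So before matching cases you would need either to repair or to refine the statements you are matching; as written, the ``routine match'' would produce $\varepsilon\neq\varepsilon'$ in a case where the conjecture is in fact true.
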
 

Note that if we can show that the conditions in the cases of Theorem~\ref{zero forcing of vertex-sum} and Proposition~\ref{path covering of vertex-sum} coincide properly then we will prove the conjecture.  
\section{Join of graphs}
Next we turn our attention to the zero forcing number of the \txtsl{join} of two graphs and we give a formula to obtain the zero forcing number of the join of two graphs based on the zero forcing number of each of the initial graphs and their sizes. 
The join of two graphs is their union with all the edges $gh$ where $g\in V(G)$ and $h\in V(H)$. The join of graphs $G$ and $H$ is denoted by $G\vee H$. 

\begin{thm}\label{Z of join}
Let $G$ and $H$ be two connected graphs. Then
\[
Z(G\vee H)=\min\{|H|+Z(G), |G|+Z(H)\}.
\]
\end{thm}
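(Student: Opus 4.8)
The plan is to establish the two inequalities $Z(G\vee H)\le\min\{|H|+Z(G),\,|G|+Z(H)\}$ and the reverse separately, assuming throughout that $|V(G)|,|V(H)|\ge 2$ (a one-vertex factor is a degenerate case to be disposed of on its own). For the upper bound I would take a minimum zero forcing set $Z_G$ of $G$ and colour black the set $V(H)\cup Z_G$, of size $|H|+Z(G)$: since $N_{G\vee H}(g)=N_G(g)\cup V(H)$ for each $g\in V(G)$, once all of $V(H)$ is black a vertex of $G$ has exactly one white neighbour in $G\vee H$ precisely when it does in $G$, so the forces that colour $G$ starting from $Z_G$ are still available and $V(H)\cup Z_G$ is a zero forcing set. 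Hence $Z(G\vee H)\le|H|+Z(G)$, and symmetrically $Z(G\vee H)\le|G|+Z(H)$.

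For the lower bound I would fix a minimum zero forcing set $B$ of $G\vee H$, write $B_G=B\cap V(G)$ and $B_H=B\cap V(H)$, and run a forcing process, relying on one structural observation: no colouring reached during the process can have two or more white vertices in $V(G)$ and two or more white vertices in $V(H)$ simultaneously, since in such a colouring every vertex would have at least two white neighbours (it is adjacent to all of the opposite side) and no force would be possible, contradicting that $B$ forces everything. Letting $\tau_G,\tau_H$ be the first steps at which $V(G)$, resp.\ $V(H)$, becomes entirely black, one checks $\tau_G\ne\tau_H$ unless $B=V(G\vee H)$ (in which case the bound is trivial); and since exchanging $G$ and $H$ swaps the two terms of the minimum, I may assume $\tau_H<\tau_G$ and try to prove $|B|\ge|H|+Z(G)$.

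Granting $\tau_H<\tau_G$, the next steps are: (1) a force between two vertices of $H$ would need all of $V(G)$ black, hence could occur only after step $\tau_G>\tau_H$, when $H$ is already black, so each vertex of $V(H)\setminus B_H$ is forced by a vertex of $G$; such a force needs $H$ reduced to a single white vertex, so it happens at most once, giving $|B_H|\ge|H|-1$. (2) If $|B_H|=|H|$, then running the process inside $G$ from $B_G$ (with $V(H)$ black) must colour all of $V(G)$ --- its derived set in $G$ cannot omit two vertices (the process in $G\vee H$ would be stuck) nor exactly one (that vertex would be forced by an all-black neighbour, $G$ being connected) --- so $B_G$ is a zero forcing set of $G$ and $|B|=|B_G|+|H|\ge Z(G)+|H|$. (3) If $|B_H|=|H|-1$, then $\tau_H\ge1$, so the observation at time $0$ forces $|B_G|\ge|G|-1$, while $|B_G|=|G|$ would give $\tau_G=0<\tau_H$; thus $|B_G|=|G|-1$ and $|B|=|G|+|H|-2$. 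Writing $\{g_1\}=V(G)\setminus B_G$ and $\{h_1\}=V(H)\setminus B_H$, the vertex $h_1$ is forced at step $\tau_H$ by some $g_0\in V(G)$, and at that instant $N_G(g_0)$ is all black while $g_1$ is still white (because $\tau_H<\tau_G$), so $g_0$ and $g_1$ are distinct and non-adjacent in $G$. Therefore $G$ is not complete, whence $Z(G)\le|V(G)|-2$, and $|B|=|G|+|H|-2\ge|H|+Z(G)$.

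The main obstacle is exactly this last case: one must notice that the forcing order, when both parts are one vertex short, already produces a non-edge of $G$, and then appeal to the fact that a connected non-complete graph $G$ has $Z(G)\le|V(G)|-2$ (which can itself be derived from $Z(K_n)=n-1$ by treating separately the case in which some non-adjacent pair has distinct neighbourhoods --- remove that pair to get a smaller forcing set --- and the case in which $G$ is complete multipartite, where Proposition~\ref{Z of complete multipartite graph} gives the bound). Everything else is bookkeeping about which side finishes first and about the at-most-one ``cross force'' into each part.
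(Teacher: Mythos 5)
Your upper bound and your cases (1) and (2) are sound, and your overall strategy (decide which side of the join finishes first, bound the number of ``cross forces'' into that side) runs parallel to the paper's, which inspects the first force in the chronological list to conclude that all, or all but one, of one side must lie in $B$. The gap is in case (3). From $|B_H|=|H|-1$ you infer that ``the observation at time $0$ forces $|B_G|\ge|G|-1$,'' but your observation only forbids colourings with at least two white vertices on \emph{both} sides simultaneously; since $V(H)$ already has exactly one white vertex at time $0$, it places no constraint on $|B_G|$ at all. Moreover the inference is genuinely false, not merely unjustified: take $G=P_4=g_1g_2g_3g_4$ and $H=K_2=h_1h_2$, so $Z(G\vee H)=\min\{2+1,\,4+1\}=3$, and take the minimum zero forcing set $B=\{g_1,g_2,h_2\}$, whose forces are $g_1\rightarrow h_1$, then $g_2\rightarrow g_3$ and $g_3\rightarrow g_4$. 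Here $\tau_H<\tau_G$ and $|B_H|=|H|-1$, yet $|B_G|=2=|G|-2$ and $|B|\neq |G|+|H|-2$; so your case (3), including the concluding non-edge/non-completeness argument, simply does not cover such minimum zero forcing sets.

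What actually happens in the omitted sub-case $|B_H|=|H|-1$, $|B_G|\le |G|-2$ is this: no force is possible until some $g_0\in B_G$ with $N_G(g_0)\subseteq B_G$ forces the unique white vertex $h_1$ of $H$ (a vertex of $H$ cannot force while $G$ has two or more white vertices, and a vertex of $G$ cannot force inside $G$ while $h_1$ is white), so $N_G[g_0]\subseteq B_G$; after that, the argument of your case (2) shows $B_G$ is a zero forcing set of $G$. Since $|B|=|B_G|+|H|-1$, you need $|B_G|\ge Z(G)+1$, i.e.\ the lemma that a zero forcing set of a connected graph on at least two vertices which contains the closed neighbourhood of some vertex is never minimum. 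This is precisely the step the paper invokes (``all neighbours of $u$ in $G$ are black; this implies that $B\cap V(G)$ cannot be a minimal zero forcing set of $G$''), and it is the ingredient missing from your write-up; without it, or some substitute, the lower bound $|B|\ge |H|+Z(G)$ is not established in this sub-case. (The degenerate case $|V(G)|=1$ or $|V(H)|=1$ that you set aside also still needs to be supplied, though it is routine.)
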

\begin{proof}
Without loss of generality, assume that 
\[
\min\{|H|+Z(G), |G|+Z(H)\}=|H|+Z(G).
\]
Let $Z$ be a minimum zero forcing set for $G$. Clearly $V(H)\cup Z$ is a zero forcing set for $Z(G\vee H)$. Thus
\[
Z(G\vee H)\leq \min\{|H|+Z(G), |G|+Z(H)\}.
\]
We show that, in fact, it is not possible to colour $G\vee H$ with fewer vertices. Let $B$ be a minimal ZFS for $G\vee H$. Let $u\rightarrow v$ be the first force in the chronological list of forces. If $u,v\in G$, then $V(H)\subset B$ otherwise, $u$ can not perform any force. Also, since all the vertices of $H$ are adjacent to all the vertices of $G$, a vertex of $H$ can only force a vertex $w$ of $G$ if $w$ is the last white vertex of $G$ and no vertex of $G$ forces $w$, which contradicts with $G$ being connected.  Therefore, in order to colour the vertices of $G$ there are at least $Z(G)$ zero forcing chains in this zero forcing process starting with the vertices of a zero forcing set of $G$. Thus
\[
Z(G\vee H)\geq |H|+Z(G).
\]
If $u\in G$ and $v\in H$, then $V(H)\backslash \{v\}\subset B$. With a similar reasoning as above, no vertex of $H$ forces any vertex of $G$. Therefore, in order to colour the vertices of $G$ there are at least $Z(G)$ zero forcing chains in this zero forcing process starting with the vertices of a zero forcing set of $G$. In addition, all neighbours of $u$ in $G$ are black. This implies that $B\cap V(G)$ cannot be a minimal zero forcing set of $G$. Hence $|B\cap V(G)|> Z(G)$. Therefore
\[
Z(G\vee H)\geq |H|+Z(G).
\]
A similar argument applies when $u\in H$.
\end{proof}

{\bf Example.} $K_1\vee C_k$ with $k\geq 3$ is a wheel on $k+1$ vertices. According to Theorem~\ref{Z of join}, 
\[
Z(K_1\vee C_k)=\min\{1+2,k+1\}=3. 
\]  

\chapter{Positive Zero Forcing Sets}\label{PZFS}
\section{Positive zero forcing number}

 In this chapter we study the positive semidefinite zero forcing number which was introduced and its connection with the maximum positive semidefinite nullity was established  in \cite{MR2645093}. Also, we will establish some relations between the positive semidefinite zero forcing number and some graph parameters such as vertex connectivity and chromatic number. 
 
The \textsl{positive semidefinite colour change rule}\index{colour change rule!positive semidefinite} is as follows:  let $G$ be a graph and $B$ be the set consisting of all the black vertices of $G$. Let $W_1,\dots,W_k$ be the set of vertices of the $k$ components of $G\backslash B$ with $k\geq1$.  If $u\in B$ and $w$ is the only white neighbour of $u$ in the graph induced by $V(W_i\cup B)$, then change the colour of $w$ to black. 
The definitions and terminology for the positive zero forcing process, such as, colouring, derived set, etc., are similar to those for zero forcing number, but the colour change rule is different. 

A \txtsl{positive zero forcing set} $Z_p$ of $G$, is a subset of vertices of $G$ such that if initially the vertices in $Z_p$ are coloured black and the remaining vertices are coloured white, then the derived set of $G$ is $V(G)$. The \txtsl{positive zero forcing number} of $G$, denoted by $Z_+(G)$, is then defined to be the smallest size of a positive zero forcing set of $G$.
We abbreviate the term positive zero forcing set as PZFS. Note that for a non-empty graph $G$,
\[
1\leq Z_+(G) \leq |V(G)|-1.
\]
\begin{example}\label{pzfs-figures}
Let $G$ be the graph in Figure~\ref{start_graph_for_pzsf}. Then, Figure~\ref{finding_a_pzfs} illustrates why the set $Z_p=\{v_1,v_3,v_{10}\}$ is a PZFS for $G$. It is clear that this set is not a ZFS for $G$.

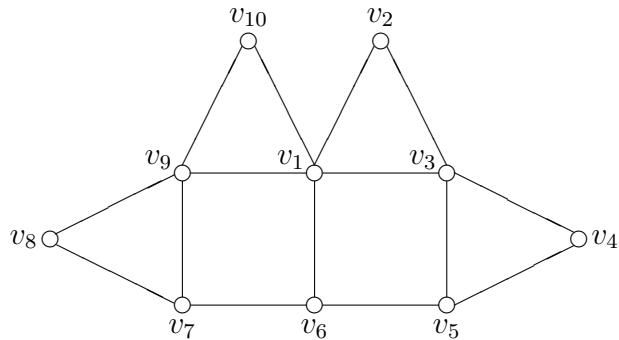
\begin{figure}[ht!]
\begin{center}
\begin{picture}(200,72)
\multiput(75,50)(50,0){2}{\circle{6}}
\multiput(50,0)(50,0){3}{\circle{6}}
\multiput(0,-25)(200,0){2}{\circle{6}}
\multiput(50,-50)(50,0){3}{\circle{6}}

\put(75,59){\makebox(0,0){$v_{10}$}}
\put(125,59){\makebox(0,0){$v_2$}}
\put(41,5){\makebox(0,0){$v_9$}}
\put(91,5){\makebox(0,0){$v_1$}}
\put(141,5){\makebox(0,0){$v_3$}}
\put(-10,-25){\makebox(0,0){$v_8$}}
\put(210,-25){\makebox(0,0){$v_4$}}
\put(50,-59){\makebox(0,0){$v_7$}}
\put(100,-59){\makebox(0,0){$v_6$}}
\put(150,-59){\makebox(0,0){$v_5$}}

\put(2,-23){\line(2,1){45}}
\put(2.5,-27){\line(2,-1){44.3}}
\put(53,0){\line(1,0){44}}
\put(53,-50){\line(1,0){44}}
\put(103,0){\line(1,0){44}}
\put(103,-50){\line(1,0){44}}
\put(50,-3){\line(0,-1){44}}
\put(100,-3){\line(0,-1){44}}
\put(150,-3){\line(0,-1){44}}
\put(153,0){\line(2,-1){45}}
\put(153,-50){\line(2,1){45}}
\put(50,3){\line(1,2){22.5}}
\put(100,3){\line(-1,2){22.5}}
\put(100,3){\line(1,2){22.5}}
\put(150,3){\line(-1,2){22.5}}
\end{picture}
\vspace{2cm}
\end{center}
\caption{A graph for which we want to find a PZFS}
\label{start_graph_for_pzsf}
\end{figure}

\begin{figure}
\begin{center}
\begin{picture}(500,0)
\put(75,50){\circle*{6}}
\put(125,50){\circle{6}}

\put(50,0){\circle{6}}
\multiput(100,0)(50,0){2}{\circle*{6}}
\multiput(0,-25)(200,0){2}{\circle{6}}
\multiput(50,-50)(50,0){3}{\circle{6}}

\put(75,59){\makebox(0,0){$v_{10}$}}
\put(125,59){\makebox(0,0){$v_2$}}
\put(41,5){\makebox(0,0){$v_9$}}
\put(91,5){\makebox(0,0){$v_1$}}
\put(141,5){\makebox(0,0){$v_3$}}
\put(-10,-25){\makebox(0,0){$v_8$}}
\put(210,-25){\makebox(0,0){$v_4$}}
\put(50,-59){\makebox(0,0){$v_7$}}
\put(100,-59){\makebox(0,0){$v_6$}}
\put(150,-59){\makebox(0,0){$v_5$}}

\put(2,-23){\line(2,1){45}}
\put(2.5,-27){\line(2,-1){44.3}}
\put(53,0){\line(1,0){44}}
\put(53,-50){\line(1,0){44}}
\put(103,0){\line(1,0){44}}
\put(103,-50){\line(1,0){44}}
\put(50,-3){\line(0,-1){44}}
\put(100,-3){\line(0,-1){44}}
\put(150,-3){\line(0,-1){44}}
\put(153,0){\line(2,-1){45}}
\put(153,-50){\line(2,1){45}}
\put(50,3){\line(1,2){22.5}}
\put(100,3){\line(-1,2){22.5}}
\put(100,3){\line(1,2){22.5}}
\put(150,3){\line(-1,2){22.5}}
\end{picture}
\begin{picture}(-100,0)
\thicklines
\put(-45,-15){\vector(4,-1){20}}
\thinlines

\put(75,50){\circle*{6}}
\put(125,50){\circle{6}}
\put(50,0){\circle{6}}
\multiput(100,0)(50,0){2}{\circle*{6}}
\multiput(0,-25)(200,0){2}{\circle{6}}
\multiput(50,-50)(50,0){3}{\circle{6}}

\put(125,50){\circle*{4}}
\put(50,0){\circle*{4}}

\put(75,59){\makebox(0,0){$v_{10}$}}
\put(125,59){\makebox(0,0){$v_2$}}
\put(41,5){\makebox(0,0){$v_9$}}
\put(91,5){\makebox(0,0){$v_1$}}
\put(141,5){\makebox(0,0){$v_3$}}
\put(-10,-25){\makebox(0,0){$v_8$}}
\put(210,-25){\makebox(0,0){$v_4$}}
\put(50,-59){\makebox(0,0){$v_7$}}
\put(100,-59){\makebox(0,0){$v_6$}}
\put(150,-59){\makebox(0,0){$v_5$}}

\put(2.5,-23){\line(2,1){44.6}}
\put(2.5,-27){\line(2,-1){44.3}}
\multiput(56,0)(15,0){6}{\line(1,0){10}}
\put(53,-50){\line(1,0){44}}
\put(103,-50){\line(1,0){44}}
\put(50,-3){\line(0,-1){44}}
\multiput(100,-5)(0,-15){3}{\line(0,-1){10}}
\multiput(150,-5)(0,-15){3}{\line(0,-1){10}}
\multiput(156,-3)(14,-7){3}{\line(2,-1){10}}
\put(153,-50){\line(2,1){45}}
\multiput(53,6)(7,14){3}{\line(1,2){5}}
\multiput(97,6)(-7,14){3}{\line(-1,2){5}}
\multiput(103,6)(7,14){3}{\line(1,2){5}}
\multiput(147,6)(-7,14){3}{\line(-1,2){5}}
\put(65,40){\vector(-1,-2){10}}
\put(105,20){\vector(1,2){10}}
\end{picture}

\begin{picture}(500,150)
\thicklines
\put(-50,-25){\vector(1,0){20}}
\thinlines

\put(75,50){\circle*{6}}
\put(125,50){\circle{6}}
\put(50,0){\circle{6}}
\multiput(100,0)(50,0){2}{\circle*{6}}
\multiput(0,-25)(200,0){2}{\circle{6}}
\multiput(50,-50)(50,0){3}{\circle{6}}

\put(125,50){\circle*{6}}
\put(50,0){\circle*{6}}
\put(100,-50){\circle*{4}}

\put(75,59){\makebox(0,0){$v_{10}$}}
\put(125,59){\makebox(0,0){$v_2$}}
\put(41,5){\makebox(0,0){$v_9$}}
\put(91,5){\makebox(0,0){$v_1$}}
\put(141,5){\makebox(0,0){$v_3$}}
\put(-10,-25){\makebox(0,0){$v_8$}}
\put(210,-25){\makebox(0,0){$v_4$}}
\put(50,-59){\makebox(0,0){$v_7$}}
\put(100,-59){\makebox(0,0){$v_6$}}
\put(150,-59){\makebox(0,0){$v_5$}}

\multiput(6,-22)(14,7){3}{\line(2,1){10}}
\put(2.5,-27){\line(2,-1){44.3}}
\multiput(56,0)(15,0){6}{\line(1,0){10}}
\put(53,-50){\line(1,0){44}}
\put(103,-50){\line(1,0){44}}
\multiput(50,-5)(0,-15){3}{\line(0,-1){10}}
\multiput(100,-5)(0,-15){3}{\line(0,-1){10}}
\multiput(150,-5)(0,-15){3}{\line(0,-1){10}}
\multiput(156,-3)(14,-7){3}{\line(2,-1){10}}
\put(153,-50){\line(2,1){45}}
\multiput(53,6)(7,14){3}{\line(1,2){5}}
\multiput(97,6)(-7,14){3}{\line(-1,2){5}}
\multiput(103,6)(7,14){3}{\line(1,2){5}}
\multiput(147,6)(-7,14){3}{\line(-1,2){5}}
\put(94,-15){\vector(0,-1){20}}
\end{picture}
\begin{picture}(-100,0)
\thicklines
\put(-45,-15){\vector(4,-1){20}}
\thinlines
\put(75,50){\circle*{6}}
\put(125,50){\circle{6}}
\put(50,0){\circle{6}}
\multiput(100,0)(50,0){2}{\circle*{6}}
\multiput(0,-25)(200,0){2}{\circle{6}}
\multiput(50,-50)(50,0){3}{\circle{6}}

\put(125,50){\circle*{6}}
\put(50,0){\circle*{6}}
\put(100,-50){\circle*{6}}
\put(50,-50){\circle*{4}}
\put(150,-50){\circle*{4}}

\put(75,59){\makebox(0,0){$v_{10}$}}
\put(125,59){\makebox(0,0){$v_2$}}
\put(41,5){\makebox(0,0){$v_9$}}
\put(91,5){\makebox(0,0){$v_1$}}
\put(141,5){\makebox(0,0){$v_3$}}
\put(-10,-25){\makebox(0,0){$v_8$}}
\put(210,-25){\makebox(0,0){$v_4$}}
\put(50,-59){\makebox(0,0){$v_7$}}
\put(100,-59){\makebox(0,0){$v_6$}}
\put(150,-59){\makebox(0,0){$v_5$}}

\multiput(6,-22)(14,7){3}{\line(2,1){10}}
\put(2.5,-27){\line(2,-1){44.3}}
\multiput(56,0)(15,0){6}{\line(1,0){10}}
\multiput(55,-50)(15,0){3}{\line(1,0){10}}
\multiput(105,-50)(15,0){3}{\line(1,0){10}}
\multiput(50,-5)(0,-15){3}{\line(0,-1){10}}
\multiput(100,-5)(0,-15){3}{\line(0,-1){10}}
\multiput(150,-5)(0,-15){3}{\line(0,-1){10}}
\multiput(156,-3)(14,-7){3}{\line(2,-1){10}}
\put(153,-50){\line(2,1){45}}
\multiput(53,6)(7,14){3}{\line(1,2){5}}
\multiput(97,6)(-7,14){3}{\line(-1,2){5}}
\multiput(103,6)(7,14){3}{\line(1,2){5}}
\multiput(147,6)(-7,14){3}{\line(-1,2){5}}
\put(85,-55){\vector(-1,0){20}}
\put(115,-55){\vector(1,0){20}}
\end{picture}

\begin{picture}(500,150)
\thicklines
\put(-50,-25){\vector(1,0){20}}
\thinlines
\put(75,50){\circle*{6}}
\put(125,50){\circle{6}}
\put(50,0){\circle{6}}
\multiput(100,0)(50,0){2}{\circle*{6}}
\multiput(0,-25)(200,0){2}{\circle{6}}
\multiput(50,-50)(50,0){3}{\circle{6}}

\put(125,50){\circle*{6}}
\put(50,0){\circle*{6}}
\put(100,-50){\circle*{6}}
\put(50,-50){\circle*{6}}
\put(150,-50){\circle*{6}}
\put(0,-25){\circle*{4}}
\put(200,-25){\circle*{4}}

\put(75,59){\makebox(0,0){$v_{10}$}}
\put(125,59){\makebox(0,0){$v_2$}}
\put(41,5){\makebox(0,0){$v_9$}}
\put(91,5){\makebox(0,0){$v_1$}}
\put(141,5){\makebox(0,0){$v_3$}}
\put(-10,-25){\makebox(0,0){$v_8$}}
\put(210,-25){\makebox(0,0){$v_4$}}
\put(50,-59){\makebox(0,0){$v_7$}}
\put(100,-59){\makebox(0,0){$v_6$}}
\put(150,-59){\makebox(0,0){$v_5$}}

\multiput(6,-22)(14,7){3}{\line(2,1){10}}
\multiput(6,-27)(14,-7){3}{\line(2,-1){10}}
\multiput(56,0)(15,0){6}{\line(1,0){10}}
\multiput(55,-50)(15,0){3}{\line(1,0){10}}
\multiput(105,-50)(15,0){3}{\line(1,0){10}}
\multiput(50,-5)(0,-15){3}{\line(0,-1){10}}
\multiput(100,-5)(0,-15){3}{\line(0,-1){10}}
\multiput(150,-5)(0,-15){3}{\line(0,-1){10}}
\multiput(156,-3)(14,-7){3}{\line(2,-1){10}}
\multiput(156,-47)(14,7){3}{\line(2,1){10}}
\multiput(53,6)(7,14){3}{\line(1,2){5}}
\multiput(97,6)(-7,14){3}{\line(-1,2){5}}
\multiput(103,6)(7,14){3}{\line(1,2){5}}
\multiput(147,6)(-7,14){3}{\line(-1,2){5}}
\put(33,-48){\vector(-2,1){18}}
\put(170,-48){\vector(2,1){18}}
\end{picture}
\begin{picture}(-100,0)
\thicklines
\put(-45,-15){\vector(4,-1){20}}
\thinlines
\multiput(75,50)(50,0){2}{\circle*{6}}
\multiput(50,0)(50,0){3}{\circle*{6}}
\multiput(0,-25)(200,0){2}{\circle*{6}}
\multiput(50,-50)(50,0){3}{\circle*{6}}

\put(75,59){\makebox(0,0){$v_{10}$}}
\put(125,59){\makebox(0,0){$v_2$}}
\put(41,5){\makebox(0,0){$v_9$}}
\put(91,5){\makebox(0,0){$v_1$}}
\put(141,5){\makebox(0,0){$v_3$}}
\put(-10,-25){\makebox(0,0){$v_8$}}
\put(210,-25){\makebox(0,0){$v_4$}}
\put(50,-59){\makebox(0,0){$v_7$}}
\put(100,-59){\makebox(0,0){$v_6$}}
\put(150,-59){\makebox(0,0){$v_5$}}

\put(2,-23){\line(2,1){45}}
\put(2.5,-27){\line(2,-1){44.3}}
\put(53,0){\line(1,0){44}}
\put(53,-50){\line(1,0){44}}
\put(103,0){\line(1,0){44}}
\put(103,-50){\line(1,0){44}}
\put(50,-3){\line(0,-1){44}}
\put(100,-3){\line(0,-1){44}}
\put(150,-3){\line(0,-1){44}}
\put(153,0){\line(2,-1){45}}
\put(153,-50){\line(2,1){45}}
\put(50,3){\line(1,2){22.5}}
\put(100,3){\line(-1,2){22.5}}
\put(100,3){\line(1,2){22.5}}
\put(150,3){\line(-1,2){22.5}}
\end{picture}
\vspace{2cm}
\end{center}
\caption{Finding a PZFS}
\label{finding_a_pzfs}
\end{figure}
\end{example}
We easily deduce the following  relationship between $Z_+(G)$ and $Z(G)$.
\begin{obs}
Since every zero forcing set is a positive zero forcing set, $Z_+(G)\leq Z(G)$.\qed
\end{obs}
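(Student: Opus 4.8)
The plan is to show the stronger combinatorial statement behind the observation: every zero forcing set $Z$ of $G$ is itself a positive zero forcing set of $G$. Once this is established, the inequality is immediate, since $Z_+(G)$ is a minimum taken over a collection of vertex sets that contains every zero forcing set of $G$; in particular it contains a minimum zero forcing set, so $Z_+(G)\le Z(G)$.

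First I would compare the two colour-change rules directly. Suppose at some stage of a forcing process the black set is $B$, and the standard rule permits a force $u\to v$: that is, $u\in B$ and $v$ is the \emph{unique} white neighbour of $u$ in all of $G$. Let $W_1,\dots,W_k$ be the vertex sets of the components of $G\setminus B$, and let $W_i$ be the component containing $v$ (which exists, since $v$ is white). In the subgraph induced by $B\cup W_i$, the vertex $v$ is still a white neighbour of $u$, and it is still the \emph{only} one: any white neighbour of $u$ in $G[B\cup W_i]$ is in particular a white neighbour of $u$ in $G$, hence must be $v$. Therefore $u$ may force $v$ under the positive semidefinite colour-change rule as well.

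Next I would promote this to whole processes. Given a zero forcing set $Z$ and a chronological list of forces that turns all of $G$ black under the standard rule, the argument above shows that each force on that list is also a legal positive force (at the moment it is applied, the black set is the same in both processes, since we are running the \emph{same} list of forces). Hence the same list of forces is a valid positive forcing process starting from $Z$, and its derived set is $V(G)$. Thus $Z$ is a positive zero forcing set. Applying this to a minimum zero forcing set $Z$ with $|Z|=Z(G)$ yields $Z_+(G)\le |Z|=Z(G)$.

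There is essentially no obstacle here; the only point requiring a moment's care is verifying that a single standard force is always realizable as a positive force, which comes down to noting that the component $W_i$ of $G\setminus B$ containing the forced vertex is well defined and that restricting to $G[B\cup W_i]$ can only remove neighbours of $u$, never add them. Everything else is bookkeeping on the chronological list of forces.
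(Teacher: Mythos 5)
Your proof is correct and is exactly the reasoning the paper leaves implicit behind the phrase ``every zero forcing set is a positive zero forcing set'': a standard force $u\to v$ remains legal under the positive semidefinite colour-change rule because passing to $G[B\cup W_i]$ can only delete white neighbours of $u$. You have simply written out the details the paper omits, so there is nothing further to add.
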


Next we calculate the positive zero forcing number of trees. This result has been shown in \cite{MR2645093}.
\begin{thm}\label{PZFS.tree}
Let $T$ be a tree. Then $Z_+(T)=1$ and any vertex $v\in V(T)$ forms a PZSF.
\end{thm}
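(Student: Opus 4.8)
The plan is to show that starting with any single vertex $v$ coloured black in a tree $T$, the positive semidefinite colour change rule will eventually force all of $V(T)$ black, and then to observe that $Z_+(T)\geq 1$ trivially holds for any nonempty graph. So the content of the theorem is the upper bound $Z_+(T)\leq 1$, witnessed by an arbitrary vertex.

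First I would root the tree $T$ at the chosen vertex $v$ and induct on the number of vertices (or on the height of the rooted tree). The key structural observation is this: if $B$ is the current set of black vertices and $v\in B$, consider the components $W_1,\dots,W_k$ of $G\backslash B$. When $B=\{v\}$, each $W_i$ is exactly the subtree hanging off one of the neighbours $w_i$ of $v$ in the rooted tree $T$; because $T$ is a tree, $v$ has exactly one neighbour inside each such subtree (namely $w_i$ itself). Hence, in the graph induced by $V(W_i)\cup B$, the vertex $v$ has $w_i$ as its unique white neighbour, so $v$ forces $w_i$ black — and this happens simultaneously for every child $w_i$ of $v$. Thus after one round of forcing, all neighbours of $v$ are black.

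Next I would argue that this propagates down the tree: the claim to prove by induction is that after finitely many rounds, every vertex at distance $\leq d$ from $v$ is black. For the inductive step, suppose all vertices at distance $\leq d$ are black; let $B_d$ be this set. A component $W$ of $G\backslash B_d$ is a subtree rooted at some vertex $u$ at distance $d+1$ from $v$, and the unique neighbour of $u$ outside $W$ is its parent $p(u)$, which lies in $B_d$. Since $T$ is a tree, in the graph induced by $V(W)\cup B_d$ the vertex $p(u)$ has exactly one white neighbour in $W$, namely $u$ itself (any other neighbour of $p(u)$ in $W$ would create a cycle). Therefore $p(u)\to u$, and this holds for every vertex $u$ at distance $d+1$, so all vertices at distance $\leq d+1$ become black. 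Since $T$ is finite, the derived set is all of $V(T)$, so $\{v\}$ is a PZFS.

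I do not expect a serious obstacle here; the argument is essentially an unwinding of the definition together with the acyclicity of $T$. The one point that needs care is justifying the "simultaneous" forcing and making sure the component structure $G\backslash B$ is tracked correctly after each round — but Proposition~\ref{uniqueness}-style reasoning (the derived set is independent of the order of forces) lets me do the forces one at a time in any convenient order, e.g. breadth-first from $v$, so I can cleanly reduce to "each parent forces each of its children." Finally, combining with the trivial bound $Z_+(T)\geq 1$ for the nonempty graph $T$ gives $Z_+(T)=1$, and the construction exhibited works for \emph{any} $v\in V(T)$, which is the "moreover" clause.
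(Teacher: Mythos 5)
Your argument is correct and takes essentially the same approach as the paper: both proofs rest on the single observation that, since $T$ is acyclic, a black vertex has exactly one white neighbour in each component of $T\backslash B$, so every parent forces each of its children and the colouring propagates outward from $v$. The only difference is that you induct on the distance from $v$ (BFS layers) while the paper inducts on $|V(T)|$ after $v$ forces all its neighbours; this is immaterial.
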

\begin{proof}
The proof is by induction on $|V(T)|$. It is clearly true for $|V(T)|=1$. Assume that the statement is true for any tree with $|V(T)|<n$. Let $T$ be a tree on $n$ vertices and let $v$ be any vertex in $T$. If $v$ is a leaf, it forces its neighbour; if not, deleting $v$ creates  components such that $v$ has only one white neighbour in each component. In either case, smaller tree(s) are obtained for which the induction hypothesis is applied.
\end{proof}

The following theorem shows that the maximum positive semidefinite nullity of a graph is bounded above by the positive zero forcing number of the graph \cite[Theorem 3.5]{MR2645093}.
\begin{thm}\label{M_+(G)<=Z_+(G)}
For any graph $G$, $M_+(G)\leq Z_+(G)$.
\end{thm}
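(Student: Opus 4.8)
The plan is to mirror the proof of Theorem~\ref{M(G)<=Z(G)}, replacing Proposition~\ref{support} by a positive semidefinite analogue. Concretely, I would first establish the following support lemma: if $Z_p$ is a positive zero forcing set of $G$ and $A\in\mathcal{S}_+(G)$, then every $\bfx\in\ker(A)$ with $\supp(\bfx)\cap Z_p=\emptyset$ must be the zero vector. Granting this, the theorem follows exactly as before: if $M_+(G)>Z_+(G)$, choose a minimum positive zero forcing set $Z_p$ and $A\in\mathcal{S}_+(G)$ with $\nul(A)>|Z_p|$; by Proposition~\ref{vanishing_at_k_positions} there is a nonzero $\bfx\in\ker(A)$ vanishing on all vertices of $Z_p$, contradicting the support lemma. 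Hence $M_+(G)\le Z_+(G)$.

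To prove the support lemma I would run the positive colour change process step by step, maintaining the invariant that $\bfx$ vanishes on the current black set $B$ (true at the start since $\supp(\bfx)\cap Z_p=\emptyset$). The key new ingredient — and the step I expect to be the real work, since this is where positive semidefiniteness is used essentially — is the following decomposition fact. Suppose $\bfx\in\ker(A)$ vanishes on $B$, and let $W_1,\dots,W_k$ be the vertex sets of the components of $G\backslash B$. Let $\bfx^{(i)}$ be the vector agreeing with $\bfx$ on $W_i$ and equal to $0$ elsewhere. Since $\supp(\bfx)\subseteq W_1\cup\cdots\cup W_k$ we have $\bfx=\sum_i\bfx^{(i)}$, hence $0=\bfx^{\top}A\bfx=\sum_{i,j}(\bfx^{(i)})^{\top}A\,\bfx^{(j)}$. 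For $i\neq j$ there are no edges of $G$ between $W_i$ and $W_j$, because such an edge would survive in $G\backslash B$ and merge the two components; thus $(\bfx^{(i)})^{\top}A\,\bfx^{(j)}=0$, and so $0=\sum_i(\bfx^{(i)})^{\top}A\,\bfx^{(i)}$. Each summand is nonnegative because $A$ is positive semidefinite, so every summand vanishes, and $(\bfx^{(i)})^{\top}A\,\bfx^{(i)}=0$ together with positive semidefiniteness forces $A\bfx^{(i)}=0$ for each $i$.

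With this in hand, a single positive colour change is handled just as in Proposition~\ref{support}. If $u\in B$ forces $w$ because $w$ is the unique white neighbour of $u$ in $G[W_i\cup B]$, then $N_G(u)\cap W_i=\{w\}$, so reading off the $u$-th coordinate of the equation $A\bfx^{(i)}=0$ leaves only $a_{uw}x_w=0$; since $uw\in E(G)$ we have $a_{uw}\neq 0$, hence $x_w=0$, which restores the invariant with $B$ enlarged by $w$. Iterating along a chronological list of forces associated with $Z_p$ colours all of $V(G)$, so $\bfx=0$, completing the support lemma and hence the theorem. The only genuinely new point beyond the non-PSD case is the component-wise decomposition of $\bfx$; everything else is a routine transcription of the argument for Theorem~\ref{M(G)<=Z(G)}.
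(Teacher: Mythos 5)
Your proof is correct. The overall skeleton matches the paper's: both reduce the theorem to a support lemma (a kernel vector of a positive semidefinite matrix in the class of $G$ that vanishes on a positive zero forcing set must be zero, after which Proposition~\ref{vanishing_at_k_positions} finishes the job), and both hinge on the same intermediate fact, namely that if $\bfx\in\ker(A)$ vanishes on the black set $B$ and $\bfx^{(i)}$ is its restriction to a component $W_i$ of $G\backslash B$ padded with zeros, then $A\bfx^{(i)}=0$, from which a single positive colour change propagates a new zero entry exactly as you describe. Where you genuinely differ is in how $A\bfx^{(i)}=0$ is obtained and in the direction of the bookkeeping. The paper writes $A$ in block form with diagonal blocks $A_i$ and coupling blocks $C_i$, reads off $A_i\bfx_i=0$ from the block structure, and then invokes the column inclusion property of Hermitian positive semidefinite matrices (there exists $Y_i$ with $C_i^*=A_iY_i$, so $C_i\bfx_i=Y_i^*A_i\bfx_i=0$), a fact it cites from the literature; it also fixes $B$ once and for all as the zero set of $\bfx$ and shows that this set is stalled, rather than marching forward along a chronological list of forces from $Z_p$. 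Your quadratic-form argument --- $0=\bfx^{\top}A\bfx=\sum_i(\bfx^{(i)})^{\top}A\bfx^{(i)}$ after killing the cross terms, each summand nonnegative, hence zero, hence $A\bfx^{(i)}=0$ by writing $A=M^{\top}M$ --- is more elementary and self-contained, and buys you a proof that needs nothing beyond first principles; the paper's version has the minor advantages of exhibiting the stalled set explicitly and of being phrased directly for the Hermitian class $\mathcal{H}_+(G)$, which is the class its proof actually uses for $M_+(G)$. If you intend your argument to cover the Hermitian case as well, replace transposes by conjugate transposes throughout; nothing else changes.
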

\begin{proof}
Let $A\in \mathcal{H}_+(G)$ with  $\nul(A)=M_+(G)$. Let $\bfx=[x_i]$ be a nonzero vector in $\nul(A)$. Define $B$ to be the set of indices $u$ such that $x_u=0$ and let $W_1,\ldots, W_k$ be the vertex sets of the $k$ components of $G\backslash B$. We claim that in the graph induced by $V(B\cup W_i)$, any vertex $w\in W_i$ can not be the unique neighbour of any vertex $u\in B$. Once the claim is established, if $X$ is a positive semidefinite zero forcing set for $G$, then the only vector in $\nul(A)$ with zeros in positions indexed by $X$ is the zero vector, and by Proposition~\ref{vanishing_at_k_positions}, $M_+(G)\leq Z_+(G)$.
To establish the claim, renumber the vertices so that the vertices of $B$ are last, the vertices of $W_1$ are first, followed by the vertices of $W_2$, etc. Then $A$ has the block form

\[
A =
\left( {\begin{array}{ccccc}
 A_1 & 0 & \ldots & 0 & C^*_1  \\
     0 & A_2 & \ldots & 0 & C^*_2  \\
\vdots & \vdots & \ddots & \vdots\\
0 & 0 & \ldots & A_k & C^*_k\\
C_1 & C_2 & \ldots & C_k & D\\
 \end{array} } \right).
\]
\\
Partition vector $\bfx$ as $\bfx=[\bfx^*_1,\ldots, \bfx^*_k,0]^{\top}$, and note that all entries of $\bfx_i$ are nonzero with $i=1,\ldots, k$. Then $A\bfx=0$ implies $A_i\bfx_i=0$ with $i=1,\ldots, k.$ Since $A$ is positive semidefinite, each column in $C^*_i$ is in the span of the columns of $A_i$ by the column inclusion property of Hermitian positive semidefinite matrices \cite{MR1642835}. That is, for $i=1,\ldots, k$, there exists $Y_i$ such that $C^*_i=A_iY_i$. Thus $C_i\bfx_i=Y^*_iA_i\bfx_i=0$, and $w\in W_i$ can not be the unique neighbour in $W_i$ of any vertex $u\in B$. Otherwise $\bfx_i$ would be a length $1$ vector so $A_i\bfx_i=0$ implies $\bfx_i=0$.
\end{proof}

It has long been known (see \cite{MR2350678}) that for any tree $T$, $M_+(T)=1$, but the use of the positive zero forcing number provides an easy proof of this result; indeed  by Theorem~\ref{M_+(G)<=Z_+(G)} $M_+(T)=1$ is an immediate consequence of $Z_+(T)=1$.

The following lemma shows how it is possible to replace a member of a PZFS by a new vertex which is not in the PZSF.
\begin{lem}\label{swap lemma1}
Let $B$ be a PZFS for a graph $G$ whose deletion leaves components $C_1, \ldots, C_k$. Then there is at least one vertex in each $C_i$ for $i=1,\ldots, k$, which is the only white neighbour in $C_i$ of at least one vertex of $B$. Also, if $v\in C_i$ is the only white neighbour of $u\in B$ in $C_i$, then $B'=\left(B\backslash \{u\}\right)\cup \{v\}$ is a PZFS for $G$.
\end{lem}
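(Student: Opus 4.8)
The plan is to establish the two assertions in turn. For the first assertion, suppose toward a contradiction that some component $C_i$ contains no vertex which is the unique white neighbour (within $G[V(C_i)\cup B]$) of any vertex of $B$. Consider what happens when we run the positive zero forcing process starting from $B$: when we examine $B$ against the component $C_i$, no vertex of $B$ can force into $C_i$, so none of the vertices of $C_i$ ever get coloured black — but this contradicts the fact that $B$ is a PZFS whose derived set is all of $V(G)$. (One should be slightly careful here: as the process runs, other vertices turn black and the ``components'' referred to in the colour change rule change. The cleanest way is to argue about the very first force that colours a vertex of $C_i$: at that moment the set of black vertices is still contained in $B$ together with vertices outside $C_i$, so the relevant component containing that vertex of $C_i$ is exactly $C_i$ itself, and the forcing vertex lies in $B$ and has that vertex as its unique white neighbour in $C_i$. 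Hence such a vertex exists.)

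For the second assertion, let $v\in C_i$ be the unique white neighbour of $u\in B$ within $C_i$, and set $B'=(B\setminus\{u\})\cup\{v\}$. I want to show $B'$ is a PZFS. The idea is to mimic the ``reversal'' argument used for ordinary zero forcing (Theorem~\ref{Reversal is a ZFS}), but locally at the single swap. Starting from $B'$, first note that all the black vertices of $B$ other than $u$ are still black, together with $v$. The key observation is that $v$ can now force $u$: in the graph $G[V(C_i)\cup B']$, the neighbours of $v$ are $u$ together with vertices of $C_i$; we need $u$ to be the only white one. The vertices of $C_i\setminus\{v\}$ adjacent to $v$ — are they black? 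Not yet. So the direct statement ``$v$ forces $u$ immediately'' is false in general; instead I would run the process from $B'$ and show it reaches all of $V(G)$ by comparing it step by step with the process from $B$.

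The cleaner route, and the one I would actually write, is this: let $D$ be the derived set of $B'$. Since $B\setminus\{u\}\subseteq B'$, everything that $B\setminus\{u\}$ can force from within the ``outside'' components (those not meeting $C_i$) still gets forced, exactly as when starting from $B$; in particular the whole process outside $C_i$ proceeds identically except that $u$ is being supplied by $B'$ rather than needing to be forced. Within $C_i$: the process from $B$ eventually colours all of $C_i$; I claim the process from $B'$ does too. Indeed, run the $B$-process and record the chronological list of forces restricted to $C_i\cup\{u\}$. In that list, $u$ forces $v$ at some stage (or $v$ is forced by someone else, but $u$ is adjacent to $v$ and $v$ is $u$'s unique white $C_i$-neighbour, so in fact $u$ is the vertex that forces $v$, or $v$ is forced even earlier by a vertex of $C_i$). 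Either way, reversing the role of $u$ and $v$ and re-running shows every vertex coloured black by the $B$-process is coloured black by the $B'$-process. Hence $D=V(G)$ and $B'$ is a PZFS.

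I expect the main obstacle to be bookkeeping the ``components of $G\setminus B$'' as the black set grows during the forcing process — the naive picture in which the components stay fixed is wrong, and the argument has to be phrased in terms of ``the first force that reaches a given vertex'' (at which point the ambient component is controlled), rather than in terms of the static partition $C_1,\dots,C_k$. Once that subtlety is handled, both parts follow from essentially the same reasoning as the ordinary zero forcing reversal lemma, specialized to a single swap.
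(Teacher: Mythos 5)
Your argument for the first assertion is fine, and in fact more careful than it needs to be: since all of $C_i$ is white, the ``first force into $C_i$'' analysis correctly pins down a vertex of $B$ whose unique neighbour in $C_i$ is the forced vertex. The problem is in the second assertion, where you reject the correct (and essentially one-line) argument and replace it with one that is not carried out. You claim that ``$v$ forces $u$ immediately'' is false in general because $v$ still has white neighbours in $C_i\setminus\{v\}$. This misapplies the positive semidefinite colour change rule, which is evaluated \emph{component by component}: a force $v\rightarrow u$ only requires $u$ to be the unique white neighbour of $v$ in the graph induced by $B'$ together with the component of $G\setminus B'$ \emph{containing $u$}. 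Since $v$ is the only neighbour of $u$ in $C_i$ (all of $C_i$ is white, so ``only white neighbour in $C_i$'' means ``only neighbour in $C_i$''), once $v$ is coloured black the vertex $u$ is separated in $G\setminus B'$ from every vertex of $C_i\setminus\{v\}$; its component consists of $\{u\}$ together with portions of the other $C_j$, none of which contain a neighbour of $v$ (as $v$'s white neighbours all lie in $C_i$). Hence $u$ \emph{is} the unique white neighbour of $v$ in that component, $v\rightarrow u$ is a legal first force, and afterwards the black set is $B'\cup\{u\}=B\cup\{v\}\supseteq B$, so the original forcing process from $B$ completes the colouring. This is exactly the paper's proof (split into the cases where $u$ has, or does not have, white neighbours outside $C_i$).

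The substitute argument you offer does not close the gap: the sentence ``$u$ is being supplied by $B'$ rather than needing to be forced'' is backwards ($u\notin B'$, so in the $B'$-process $u$ must be forced, which is precisely the point at issue), and ``reversing the role of $u$ and $v$ and re-running'' is asserted rather than proved --- unlike the ordinary zero forcing reversal theorem, you have not exhibited a valid chronological list of forces from $B'$. Once you observe that $v\rightarrow u$ is legal as the very first move, none of this bookkeeping is needed.
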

\begin{proof}
 There is at least one vertex in each $C_i$ with $i=1,\ldots, k$, that is the only white neighbour in $C_i$ of some vertex of  $B$ because if there is no such a vertex in $B$ for a component, the derived set won't include the vertices of the component which contradicts the fact that $B$ is a PZFS for $G$.
 
Assume $v\in C_i$ is the only white neighbour of $u\in B$ in $C_i$. Set $B'=\left(B\backslash \{u\}\right)\cup \{v\}$. Then two cases are possible:
\begin{enumerate}[(a)]
\item$u$ has no neighbour in $G\backslash B$ except $v$; in this case after deletion of $B'$ we get the component $\{u\}$ as an isolated vertex. Thus $v$ forces $u$ and the rest of the positive zero forcing process will be the same as the positive zero forcing process initiated by the vertices in $B$ after forcing $v$.
\item $u$ has some other neighbours in $G\backslash B$ other than $v$. Clearly these neighbours are not in $C_i$. Let $C_{n_i}$ be the components with
\[
\left(N(u)\backslash \left(B\cup\{v\}\right)\right)\subseteq \bigcup_{i=1}^k C_{n_i}.
\]
Thus in the forcing process initiated by the black vertices in $B'$, $u$ is the only white neighbour of $v$ in the component induced by $V( \cup_{i=1}^k C_{n_i})\cup\{u\}$. Hence $v$ forces $u$ and the rest of the positive zero forcing process will be the same as the positive zero forcing process initiated by the vertices in $B$ after forcing $v$.\qedhere
\end{enumerate}
\end{proof}

\section{Forcing trees}\label{Forcing Trees}
We start this section by defining a concept analogous to zero forcing chains. Let $G$ be a graph and $Z_p$ be a positive zero forcing set of $G$. Construct the derived set, recording the forces in the order in which they are performed; this is the chronological list of forces. Note that two or more vertices can perform forces at the same time by applying the colour change rule once and a vertex can perform two or more forces at the same time (for example see step $4$ in Figure~\ref{finding_a_pzfs}).

For  any chronological list of forces, as above,   a \txtsl{multi-forcing chain} is an induced rooted tree, $T_r$, formed by a sequence of sets of vertices $(r,X_1,\dots, X_k)$, where $r\in Z_P$ is the root and the vertices in $X_i$ are at  distance $i$ from $r$ in the tree. The vertices of $X_i$ for $i=1,\dots, k$,  are forced by applying the positive semidefinite colour change rule to the vertices in $X_{i-1}$; so for any $v \in X_i$ there is a  $u \in X_{i-1}$, such that $u$ forces $v$ if and only if $v$ is a neighbour of $u$.
A multi-forcing chain is also called a \txtsl{forcing tree}. Also, the set of all multi-forcing chains in a positive zero forcing process is called the \txtsl{set of forcing trees}.

In a forcing tree, the vertices in $X_i$ are said to be the vertices of the $i$-th level in the tree. Note that the vertices in a specific level may have been forced in different steps of  the positive semidefinite colour change procedure and they may also perform forces in different steps.

\begin{example} 
The positive zero forcing number of graph $G$ in Figure~\ref{ex.forcing trees} is $3$ and $\{1,3,10\}$ is a PZFS for the graph. The forcing trees in this colouring procedure (as depicted in Figure~\ref{ex1.forcing trees}) are as follows:
\newline $T_1=\{1,X_1,X_2,X_3\}$, where  
$X_1=\{2,6\}$, $X_2=\{5,7\}$, $X_3=\{4,8\}$,\\
$T_3=\{3\}$,\\
$T_{10}=\{10, X_1\}$, where  $X_1=\{9\}$.
\begin{figure}
\centering
\vspace{.5cm}
\includegraphics[width=.6\textwidth]{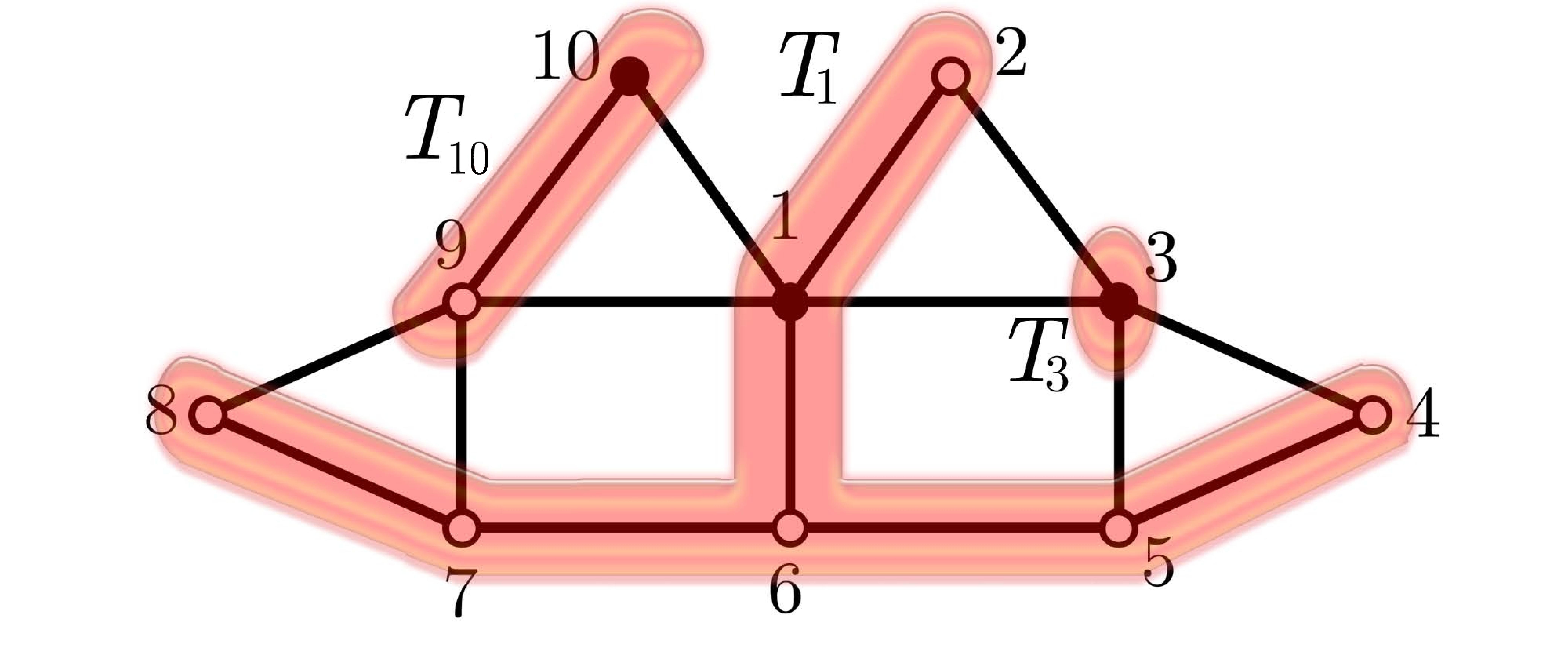}
\\
\caption{Graph $G$}
\label{ex.forcing trees}
\end{figure}

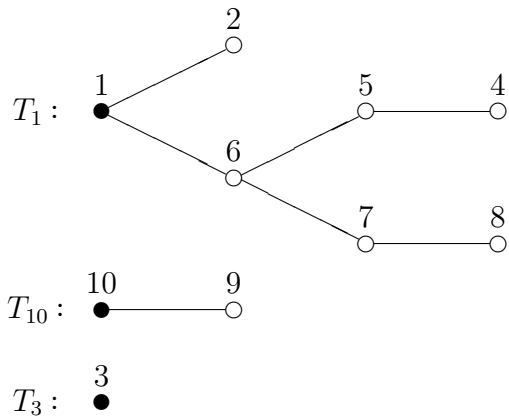
\begin{figure}
\begin{center}
\unitlength=1pt
\begin{picture}(150,50)
\put(0,0){\circle*{6}}
\put(50,25){\circle{6}}
\put(50,-25){\circle{6}}
\put(100,0){\circle{6}}
\put(150,0){\circle{6}}
\put(100,-50){\circle{6}}
\put(150,-50){\circle{6}}
\put(0,0){\line(2,1){47}}
\put(53,-24){\line(2,1){44.2}}
\put(0,0){\line(2,-1){47}}
\put(53,-26){\line(2,-1){44.2}}
\put(103,0){\line(1,0){44}}
\put(103,-50){\line(1,0){44}}
\put(0,-75){\circle*{6}}
\put(50,-75){\circle{6}}
\put(0,-75){\line(1,0){47}}
\put(0,-110){\circle*{6}}
\put(-25,0){\makebox(0,0){$T_1:$}}
\put(0,10){\makebox(0,0){$1$}}
\put(50,35){\makebox(0,0){$2$}}
\put(50,-15){\makebox(0,0){$6$}}
\put(100,-40){\makebox(0,0){$7$}}
\put(150,-40){\makebox(0,0){$8$}}
\put(100,10){\makebox(0,0){$5$}}
\put(150,10){\makebox(0,0){$4$}}
\put(-25,-75){\makebox(0,0){$ T_{10}:$}}
\put(0,-65){\makebox(0,0){$10$}}
\put(50,-65){\makebox(0,0){$9$}}
\put(-25,-110){\makebox(0,0){$ T_3:$}}
\put(0,-100){\makebox(0,0){$3$}}
\end{picture}\unitlength=1pt
\\
\vspace{4.5cm}
\end{center}
\caption{Forcing trees of $G$}
\label{ex1.forcing trees}
\end{figure}
\vspace{.5cm}
\end{example}

We now define the tree cover number which is a very interesting and useful graph parameter. Given a graph $G$, the \txtsl{tree cover number}, $\T(G)$, is the smallest positive integer $m$ such that there are $m$ vertex-disjoint induced trees  in $G$ that cover all the vertices of $G$.
In Example~\ref{ex.forcing trees}, the trees $T_1$, $T_{10}$ and $T_3$ form a tree covering of the graph. Also it can be shown that the tree cover number of the graph in Example~\ref{ex.forcing trees} is three. Note that any set of forcing trees corresponding to a minimal PZFS  is of size $Z_+(G)$  and covers all vertices of the graph. This implies the following.   
\begin{prop}\label{T(G)&Z_+(G)}
For any graph $G$, it holds that $\T(G)\leq Z_+(G)$.\qed
\end{prop}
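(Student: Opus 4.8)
The statement to prove is Proposition~\ref{T(G)&Z_+(G)}: for any graph $G$, $\T(G)\le Z_+(G)$.

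The plan is to exhibit an explicit tree cover of size $Z_+(G)$ by reading it off from a minimal positive zero forcing process. First I would fix a minimum positive zero forcing set $Z_p$ with $|Z_p|=Z_+(G)$, run the positive zero forcing process to completion, and record a chronological list of forces. From this data I would form the set of forcing trees $\{T_r : r\in Z_p\}$ as defined in Section~\ref{Forcing Trees}: each $T_r$ is the induced rooted tree grown from the root $r$ by the successive levels $X_1,X_2,\dots$ produced by the positive semidefinite colour change rule. There are exactly $|Z_p|=Z_+(G)$ such trees, one per element of $Z_p$.

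The heart of the argument is to check that these forcing trees genuinely constitute a tree cover, i.e. (a) each $T_r$ is an induced tree, (b) they are pairwise vertex-disjoint, and (c) together they cover $V(G)$. For (c): since $Z_p$ is a positive zero forcing set, the derived set is all of $V(G)$, so every vertex is either in some $Z_p$ (hence the root of its tree) or gets coloured black at some step, at which moment it is added to exactly one level $X_i$ of exactly one tree; thus every vertex lies in some $T_r$. For (b): each non-root vertex is forced exactly once (once black it stays black and is never re-forced), so it is assigned to a unique forcing tree; roots are distinct by construction; hence the vertex sets of the $T_r$ partition $V(G)$. For (a): I need to argue that $T_r$, with the edges "$u$ forces $v$'' built in, is acyclic and is an \emph{induced} subgraph. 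Acyclicity is clear since every non-root vertex has a unique parent (the vertex that forced it) lying at the previous level. The induced claim is the one requiring care.

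The main obstacle is precisely verifying that $T_r$ is an induced subgraph of $G$ — that $G[V(T_r)]$ contains no edges beyond the parent–child edges of $T_r$. Here I would lean on the structure of the positive semidefinite colour change rule. When a black vertex $u$ forces a white vertex $w$, the rule requires that $w$ be the \emph{unique} white neighbour of $u$ within the component of $G\setminus B$ containing $w$, where $B$ is the current black set. The key point is that two vertices forced into the tree $T_r$ at the same or nearby levels from the same component are separated by the rule's component condition: if $v,v'\in V(T_r)$ were adjacent but neither is the parent of the other, one could trace back through their forcing history and produce a violation of the uniqueness-in-component requirement at the step where the later of the two was forced (its forcer would have had two white neighbours in the relevant component, or $v$ and $v'$ would have lain in the same component at a stage contradicting how the levels were built). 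I would formalize this by an induction on the step number, maintaining the invariant that at every stage the already-constructed partial forcing trees are induced and that the white components refine compatibly with the tree structure. Once (a), (b), (c) are in hand, $\{T_r : r\in Z_p\}$ is a collection of $Z_+(G)$ vertex-disjoint induced trees covering $G$, so $\T(G)\le Z_+(G)$, completing the proof.
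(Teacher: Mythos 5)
Your proposal is correct and follows essentially the same route as the paper, which simply observes that the forcing trees of a minimal positive zero forcing process form a vertex-disjoint induced tree cover of size $Z_+(G)$. You supply the verification (disjointness, coverage, and especially that each forcing tree is induced, via the uniqueness-in-component condition of the positive colour change rule) that the paper leaves implicit in its definition of a multi-forcing chain as an induced rooted tree.
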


Based on Proposition~\ref{T(G)&Z_+(G)}, the tree cover number is a lower bound for the positive zero forcing number. This bound is clearly tight for trees. But there are graphs, such as complete bipartite graphs, for which the discrepancy between these parameters is relatively large. It is, hence, an interesting  question to ask  for which families of graphs the equality between these two parameters holds. One way to approach this problem is  to find graph operations which preserve the equality in graphs for which these parameters agree.   

We have the following fact for the vertex-sum of any graph with any tree.
\begin{thm}\label{vertex-sum}
Let $G$ be any graph and $T$ be a tree both with a vertex labeled $v$ then the following equalities hold:
\begin{enumerate}[(a)]
\vspace{.2cm}
\item $Z_+(G\,\stackplus{v}\,T)=Z_+(G);$
\vspace{.2cm}
\item  $\T(G\,\stackplus{v}\,T)=\T(G).$
\vspace{.2cm}
\end{enumerate}
\end{thm}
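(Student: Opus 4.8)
The plan is to prove each equality through two inequalities, using that $T$ is attached to the rest of $H:=G\,\stackplus{v}\,T$ only at the single vertex $v$, so that $v$ plays the role of a cut vertex and any path of $H$ joining two vertices of $V(G)$ can be rerouted through $v$ to avoid $V(T)\setminus\{v\}$.

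For part (b), the bound $\T(H)\le\T(G)$ is the easy direction: take a minimum tree cover of $G$, let $\sigma$ be the induced tree of that cover containing $v$, and replace $\sigma$ by $\sigma\cup V(T)$. Since $V(G)\setminus\{v\}$ and $V(T)\setminus\{v\}$ induce no edges of $H$ between them, the enlarged set induces precisely $\sigma$ and $T$ glued at $v$, which is again a tree, while the other cover trees lie in $V(G)\setminus\{v\}$ and are unchanged. For $\T(G)\le\T(H)$, take a minimum tree cover $\{\tau_i\}$ of $H$ and intersect each $\tau_i$ with $V(G)$. The rerouting observation shows $\tau_i\cap V(G)$ is connected (any two of its vertices are joined in $\tau_i$ by a path that cannot leave $V(G)$, since it would have to visit $v$ twice), hence it is a possibly empty induced subtree of $G$; these are vertex-disjoint and cover $V(G)$.

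For part (a), I would prove $Z_+(H)\le Z_+(G)$ by checking that a minimum PZFS $B$ of $G$ is a PZFS of $H$. Starting from $B$ in $H$, first replay the positive forcing process of $G$: a force $u\to w$ of that process stays valid in $H$, because attaching $T$ at $v$ only merges the component of $v$ with the components of $T\setminus\{v\}$, the sole vertex incident to the new edges is $v$, and $v$ is either already black (so no such merging affects that force) or white (so it performs no force). Once all of $V(G)$, in particular $v$, is black, $v$ meets each component of $T\setminus\{v\}$ in exactly one vertex, so by Theorem~\ref{PZFS.tree} the process finishes colouring $T$. This gives $Z_+(H)\le|B|=Z_+(G)$.

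The reverse inequality $Z_+(H)\ge Z_+(G)$ is the main obstacle. Let $B$ be a minimum PZFS of $H$. Each vertex of $V(G)\setminus\{v\}$ has all its $H$-neighbours inside $V(G)$, so in the forcing process of $H$ every such vertex is forced by a vertex of $V(G)$; and comparing components of $G$ and of $H$ (again by rerouting through $v$) shows each of those forces remains valid in $G$ relative to the induced black set, even after one inserts $v$ into it. If $v\in B$ or $v$ is forced in $H$ by a vertex of $V(G)$, then replaying just these forces shows $B\cap V(G)$ is a PZFS of $G$, so $Z_+(G)\le|B|$. Otherwise $v$ is forced in $H$ by a vertex of $V(T)\setminus\{v\}$; since $v$ stays white until that moment, the whole chain of forces producing that forcer lies in $V(T)\setminus\{v\}$ and hence is seeded by a vertex of $B\cap(V(T)\setminus\{v\})$, so $|B\cap V(G)|\le|B|-1$, whence $\bigl(B\cap V(G)\bigr)\cup\{v\}$, a PZFS of $G$ by the previous remark, has size at most $|B|$. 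In all cases $Z_+(G)\le|B|=Z_+(H)$. The delicate points are the two component-comparison arguments — a valid force in $H$ restricts to a valid force in $G$, and conversely — and the bookkeeping in the last case showing $B$ must spend a vertex strictly inside $T$.
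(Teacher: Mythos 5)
Your proof is correct and follows essentially the same route as the paper: the easy inequalities by gluing or extending the relevant tree/forcing process at $v$, and the reverse inequalities by restricting a minimum cover or a minimum positive forcing process of $G\,\stackplus{v}\,T$ back to $G$. If anything, your write-up is more complete than the paper's — the paper dismisses (b) as trivial and, in the reverse direction of (a), asserts without justification that a minimum PZFS contains at most one vertex of $T$, whereas your case analysis on how $v$ gets forced (by a vertex of $V(G)$ versus by a vertex of $V(T)\setminus\{v\}$) sidesteps that claim entirely.
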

\begin{proof}
For (a) assume that $B$ is a PZFS for $G$.  After applying the colour change rule with $B$ the set of  initial black vertices, all vertices in $G$ including $v$ get forced. According to Theorem~\ref{PZFS.tree}, $B$ is therefore a PZFS for $T$. Thus $Z_+(G\,\stackplus{v}\,T)\leq Z_+(G)$. To show that the reverse inequality also holds, let $B'$ be a PZFS for $Z_+(G\,\stackplus{v}\,T)$. If all members of $B'$ belong to $G$, then we are done. Otherwise at most one of the vertices of $B'$, say $u$,  is in $T$. Let $T_u$ be the forcing tree started by $u$ and $X_i$ be the first level of $T_u$ in which the vertices of $G$ appeared. Since $v$ is the only vertex in the intersection set of $V(G)$ and $V(T)$, $X_i\cap V(G)=\{v\}$. Thus $$Z_+(G\,\stackplus{v}\,T)\geq Z_+(G).$$
The statement (b) is trivial.
\end{proof}

\section{Positive zero forcing number and graph operations}

In this section we study the effects of the graph operations considered in Section~\ref{Simple operations} on the positive zero forcing number and show that it is not monotone on subgraphs. These results are in fact analogous to what we have already shown in Section~\ref{Simple operations} (for the proofs, see \cite[Proposition 5.3, Proposition 5.14, Proposition 5.22, Theorem 5.23]{ekstrand2013positive}).

First, we consider vertex deletion.
\begin{prop}
 Let $G$ be a graph and $v$ be a vertex in $V(G)$. Then 
 \[
Z_+(G)+\text{deg}(v)-1 \geq Z_+(G-v)\geq Z_+(G)-1.\qed
 \]
\end{prop}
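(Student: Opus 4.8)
The plan is to establish the two inequalities in $Z_+(G)+\operatorname{deg}(v)-1 \geq Z_+(G-v)\geq Z_+(G)-1$ separately, each by producing an explicit positive zero forcing set for one graph out of a minimal one for the other.

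\textbf{Lower bound $Z_+(G-v)\geq Z_+(G)-1$.} First I would take a minimum PZFS $B'$ for $G-v$, with $|B'| = Z_+(G-v)$, and show that $B' \cup \{v\}$ is a PZFS for $G$. The key observation is that the positive colour change rule applied in $G$ with initial black set $B'\cup\{v\}$ dominates the process in $G-v$ with initial set $B'$: since $v$ is already black, deleting $v$ from $G$ and looking at a component of $G\backslash(B'\cup\{v\})$ is exactly looking at a component of $(G-v)\backslash B'$. So every force that can be performed in $G-v$ starting from $B'$ can still be performed in $G$ starting from $B'\cup\{v\}$, and therefore the derived set contains all of $V(G-v)$; together with $v$ itself this is all of $V(G)$. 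Hence $Z_+(G)\leq |B'|+1 = Z_+(G-v)+1$, which rearranges to the claimed inequality.

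\textbf{Upper bound $Z_+(G-v)\leq Z_+(G)+\operatorname{deg}(v)-1$.} Here I would start with a minimum PZFS $B$ for $G$, $|B| = Z_+(G)$. If $v\in B$, I claim $B\backslash\{v\}$ is a PZFS for $G-v$ (size $Z_+(G)-1 \le Z_+(G)+\deg(v)-1$ since $\deg(v)\ge 0$), because running the positive colour change rule on $G-v$ from $B\backslash\{v\}$ again dominates what the rule does in $G$ after $v$ has been removed from consideration — any component of $(G-v)\backslash(B\backslash\{v\})$ is a component of $G\backslash B$, and the forces propagate the same way. If instead $v\notin B$, then in the positive zero forcing process on $G$ starting from $B$, the vertex $v$ gets forced at some step; replace $v$'s role by adding to the initial set the (at most $\deg(v)$) neighbours of $v$, i.e. consider $B^\ast = \bigl(B\backslash\{v\}\bigr)\cup N(v)$, which has size at most $Z_+(G)+\deg(v)$, and has at least one element (namely $v$, if $v\in B$, or a neighbour) that can be removed — more carefully, I would take $B^\ast = B \cup N(v) \setminus \{v\}$ when $v \notin B$ and observe $|B^\ast| \le |B| + \deg(v)$, but one of the $\deg(v)$ added neighbours is redundant because once all neighbours of $v$ are black the process in $G-v$ still reaches everything (each component of $(G-v)\setminus B^\ast$ is a union of components of $G\setminus(B\cup\{v\})$ with the neighbours of $v$ removed, so forcing proceeds as in $G$). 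This gives $|B^\ast|\le Z_+(G)+\deg(v)-1$ after discarding one redundant vertex, and $B^\ast$ restricted appropriately is a PZFS for $G-v$.

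\textbf{Main obstacle.} The delicate point is the bookkeeping in the upper bound when $v\notin B$: I need to argue carefully that after adding $N(v)$ to the black set and deleting $v$, the forcing process on $G-v$ still terminates at $V(G-v)$, and crucially that I can always drop one of the added vertices (so the count is $\deg(v)-1$, not $\deg(v)$). The cleanest route is probably to use Lemma~\ref{swap lemma1}: the vertex that originally forced $v$ in $G$ had $v$ as its unique white neighbour in some component, so in $G-v$ that component splits, and one neighbour of $v$ can be recovered for free by a swap-type argument, allowing one of the $\deg(v)$ vertices to be omitted from the initial set. I would verify this domination-of-processes claim in each case rather than invoking any general monotonicity (which fails, as the section emphasises).
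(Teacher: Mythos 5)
Your lower bound is correct and is the standard argument: if $B'$ is a PZFS of $G-v$, then $B'\cup\{v\}$ is a PZFS of $G$, because adding $v$ as an initially black vertex leaves the white components and every unique-white-neighbour condition unchanged. (Note that the paper does not prove this proposition at all; it defers to \cite{ekstrand2013positive}, so the comparison here is with what a correct proof must contain rather than with an in-paper argument.)

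The upper bound, however, contains a genuine gap, and your first case is simply false. If $v\in B$, the set $B\setminus\{v\}$ need not be a PZFS of $G-v$: take $G=K_{1,n}$ with $v$ the centre, so that $B=\{v\}$ is a minimum PZFS; then $G-v$ is edgeless with $Z_+(G-v)=n$, while your claim would give $Z_+(G-v)\le Z_+(G)-1=0$. The ``domination'' heuristic fails because the deleted vertex may itself be a forcing vertex, and all of its forces are lost when it is removed from the graph --- deleting $v$ is not the same as marking it black and ignoring it. The repair is the construction from your second case applied uniformly: $(B\cup N(v))\setminus\{v\}$ is a PZFS of $G-v$ for any PZFS $B$ of $G$ (each force performed by $v$ has its target in $N(v)$ and is pre-satisfied; every other force survives since the white set only shrinks and components only split), and when $v\in B$ this set has size at most $Z_+(G)+\deg(v)-1$. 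When $v\notin B$ the same set can have size $Z_+(G)+\deg(v)$, and the whole content of the $-1$ is identifying which added neighbour may be discarded --- exactly the step you assert without proof. A concrete finish: let $u\in N(v)$ be the vertex that forces $v$ in a chronological list of forces for $B$. If $u\in B$, then $|B\cup N(v)|\le |B|+\deg(v)-1$ already. If $u\notin B$, take $B^{\ast}=B\cup\bigl(N(v)\setminus\{u\}\bigr)$, of size at most $|B|+\deg(v)-1$; every force preceding $u\rightarrow v$ goes through in $G-v$ (in particular $u$ still gets forced, and no earlier force is performed by $v$, which is still white at that stage), the force $u\rightarrow v$ is skipped, and each later force either has its target in $N(v)$ (already black) or survives as before. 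Alternatively one can iterate Lemma~\ref{swap lemma1} to produce a minimum PZFS containing $v$ and reduce to the first case; either way, this step must actually be carried out rather than waved at.
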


Next, we consider edge deletion.

\begin{prop}
Let $G$ be a graph and $e$ be an edge in $G$. Then
\[
Z_+(G)-1\leq Z_+(G-e)\leq Z_+(G)+1.\qed
\]
\end{prop}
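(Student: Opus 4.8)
The plan is to prove the two inequalities $Z_+(G-e) \le Z_+(G)+1$ and $Z_+(G-e) \ge Z_+(G)-1$ separately, each time by starting from a minimal positive zero forcing set of one graph and modifying it slightly to obtain a positive zero forcing set of the other. The guiding intuition, exactly as in the conventional case (Proposition~\ref{Z(G-e)}) and in the vertex-sum arguments of Section~\ref{Forcing Trees}, is that deleting one edge can change only local structure, so a single extra black vertex should always be enough to repair a forcing process.

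For the upper bound $Z_+(G-e) \le Z_+(G)+1$, let $e=uv$ and let $B$ be a minimal PZFS for $G$ with $|B| = Z_+(G)$. I claim $B \cup \{u\}$ is a PZFS for $G-e$. Run the positive zero forcing process on $G-e$ starting from $B \cup \{u\}$. I want to compare it step by step with the valid process on $G$ starting from $B$: the only place the colour change rule can ``see'' the edge $e$ is in determining the components of the white subgraph and in counting white neighbours of $u$ (or of $v$). Since $u$ is already black in $G-e$, any force in the $G$-process that was performed \emph{by} $u$ across $e$ is now irrelevant, and any force that relied on $u$ being present merely as a black neighbour is unaffected. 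A force performed in $G$ by some black vertex $w \ne u$ using a white neighbour in a component $W_i$ of $G\setminus B$: in $G-e$ the component structure of the white subgraph can only get finer (splitting $W_i$ into pieces), which can only make $w$'s white neighbour set within a component smaller, so the force is still legal. The one subtlety is that deleting $e$ might \emph{merge} the roles of $v$'s neighbours across components differently; but since $u$ is black from the outset, $v$ lies in whatever component it lands in with one fewer black-adjacent obstruction removed, and the argument that the $G$-process translates forward still goes through. Hence every vertex forced in $G$ gets forced in $G-e$, so $B\cup\{u\}$ is a PZFS and $Z_+(G-e)\le Z_+(G)+1$.

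For the lower bound $Z_+(G-e)\ge Z_+(G)-1$, equivalently $Z_+(G) \le Z_+(G-e)+1$, I would run the symmetric argument: let $B'$ be a minimal PZFS for $G-e$, and show $B' \cup \{u\}$ is a PZFS for $G$. Again compare the two processes; now adding the edge $e$ can only \emph{coarsen} the white-component structure (merging components) and can increase the white-neighbour count of $u$ and of $v$ by at most one each. Because $u$ is black throughout the $G$-process, $u$ never needs to force across $e$, and $v$'s extra black neighbour $u$ only helps; any force legal in $G-e$ by a vertex $w \ne u$ remains legal in $G$ unless the merging of components newly supplies $w$ with a second white neighbour in the enlarged component — but by keeping $u$ black we have removed exactly the edge-endpoint that could cause such a merge to interfere, so again the process carries forward. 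This yields $Z_+(G) \le Z_+(G-e)+1$.

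The main obstacle is making the ``component structure can only get finer / coarser'' comparison fully rigorous: unlike the ordinary zero forcing rule, the positive rule depends globally on the partition of the white subgraph into connected components, so one must argue carefully that at each stage the set of white vertices is the same in both processes (by induction on the number of force-steps) and that within that fixed white set, removing (resp.\ adding) the single edge $e$ refines (resp.\ coarsens) the component partition in a way that never destroys the legality of a force performed by a vertex other than the one endpoint we have coloured black. Once that inductive invariant — same black set after each step — is set up, each individual step is a short case check on whether the force in question involved $u$, $v$, or neither, mirroring the case analysis already used in Lemma~\ref{swap lemma1} and Theorem~\ref{vertex-sum}.
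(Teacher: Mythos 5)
First, note that the thesis does not actually prove this proposition; it is quoted from the literature (\cite[Proposition~5.14]{ekstrand2013positive}), so your argument can only be judged on its own terms. Your overall strategy --- add one endpoint of $e$ to a minimum positive zero forcing set of the other graph and replay the forces --- is the right one, and the part you flag as the ``main obstacle'' is in fact the easy part: once an endpoint of $e$ is black, the white subgraphs of $G$ and $G-e$ induced on the same white set are \emph{identical}, so the component partitions coincide rather than merely refining or coarsening. The genuine gaps lie in the choice of \emph{which} endpoint to add, and with an arbitrary endpoint both of your constructions fail on small examples. For the upper bound you assert that, since $u$ is black, ``any force performed by $u$ across $e$ is now irrelevant.'' This is backwards: colouring $u$ black makes forces \emph{onto} $u$ irrelevant, not forces \emph{by} $u$. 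If the only force that coloured $v$ in the $G$-process was $u\rightarrow v$ across $e$, then $B\cup\{u\}$ need not force $v$ in $G-e$. Take $G=P_3$ with vertices $w,u,v$ and $e=uv$: then $B=\{w\}$ is a minimum PZFS of $G$, but $B\cup\{u\}=\{w,u\}$ leaves the vertex $v$, which is isolated in $G-e$, white forever. The repair is to add the endpoint of $e$ that is forced \emph{across} $e$ in the chosen process (here $v$); if no force crosses $e$, either endpoint works.

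For the lower bound the symmetric problem occurs with forces performed \emph{by} the endpoint you colour. Your case analysis covers only forcing vertices $w\neq u$, but restoring the edge $uv$ gives $u$ the new neighbour $v$, so a force $u\rightarrow x$ with $x\neq v$ that was legal in $G-e$ can be blocked in $G$ when $v$ is still white and lies in the same component as $x$. Take $G=K_3$ on $\{u,v,x\}$ and $e=uv$, so that $G-e$ is the path $u$--$x$--$v$ and $B'=\{u\}$ is a legitimate minimum PZFS of it by Theorem~\ref{PZFS.tree}; then $B'\cup\{u\}=\{u\}$ has the two white neighbours $v$ and $x$ in a single component of $K_3$ and forces nothing, whereas $Z_+(K_3)=2$. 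The correct choice here is to add the endpoint of $e$ that turns black \emph{later} in the $(G-e)$-process: with that endpoint, say $v$, black from the start, every force by $u$ sees $v$ black, and every force by $v$ occurs at a time when $u$ is already black, so no white-neighbour count changes and the replay goes through. Both gaps are repairable, but as written each direction makes a claim that concrete examples refute.
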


 Edge contraction has the following effect on the positive zero forcing number. 
\begin{prop}
 Let $e = uv$ be an edge in a graph $G$. Then  
\[
 Z_+(G)-1\leq Z_+(G/e)\leq Z_+(G)+1.\qed
 \]
\end{prop}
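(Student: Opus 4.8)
The statement to prove is that for an edge $e = uv$ in a graph $G$, we have $Z_+(G) - 1 \leq Z_+(G/e) \leq Z_+(G) + 1$. I want to establish two inequalities, and I expect the upper bound to be the more delicate one.

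For the lower bound $Z_+(G/e) \geq Z_+(G) - 1$, the plan is to start with a minimum positive zero forcing set $B'$ of $G/e$ and build a PZFS of $G$ of size at most $|B'| + 1$. Let $w$ denote the vertex of $G/e$ obtained by identifying $u$ and $v$. The natural candidate is: if $w \in B'$, replace it by the two vertices $u$ and $v$ in $G$ (adding one vertex), and if $w \notin B'$, keep $B'$ as-is, viewing each vertex of $G/e \setminus \{w\}$ as the corresponding vertex of $G$, and then possibly add one of $u$ or $v$ to handle the ambiguity. I would then argue that any legal application of the positive colour-change rule in $G/e$ lifts to a sequence of legal applications in $G$: the key observation is that the components of $(G/e) \setminus B'$ correspond closely to the components of $G \setminus (B' \cup \{u,v\})$ — contracting an edge can merge at most the two pieces incident to $u$ and $v$, so the component structure is essentially preserved. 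Tracking how a force $x \to y$ in $G/e$ translates when $x$ or $y$ equals $w$ is the bookkeeping core of this direction.

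For the upper bound $Z_+(G/e) \leq Z_+(G) + 1$, I would take a minimum PZFS $B$ of $G$ and produce a PZFS of $G/e$ of size at most $|B| + 1$. The construction: let $B''$ be the image of $B$ in $G/e$ (so if $u$ or $v$ or both lie in $B$, they map to $w$), and add an extra vertex if needed to compensate for a loss of forcing power near the contracted vertex. The subtle point is that a force performed across the edge $uv$ in $G$, or a force where $u$ and $v$ lie in different components of $G \setminus B$ at some stage, may become illegal after contraction because $w$ now has more white neighbours in a single component than either $u$ or $v$ did individually. I would handle this by identifying, in a fixed chronological list of forces for $B$ in $G$, the first moment at which $u$ and $v$ are both coloured, and pre-colouring in $G/e$ the vertex $w$ together with one carefully chosen additional vertex (a white neighbour of $w$ that would otherwise block a force), so that the remaining forces all lift.

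The main obstacle, and where I would spend the most care, is precisely this interaction between contraction and the component-sensitive nature of the positive colour-change rule: unlike the ordinary zero forcing rule, the positive rule depends on the decomposition of $G \setminus (\text{black set})$ into connected components, and contracting an edge can fuse two components into one, which can destroy the uniqueness-of-white-neighbour condition. I would invoke Lemma~\ref{swap lemma1} to normalise the forcing sets (swapping a member of a PZFS for a white neighbour it forces) so that I can always arrange $u$ or $v$ to be a root of a forcing tree rather than an interior vertex, which limits the damage to a single extra vertex. I expect the final write-up of the upper bound to require a short case analysis according to whether $u \in B$, $v \in B$, both, or neither, mirroring the style of the proofs of Proposition~\ref{Adding edge to a semi-complete} and Theorem~\ref{zero forcing of vertex-sum} earlier in the chapter.
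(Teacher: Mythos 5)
First, a point of reference: the paper does not actually prove this proposition. It is one of four statements in that section quoted with a terminal \qed{} and attributed to \cite{ekstrand2013positive}, so there is no in-paper argument to compare yours against; I can only assess your plan on its own terms. Your lower bound direction is essentially workable: when $w\in B'$ you replace it by $\{u,v\}$ and the white components are literally unchanged, and when $w\notin B'$ you pre-colour whichever of $u,v$ is adjacent to the vertex that performs the (unique) force onto $w$ in a fixed chronological list; un-contracting with one of $u,v$ already black only refines the white components, which is the favourable direction. That needs to be written out, but nothing breaks.

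The upper bound, however, has a concrete gap in the accounting. Your recipe is to take $\pi(B)$ and pre-colour ``the vertex $w$ together with one carefully chosen additional vertex.'' When $u,v\notin B$ we have $|\pi(B)|=|B|$, so this set has size $|B|+2$ and only proves $Z_+(G/e)\le Z_+(G)+2$. You cannot simply drop the extra vertex and keep $\pi(B)\cup\{w\}$: take $G$ on $\{u,v,a_1,a_2,b_1,b_2\}$ with edges $uv$, $ua_1$, $a_1a_2$, $a_2v$, $ub_1$, $b_1b_2$, $b_2v$ (two $4$-cycles sharing the edge $uv$). Then $B=\{a_1,a_2\}$ is a minimum PZFS of $G$ with $u,v\notin B$ (force $a_1\to u$, $a_2\to v$, $u\to b_1$, $b_1\to b_2$), but $G/uv$ is two triangles sharing $w$, and $\{a_1,a_2,w\}$ is not a PZFS of $G/uv$ since $w$ has two white neighbours $b_1,b_2$ in one component and $a_1,a_2$ have none. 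Here $Z_+(G/e)=3=Z_+(G)+1$, so the bound is tight and there is no slack: a correct proof must produce a set of size exactly $|B|+1$. The repair is the idea you mention only in passing, namely using Lemma~\ref{swap lemma1} (iterated along a forcing tree) to replace $B$ by an equal-sized PZFS containing $u$ or $v$, so that $\pi$ saves a vertex and you recover the budget for one blocker-fixing addition --- or else to prove directly that a single well-chosen vertex added to $\pi(B)$ suffices (in the example, $\{a_1,a_2,b_1\}$ works, but $w$ is the wrong vertex to insist on). As written, the two mechanisms (add an extra vertex; normalise $B$ by swapping) are both gestured at without either one being carried through to a count of $|B|+1$, and that is precisely where the content of the proof lies.
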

And finally the following result is about the effect of edge subdivision on the positive zero forcing number.  
\begin{prop}\label{subdivision and Z_+}
Let $H$ be the graph obtained from $G$ by subdividing an edge $e\in E(G)$. Then $Z_+(H) = Z_+(G)$ and any positive semidefinite zero forcing set for $G$ is a positive semidefinite zero forcing set for $H$.\qed 
\end{prop}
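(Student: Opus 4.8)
Write $e=uv$ and let $w$ be the vertex added in the subdivision, so $V(H)=V(G)\cup\{w\}$, $N_H(w)=\{u,v\}$, $N_H(u)=(N_G(u)\setminus\{v\})\cup\{w\}$, $N_H(v)=(N_G(v)\setminus\{u\})\cup\{w\}$, and all other adjacencies and all induced structure on $V(G)\setminus\{u,v\}$ are unchanged. The backbone of the proof is a comparison of the positive colour-change rule in $G$ and in $H$. For a black set $S$ with $w\notin S$, the components of $H\setminus S$ are obtained from those of $G\setminus S$ by: adding the isolated component $\{w\}$ if $u,v\in S$; and otherwise replacing the component $W$ that meets $\{u,v\}$ by $W\cup\{w\}$ with $w$ pendant. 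Moreover, for $x\notin\{u,v\}$ the set of white neighbours of $x$ inside any given component is the same in $G$ and in $H$, while for $x\in\{u,v\}$ this set changes only by the cardinality-preserving substitution $v\leftrightarrow w$ (resp.\ $u\leftrightarrow w$). A dual comparison holds for black sets containing $w$, where one deletes $w$ and restores the edge $uv$.

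\textbf{Every PZFS of $G$ is a PZFS of $H$ (hence $Z_+(H)\le Z_+(G)$).} Let $B$ be a PZFS of $G$, regarded inside $V(H)$. I would simulate the positive zero forcing process of $G$ from $B$ inside $H$, force by force: by the comparison above, every force $x\to z$ performed in $G$ with $\{x,z\}\neq\{u,v\}$ is still a legal positive force in $H$ at the corresponding stage. If the $G$-process ever performs $u\to v$ (or $v\to u$), replace that single force by the two forces $u\to w\to v$ (resp.\ $v\to w\to u$), which the component comparison shows are legal. If the edge $uv$ is never used to force, then at the first stage of the simulated $H$-process at which both $u$ and $v$ are black, the vertex $w$ is an isolated component and $u\to w$ is a legal force, which I insert. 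In every case the simulation colours all of $V(G)\cup\{w\}=V(H)$, so $B$ is a PZFS of $H$; taking $B$ of minimum size gives $Z_+(H)\le Z_+(G)$.

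\textbf{Reverse inequality $Z_+(G)\le Z_+(H)$.} Let $B'$ be a minimum PZFS of $H$. I would first reduce to the case $w\notin B'$: if $w\in B'$, then some neighbour of $w$ is white (otherwise $B'\setminus\{w\}$ is already a PZFS of $H$, contradicting minimality), and Lemma~\ref{swap lemma1} lets me replace $w$ by a white neighbour of $w$ lying in the same component of $H\setminus B'$, yielding a minimum PZFS that avoids $w$; the one configuration this does not immediately cover — both $u$ and $v$ white and in the same component of $H\setminus B'$ — is treated directly. With $w\notin B'$, I view $B'\subseteq V(G)$ and run the $H$-process-to-$G$-process simulation in reverse: delete the unique force that colours $w$, and whenever $w$ subsequently forces $u$ or $v$, use the comparison lemma to exhibit the corresponding legal force in $G$ at that stage. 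This shows $B'$ is a PZFS of $G$, so $Z_+(G)\le|B'|=Z_+(H)$, and combined with the previous paragraph $Z_+(G)=Z_+(H)$.

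\textbf{Main obstacle.} The substantive difficulty is the bookkeeping in the two simulations — keeping track of which of $u,v$ is already black so that $w$ is placed in the correct component and is forced at the right moment — together with the reduction, in the reverse direction, to a minimum PZFS of $H$ not using the subdivision vertex $w$; it is precisely in the degenerate configurations of the two neighbours of $w$ that Lemma~\ref{swap lemma1} (and a small amount of ad hoc case analysis) is essential.
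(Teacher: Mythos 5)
The thesis itself does not prove this proposition --- it is stated with \verb|\qed| and a pointer to an external reference --- so your argument has to stand on its own. Your forward direction (every PZFS of $G$ is a PZFS of $H$, hence $Z_+(H)\le Z_+(G)$) is correct: the only vertices whose forcing ability the white vertex $w$ can affect are $u$ and $v$, and you correctly replace a force along $uv$ by $u\to w\to v$ and, when $uv$ is never used, insert $u\to w$ once both ends are black. The reverse simulation for a minimum PZFS $B'$ of $H$ with $w\notin B'$ also goes through, though your ``comparison lemma'' is overstated: the substitution $v\leftrightarrow w$ in the white neighbourhood of $u$ is \emph{not} cardinality-preserving when $v$ is already black but $w$ is still white. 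What actually rescues the step where you replace $w\to v$ by $u\to v$ is that components of $H\setminus B$ only refine as $B$ grows, so the force $u\to v$ was already legal in $G$ at the earlier moment when $u$ forced $w$, and legality persists.

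The genuine gap is the reduction to $w\notin B'$. Lemma~\ref{swap lemma1} only lets you trade a black vertex for a white neighbour that is the \emph{unique} white neighbour of that vertex in its component, so it covers every configuration except exactly the one you name: $w\in B'$ with both $u$ and $v$ white and lying in the same component of $H\setminus B'$. You assert this case ``is treated directly'' but give no argument, and it is not a triviality. The naive swap $B''=(B'\setminus\{w\})\cup\{u\}$ is not obviously a PZFS of $H$: since $u$ and $v$ were joined by a white path avoiding $w$, after the swap $u$ has at least two white neighbours ($w$ and the first vertex of that path) in a single component of $H\setminus B''$, so $u$ cannot force $w$, and the order of forces genuinely changes; nor does the swap obviously produce a PZFS of $G$, since $N_G(u)$ and $N_H(w)$ do not correspond. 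To close the argument you must either show this configuration cannot arise for a \emph{minimum} PZFS of $H$, or give a separate simulation (for instance, tracking the first moment at which $u$ and $v$ become separated or one of them is forced, and showing the process started from $B''$ reproduces the $B'$-process up to and beyond that moment). Until that case is handled, the inequality $Z_+(G)\le Z_+(H)$ is not established.
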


Because of the strong relationship between the tree cover number and the positive zero forcing number and  Proposition~\ref{subdivision and Z_+},  we turn our attention to the changes of the tree cover number of a graph after subdividing an edge.

\begin{thm}\label{subdivision and T(G)}
Let $H$ be the graph obtained from $G$ by subdividing an edge. Then
\[
T(G)=T(H).
\]
\end{thm}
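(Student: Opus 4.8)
The plan is to prove the two inequalities $T(H)\le T(G)$ and $T(G)\le T(H)$ separately, in each case starting from an optimal tree cover of one graph and performing local surgery to produce a tree cover of the other of no larger size. Throughout I write $e=uv$ for the subdivided edge and $w$ for the new vertex of $H$, so that $N_H(w)=\{u,v\}$ and the graphs $G$ and $H$ agree on $V(G)$ except that $uv\in E(G)$ while $uv\notin E(H)$; note also that $\deg_H(w)=2$. The only substantive thing to watch is that each surgery preserves the property of being an \emph{induced} tree, and for that the relevant bookkeeping is simply which part of the cover contains $u$ and which contains $v$.

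For $T(H)\le T(G)$, I would take a minimal tree cover $\{T_1,\dots,T_m\}$ of $G$ with $m=T(G)$ and decide where to insert $w$ according to the location of $u$ and $v$. If $u,v$ lie in the same $T_i$, then (since $T_i$ is induced and $uv\in E(G)$) $uv\in E(T_i)$, and I replace $T_i$ by the graph on $V(T_i)\cup\{w\}$ with edge set $(E(T_i)\setminus\{uv\})\cup\{wu,wv\}$; a one-line count shows this is connected with $|V(T_i)|+1$ vertices and $|V(T_i)|$ edges, hence a tree, and it is exactly the subgraph of $H$ induced on $V(T_i)\cup\{w\}$. If instead $u\in V(T_i)$ and $v\in V(T_j)$ with $i\ne j$, I attach $w$ to $T_i$ as a pendant vertex at $u$; since $v\notin V(T_i)$, $w$ has only $u$ as an $H$-neighbour inside $V(T_i)\cup\{w\}$, so this is again an induced tree. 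In both cases every other $T_k$ is left untouched and remains an induced tree in $H$, because such a $T_k$ omits at least one of $u,v$ by vertex-disjointness and $uv$ is the only edge of $G$ absent from $H$ on $V(G)$. This produces a tree cover of $H$ of size $m$.

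For the converse $T(G)\le T(H)$, I would take a minimal tree cover $\{S_1,\dots,S_m\}$ of $H$ with $m=T(H)$, say $w\in V(S_1)$, and case on $\deg_{S_1}(w)\in\{0,1,2\}$. If $\deg_{S_1}(w)=2$ then $u,v,w\in V(S_1)$ and $S_1$ contains the path $u\,w\,v$; I replace $S_1$ by the graph on $V(S_1)\setminus\{w\}$ with edge set $(E(S_1)\setminus\{wu,wv\})\cup\{uv\}$. Deleting the degree-$2$ vertex $w$ breaks $S_1$ into two subtrees with $u$ and $v$ in different parts (otherwise $S_1$ would contain a cycle), so restoring $uv$ reconnects it into a tree, which is precisely the subgraph of $G$ induced on $V(S_1)\setminus\{w\}$; the other $S_k$ avoid both $u$ and $v$ and are unaffected. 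If $\deg_{S_1}(w)=1$, with unique $S_1$-neighbour $u$ say, then $w$ is a leaf of $S_1$ and I simply delete it; $S_1-w$ is a tree induced in $G$ (it avoids $v$), and every other $S_k$ avoids $u$ and so is unaffected. If $\deg_{S_1}(w)=0$ then $S_1=\{w\}$ and I discard it; here, if some other $S_k$ contains both $u$ and $v$, the subgraph of $G$ induced on $V(S_k)$ is $S_k$ plus the single edge $uv$, creating exactly one cycle, and I remove one edge of $S_k$ on the $u$--$v$ path to split $S_k$ into two trees, each induced in $G$ since each contains only one of $u,v$. The count changes by $-1$ (losing $\{w\}$) and $+1$ (the split), for a net of $0$; if no $S_k$ contains both $u$ and $v$, discarding $\{w\}$ alone already gives a cover of size $m-1$. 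In every case $T(G)\le m=T(H)$.

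I expect the only real obstacle to be the last case, $\deg_{S_1}(w)=0$: this is where the edge $uv$, harmless in $H$, can sit inside a tree of the cover and break the tree property in $G$, so one must argue that the extra edge creates exactly one cycle and that splitting along it leaves both halves induced. Every other step is routine once the induced-subgraph condition is tracked carefully, and combining the two inequalities gives $T(G)=T(H)$.
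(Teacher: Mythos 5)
Your argument is correct, and in outline it is the same as the paper's: both directions transfer an optimal tree cover across the subdivision by local surgery. The direction $T(H)\le T(G)$ is essentially identical to the paper's (subdivide the edge inside the tree containing both $u$ and $v$, or hang $w$ as a pendant off one of the two trees when $u$ and $v$ are covered separately). Where you genuinely add something is in the reverse direction: the paper disposes of $T(G)\le T(H)$ in one line by ``contracting $uw$'' in a minimal tree cover of $H$, which implicitly assumes that $w$'s covering tree contains $u$ (or at least a neighbour of $w$). Your case analysis on $\deg_{S_1}(w)$ makes this explicit, and in particular your last case --- $S_1=\{w\}$, with $u$ and $v$ possibly lying in a common tree $S_k$ so that restoring the edge $uv$ creates a cycle in $G[V(S_k)]$ --- is a real degenerate situation that the contraction argument does not address; your fix (discard $\{w\}$, split $S_k$ along its fundamental $u$--$v$ cycle, net change zero) is exactly what is needed, and you correctly verify that both halves remain induced because each contains only one of $u,v$. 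So your proof is a more careful, fully self-contained version of the paper's argument rather than a different method; the price is length, the payoff is that the induced-tree bookkeeping is actually checked in every case.
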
 
\begin{proof}
Assume $H$ is obtained by subdividing the edge $e=uv\in E(G)$. Hence $uw$ and $wv$ are the new edges in $H$. Let $\TT_G$ be a minimal tree covering for $G$. If $e$ is an edge of one of the trees, $T$, in  $\TT_G$, then subdividing $e$ in $T$ produces a new tree that covers $w$. Next consider when  $v$ and $u$ are covered by two different trees, say $T_1$ and $T_2$ respectively. In this case by extending $T_1$ to include the edge $vw$ we are able to cover $w$. Alternatively we could extend $T_2$ by $\{u,w\}$. Thus $T(H)\leq T(G)$. To establish the equality, suppose that $T(H)<T(G)$. Let $\TT_H$ be a minimal tree covering for $H$. By contracting $uw$ we obtain a tree covering for $G$ with the same size as $\TT_H$; this contradicts $T(G)$ being the tree cover number of $G$ and this completes the proof.  
\end{proof}
Combining Proposition~\ref{subdivision and Z_+} and Theorem~\ref{subdivision and T(G)} we have the following corollary. 
\begin{cor}\label{subdivision of a graph with Z_+=T}
Let $G$ be a graph with $Z_+(G)=T(G)$ and $H$ be a graph obtained from $G$ by subdividing some edges of $G$. Then
\[
Z_+(H)=T(H).\qed
\]
\end{cor}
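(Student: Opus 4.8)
The plan is to reduce to the single--edge case already handled by Proposition~\ref{subdivision and Z_+} and Theorem~\ref{subdivision and T(G)}, and then to chain the resulting equalities with the hypothesis $Z_+(G)=T(G)$. First I would observe that any graph $H$ obtained from $G$ by subdividing some edges of $G$ can be produced by a finite sequence of single--edge subdivisions: choose an enumeration $G=G_0,G_1,\ldots,G_m=H$ in which $G_{i+1}$ is obtained from $G_i$ by subdividing exactly one edge. This framing loses no generality, since subdividing an edge several times is the same as subdividing it once and then repeatedly subdividing one of the pieces that results, and each such piece is a genuine edge of the current graph.

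Next I would run an induction on $m$, the number of single--edge subdivisions. The base case $m=0$ is trivial. For the inductive step, Proposition~\ref{subdivision and Z_+} gives $Z_+(G_{i+1})=Z_+(G_i)$ and Theorem~\ref{subdivision and T(G)} gives $T(G_{i+1})=T(G_i)$; telescoping over $i=0,\ldots,m-1$ yields $Z_+(H)=Z_+(G)$ and $T(H)=T(G)$. Combining these two identities with the hypothesis $Z_+(G)=T(G)$ gives $Z_+(H)=Z_+(G)=T(G)=T(H)$, which is the desired conclusion; the statement about zero forcing sets carrying over follows from the corresponding clause in Proposition~\ref{subdivision and Z_+} applied at each step.

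Since both per--step facts are already in hand, there is no real obstacle in this argument; the only point requiring a moment's care is the reduction in the first paragraph, namely verifying that ``subdividing some edges of $G$'' is literally the same as performing finitely many single--edge subdivisions, which is immediate from the definition of subdivision given earlier in the thesis.
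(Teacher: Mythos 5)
Your proposal is correct and matches the paper's argument: the corollary is stated as an immediate consequence of Proposition~\ref{subdivision and Z_+} and Theorem~\ref{subdivision and T(G)}, applied once per subdivided edge, exactly as in your telescoping induction. Your explicit reduction of ``subdividing some edges'' to a finite chain of single-edge subdivisions is just a more careful write-up of the same step the paper leaves implicit.
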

Corollary~\ref{subdivision of a graph with Z_+=T} is a  useful tool to generate more families of graphs satisfying $Z_+(G)=T(G)$ out of graphs which satisfy this equality.  
\section{Outerplanar graphs}\label{Outerplanar Graphs}
Recall that a graph is a partial $2$-tree if it does not have a $K_4$ minor and that outerplanar graphs are exactly the graphs with no  $K_4$ and $K_{2,3}$ minors.
In \cite{barioli2011minimum} it is shown that $M_+(G) = T(G)$ for any outerplanar graph. This is extended to $Z_+(G) = M_+(G) = T(G)$ for any partial $2$-tree. This implies that  $Z_+(G) = M_+(G) = T(G)$ holds for any outerplanar graph. It is easy to see that every outerplanar graph is a partial $2$-tree.   In this section we give a different proof of the fact that the positive zero forcing number of an outerplanar simple graph agrees with its tree cover number. Moreover we show that any minimal tree covering of an outerplanar graph, $G$, coincides with a collection of forcing trees that contains exactly $Z_+(G)$ trees.
 
\subsection{Double paths and trees}
In this section we introduce two families of graphs: double paths and double trees. We establish some  properties for the PZFS of these graphs which we use to prove some important results in the next section.
If the vertices of an outerplanar graph with more than one face can be covered with two induced paths, then the graph is called a \txtsl{double path}. These paths are called the covering paths.
Similarly, if the vertices of an outerplanar graph with more than two faces can be covered with two induced trees, then the graph is called a \txtsl{double tree}.
Note that the subgraph $H$ of a double tree $G$ which is induced by the vertices on the boundary of the outer face of $G$ is a double path. Also $G\backslash H$ is a forest $\TT_H=\{T_1,T_2,\ldots, T_k\}$. In this case there is a subset $\{v_1,v_2,\ldots,v_k\}$ of $V(H)$, such that
\[
G=(\ldots((H\,\stackplus{v_1}\,T_1)\,\stackplus{v_2}\,T_2)\dots) \,\stackplus{v_k}\,T_k.
\] 
Since any vertex of a tree is a minimal PZFS of the tree, It is not hard to observe the following.
\begin{obs}\label{swap in double tree} 
Let $G$ and $H$ be as above and $u$ be any vertex in $V(T_i)\backslash \{v_i\}$ with, $1\leq i\leq k$. Assume $u\in B$. Then $B$ is a minimal PZFS of $G$ if and only if $\left(B\backslash \{u\}\right) \cup \{v_i\}$ is a minimal PZFS for $G$.\qed
\end{obs}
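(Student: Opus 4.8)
The plan is to exploit the decomposition $G=(\ldots((H\,\stackplus{v_1}\,T_1)\,\stackplus{v_2}\,T_2)\dots)\,\stackplus{v_k}\,T_k$ together with the swap lemma for general graphs, Lemma~\ref{swap lemma1}. The asserted equivalence is symmetric in the two vertices $u$ and $v_i$ only in appearance; since Lemma~\ref{swap lemma1} gives a one-directional swap and the statement is an ``if and only if'', I would prove each direction by applying the swap lemma once, in the appropriate graph, after peeling off the irrelevant trees.

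First I would fix $i$ and let $G_i=(\ldots((H\,\stackplus{v_1}\,T_1)\dots)\,\stackplus{v_i}\,T_i$, so that $G$ is obtained from $G_i$ by a sequence of vertex-sums with trees at vertices distinct from the internal vertices of $T_i$; by Theorem~\ref{vertex-sum}(a) applied repeatedly, $Z_+(G)=Z_+(G_i)$ and, more importantly, a minimal PZFS of $G$ restricts (away from the later trees) to control of $G_i$, and conversely. So it suffices to prove the claim for $G_i$, i.e.\ I may assume $T_k=T_i$ is the last tree glued on. Now suppose $B$ is a minimal PZFS of $G_i$ with $u\in B$, where $u\in V(T_i)\backslash\{v_i\}$. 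Since $T_i$ is a pendant tree attached only at $v_i$, in the positive forcing process started from $B$ the vertex $v_i$ must be forced (from within $T_i$) before anything outside $T_i$ in that branch can be reached; more precisely, after deleting $B$ the component of $G_i\setminus B$ containing $u$, call it $C$, lies partly in $T_i$, and $v_i$ — if white — is the unique white neighbour in $C$ of some black vertex along the tree, because $T_i$ is a tree (this is exactly the mechanism in Theorem~\ref{PZFS.tree}). Applying Lemma~\ref{swap lemma1} with this component $C$ and the pair $(u,v_i)$ — or, if $v_i\in B$ already, observing $u$ is forced from inside the tree and can be swapped out by the analogous argument — yields that $\bigl(B\setminus\{u\}\bigr)\cup\{v_i\}$ is again a PZFS, and it has the same cardinality, hence is minimal. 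For the reverse direction, starting from a minimal PZFS $B'$ of $G_i$ containing $v_i$ but not $u$, I would again use that $T_i$ is a pendant tree: some white vertex of $T_i$ adjacent to $v_i$ gets forced first, and running the forcing tree rooted along $T_i$ one sees $u$ is forced; then Lemma~\ref{swap lemma1}, applied to the component of $G_i\setminus B'$ inside $T_i$ and the pair $(v_i,u)$ (using that $u$ is the unique white neighbour in that component of the black vertex preceding it on the $v_i$--$u$ path of $T_i$), gives that $\bigl(B'\setminus\{v_i\}\bigr)\cup\{u\}$ is a PZFS of the same size.

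The main obstacle I anticipate is the careful bookkeeping of \emph{which} component of $G_i\setminus B$ (respectively $G_i\setminus B'$) the swap is performed in, and verifying that within that component the relevant vertex really is the \emph{unique} white neighbour of a black vertex — this is where ``$T_i$ is a tree and attached at the single vertex $v_i$'' does all the work, via the same induction on $|V(T_i)|$ as in Theorem~\ref{PZFS.tree}. A secondary subtlety is the boundary case $u$ adjacent to $v_i$ in $T_i$, and the degenerate case where $T_i$ has only the two vertices $u$ and $v_i$; both are handled directly. Once these local uniqueness facts are in hand, each direction is a single invocation of Lemma~\ref{swap lemma1}, and the reduction from $G$ to $G_i$ via Theorem~\ref{vertex-sum} makes the global structure irrelevant, which is why the observation is ``not hard.''
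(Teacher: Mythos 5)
Your guiding intuition --- that the pendant tree $T_i$, attached to the rest of $G$ only through the vertex $v_i$, lets a single black vertex slide freely along it --- is the right one, and it is essentially all the paper invokes (it offers no proof beyond citing Theorem~\ref{PZFS.tree}). But your execution has a genuine gap. Lemma~\ref{swap lemma1} exchanges a black vertex $u\in B$ for a vertex $v$ that is \emph{the unique white neighbour of $u$} in the component of $G\setminus B$ containing $v$; in particular $v$ must be adjacent to $u$. For an arbitrary $u\in V(T_i)\setminus\{v_i\}$ the vertex $v_i$ need not be adjacent to $u$, so ``a single invocation of Lemma~\ref{swap lemma1} with the pair $(u,v_i)$'' is inapplicable, and no local uniqueness fact can repair that, since the hypothesis fails at the level of adjacency. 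The repair is to iterate the swap along the unique $u$--$v_i$ path $u=u_0,u_1,\dots,u_m=v_i$ in $T_i$, checking at each step that $u_{j+1}$ is the unique white neighbour of $u_j$ in its component; this does follow from the facts that every neighbour of a vertex of $T_i\setminus\{v_i\}$ lies in $T_i$ and that $u_j$ separates its other neighbours from $v_i$ in the tree, but it is an induction you have not carried out. Your reverse direction misuses the lemma more seriously: you propose to apply it to ``the black vertex preceding $u$ on the $v_i$--$u$ path,'' but that vertex is not in the initial set $B'$ (only $v_i$ is), and Lemma~\ref{swap lemma1} swaps only members of the initial black set, not vertices that merely become black during the process.

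Two smaller points. The reduction to $G_i$ via Theorem~\ref{vertex-sum}(a) is both unjustified and unnecessary: that theorem equates the numbers $Z_+(G)$ and $Z_+(G_i)$, not the families of minimal positive zero forcing sets (your set $B$ may meet the later trees $T_j$, and translating it into $G_i$ is essentially the statement being proved), while the only structural fact you actually need --- that $T_i$ meets the rest of $G$ exactly in $v_i$ --- already holds in $G$ itself, so you can work there directly. Finally, the case $v_i\in B$ is cleaner than your parenthetical suggests: if a minimal PZFS contained both $v_i$ and some $u\in V(T_i)\setminus\{v_i\}$, then $B\setminus\{u\}$ would still be a PZFS (once $v_i$ is black, all of $T_i$ is forced by the argument of Theorem~\ref{PZFS.tree}, and the forcing outside $T_i$ is unaffected), contradicting minimality; so that case simply cannot occur.
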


Now we show that for any double path starting with any vertex in one of the covering paths there is always another vertex in the other path such that these two vertices form a PZFS for the graph, and, using Theorem~\ref{PZFS.tree}, the positive zero forcing number of a double path is $2$.

In the following three lemmas we assume that $G$ is a double path with covering paths $P_1$ and $P_2$. Also we focus on a specific planar embedding of $G$. Once we have a planar embedding of $G$ we can refer to the end points of a covering path as the right end point and the left end point. 

\begin{lem}\label{lemma1} If $u$ and $v$ are both right (left) end points of  $P_1$ and $P_2$, respectively, then $\{u,v\}$ is a positive zero forcing set of $G$. 
\end{lem}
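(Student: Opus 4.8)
The statement concerns a double path $G$ with covering paths $P_1$ and $P_2$, fixed in a planar embedding, where $u$ and $v$ are the right (respectively left) endpoints of $P_1$ and $P_2$. The plan is to argue directly with the positive semidefinite colour change rule, colouring $\{u,v\}$ black and showing the derived set is all of $V(G)$. The key structural fact I would exploit is that in an outerplanar embedding of a double path, the boundary of the outer face is a single cycle (or the whole graph if there is exactly one non-outer face), and the inner edges — the chords — partition the interior into faces each of which is bounded by an arc of $P_1$, an arc of $P_2$, and at most two chords. So I would first set up notation: write $P_1 = u_1 u_2 \cdots u_m$ with $u_1 = u$ the right endpoint, and $P_2 = w_1 w_2 \cdots w_n$ with $w_1 = v$ the left endpoint (or match the orientation so that $u$ and $v$ sit at ``opposite corners'' of the embedding, which is the content of both being right/both being left endpoints after reading the embedding consistently).

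The core of the argument is an induction on the number of inner faces, or equivalently on $|V(G)|$, mirroring the inductive style used in Proposition~\ref{P(T)=Z(T)} and Theorem~\ref{For block-cycle Z(G)=P(G)}. The base case is a double path with one inner face: here $G$ is a cycle, and $\{u,v\}$ being two vertices of the cycle that are ``antipodal corners'' — in fact for a cycle $Z_+(C_k) = 2$ and I would check that when $u$ is the right endpoint of one arc and $v$ the left endpoint of the other, deleting $\{u,v\}$ leaves (at most) two path-components, and each black vertex has a unique white neighbour in its component, so the forcing propagates around both arcs. For the inductive step I would peel off a ``pendant'' inner face — one bounded by the outer boundary on one side and a single chord $xy$ on the other (such a face exists by planarity/outerplanarity, analogous to the pendant-block argument of Lemma~\ref{pendant block}). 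Removing the vertices strictly between $x$ and $y$ on the outer boundary of that face, or contracting appropriately, yields a smaller double path to which the induction hypothesis applies, and then I would show that once the forcing has reached $x$ and $y$, the positive colour change rule fills in the peeled-off arc: after the rest of the graph is black, the component of $G \setminus B$ consisting of that arc is a path whose endpoints' neighbours $x,y$ are black, so it forces from both ends.

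Concretely, once $u$ is black with all of its neighbours along its incident chords still white but lying in distinct components of $G \setminus \{u,v\}$, $u$ forces simultaneously into each such component (this is exactly where the positive rule, not the standard rule, is essential — $u$ may have several white neighbours, but one per component); similarly for $v$. I would track that the forcing chains from $u$ and from $v$ sweep along $P_1$ and $P_2$ respectively, jumping across chords into successive inner faces, and meet in the middle, covering every vertex. The swap-type reasoning of Lemma~\ref{swap lemma1} guarantees that when I remove and reattach a pendant arc, the PZFS status is preserved.

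\textbf{Main obstacle.} The delicate point is the bookkeeping of \emph{components} of $G \setminus B$ at each stage: the positive rule only lets a black vertex force one white vertex \emph{per component}, so I must be sure that at every intermediate colouring the still-white vertices of $P_1$ near the forcing front and the still-white vertices of $P_2$ near that front lie in genuinely different components (so that a single black vertex on a chord can push into both arcs), yet not so separated that some inner face gets stranded with no black vertex adjacent to it. Making the embedding-dependent phrase ``$u$ and $v$ are both right (left) endpoints'' precise enough to force the two forcing fronts to emanate from ``opposite corners'' and thereby cover disjoint halves that exhaust $G$ — rather than collide prematurely or leave a gap — is the crux, and I would handle it by a careful choice of which chord to peel in the inductive step (always the one closest to the $u$-side, say), so that the induction hypothesis is applied to a double path in which the images of $u$ and $v$ are still correctly positioned as right/left endpoints.
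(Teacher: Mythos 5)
Your overall strategy—induction on the inner faces of the embedding, peeling a pendant face bounded by a single chord—is genuinely different from the paper's, which inducts on $|V(G)|$ and peels a \emph{single vertex} at the end where $u$ and $v$ sit: there, either one of $u,v$ is pendant and forces its unique neighbour, or $u$ and $v$ are adjacent and (by outerplanarity) one of them has degree two, so it forces its successor along its covering path; the remaining graph is a smaller double path whose two relevant endpoints are again black, and the induction closes immediately. The virtue of that route is that every force used is a force of a vertex's \emph{unique white neighbour in all of $G$}, so no component bookkeeping is ever needed. Your route does not enjoy this, and the step you defer in your ``main obstacle'' paragraph is precisely the step that constitutes the proof: you must show that a positive zero forcing process valid in the peeled graph $G'$ remains valid in $G$. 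This is not automatic. The only vertices of $G'$ adjacent to the deleted arc are the chord endpoints $x$ and $y$; if at some stage $x$ is black, $y$ is still white, and the vertex $w$ that $x$ is supposed to force in $G'$ lies in the same component of the white subgraph as $y$, then in $G$ that component also contains $x$'s white neighbour on the peeled arc, so $x$ has two white neighbours there and the force is blocked. Nothing in your sketch rules this out, and Lemma~\ref{swap lemma1} (which exchanges a black vertex for a white one, not a deleted arc for nothing) does not supply the missing argument.

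Two further concrete problems. First, your proposed fix for keeping $u$ and $v$ as right (left) endpoints—``always peel the chord closest to the $u$-side''—is backwards: if $u$ and $v$ are the right endpoints, the rightmost pendant face is the one whose outer arc contains $u$ and $v$, so peeling there destroys exactly the hypothesis you need; you must peel at the \emph{opposite} end, and then the lifting issue above is unavoidable rather than avoidable. Second, your base case ``one inner face, hence $G$ is a cycle'' is not exhaustive: a double path with a single inner face may be a cycle with path tails attached (e.g.\ $C_4$ with a pendant vertex, covered by the induced paths $eab$ and $dc$), so the induction must also absorb pendant vertices, which the paper's vertex-by-vertex peeling does explicitly and your face-by-face peeling does not. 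To salvage your approach you would need an explicit invariant on the intermediate colourings (for instance, that at every stage the black set separates the already-forced right portion from the still-white left portion, so that the peeled arc always lies in a component met by at most one white neighbour of each of $x$ and $y$), proved inductively alongside the main claim.
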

\begin{proof} 
 We prove this lemma by induction on the number of vertices. This is clearly true for $C_3$ which is a double path with fewest number of vertices.  Assume that it is true for all graphs $H$ with $|V(H)|<n$. Let $G$ be a graph of size $n$. Assume that $u$ and $v$ are left end points of $P_1$ and $P_2$, respectively. By assigning the colour black to each of these vertices we claim that $\{u,v\}$ is a PZFS of $G$. If $u$ is a pendant vertex, then it forces its only neighbour, say $w$, which is a left end point of a covering path in $G\backslash u$. Thus by the induction hypothesis $\{w,v\}$ is a PZSF of $G$. Similarly if $v$ is a pendant vertex, using a similar reasoning, the lemma follows. If neither $u$ nor $v$ are pendant then $u$ and $v$ are adjacent and since both are on the same side, at least one of them, say $u$, is of degree two. Let $w$ be the only neighbour of $u$ in $P_1$. Thus $u$ can force $w$ (its only white neighbour) and again by the induction hypothesis $\{w,v\}$ is a PZSF of $G$. The same reasoning applies when $u$ and $v$ are the right end points of $P_1$ and $P_2$, respectively.    
\end{proof}

\begin{lem}\label{lemma2}
 If $u$ and $v$ are two vertices of $P_1$ and $P_2$, respectively, which form a cut set for $G$, then $\{u,v\}$  is a positive zero forcing set for $G$.
\end{lem}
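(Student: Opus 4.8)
The statement to prove is Lemma~\ref{lemma2}: if $u \in P_1$ and $v \in P_2$ form a cut set of the double path $G$, then $\{u,v\}$ is a positive zero forcing set. The natural strategy is to reduce to Lemma~\ref{lemma1} by using the fact that $\{u,v\}$ separates $G$ into components, and to apply the positive colour change rule component-by-component. First I would fix the given planar embedding of $G$ and consider $G \setminus \{u,v\}$. Since $\{u,v\}$ is a cut set, this graph has components $C_1, \ldots, C_k$ with $k \geq 2$. The key observation is that each $C_i$, together with (part of) the cut vertices on its boundary, inherits the structure of a smaller double path (or a path, or a tree) — the covering paths $P_1, P_2$ of $G$ restrict to covering paths of $G[V(C_i) \cup \{u,v\}]$, and the relevant portions $u, v$ sit at the \emph{ends} of these restricted covering paths because $\{u,v\}$ is a minimal-type separator sitting between the pieces.

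**Main steps in order.** (1) Set up the decomposition: with the embedding fixed, the vertices $u$ and $v$ divide each of $P_1$ and $P_2$ into at most two subpaths, and each component $C_i$ of $G\setminus\{u,v\}$ is spanned by a sub-double-path $G_i := G[V(C_i)\cup\{u,v\}]$ (some $G_i$ may degenerate to a single path, where one of $u,v$ may fail to be present, or to a graph on few vertices). (2) In $G_i$, argue that $u$ and $v$ both occur as end points of the covering subpaths of $G_i$, and moreover as end points \emph{on the same side} of the embedding of $G_i$ (this is where one uses that $\{u,v\}$ is a cut set: the rest of $G$ sits only on one side). (3) Apply Lemma~\ref{lemma1} to conclude $\{u,v\}$ is a PZFS for each $G_i$. (4) Assemble: colour $u$ and $v$ black in $G$; since $\{u,v\} = B$ separates $G$, the components of $G\setminus B$ are exactly the $C_i$, and the positive colour change rule applied within each $V(B\cup C_i)$ is precisely the process on $G_i$, which blackens all of $G_i$ by step (3). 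Doing this for every $i$ blackens all of $V(G)$, so $\{u,v\}$ is a PZFS for $G$.

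**The main obstacle.** The delicate point is step (2): verifying that $u$ and $v$ genuinely appear as \emph{end points of the covering paths of each $G_i$}, so that Lemma~\ref{lemma1} is applicable, rather than as interior vertices. One must carefully use the planar embedding — a double path with more than one face has its two covering paths forming the "upper" and "lower" boundary arcs, and a cut set $\{u,v\}$ with $u\in P_1$, $v\in P_2$ must consist of the two endpoints of a chord-like separation, so that in each side-component the truncated covering paths terminate exactly at $u$ and $v$. A secondary subtlety is handling degenerate components (when a $C_i$ is adjacent to only one of $u,v$, or when $G_i$ is just a path or has too few vertices for the inductive base of Lemma~\ref{lemma1}); these should be dispatched directly, noting that a single vertex or a path is handled by Theorem~\ref{PZFS.tree} or trivially. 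I would also want to double check that the edges between $\{u,v\}$ and a given $C_i$ do not create an unwanted extra white neighbour when running the colour change rule inside $V(B\cup C_i)$ — but this is automatic, since by definition the positive colour change rule is applied to the induced subgraph on $V(W_i \cup B)$, which is exactly $G_i$.
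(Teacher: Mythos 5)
Your proposal is correct and follows essentially the same route as the paper: decompose $G\setminus\{u,v\}$ into its components, form the induced subgraphs $G_i = G[V(C_i)\cup\{u,v\}]$, apply Lemma~\ref{lemma1} to see that $\{u,v\}$ forces each $G_i$, and use the component-wise nature of the positive colour change rule to assemble. The paper's version is terser (it names only a left and a right component and cites Lemma~\ref{lemma1} directly), while you spell out the verification that $u$ and $v$ land as same-side end points of the restricted covering paths and handle degenerate pieces, which are exactly the details the paper leaves implicit.
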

\begin{proof}
Let $W_1\subseteq V(G)$ and $W_2\subseteq V(G)$ be the vertices of the left hand side and the right hand side components of $G\backslash \{u,v\}$ respectively and let $G_1$ and $G_2$ be the subgraphs induced by $\{u,v\}\cup W_1$ and $\{u,v\}\cup W_2$ respectively. Then according to Lemma~\ref{lemma1}, $\{u,v\}$ is a PZFS for both $G_1$ and $G_2$ and this completes the proof.  
\end{proof}
\begin{lem}\label{lemma3}
 If $u$ is a vertex in $P_1$ which is not an end point, then there always is a vertex $v$ in $P_2$ such that $\{u,v\}$ is a cut set of $G$.
\end{lem}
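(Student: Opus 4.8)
The plan is to use the outerplanarity of $G$ together with a fixed planar embedding to locate, for a given non-endpoint vertex $u \in P_1$, a suitable companion vertex $v \in P_2$. First I would set up notation: fix the planar embedding of the double path $G$ in which $P_1$ forms the ``top'' boundary arc and $P_2$ the ``bottom'' boundary arc, so that the two paths together bound the outer face, and $P_1$ has a left end point and a right end point (and likewise $P_2$). Writing $P_1 = x_1 x_2 \cdots x_m$ from left to right with $u = x_i$ for some $1 < i < m$, consider the ``vertical cut'' through $u$: since $G$ is outerplanar and $u$ is an interior vertex of the top path, the vertex $u$ together with all of its neighbours on $P_2$ (there is at least one, otherwise $u$ would have degree $\le 2$ and its deletion — or the deletion of an edge of $P_1$ at $u$ — would already disconnect $G$ in a trivial way) separates the vertices of $P_1$ strictly to the left of $u$ from those strictly to the right.

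The key step is to show that one can choose a \emph{single} vertex $v \in P_2$ so that $\{u,v\}$ is a cut set. I would argue as follows. Because $G$ is outerplanar, it has no $K_{2,3}$ minor and no $K_4$ minor; in particular $u$ cannot have three or more neighbours on $P_2$ that are ``spread out'' in a way that would force a $K_{2,3}$ when combined with the path $P_2$ between them and a path through $P_1$. Hence the neighbours of $u$ on $P_2$ form a consecutive block $y_a, y_{a+1}, \ldots, y_b$ along $P_2 = y_1 y_2 \cdots y_n$. Now I claim $v$ may be taken to be $y_a$ (the leftmost neighbour of $u$ on $P_2$) — or, if $u$ has no neighbour on $P_2$ at all, then $u$ itself lies on a chord/edge of $P_1$ whose removal together with a single appropriately placed vertex of $P_2$ disconnects $G$; in that degenerate case pick $v$ to be the neighbour of $u$ ``facing'' the relevant face. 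The point is that deleting $u$ and $v = y_a$ destroys every $x$--$y$ path that crosses from the left region to the right region: any such path must either pass through $u$, or must ``jump'' from $P_1$ to $P_2$ via an edge, and outerplanarity forces all such crossing edges incident to the left-of-$u$ region to pass through the vertex $y_a$ (since crossing edges cannot cross each other in the embedding and must respect the cyclic order on the outer face).

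To make this rigorous I would phrase it in terms of faces: in the planar embedding, the vertex $u$ is incident to two bounded faces $F_{\mathrm{left}}$ and $F_{\mathrm{right}}$ (taking $u$ to be on an edge shared by consecutive bounded faces, using that every edge of an outerplanar graph that is not a bridge borders exactly one bounded face and the outer face). The separating pair is then $u$ together with the unique other vertex of $P_2$ lying on the boundary between these two faces — this exists because each bounded face of an outerplanar graph is bounded by a cycle that uses exactly one ``lower'' vertex in addition to $u$ when $u$ is the unique ``upper'' vertex of that face (this is where I would invoke the structure of double paths directly, using that $G \setminus H$-type decompositions are not needed here since $G$ itself is the double path). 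Finally, having produced a vertex $v \in P_2$ with $\{u,v\}$ a cut set separating $V(G)$ into a left part and a right part, the lemma is proved.

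The main obstacle I anticipate is the careful handling of the combinatorial geometry: proving that the neighbours of $u$ on $P_2$ form a consecutive interval, and that no crossing edge ``escapes'' the chosen cut vertex $v$. This is exactly where the forbidden minors $K_4$ and $K_{2,3}$ must be invoked, and the cleanest route is probably to argue by contradiction — if $\{u,v\}$ were not a cut set for the natural choice of $v$, exhibit an explicit $K_{2,3}$ or $K_4$ minor using $u$, $v$, a ``witness'' vertex on the far side, and the two boundary paths — rather than trying to reason directly from the embedding. A secondary nuisance is the boundary/degenerate cases (when $u$ is adjacent to an end point of $P_2$, or when $u$ has no $P_2$-neighbour, or when some edges of $P_1$ or $P_2$ are bridges), which should be dispatched separately but quickly.
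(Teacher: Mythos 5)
Your overall strategy (fix the outerplanar embedding with $P_1$ on top and $P_2$ on the bottom, and take $v$ to be a neighbour of $u$ on $P_2$) is workable, but two of the structural claims you lean on are simply false, and one case is not actually handled. First, the neighbours of $u$ on $P_2$ need \emph{not} form a consecutive block. Take $P_1=x_1ux_3$, $P_2=y_1y_2y_3$, with crossing edges $x_1y_1$, $uy_1$, $uy_3$, $x_3y_3$. This is a double path (embed the vertices in the cyclic order $x_1,u,x_3,y_3,y_2,y_1$; the only chords are $uy_1$ and $uy_3$, which share the endpoint $u$ and cannot cross), yet $u$ is adjacent to $y_1$ and $y_3$ but not $y_2$. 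No $K_4$ or $K_{2,3}$ minor is created, so outerplanarity does not rescue the claim. The same example kills the face-counting argument: $u$ is incident to \emph{three} bounded faces, and the bounded face $uy_1y_2y_3$ contains three vertices of $P_2$, not one. Second, the degenerate case in which $u$ has no neighbour on $P_2$ is only gestured at; your proposed $v$ (``the neighbour of $u$ facing the relevant face'') is not a vertex of $P_2$ at all, so it cannot witness the lemma. That case needs a genuine argument, e.g.\ comparing the sets of attachment vertices on $P_2$ of the two components of $P_1-u$ and using non-crossing to find a vertex of $P_2$ lying between them.

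The good news is that the consecutiveness claim is a red herring: for \emph{any} neighbour $y_a$ of $u=x_i$ on $P_2$, the edge $uy_a$ is a chord (or boundary edge) of the outer cycle, and any edge joining $\{x_1,\dots,x_{i-1},y_1,\dots,y_{a-1}\}$ to $\{x_{i+1},\dots,x_m,y_{a+1},\dots,y_n\}$ would have to cross it; since $1<i<m$ both sides are nonempty, $\{u,y_a\}$ is a cut set. Proving that single crossing statement (plus the no-neighbour case) is all you need, and it is cleaner than either the interval claim or the face bookkeeping. For comparison, the paper proceeds by contradiction: it picks an arbitrary interior vertex $v$ of $P_2$, notes that if $\{u,v\}$ fails to separate then some vertex on one side of $u$ is joined to a vertex on the other side of $v$, and slides $v$ to the farthest vertex $l$ of $P_2$ with that property, claiming $\{u,l\}$ is then a cut set.
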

\begin{proof}
Suppose there is a vertex $u$ in $P_1$ for which there is no vertex $v$ in $P_2$ such that $\{u,v\}$ is a cut set for $G$.
Since $G$ is an outerplanar graph, it has an embedding in the plane such that all vertices are on the same face.
 Let $v$ be any non pendant vertex of $P_2$. Obviously $u$ is a cut vertex of $P_1$ and $v$ is a cut vertex of $P_2$. Since $\{u,v\}$ is not a cut set of $G$, there is a vertex in the left hand side (or right hand side) of $u$ that is adjacent to a vertex in the right hand side (or left hand side) of $v$. Assume that $l$ is the farthest vertex from $v$ in $P_2$ having this property. Since $l$ is the farthest vertex from $v$ with the described property and $G$ is an outerplanar graph, $\{u,l\}$ is a cut set of $G$ which contradicts the fact that, there is no vertex in $P_2$ that forms a cut set along with $u$ for $G$.
\end{proof}

 Combining Lemmas \ref{lemma1}, \ref{lemma2} and \ref{lemma3} along with the fact that in the proof of all these three  lemmas forces are performed along covering paths, we have  the following.
\begin{cor}\label{PZFS of double path}
Let $G$ be a double path with covering paths $P_1$ and $P_2$. Then for any vertex $v$ in $P_1$, there is always another vertex $u$ in $P_2$ such that $\{u,v\}$ is a positive zero forcing set for $G$. Moreover the two paths $P_1$ and $P_2$ are a minimal set of forcing trees in $G$.\qed
\end{cor}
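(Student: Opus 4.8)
The plan is to deduce the statement directly from Lemmas~\ref{lemma1}, \ref{lemma2} and \ref{lemma3}, via a short case analysis on the position of $v$ along $P_1$. Throughout I would fix the planar embedding of the outerplanar graph $G$ used in those lemmas, so that it is meaningful to speak of the left and right end points of each covering path; note also that $P_1$ and $P_2$ are vertex-disjoint, so any $u\in V(P_2)$ is automatically distinct from $v$, which takes care of the word ``another''.

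First I would treat the case where $v$ is an end point of $P_1$. Relabelling the embedding if necessary, say $v$ is the left end point of $P_1$, and let $u$ be the left end point of $P_2$. Then Lemma~\ref{lemma1} gives at once that $\{u,v\}$ is a positive zero forcing set for $G$. Next, suppose $v$ is an interior vertex of $P_1$. By Lemma~\ref{lemma3} there is a vertex $u\in V(P_2)$ for which $\{u,v\}$ is a cut set of $G$, and Lemma~\ref{lemma2} then shows that this same set $\{u,v\}$ is a positive zero forcing set for $G$. In either case we have produced $u\in V(P_2)$ with $\{u,v\}$ a PZFS, proving the first assertion.

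For the ``moreover'' part I would observe that the proofs of all three lemmas share the feature that every force in the associated positive zero forcing process is performed between two consecutive vertices of one of the covering paths: each inductive step forces the neighbour of $u$ (or of $v$) lying on $P_1$ (or on $P_2$), and the recursion of Lemma~\ref{lemma2} only splits $G$ along $\{u,v\}$ and later reglues the pieces of $P_1$ and of $P_2$. Hence the two forcing trees obtained from the PZFS $\{u,v\}$ are precisely the chains $P_1$ and $P_2$. To conclude that $\{P_1,P_2\}$ is a \emph{minimal} set of forcing trees it remains to note $Z_+(G)\ge 2$: since $G$ is a double path it has more than one face, hence contains a cycle and is not a tree, so a single vertex cannot be a PZFS of $G$ (cf.\ Theorem~\ref{PZFS.tree}). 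Combined with the PZFS $\{u,v\}$ of size $2$ this gives $Z_+(G)=2$, so the two forcing trees $P_1,P_2$ form a set of forcing trees of size $Z_+(G)$.

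I expect the only delicate point --- and hence the main obstacle --- to be the ``all forces run along a covering path'' claim underpinning the ``moreover'' statement: one has to revisit the inductive constructions in Lemmas~\ref{lemma1}--\ref{lemma3} and check that at no stage does a force jump between $P_1$ and $P_2$, and that the split-and-reglue in Lemma~\ref{lemma2} really recombines the partial chains into exactly the two chains $P_1$ and $P_2$. The remaining items (distinctness of $u$ and $v$, consistency of the left/right end-point terminology with the fixed embedding, and the elementary fact $Z_+(G)\ge 2$) are routine bookkeeping.
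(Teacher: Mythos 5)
Your proposal is correct and follows essentially the same route as the paper, which obtains the corollary by ``combining Lemmas~\ref{lemma1}, \ref{lemma2} and \ref{lemma3} along with the fact that in the proof of all these three lemmas forces are performed along covering paths''; your case split (endpoint of $P_1$ handled by Lemma~\ref{lemma1}, interior vertex handled by Lemma~\ref{lemma3} followed by Lemma~\ref{lemma2}) just makes that combination explicit. The added remark that $Z_+(G)\ge 2$ (via $T(G)\le Z_+(G)$ and the fact that a double path contains a cycle) is a harmless and correct way to justify minimality.
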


The following corollary is a consequence of Observation ~\ref{swap in double tree} and Corollary~\ref{PZFS of double path}. 
\begin{cor}\label{PZFS of double tree}
Let $G$ be a double tree with covering trees $T_1$ and $T_2$.  Then for any vertex $v$ in $T_1$, there is always another vertex $u$ in $T_2$ such that $\{u,v\}$ is a positive zero forcing set for $G$. Moreover $\{T_1,T_2\}$ coincides with a minimal collection of forcing trees in $G$.\qed
\end{cor}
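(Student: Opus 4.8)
The plan is to deduce this corollary directly from Corollary~\ref{PZFS of double path} together with Observation~\ref{swap in double tree}, exactly as the preceding paragraph in the excerpt suggests. Let $G$ be a double tree with covering trees $T_1$ and $T_2$. First I would recall the structural decomposition of a double tree that was set up just before Observation~\ref{swap in double tree}: the subgraph $H$ of $G$ induced by the vertices on the boundary of the outer face is a double path with covering paths $P_1$ and $P_2$ (say $P_j = T_j \cap H$), and $G\backslash H$ is a forest $\TT_H = \{T_1', T_2',\ldots, T_k'\}$ with attachment vertices $v_1,\ldots,v_k \in V(H)$ so that
\[
G = (\ldots((H\,\stackplus{v_1}\,T_1')\,\stackplus{v_2}\,T_2')\ldots)\,\stackplus{v_k}\,T_k'.
\]

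Next I would fix an arbitrary vertex $v \in V(T_1)$ and produce the desired partner $u \in V(T_2)$. There are two cases. If $v$ already lies on the outer face, i.e.\ $v \in V(P_1)$, then Corollary~\ref{PZFS of double path} gives a vertex $u \in V(P_2) \subseteq V(T_2)$ with $\{v,u\}$ a PZFS for the double path $H$; I would then argue that $\{v,u\}$ is in fact a PZFS for all of $G$: starting the positive zero forcing process from $\{v,u\}$ colours all of $H$, and then each appended tree $T_i'$ is coloured because its attachment vertex $v_i$ is black and $T_i'$ is a tree (invoke Theorem~\ref{PZFS.tree}, or more carefully the component structure of the positive colour change rule applied to $H\,\stackplus{v_i}\,T_i'$). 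If instead $v$ lies in some pendant forest piece, $v \in V(T_i')\backslash\{v_i\}$, then by Observation~\ref{swap in double tree} the set $\{v,\text{(partner)}\}$ is a minimal PZFS of $G$ if and only if $\{v_i,\text{(partner)}\}$ is; and since $v_i \in V(P_1)$ we are reduced to the first case, obtaining $u \in V(P_2)\subseteq V(T_2)$ and then swapping back via Observation~\ref{swap in double tree}. (The symmetric roles of $T_1,T_2$ need a brief remark, since a priori $v$ could be a shared attachment vertex, but one covering tree can be chosen to contain it.)

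Finally, for the ``moreover'' clause I would track the forcing trees rather than just the forcing set. In Corollary~\ref{PZFS of double path} the forces within $H$ are performed along $P_1$ and $P_2$, so the two forcing trees restricted to $H$ are exactly $P_1$ and $P_2$; each appended $T_i'$ gets absorbed into whichever of the two forcing trees contains its attachment vertex $v_i$, so that the two forcing trees of $G$ are precisely $T_1$ and $T_2$ (up to the choice of how the $T_i'$ are distributed, which matches the covering $\{T_1,T_2\}$). I expect the main obstacle to be the bookkeeping in this last step: verifying that the appended trees attach to the correct side and that no forcing tree of $G$ accidentally merges across the two covering trees — this is where one must use that $H$ is a double path (so $P_1,P_2$ stay separate forcing trees) and that each $v_i$ lies in exactly one of $P_1,P_2$. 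Everything else is a routine application of the induction already carried out in Lemmas~\ref{lemma1}--\ref{lemma3} and the swap principle of Observation~\ref{swap in double tree}.
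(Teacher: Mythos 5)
Your proposal follows exactly the route the paper intends: the corollary is stated there as an immediate consequence of Observation~\ref{swap in double tree} (reducing a vertex in a pendant forest piece to its attachment vertex on the outer-face double path) combined with Corollary~\ref{PZFS of double path}, and your case analysis plus the tracking of forcing trees is just a careful write-up of that deduction. No gaps beyond the bookkeeping you already flag, which is at the same level of detail the paper itself leaves implicit.
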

\subsection{Pendant trees}
In this section we show that for any outerplanar graph there is a minimal tree covering in which there is a \textsl{pendant tree}.

In a tree covering of a graph, two trees, $T_1$ and $T_2$, are said to be \txtsl{adjacent} if there is at least one edge $uv\in E(G)$ such that  $v \in V(T_1)$ and $u \in V(T_2)$.
In Figure~\ref{ex.forcing trees},  $T_1$ is adjacent to $T_{10}$ and $T_3$ but $T_{10}$ and $T_3$ are not adjacent. A tree is called \txtsl{pendant} if it is  adjacent to only one other tree. In Example~\ref{ex.forcing trees}, $T_{10}$ and $T_3$ are two pendant trees.

Let $G$ be an outerplanar graph with a planar embedding in which all the vertices are on the same face. Let $\TT(G)$ be a minimal tree covering for $G$. Define $H_{\TT}$ to be the graph whose vertices correspond to the elements of $\TT(G)$ in which two vertices are adjacent if there is an outer edge between their corresponding trees in the graph $G$. Two trees of $\TT(G)$ are called \txtsl{consecutive}, if their corresponding vertices in $H_{\TT}$ are adjacent vertices each of degree two.

\begin{thm}\label{consecutive trees}
Let $G$ be an outerplanar graph and $\TT(G)$  a minimum tree covering for $G$. If there is no pendant tree in $\TT(G)$, then there are at least two consecutive trees in $\TT(G)$.
\end{thm}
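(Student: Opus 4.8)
The plan is to argue via the auxiliary graph $H_{\TT}$ associated to a minimal tree covering $\TT(G)$. First I would record the structure of $H_{\TT}$: its vertices are the trees of $\TT(G)$, and two are adjacent iff there is an outer edge of $G$ joining the corresponding trees. The key observation is that, since $G$ is outerplanar with all vertices on the outer face, the trees of $\TT(G)$ are encountered in a cyclic order around the outer boundary, so the subgraph of $H_{\TT}$ formed by outer-edge adjacencies ``wraps around'' the boundary. I would make precise that $H_{\TT}$ is connected (as $G$ is connected and each edge of $G$ that is not inside a single tree contributes an adjacency, and one can reduce to outer edges using outerplanarity), and that $H_{\TT}$ contains a spanning cyclic structure: traversing the outer face, the sequence of trees visited gives a closed walk in $H_{\TT}$, hence $H_{\TT}$ has a Hamiltonian-type cycle through all its vertices, or at least every vertex lies on a cycle.

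Next I would translate the hypothesis. ``No pendant tree in $\TT(G)$'' means every vertex of $H_{\TT}$ has degree at least $2$ (a pendant tree is by definition adjacent to exactly one other tree, i.e.\ a degree-$1$ vertex of $H_{\TT}$). So the assumption gives $\delta(H_{\TT}) \geq 2$. The goal ``there are at least two consecutive trees'' means: there exist two adjacent vertices of $H_{\TT}$ each of degree exactly $2$. So the whole statement reduces to a small lemma about the plane graph $H_{\TT}$: \emph{if $H_{\TT}$ is a connected plane graph arising from the outer-boundary incidence pattern of an outerplanar graph, has at least two vertices, and has minimum degree $\geq 2$, then it has two adjacent vertices of degree $2$.}

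To prove that lemma I would use the cyclic/outerplanar structure again: the outer face of $G$ being a closed boundary forces $H_{\TT}$ itself to be outerplanar, indeed to have a spanning cycle $C$ (the cyclic order of trees around $\partial G$), and any additional edges of $H_{\TT}$ are ``chords'' that, by outerplanarity of $G$, cannot cross. If \emph{every} vertex of degree $2$ in $H_{\TT}$ had both its neighbours of degree $\geq 3$, I would count: vertices of degree $\geq 3$ would be forced to be ``far apart'' along $C$, but each chord reduces the number of available low-degree vertices only locally, and a planarity/Euler-type count (number of chords is bounded because $H_{\TT}$ is outerplanar, so it has at most $2|V(H_{\TT})|-3$ edges) shows there are not enough high-degree vertices to separate all the degree-$2$ vertices; alternatively, pick a face of $H_{\TT}$ of minimal size other than the outer face — its boundary, by outerplanarity and minimality of the covering, must contain two consecutive degree-$2$ vertices, otherwise one could merge two trees and contradict minimality of $\TT(G)$.

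The main obstacle I expect is exactly the last point: showing the count/minimality argument genuinely produces \emph{two adjacent} degree-$2$ vertices rather than merely two degree-$2$ vertices somewhere. The cleanest route is probably to exploit minimality of $\TT(G)$ directly — if a tree $T$ corresponds to a degree-$2$ vertex of $H_{\TT}$ with a neighbour $T'$ also of degree $2$, fine; if not, one shows that a degree-$2$ tree $T$ all of whose neighbours have higher degree can be absorbed into a neighbour (using that $G[V(T)\cup V(T')]$ stays a tree because along the outer boundary $T$ meets $T'$ in a single ``interval''), contradicting $|\TT(G)|$ being minimum — and then iterate. I would carry the argument out by taking a minimal tree covering and, assuming no pendant and no two consecutive trees, deriving either a cycle in $G$ inside a would-be merged tree (impossible) or a strictly smaller covering (contradiction). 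The delicate check is that the merge of two adjacent trees meeting the outer face in a single shared boundary arc remains \emph{induced} and \emph{acyclic}, which is where outerplanarity (no $K_4$, no $K_{2,3}$ minor) does the real work, mirroring the argument for pendant trees in the following Theorem.
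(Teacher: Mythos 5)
Your reduction to a purely degree-theoretic statement about $H_{\TT}$ does not go through, for two separate reasons. First, the translation of the hypothesis is off: a pendant tree is one adjacent to only one other tree, where adjacency of trees is witnessed by \emph{any} edge of $G$, whereas $H_{\TT}$ records only \emph{outer}-edge adjacencies; a tree can have degree $1$ in $H_{\TT}$ while meeting a second tree through an inner edge, so ``no pendant tree'' does not yield $\delta(H_{\TT})\geq 2$. Second, and more seriously, the lemma you reduce to is false at the level of generality at which you propose to prove it: an outerplanar graph with minimum degree $2$ need not contain two adjacent vertices of degree $2$. Take a fan, i.e.\ a path $v_2v_3\cdots v_n$ together with a vertex $v_1$ joined to all of $v_2,\dots,v_n$, with $n\geq 4$: it is maximal outerplanar with $|E|=2|V|-3$, its only degree-$2$ vertices are $v_2$ and $v_n$, and these are not adjacent. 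So the Euler-type count via $|E|\leq 2|V|-3$ cannot produce two \emph{adjacent} degree-$2$ vertices; it does not even force more than two degree-$2$ vertices in total. Any correct argument must use more than outerplanarity of $H_{\TT}$ plus a degree bound.

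Your fallback --- absorbing a degree-$2$ tree $T$ into a neighbour $T'$ to contradict minimality --- has exactly the gap you flag but do not close: $G[V(T)\cup V(T')]$ is an induced tree only when exactly one edge of $G$ joins $T$ to $T'$, and adjacent trees in a covering of an outerplanar graph can be joined by two or more edges (already in $C_4$ covered by two opposite edges), in which case the merge creates a cycle and no smaller covering results. The paper sidesteps all of this by arguing on the planar embedding of $G$ itself rather than on $H_{\TT}$: it splits into the case where no tree of $\TT(G)$ contains an inner edge of $G$ (then every tree is an arc of the outer boundary, $H_{\TT}$ is a cycle, and any adjacent pair is consecutive) and the case where some tree does, in which case it selects an ``innermost'' such inner edge $uv$ --- one whose enclosed side contains no endpoint of an inner edge belonging to a tree of the covering --- and locates the two consecutive trees among those covering that enclosed region. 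To salvage your route you would need to import this innermost-region idea, or some equivalent use of the embedding; the degree and counting information about $H_{\TT}$ alone is insufficient.
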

\begin{proof}
Assume that $\TT(G)$ is a minimum tree covering for $G$ in which there is no pendant tree. Then, two cases are possible:
\newline
{\bf Case 1.} There is no tree in $\TT(G)$ with at least one of the inner edges of $G$ in its edge set (see Figure~\ref{fig_for_H_T} for an example of such a graph). Therefore $H_{\TT}$ is a cycle. Accordingly, any adjacent pair of trees in $\TT(G)$ are consecutive.
\begin{figure}[ht!]
\vspace{.5cm}
\centering
\includegraphics[width=.8\textwidth]{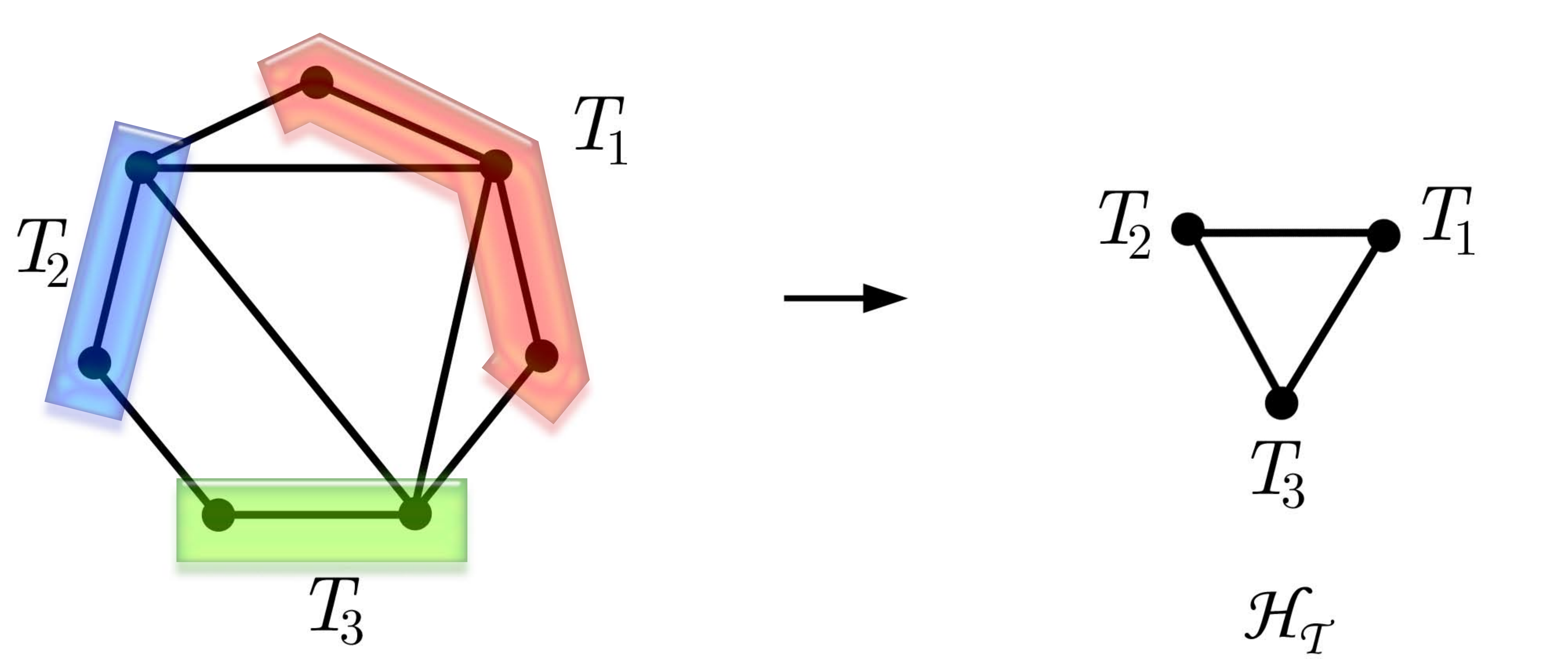}
\caption{Forcing trees with no inner edge in their edge set}
\label{fig_for_H_T}
\end{figure}

\begin{figure}[ht!]
\vspace{.5cm}
\centering
\includegraphics[width=.8\textwidth]{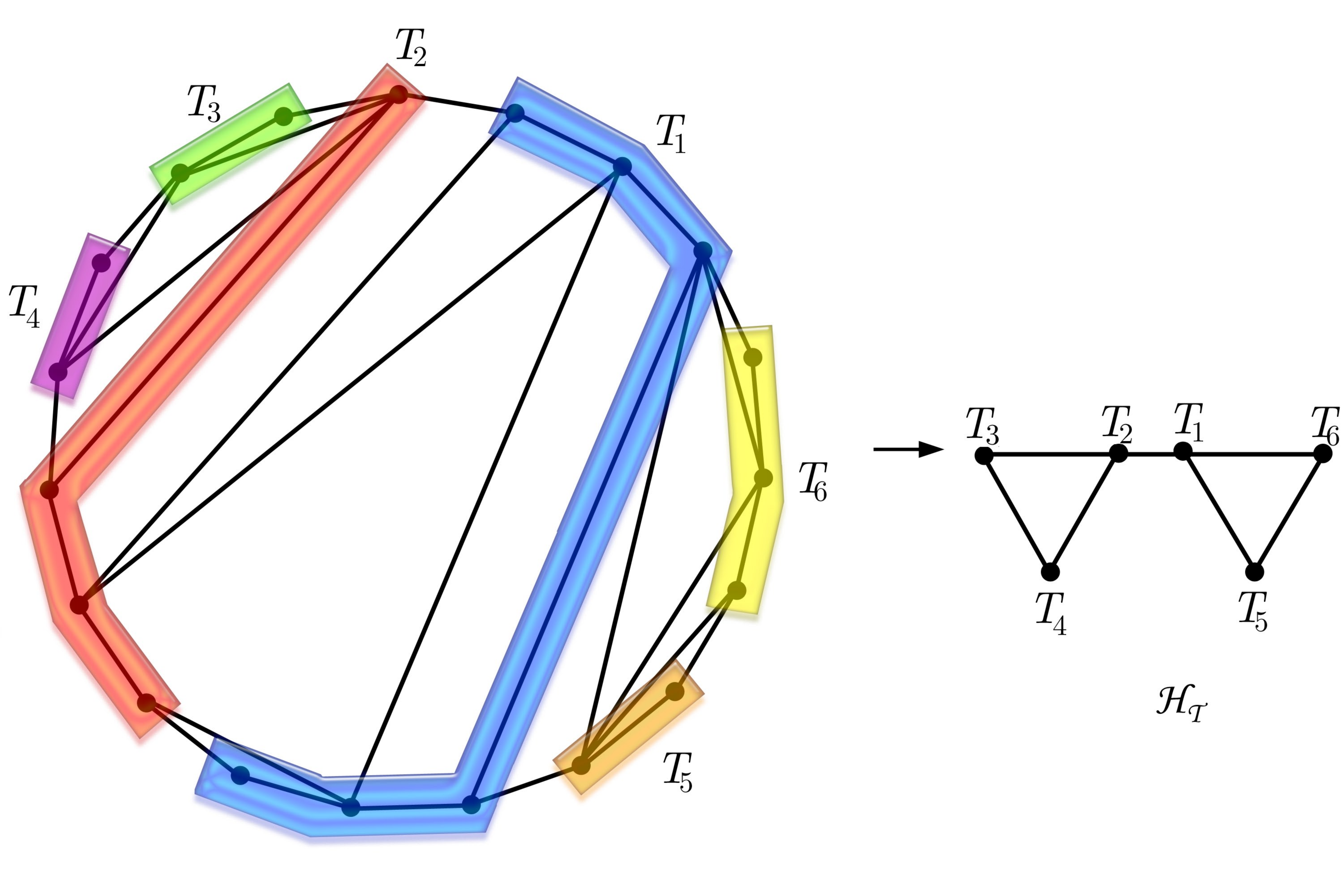}
\caption{$T_1$ is a forcing tree with an inner edge in its edge set}
\label{fig_for_H_T_case2}
\end{figure}
{\hspace{-.64cm}\bf Case 2.} There is at least one tree in $\TT(G)$ having at least one of the inner edges of $G$ in its edge set.  Any inner edge $e=uv\in E(G)$ partitions the plane into two parts and, consequently, the set $V(G)\backslash \{u,v\}$ into two subsets $V_{u}$ and $V_{v}$. For any inner edge $uv$, let $V_f$ be the subset of all vertices in $V(G)\backslash\{u,v\}$  which are not end-points of an inner edge that is included in a tree of $\TT(G)$. Since $G$ is outerplanar and finite, there exists an edge, $e=uv$, such that  at least one of $V_{u}$ or $V_{v}$ is in $V_f$. Therefore there are at least two consecutive trees in $\TT(G)$ covering the vertices of $V_f$. In Figure~\ref{fig_for_H_T_case2} the pairs $T_3,T_4$ and $T_5,T_6$ are examples of consecutive pairs of trees in $\TT(G)$.    
 \end{proof}
 
 The following theorem plays a key role in the proof of the fact that outerplanar graphs satisfy $Z_+(G)=T(G)$.  
\begin{thm}\label{pendant tree}
Let $G$ be an outerplanar graph. Then there is a minimum tree covering for $G$ in which there is a pendant tree.
\end{thm}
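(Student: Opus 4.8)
The plan is to begin with an arbitrary minimum tree covering $\TT$ of $G$ and, if it has no pendant tree, to surgically modify it into another minimum tree covering that does. First I would fix a planar embedding of $G$ with every vertex on the outer face and build the auxiliary graph $H_{\TT}$ of Theorem~\ref{consecutive trees}. If $\TT$ already contains a pendant tree there is nothing to prove, so assume it does not. By Theorem~\ref{consecutive trees} there are then two consecutive trees $T_1,T_2\in\TT$, that is, $T_1$ and $T_2$ are adjacent in $H_{\TT}$ and each has degree exactly two there; let $T_0$ be the other $H_{\TT}$-neighbour of $T_1$ and $T_3$ that of $T_2$ (possibly $T_0=T_3$). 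The aim is to repartition $S:=V(T_1)\cup V(T_2)$ into two induced trees $T_1',T_2'$ such that $T_1'$ is incident in $G$ with no vertex outside $S$. Granting this, the covering $\big(\TT\setminus\{T_1,T_2\}\big)\cup\{T_1',T_2'\}$ has the same cardinality $T(G)$, so it is still minimum, and in it the tree $T_1'$ is adjacent to $T_2'$ only, hence pendant.

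The second step is to use consecutiveness and outerplanarity to control the structure of $G[S]$ and of the set $\partial S\subseteq S$ of vertices of $S$ having a neighbour in $V(G)\setminus S$. Since deleting vertices only enlarges faces, every vertex of $\partial S$ still lies on the outer boundary of $G[S]$. Because $T_1$ and $T_2$ have $H_{\TT}$-degree two, the only outer-boundary edges leaving $S$ run to $T_0$ and to $T_3$, one at each end of the boundary walk of $G[S]$; the remaining edges leaving $S$ are chords, and the forbidden minors $K_4$ and $K_{2,3}$ of an outerplanar graph restrict how such chords may interleave with the chords of $G[S]$ itself. Using this one shows that $G[S]$ has essentially the shape of a double path (in the sense of Section~\ref{Outerplanar Graphs}) with $T_1$ and $T_2$ as its two covering paths, and that $\partial S$ is a proper subset of $S$. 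Minimality of $\TT$ enters here as well: $G[S]$ cannot be a single induced tree, for then $T_1$ and $T_2$ could be merged into one tree, contradicting $|\TT|=T(G)$; hence $G[S]$ genuinely contains a cycle and is a non-degenerate double path.

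The third step, which I expect to be the main obstacle, is the repartition of the double path $G[S]$ while keeping all of $\partial S$ on one side. The tools are the double-path lemmas, Lemmas~\ref{lemma1}, \ref{lemma2} and \ref{lemma3}: one locates a separating pair $\{u,v\}$ of $G[S]$ with $u$ on one covering path and $v$ on the other, chosen so that one of the two sides that $\{u,v\}$ creates meets $\partial S$ only possibly in $u$ and $v$; that interior side, with $\{u,v\}$ deleted from it, is split off as $T_1'$ --- an induced subtree, again by Lemmas~\ref{lemma1}--\ref{lemma2} --- while $T_2'$ is everything else and therefore contains all of $\partial S$. The delicate point is to prove that a separating pair with a genuinely interior side always exists; should this fail for the initially chosen $T_1,T_2$, one would re-select the consecutive pair (for example, when $H_{\TT}$ is a cycle, as in Case~1 of the proof of Theorem~\ref{consecutive trees}, every adjacent pair is consecutive and a short extremal argument along the boundary selects a good one), and, if the direct surgery becomes too intricate, the fallback is to recast the whole argument as an induction on $|V(G)|$, deleting an outer-boundary vertex of degree at most two and reinstating it afterwards. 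Throughout, one may assume $G$ is connected, the disconnected case reducing component by component.
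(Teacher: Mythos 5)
Your opening and closing moves coincide with the paper's: fix an outer-planar embedding, invoke Theorem~\ref{consecutive trees} to obtain a consecutive pair $T_1,T_2$, and repartition $S=V(T_1)\cup V(T_2)$ into two induced trees of which one has all its outside-$S$ neighbours inside the other. The problem is that the repartition itself --- the entire content of the theorem --- is never actually produced. You name it as ``the main obstacle,'' observe that a separating pair with a genuinely interior side might not exist for the chosen pair, and offer as remedies ``re-select the consecutive pair'' and ``recast the whole argument as an induction on $|V(G)|$,'' neither of which is carried out. The paper closes exactly this gap with a concrete construction: the subgraph $H=G[S]$ has two outer-boundary edges joining $T_1$ to $T_2$, one of which is an outer edge of $G$ (witnessing adjacency in $H_{\TT}$) and the other of which is an inner edge $e=uv$ of $G$ with $u\in T_1$, $v\in T_2$; the region of the plane cut off by $e$ (the one not meeting the other neighbours $T_0,T_3$) contains only vertices with no neighbours outside $S$, so splitting $T_1$ at $u$ and merging the piece containing $u$ with $T_2$ across $e$ leaves the other piece of $T_1$ as a pendant tree. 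Some such explicit identification of the cut vertex is what your proposal is missing.

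A secondary but genuine error: you assert that $G[S]$ ``has essentially the shape of a double path with $T_1$ and $T_2$ as its two covering paths.'' The members of a minimum tree covering are arbitrary induced trees, not paths (the paper's own examples have branching covering trees), so $G[S]$ is at best a double tree. Consequently Lemmas~\ref{lemma1}--\ref{lemma3} do not apply as stated; and even for genuine double paths, Lemma~\ref{lemma3} only guarantees, for each non-endpoint $u$ of one covering path, \emph{some} $v$ on the other path with $\{u,v\}$ a cut set --- it gives no control over which side of the cut the boundary set $\partial S$ lands on, which is precisely the control your surgery requires. So the tool you invoke for the key step does not do the job even in the special case where it applies.
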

\begin{proof}
Since $G$ is an outerplanar graph, it has an embedding in the plane such that all of its vertices are on the same face. Assume that $\TT(G)$ is a minimum tree covering for $G$ in which there is no pendant tree. We use Theorem~\ref{consecutive trees} to construct a new tree covering $\TT'(G)$ of  $\TT(G)$  with $|\TT'(G)|=|\TT(G)|$ in which there is a pendant tree. 

 Consider two  trees $T_1$ and $T_2$ in $\TT(G)$, which are consecutive and assume that $T_1$ is in the right hand side of $T_2$. Let  $H$ be the graph induced by $V(T_1)\cup V(T_2)$ (Figure~\ref{pendant_tree_construction}). There are two outer edges in $H$ that have an end-point from each of trees $T_1$ and $T_2$. One of these outer edges, call it $e=uv$, is an inner edge in $G$ with  $u \in T_1$ and $v \in T_2$.
 If $v$ has any other neighbour in  $T_1$, then $u$ has no other neighbour in $T_2$;  otherwise there will be either two crossing edges in the planar embedding of $G$ or an edge beyond $e$; either case is a contradiction. Thus $u$ along with all vertices on its right hand side in $T_1$ and all the vertices in $T_2$ induce a new tree $T_i$ and the  vertices in the left hand side of $u$  in  $T_1$  induce another new tree $T_k$  which is a subtree of $T_1$. Now we have a new tree covering $\TT'(G)$ for $G$ with $|\TT'(G)|=|\TT(G)|$ in which $T_k$  is a pendant tree. 
 
A similar argument applies when $u$ has another neighbour in $T_2$.
If neither $u$ nor $v$ has any other neighbour in $T_1$ and $T_2$, respectively, then either cases mentioned above are applicable.
\end{proof}
\subsection{Outerplanar graphs satisfy $Z_+(G)=T(G)$}
We now turn our attention to the positive zero forcing number of outerplanar graphs and show that this parameter for any graph among this family of graphs is equal to the tree cover number of the graph, moreover any minimal tree covering of such a graph coincides with a minimal collection of forcing trees.

\begin{thm}\label{outerplanars satisfy Z_+=T}
Let $G$ be an outerplanar graph. Then
$$Z_+(G)=\T(G).$$
Also any minimal tree covering of the graph $\TT(G)$ coincides with a collection of forcing trees with $|\TT(G)|=Z_+(G)$. 
\end{thm}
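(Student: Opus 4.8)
The plan is to prove both halves of the statement simultaneously by induction on the number of vertices of $G$ (equivalently, on $\T(G)$ together with the size of the graph), using the pendant-tree machinery already established. First I would recall that by Proposition~\ref{T(G)&Z_+(G)} we always have $\T(G)\leq Z_+(G)$, so the work is entirely in proving $Z_+(G)\leq \T(G)$, and in fact in exhibiting a minimal tree covering whose trees are realized as forcing trees of a positive zero forcing process started from a suitable choice of roots. The base cases are when $\T(G)=1$: by Theorem~\ref{PZFS.tree} every tree has $Z_+=1$, and an outerplanar graph coverable by a single induced tree is a tree, so these agree. (One should also dispose of the double-tree/double-path cases directly: if $\T(G)=2$, Corollary~\ref{PZFS of double tree} already gives $Z_+(G)=2$ with $\{T_1,T_2\}$ a collection of forcing trees.)

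For the inductive step I would take $G$ outerplanar with a minimal tree covering $\TT(G)$ chosen, by Theorem~\ref{pendant tree}, to contain a pendant tree $T_0$; say $T_0$ is adjacent to exactly one other tree $T_1$ of $\TT(G)$, joined through some edge $xy$ with $y\in V(T_0)$, $x\in V(T_1)$. Let $G'=G\backslash V(T_0)$. The key structural point to verify is that $\TT(G)\setminus\{T_0\}$ is a minimal tree covering of $G'$, so $\T(G')=\T(G)-1$; this is the analogue of Lemma~\ref{P(G-B), G is a block-cycle} and should follow because adding back the induced tree $T_0$ attached through outer structure to $G'$ cannot decrease the tree cover number by more than one, and $\TT(G)$ being minimal forces equality. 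Note $G'$ is again outerplanar (induced subgraphs of outerplanar graphs are outerplanar), so the induction hypothesis applies: there is a PZFS $Z'$ of $G'$ with $|Z'|=\T(G')$ whose forcing trees are precisely the trees of $\TT(G)\setminus\{T_0\}$, and we may arrange the root of the tree $T_1$ using the swap Lemma~\ref{swap lemma1} (and Observation~\ref{swap in double tree} in the double-tree reduction) so that $x$ is reached in the $G'$-process before it needs to force into $T_0$. I would then pick one well-chosen vertex $z\in V(T_0)$ to add to $Z'$, taking $z$ to be an endpoint/leaf of $T_0$ positioned so that $T_0$ with root $z$ can be completely forced internally as a tree (Theorem~\ref{PZFS.tree}), with the single cross-edge $xy$ either not needed or available as the last link; set $Z=Z'\cup\{z\}$. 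Running the positive colour-change rule, the $G'$-part colours exactly as in the induction and produces the forcing trees of $\TT(G)\setminus\{T_0\}$, and $z$ produces $T_0$ as its forcing tree, so $Z$ is a PZFS of size $\T(G')+1=\T(G)$ whose forcing trees are exactly $\TT(G)$. Combined with $\T(G)\leq Z_+(G)$ this gives $Z_+(G)=\T(G)$ and the "moreover" statement.

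The main obstacle I expect is the bookkeeping around the pendant tree $T_0$ and its attaching edge: one must check that the positive colour-change rule (which only forces when a black vertex has a unique white neighbour \emph{within a component of $G\setminus B$}) genuinely allows the $G'$-process and the $T_0$-process to proceed without interference — i.e., that deleting $T_0$ really does split off a clean component, that no vertex of $T_0$ prematurely becomes the unique white neighbour of some $G'$-vertex in a way that disrupts the induced-tree structure, and that after $G'$ is black the vertex $x$ can legitimately force $y$ into the now-isolated component $V(T_0)$. This is exactly where outerplanarity is used: the pendant tree is attached to the rest of $G$ essentially through a single "interface" (one edge, or a structure controlled by Theorem~\ref{consecutive trees} and the planar embedding), so $G\setminus V(T_0)$ is connected enough and the cross-edges are limited. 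I would handle this by mimicking closely the endgame of the proof of Theorem~\ref{For block-cycle Z(G)=P(G)} — casing on whether the root of $T_1$ in the $G'$-covering is an interior or boundary vertex of its tree, and on whether $T_0$ is attached by a genuine inner edge or sits on the outer face — and in each case explicitly describing the chronological list of forces. The double-path/double-tree lemmas (Lemmas~\ref{lemma1}--\ref{lemma3}, Corollaries~\ref{PZFS of double path},~\ref{PZFS of double tree}) are the tools that settle the smallest pieces, so the induction bottoms out cleanly there.
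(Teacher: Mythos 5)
Your overall strategy — induction on the tree cover number, peeling off a pendant tree supplied by Theorem~\ref{pendant tree}, and using the double\--tree machinery (Corollary~\ref{PZFS of double tree}) to attach one new black vertex — is the same route the paper takes, and for the bare equality $Z_+(G)=\T(G)$ it works. However, there is a genuine gap in how you handle the ``moreover'' clause, and it also undermines the self-consistency of your induction. The theorem asserts that \emph{any} minimal tree covering coincides with a collection of forcing trees, but Theorem~\ref{pendant tree} only guarantees the existence of \emph{some} minimum covering with a pendant tree; when the covering you actually care about has no pendant tree, Theorem~\ref{pendant tree} produces it by surgically modifying two consecutive trees, so your inductive step only certifies the \emph{modified} covering $\TT'(G)$, not the original $\TT(G)$. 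Worse, your own inductive step needs the strong form of the hypothesis: you invoke it to say that the \emph{specific} covering $\TT(G)\setminus\{T_0\}$ of $G'=G\backslash V(T_0)$ is realized by forcing trees, yet that covering need not contain a pendant tree of $G'$, so the weak form of the hypothesis (``some minimal covering works'') does not deliver this. The paper closes exactly this gap with the second half of its proof: after running the process along the modified trees $T_{k-1},T_k$, it explicitly re-routes the forces (casing on whether the first forced interface vertex $x$ lies in $T_2$ or in $T_1'$, and using Observation~\ref{swap in double tree} and Lemma~\ref{lemma1} to swap the added root) so that the forces travel along the \emph{original} trees $T_1,T_2$. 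You need an argument of this kind.

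A secondary imprecision: you describe the added black vertex $z$ as a leaf of $T_0$ chosen in advance, with the cross-edge $xy$ ``not needed or available as the last link.'' A pendant tree is adjacent to only one other tree of the covering, but possibly through several edges, and $T_0$ generally cannot be forced internally from an arbitrary leaf while its neighbour tree still has white vertices (the white part of $T_0$ and the white part of $T_{k-1}$ may sit in one component of $G\setminus B$, blocking the positive colour-change rule). The correct order is the paper's: run the process in $G'$ until the first vertex $x$ of $T_{k-1}$ adjacent to $T_0$ turns black, and only then does Corollary~\ref{PZFS of double tree} hand you the matching vertex $y\in V(T_0)$ — so $y$ is determined by the process, not chosen up front. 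You gesture at this in your ``obstacle'' paragraph, but the construction as written should be corrected accordingly.
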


\begin{proof}
  We prove the claim by induction on the tree cover number and using  Theorem~\ref{pendant tree}. It is clearly true for $\T(G)=1$. Assume that it is true for any outerplanar graph $G'$ with $\T(G') < k$. Now let $G$ be an outerplanar graph with $\T(G)=k$. By Proposition~\ref{T(G)&Z_+(G)}, we have  $ Z_+(G) \geq \T(G)$. According to Theorem~\ref{pendant tree} there is a minimum tree covering of $G$, $\TT(G)$, containing $T_1,T_2, \dots, T_k$  in which there is a pendant tree, say $T_k$. Let $T_{k-1}$ be the only neighbour of $T_k$. Let $G'=G\backslash V(T_k)$ then the induction hypothesis holds, so $T(G')=Z_+(G')=k-1$. Further,  $T_1,T_2, \dots, T_{k-1}$ are forcing trees in a positive zero forcing process whose initial set of black vertices, $Z'_p$, has a vertex from each tree in $\TT(G)\backslash T_k$. A zero forcing process in $G$ starting with the black vertices in $Z'_p$ can proceed as it does in $G'$ until the
 first vertex of $T_{k-1}$, say $x$, that is adjacent to some vertex in $T_k$ gets forced. Since the graph induced by $V(T_{k-1})\cup V(T_k)$ is a double tree, according to Corollary \ref{PZFS of double tree}, the vertex $x$ determines a vertex $y$ in $T_k$ such that $\{x,y\}$ is a PZFS for the subgraph, $W$, induced by $V(T_{k-1})\cup V(T_k)$. Since the induction hypothesis holds for $W$, $T_{k-1}$ is a forcing tree in $W$ too. Thus the vertices of $T_{k-1}$ get forced in the same order as they get forced in $G'$.   
 Therefore we can complete colouring the graph $G$ by adding the black vertex $y$ to the initial set of black vertices.
 Thus, $Z_p=Z'_p\cup \{y\}$ is a PZFS of $G$ with $T_1,T_2 \dots, T_k$ as the forcing trees in this positive zero forcing process. Thus, $Z_+(G)= \T(G)$.

Now we need to show that the original minimal tree covering of the graph $G$ also coincides with a collection of forcing trees. Note that in the procedure of constructing a pendant tree in the minimal tree covering of $G$, we modified two consecutive trees $T_1$ and $T_2$ to obtain a pendant tree $T_k$ in a new minimal forcing tree covering. Assume that $v\in T_2$ has a neighbour, other than $u$, in $T_1$ as  is shown in Figure~\ref{pendant_tree_construction}.

\begin{figure}[H]
\centering
\includegraphics[width=.6\textwidth]{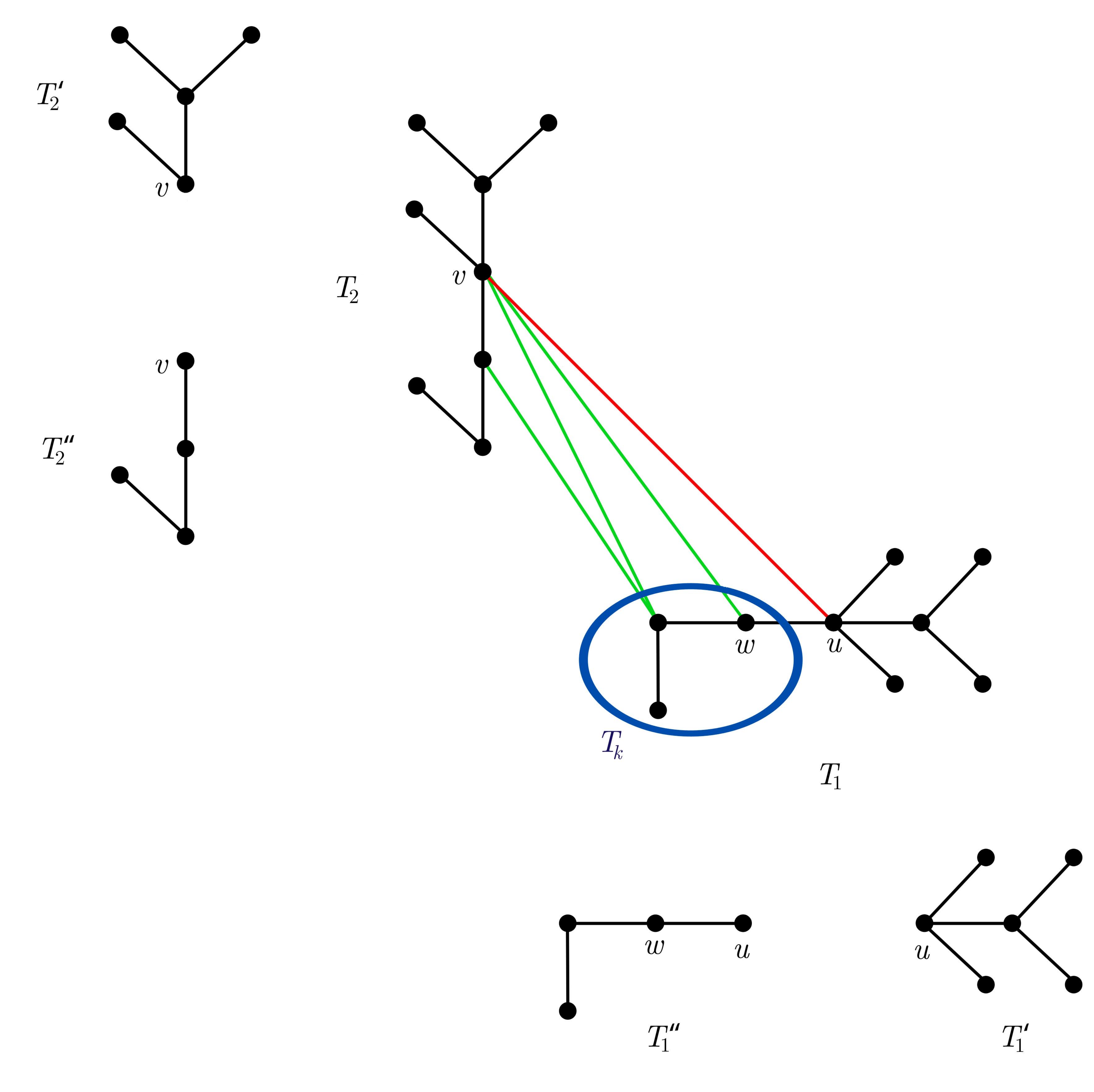}
\\
\caption{Construction of a pendant tree in an outerplanar graph}
\label{pendant_tree_construction}
\end{figure}

Let $T_1=T_1'\,\stackplus{u}\,T_1''$, where $T_1''$ involves all the vertices of $T_1$ that have some neighbours in $T_2$. Similarly, let $T_2=T_2'\,\stackplus{v}\,T_2''$, where $T_2''$ involves all the vertices of $T_2$ that have some neighbours in $T_1$.    
Then the tree adjacent to $T_k$ is the tree
\[
T_{k-1}=T_2\,\stackplus{vu}\,T_1'.
\]
 The tree $T_k$ is obtained by removing $u$ from $T_1''$. In other words $T_1=T_k\,\stackplus{wu}\, T_1'$ with $w\in V(T_k)$.

Now we show that, one can  change the direction of forces in this positive zero forcing process, so that instead of forcing along the trees $T_k$ and $T_{k-1}$, the forces are performed along the original trees $T_1$ and $T_2$; consequently, we show that the original tree covering of the graph is in fact a set of forcing trees for the graph.

Consider the positive zero forcing process starting with the vertices of $Z'_p$. Let $x$ be the first black vertex of  $T_{k-1}$ that is adjacent to a vertex in $T_k$.

 If $x\in T_2$, then  according to Corollary~\ref{PZFS of double tree}, the vertex $x$ determines a (not necessarily unique) vertex $y$ in $T_k$ such that $Z'_p\cup\{y\}$ is a PZFS for $G$. In this case the level of vertex $v$ is lower than the level of vertex $u$ in the forcing tree $T_{k-1}$. Thus $v$ forces $u$. The set $\{x,y\}$ is a PZFS for the graph $H=G[V(T_2'')\cup V(T_1'')]$ which is a double tree. Hence by the induction hypothesis $\{T_1'',T_2''\}$ is a set of forcing trees for $H$. Thus the colouring of vertex $u$ can be performed by vertex $w$ instead of vertex $v$. From this we have that $T_1'\,\stackplus{u}\,T_1''=T_1$ with the root $y$ and $T_2$ with the root $x$ are two forcing trees in a positive zero forcing process of $G$. 
 
Next, assume that $x$ belongs to $T_1'$. In this case according to Lemma~\ref{lemma1} and Observation~\ref{swap in double tree}, the set $Z'_p\cup\{w\}$ is a PZFS for $G$. Also in this case the level of vertex $u$ is lower than the level of vertex $v$ in the forcing tree $T_{k-1}$. Thus $u$ forces $v$. But since $u$ has only two white neighbours in the component $G[V(T_k)\cup V(T_2'')]$ which are $w$ and $v$, we can replace $w$ by $v$ in the PZFS of $G$. Then $u$ forces $w$ and the rest of the forces will be performed along $T_1$ and $T_2$. Thus $T_1$ with the root $x$ and $T_2$ with the root $v$ are two forcing trees in a positive zero forcing process of $G$.     
\end{proof}
The following is a consequence of combining Theorem~\ref{outerplanars satisfy Z_+=T} and Corollary~\ref{subdivision of a graph with Z_+=T}.
\begin{cor}
Any subdivision of an outerplanar graph (which may no longer be outerplanar) satisfies
\[
Z_+(G)=T(G).\qed
\]
\end{cor}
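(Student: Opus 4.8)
The plan is to chain together the two results just proved, with essentially no extra work. Let $G$ be any subdivision of an outerplanar graph; by definition this means there is an outerplanar graph $G_0$ such that $G$ is obtained from $G_0$ by subdividing some (possibly empty) collection of edges, i.e.\ by a finite sequence of single-edge subdivisions.

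First I would apply Theorem~\ref{outerplanars satisfy Z_+=T} to $G_0$: since $G_0$ is outerplanar, $Z_+(G_0)=T(G_0)$. Then Corollary~\ref{subdivision of a graph with Z_+=T} applies directly with the role of ``$G$'' in that corollary played by $G_0$ and the role of ``$H$'' played by our $G$, because $G$ is obtained from $G_0$ by subdividing edges and $Z_+(G_0)=T(G_0)$. This yields $Z_+(G)=T(G)$, which is exactly the claim.

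The main obstacle is essentially nonexistent: one only needs to note that Corollary~\ref{subdivision of a graph with Z_+=T} is stated for subdividing ``some edges'' rather than a single edge, which is fine since it was itself obtained by iterating the single-edge statements (Proposition~\ref{subdivision and Z_+} and Theorem~\ref{subdivision and T(G)}). The parenthetical ``which may no longer be outerplanar'' is merely a caution that we do \emph{not} re-invoke Theorem~\ref{outerplanars satisfy Z_+=T} on $G$ itself; we only use it on the outerplanar graph $G_0$, and then transport the equality across the subdivision operation via Corollary~\ref{subdivision of a graph with Z_+=T}.
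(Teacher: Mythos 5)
Your proof is correct and is exactly the argument the paper intends: apply Theorem~\ref{outerplanars satisfy Z_+=T} to the underlying outerplanar graph and then transport the equality $Z_+=T$ across the subdivisions via Corollary~\ref{subdivision of a graph with Z_+=T}. The paper presents this corollary as an immediate consequence of combining those same two results, so no further comment is needed.
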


It seems that the scope of graphs satisfying $Z_+(G)=T(G)$ goes well beyond outerplanar graphs and even $2$-trees. The graph in Figure~\ref{non_outer_planar} is an example of a graph that is not a partial $2$-tree but still satisfies $Z_+(G)=T(G)$. This graph includes a $K_4$ as a subgraph therefore $Z_+(G)\geq 3$. But the set $\{1,2,3\}$ is a PZFS for $G$. Thus $Z_+(G)=3$. It is not hard to verify that the tree cover number of this graph is no less than $3$ and since the positive zero forcing number is an upper bound for the tree cover number we have $T(G)=3$ as well.    

\begin{figure}[H]
\centering
\unitlength=1pt
\begin{picture}(0,0)
\put(0,0){\circle*{6}}
\put(-25,-50){\circle*{6}}
\put(25,-50){\circle{6}}
\put(-75,-75){\circle*{6}}
\multiput(-25,-100)(50,0){2}{\circle{6}}
\put(0,-150){\circle{6}}

\put(-23.5,-52.5){\line(1,-1){45.7}}
\put(23.45,-52.55){\line(-1,-1){45.7}}
\put(-1.7,-2.7){\line(-1,-2){22.3}}
\put(-22.2,-50){\line(1,0){44}}
\put(23,-102.4){\line(-1,-2){22.25}}

\put(-75,-75){\line(2,-1){47.2}}
\put(-75,-75){\line(2,1){47.2}}

\put(1.6,-2.6){\line(1,-2){22.3}}
\put(-1.6,-2.6){\line(-1,-2){22.4}}

\put(-25,-50){\line(1,-4){24.25}}
\put(-22.2,-100){\line(1,0){44}}
\put(-25,-52.2){\line(0,-1){45}}
\put(25,-52.8){\line(0,-1){44}}

\put(-83,-75){\makebox(0,0){$1$}}
\put(-33,-45){\makebox(0,0){$2$}}
\put(8,0){\makebox(0,0){$3$}}
\put(33,-45){\makebox(0,0){$4$}}
\put(33,-105){\makebox(0,0){$5$}}
\put(-33,-105){\makebox(0,0){$6$}}
\put(8,-150){\makebox(0,0){$7$}}

\end{picture}
\vspace{6cm}
\caption{A non-outerplanar graph with $Z_+(G)=T(G)$}
\label{non_outer_planar}
\end{figure}

\section{Positive zero forcing and the tree cover number}
Since the positive zero forcing number and tree cover number are very nicely related it seems reasonable to study other families of graphs satisfying $Z_+(G)=T(G)$. In this section we try to find other graph operations that preserve the equivalency of these parameters for the graphs in which these parameters already agree. Moreover we will compare these parameters for $k$-trees. 

\subsection{Vertex-sum of two graphs}\label{Vertex-sum Of Two Graphs}
\begin{thm}\label{T&Z_+ of vertex sum}
For any graphs $G$ and $H$ with a vertex labeled $v$ 
\[
T(G\,\stackplus{v}\,H)=T(G)+T(H)-1,
\]
and
\[
Z_+(G\,\stackplus{v}\,H)=Z_+(G)+Z_+(H)-1.
\]
\end{thm}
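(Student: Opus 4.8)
The plan is to prove both equalities by establishing matching upper and lower bounds, treating the tree cover number and the positive zero forcing number separately but with parallel arguments. For the tree cover number, the upper bound $T(G\,\stackplus{v}\,H)\leq T(G)+T(H)-1$ should come from taking a minimal tree covering $\TT_G$ of $G$ and a minimal tree covering $\TT_H$ of $H$: the vertex $v$ lies in exactly one tree of each, say $T\in\TT_G$ and $T'\in\TT_H$; since identifying $v$ in $T$ and $T'$ along the single shared vertex yields a connected acyclic induced subgraph (a tree), we can merge these two trees into one, producing a covering of $G\,\stackplus{v}\,H$ with $|\TT_G|+|\TT_H|-1$ trees. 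For the lower bound, given a minimal tree covering $\TT$ of $G\,\stackplus{v}\,H$, the tree $T_v\in\TT$ containing $v$ restricts to induced subforests of $G$ and of $H$; because $v$ is a cut vertex, $T_v\cap G$ and $T_v\cap H$ are each induced subtrees sharing only $v$, and every other tree of $\TT$ lies entirely in $G$ or entirely in $H$. Splitting $T_v$ at $v$ thus gives a tree covering of $G$ and one of $H$, with total size $|\TT|+1$, whence $T(G)+T(H)\leq T(G\,\stackplus{v}\,H)+1$.

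For the positive zero forcing number, the upper bound $Z_+(G\,\stackplus{v}\,H)\leq Z_+(G)+Z_+(H)-1$ is the delicate direction. I would start from a minimal PZFS $B_G$ of $G$ and a minimal PZFS $B_H$ of $H$. Using Lemma~\ref{swap lemma1}, I can assume $v\in B_G$ (repeatedly swapping a vertex of $B_G$ for the unique white neighbour it forces, until $v$ enters the set — this works because a PZFS's forcing trees cover $v$), and likewise I may try to arrange $v\in B_H$. Then I claim $\left(B_G\cup B_H\right)\backslash\{v\}$, together with $v$, i.e.\ $B_G\cup\left(B_H\backslash\{v\}\right)$, is a PZFS for $G\,\stackplus{v}\,H$: first run the positive forcing process inside $G$ (the presence of the extra white vertices of $H$ attached at $v$ does not block any force that happened in $G$, since those vertices all lie in the single component of $(G\,\stackplus{v}\,H)\backslash B$ containing $H\backslash\{v\}$, and $v$ gets coloured at some stage), then once $v$ is black, run the process inside $H$. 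One must check that forces legal in $H$ remain legal in $G\,\stackplus{v}\,H$: a vertex $u\in V(H)$ with a unique white neighbour $w$ in its component of $H\backslash B_H$ still has $w$ as its unique white neighbour in the corresponding component of $(G\,\stackplus{v}\,H)\backslash(\text{current black set})$, because all of $V(G)$ is already black and $V(G)\setminus\{v\}$ is separated from $V(H)\setminus\{v\}$. For the lower bound, suppose $Z_+(G\,\stackplus{v}\,H)< Z_+(G)+Z_+(H)-1$ and take a minimal PZFS $B$. At most one vertex of $V(H)\setminus\{v\}$ can be forced by a vertex outside $V(H)$ — in fact, since $v$ is the unique cut vertex, only $v$ can be forced by something in $V(G)\setminus\{v\}$ (via the component containing $H\setminus\{v\}$), and symmetrically. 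So $B\cap V(H)$ together with (possibly) $v$ must be a PZFS for $H$ once restricted, forcing $|B\cap V(H)|\geq Z_+(H)-1$; similarly $|B\cap V(G)|\geq Z_+(G)-1$; and these overlap in at most the single vertex $v$, giving $|B|\geq Z_+(G)+Z_+(H)-1$, a contradiction.

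The main obstacle I anticipate is the careful bookkeeping in the $Z_+$ lower bound — specifically, verifying that when we restrict the forcing process on $G\,\stackplus{v}\,H$ to $V(G)$ we genuinely obtain a valid positive forcing process witnessing that $B\cap V(G)$ (augmented by $v$ if $v\notin B$) is a PZFS for $G$, since the component structure of $(G\,\stackplus{v}\,H)\backslash(\text{black set})$ differs from that of $G\backslash(\text{black set})$ only in that the $H$-side components are absent, so a unique-white-neighbour force valid in the big graph is still valid after restriction. A subtle point is whether $v$ can play the role of being ``free'' on both sides: in the worst case $v\notin B$ and $v$ is forced from the $G$-side, in which case the $H$-side forcing process starts only after $v$ is black, and one uses Theorem~\ref{PZFS.tree}-style reasoning together with Lemma~\ref{swap lemma1} to convert $B\cap V(H)$ plus $v$ into an honest PZFS for $H$. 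Once these component-restriction lemmas are stated cleanly (they are essentially the same observation used in the proof of Theorem~\ref{vertex-sum}), both inequalities fall into place and the two displayed equalities follow.
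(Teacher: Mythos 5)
Your tree-cover argument is sound and is essentially the paper's (the paper phrases the lower bound as a contradiction, you split the tree through $v$ directly; same content). The problems are in the $Z_+$ equality. First, the lower bound as written does not close arithmetically: from $|B\cap V(G)|\geq Z_+(G)-1$, $|B\cap V(H)|\geq Z_+(H)-1$ and ``overlap at most $\{v\}$'' you get only $|B|\geq Z_+(G)+Z_+(H)-3$, not $-1$. The missing ingredient is exactly the subtlety you flag but never fold into the count: $v$ receives its colour from at most one side. If $v\in B$, then \emph{both} restrictions $B\cap V(G)$ and $B\cap V(H)$ are already honest PZFSs (no $-1$ on either), and the overlap is exactly one vertex; if $v\notin B$, there is no overlap at all, and whichever side forces $v$ does so using only its own vertices, so that side's restriction is a full PZFS and only the \emph{other} side gets the $-1$ discount. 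Either case gives $|B|\geq Z_+(G)+Z_+(H)-1$; without this case split the inequality you claim does not follow from the bounds you derived.

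Second, for the upper bound you lean on Lemma~\ref{swap lemma1} to ``assume $v\in B_G$ and $v\in B_H$,'' but that lemma only exchanges a black vertex $u$ for the \emph{specific} white vertex that is $u$'s unique white neighbour in a component of $G\backslash B$; you do not get to choose which vertex enters, and iterating tends to undo itself (after the swap, $v$'s first force is $u$, so the next application just restores $u$). What you actually need is that a forcing tree can be re-rooted at an arbitrary one of its vertices, so that the tree of $\TT_H$ containing $v$ can be driven from $v$ once $v$ is blackened from the $G$-side. This is precisely how the paper argues: it works with the forcing trees rather than the black sets, merges the two forcing trees $T_1\in\TT_G$ and $T_2\in\TT_H$ containing $v$ into the single tree $T_1\,\stackplus{v}\,T_2$ for the upper bound, and splits the tree through $v$ for the lower bound, so the $\pm1$ bookkeeping is done by counting trees (one tree is shared) instead of counting initial black vertices. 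Recasting your argument in that language, or proving the re-rooting claim separately, would repair both directions.
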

\begin{proof}
In order to prove the first equality let $\TT_G$ and $\TT_H$ be the minimal tree coverings of $G$ and $H$, respectively, and $T_1\in \TT_G$ and $T_2\in \TT_H$ be the trees covering $v$. Let $T_v=T_1 \,\stackplus{v}\, T_2$. Observe that $T_v$ is an induced tree in $G\,\stackplus{v}\,H$ that covers $v$. Therefore, $\left(\left(\TT_G \cup \TT_H\right) \backslash \{T_1,T_2\}\right)\cup T_v$ is a tree covering of $G\,\stackplus{v}\,H$. 

Next we show, in fact, the vertices of $G\,\stackplus{v}\,H$ can not be covered with fewer trees.  Suppose that   the vertices of $G\,\stackplus{v}\,H$ can be covered with $T(G)+T(H)-2$ induced disjoint trees.  Let $T$ be the tree that covers $v$ in such a tree covering of $G\,\stackplus{v}\,H$. At least $T(G)-1$ trees are needed to cover the vertices of $G\backslash T$. Since the number of trees in the covering of $G\,\stackplus{v}\,H$ is  $T(G)+T(H)-2$, the vertices of $H\backslash T$ are covered by at most ($T(H)-2$) trees, but this is a contradiction with the fact that $T(H)$ is the least number of trees that cover the vertices of $H$. Thus  
\[
T(G\,\stackplus{v}\,H)=T(G)+T(H)-1.
\]
To prove the second equality, let $\TT_G$ and $\TT_H$ be the sets of forcing trees for a minimal PZFS in $G$ and $H$ respectively. Let $T_1\in \TT_G$ and $T_2\in \TT_H$ be the trees that contain $v$. Then $T_1\,\stackplus{v}\,T_2$  is a forcing tree in $G\,\stackplus{v}\,H$ covering $v$. Then a similar reasoning as above applies.       
\end{proof}

\begin{cor}
For any graphs $G$ and $H$ with a vertex labeled $v$ if $Z_+(G)=T(G)$ and $Z_+(H)=T(H)$, then 
\[
Z_+(G\,\stackplus{v}\,H)=T(G\,\stackplus{v}\,H).\qed
\]
\end{cor}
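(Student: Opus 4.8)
The plan is to deduce this corollary directly from Theorem~\ref{T&Z_+ of vertex sum}, which already supplies the two exact formulas needed. First I would recall that Theorem~\ref{T&Z_+ of vertex sum} asserts, for any graphs $G$ and $H$ sharing a labeled vertex $v$, both $T(G\,\stackplus{v}\,H)=T(G)+T(H)-1$ and $Z_+(G\,\stackplus{v}\,H)=Z_+(G)+Z_+(H)-1$. These two identities hold with no hypothesis on $G$ or $H$, so nothing further needs to be established about the combinatorics of the vertex-sum operation itself.

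Next I would simply substitute the assumptions. Since $Z_+(G)=T(G)$ and $Z_+(H)=T(H)$, we compute
\[
Z_+(G\,\stackplus{v}\,H)=Z_+(G)+Z_+(H)-1=T(G)+T(H)-1=T(G\,\stackplus{v}\,H),
\]
where the first equality is the $Z_+$ formula from Theorem~\ref{T&Z_+ of vertex sum}, the middle equality is the hypothesis, and the last is the $T$ formula from the same theorem. This chain of equalities is the entire argument.

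There is essentially no obstacle here: all the real work—showing that the vertex-sum behaves additively (up to the shared vertex) for both parameters, via gluing tree coverings at $v$ and via gluing forcing trees at $v$, together with the matching lower bounds—was carried out in the proof of Theorem~\ref{T&Z_+ of vertex sum}. The only thing to be careful about in writing this up is to cite that theorem explicitly rather than re-deriving the formulas, and to note that the hypotheses $Z_+(G)=T(G)$, $Z_+(H)=T(H)$ are used only in the single substitution step above. So the proof is a two-line calculation, and I would present it as such.
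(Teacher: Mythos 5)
Your proposal is correct and is exactly the argument the paper intends: the corollary is stated with a \qed because it follows immediately by substituting the hypotheses $Z_+(G)=T(G)$ and $Z_+(H)=T(H)$ into the two formulas of Theorem~\ref{T&Z_+ of vertex sum}. Nothing further is needed.
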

In \cite{booth2011minimum}, it has been shown that 
\[
M_+(G\,\stackplus{v}\,H)=M_+(G)+M_+(H)-1.
\]
Therefore we have the following as another consequence of Theorem~\ref{T&Z_+ of vertex sum}.   
\begin{cor}
Let $G$ and $H$ be two graphs both with a vertex labeled $v$. If $Z_+(G)=M_+(G)$, then 
\[
Z_+(G\,\stackplus{v}\,H)=M_+(G\,\stackplus{v}\,H).\qed
\]
\end{cor}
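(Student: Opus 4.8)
The plan is to obtain the conclusion by pairing the two exact ``additive minus one'' vertex-sum identities, one for each parameter, rather than by constructing matrices directly. Theorem~\ref{T&Z_+ of vertex sum} supplies $Z_+(G\,\stackplus{v}\,H)=Z_+(G)+Z_+(H)-1$, and the result of \cite{booth2011minimum} quoted just above supplies the companion identity $M_+(G\,\stackplus{v}\,H)=M_+(G)+M_+(H)-1$. I would also keep Theorem~\ref{M_+(G)<=Z_+(G)}, the inequality $M_+\le Z_+$, in reserve to pin down the direction of the comparison once the two formulas are subtracted.

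First I would feed the hypothesis $Z_+(G)=M_+(G)$ into the first identity to get $Z_+(G\,\stackplus{v}\,H)=M_+(G)+Z_+(H)-1$, and then subtract the second identity term by term. This produces the clean \emph{defect formula}
\[
Z_+(G\,\stackplus{v}\,H)-M_+(G\,\stackplus{v}\,H)=Z_+(H)-M_+(H).
\]
So under the stated hypothesis on $G$ the $G$-contribution to the gap cancels \emph{exactly}, and the entire discrepancy between the two vertex-sum parameters collapses onto the single residual term $Z_+(H)-M_+(H)$. By Theorem~\ref{M_+(G)<=Z_+(G)} this residual is nonnegative, which already re-derives the generic inequality $M_+(G\,\stackplus{v}\,H)\le Z_+(G\,\stackplus{v}\,H)$ for free.

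The hard part — and where I expect the genuine obstacle to sit — is forcing that residual to be zero. Because both vertex-sum formulas are exact equalities and not merely one-sided bounds, there is no slack in the $H$-summand for the hypothesis on $G$ to absorb: the equation $M_+(G)=Z_+(G)$ annihilates the $G$-half of the defect but conveys no information whatsoever about $H$. Consequently the desired equality $Z_+(G\,\stackplus{v}\,H)=M_+(G\,\stackplus{v}\,H)$ holds if and only if $Z_+(H)=M_+(H)$, and I see no mechanism by which the conclusion can be reached from the hypothesis on $G$ alone. The honest outcome of the plan is therefore that the displayed corollary is an immediate one-line consequence of the defect formula once the symmetric companion condition $Z_+(H)=M_+(H)$ is also assumed — exactly paralleling the preceding corollary, which imposes the analogous equality on \emph{both} factors — and that without that companion condition the statement as worded cannot hold, since one can take any $H$ with $Z_+(H)>M_+(H)$ to make the defect strictly positive.
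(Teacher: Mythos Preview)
Your approach is exactly the one the paper intends: the corollary is presented with nothing but a \qed, immediately after quoting $M_+(G\,\stackplus{v}\,H)=M_+(G)+M_+(H)-1$ from \cite{booth2011minimum} and invoking Theorem~\ref{T&Z_+ of vertex sum}. So the intended argument is precisely the subtraction you carry out, and there is no alternative proof in the paper to compare against.

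Your defect computation
\[
Z_+(G\,\stackplus{v}\,H)-M_+(G\,\stackplus{v}\,H)=\bigl(Z_+(G)-M_+(G)\bigr)+\bigl(Z_+(H)-M_+(H)\bigr)
\]
is correct, and so is your conclusion: the hypothesis $Z_+(G)=M_+(G)$ kills only the first summand, leaving $Z_+(H)-M_+(H)$, which is nonnegative by Theorem~\ref{M_+(G)<=Z_+(G)} but need not vanish. The displayed equality therefore holds if and only if $Z_+(H)=M_+(H)$ as well. This exactly parallels the immediately preceding corollary for $T(\cdot)$, which \emph{does} impose the matching hypothesis on both factors. You have not missed an idea; the corollary as printed is missing the companion hypothesis on $H$, and your analysis pinpoints the omission.
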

\subsection{Coalescence of graphs}
In Section~\ref{Vertex-sum Of Two Graphs}, the positive zero forcing number and the tree cover number of the vertex-sum of two graphs were studied. This provides the motivation for verifying the variations of these parameters in a more general way of merging two graphs. 

Let $H$ be a graph that is a subgraph of both the graphs $G_1$ and $G_2$, then  the \txtsl{coalescence} of $G_1$ and $G_2$ on $H$, denoted by $G_1\,\stackplus{H}\,G_2$, is obtained by the process of merging these graphs through the subgraph $H$. The coalescence of two graphs $G_1$ and $G_2$ on $H$ is obtained by identifying the subgraph $H$ in these two graphs. As an example see Figure~\ref{coalescence}.

\begin{figure}[ht!]
\centering
\unitlength=1pt
\begin{picture}(0,0)

\multiput(-150,0)(50,0){2}{\circle*{6}}
\multiput(-150,-50)(50,0){2}{\circle*{6}}
\put(-50,-25){\circle*{6}}

\put(-150,0){\line(1,0){50}}
\put(-150,0){\line(0,-1){50}}
\put(-150,-50){\line(1,0){50}}
\put(-100,0){\line(0,-1){50}}
\put(-100,0){\line(2,-1){48}}
\put(-100,-50){\line(2,1){48}}

\multiput(50,0)(50,25){2}{\circle*{6}}
\multiput(50,-50)(50,25){2}{\circle*{6}}
\put(50,0){\line(2,1){48}}
\put(50,0){\line(2,-1){48}}
\put(50,0){\line(0,-1){50}}
\put(50,-50){\line(2,1){48}}
\put(100,25){\line(0,-1){50}}

\put(0,-75){\circle*{6}}
\multiput(-25,-125)(50,0){2}{\circle*{6}}
\put(0,-75){\line(-1,-2){25}}
\put(0,-75){\line(1,-2){25}}
\put(-25,-125){\line(1,0){50}}

\multiput(-50,-225)(50,0){2}{\circle*{6}}
\multiput(-50,-275)(50,0){2}{\circle*{6}}
\put(50,-250){\circle*{6}}
\put(50,-200){\circle*{6}}

\put(-50,-225){\line(1,0){50}}
\put(-50,-225){\line(0,-1){50}}
\put(-50,-275){\line(1,0){50}}
\put(0,-225){\line(0,-1){50}}
\put(0,-225){\line(2,-1){48}}
\put(0,-275){\line(2,1){48}}
\put(50,-200){\line(0,-1){50}}
\put(0,-225){\line(2,1){48}}

\put(-125,-60){\makebox(0,0){$G_1$}}
\put(75,-60){\makebox(0,0){$G_2$}}
\put(0,-140){\makebox(0,0){$H$}}
\put(0,-295){\makebox(0,0){$G_1\,\, \,\stackplus{H}\,\, G_2$}}

\end{picture}
\vspace{11cm}
\caption{Coalescence of two graphs}
\label{coalescence}
\end{figure}
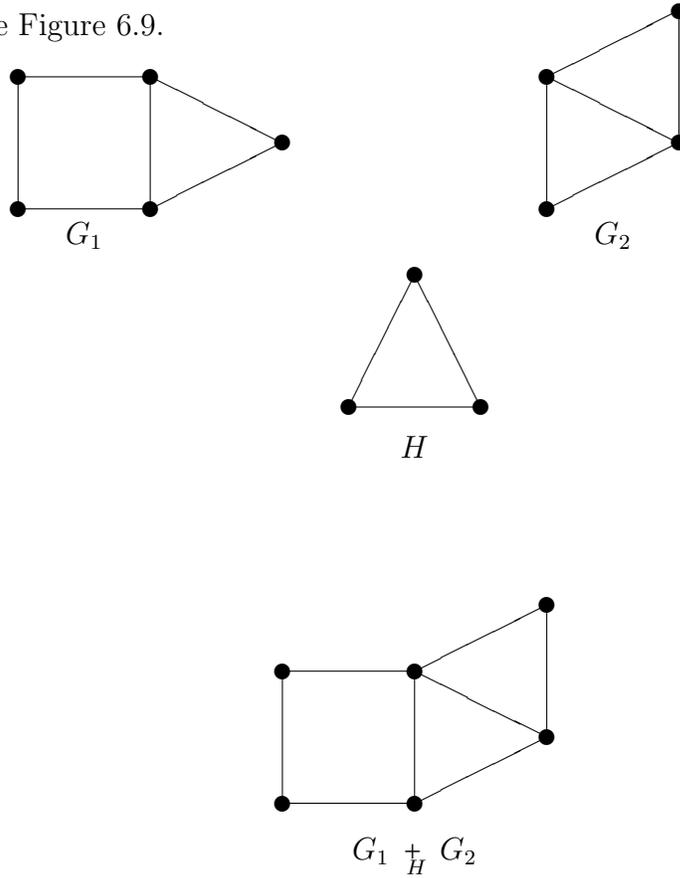

The following theorem gives a lower bound for the tree cover number of the coalescence of a graph $G$ and itself on an induced subgraph $H$.
\begin{thm}\label{T of coalesce}
Let $H$ be a subgraph of $G$. Then
\[
T(G\,\stackplus{H}\,G)\geq 2T(G)-k,
\]
where $k$ is the maximum number of trees that cover the vertices of $H$ in any minimal tree covering of $G$.
\end{thm}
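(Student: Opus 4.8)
The plan is to bound the tree cover number of $G\,\stackplus{H}\,G$ from below by taking an optimal tree covering of the coalescence and restricting it to each of the two copies of $G$. Let $G' = G\,\stackplus{H}\,G$, let $G_1$ and $G_2$ denote the two copies of $G$ inside $G'$, and let $H$ be their common induced subgraph. Suppose $\TT$ is a minimal tree covering of $G'$, so $|\TT| = T(G')$. First I would split the trees of $\TT$ according to how they meet the vertex set $V(H)$: let $\TT_0$ be the trees of $\TT$ that miss $V(H)$ entirely, and $\TT_H$ be the trees that contain at least one vertex of $V(H)$. A tree in $\TT_0$ lies either entirely inside $G_1$ or entirely inside $G_2$ (it cannot straddle both copies, since every path between $V(G_1)\setminus V(H)$ and $V(G_2)\setminus V(H)$ passes through $V(H)$, and an induced tree disjoint from $V(H)$ would then be disconnected). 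So $\TT_0$ partitions as $\TT_0 = \TT_0^{(1)} \sqcup \TT_0^{(2)}$ with $\TT_0^{(i)}$ covering $V(G_i)\setminus V(H)$ minus whatever is covered by $\TT_H$.

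The key step is then to produce, from $\TT$, a tree covering of $G_1$ of size at most $|\TT_0^{(1)}| + k_1$ and a tree covering of $G_2$ of size at most $|\TT_0^{(2)}| + k_2$, where $k_1, k_2$ are the numbers of trees one needs to cover $V(H)$ inside the respective copies. Concretely, for each tree $T \in \TT_H$, intersect $T$ with $V(G_1)$; since $T$ is an induced tree in $G'$, $T \cap V(G_1)$ is an induced forest in $G_1$, and I would argue that the restriction of all trees in $\TT_H$ to $G_1$ can be reorganized into at most $k$ induced trees covering $V(H) \cap V(G_1) = V(H)$ — this is exactly where the hypothesis enters, since $k$ is defined as the maximum, over minimal tree coverings of $G$, of the number of trees meeting $V(H)$, and one should check (or invoke) that any minimal tree covering of $G_1$ can be chosen so that at most $k$ of its trees touch $V(H)$. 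Combining, $T(G) = T(G_1) \le |\TT_0^{(1)}| + k$ and $T(G) = T(G_2) \le |\TT_0^{(2)}| + k$. Adding these and using $|\TT_0^{(1)}| + |\TT_0^{(2)}| = |\TT_0| \le |\TT| - |\TT_H| \le T(G')$ (indeed $|\TT_0| \le T(G') - (\text{number of }\TT_H\text{ trees})$, and one can bound the $\TT_H$ contribution crudely), we get $2T(G) \le T(G') + 2k - (\text{something})$; rearranging carefully yields $T(G\,\stackplus{H}\,G) \ge 2T(G) - k$.

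The main obstacle I anticipate is the bookkeeping around the trees in $\TT_H$: a single tree $T \in \TT_H$ may use vertices of both $G_1$ and $G_2$ and weave back and forth through $H$, so restricting $T$ to $V(G_i)$ can genuinely fragment it, and one must be careful that the total count of fragments (over both copies) is controlled by $|\TT_H|$ plus a term absorbed by $k$. The cleanest route is probably to argue that $T \cap V(G_1)$ and $T \cap V(G_2)$ are each induced forests whose components can be grouped into trees, and that the components meeting $V(H)$ are the only ones that need extra accounting — these are exactly captured by the definition of $k$ via the fact that one may regard the relevant fragments as part of an (extendable) tree covering of $G$. I would also double-check the edge case where $H$ is itself a tree (so $k=1$), in which the bound reads $T(G\,\stackplus{H}\,G) \ge 2T(G) - 1$, consistent with Theorem \ref{T&Z_+ of vertex sum} when $H$ is a single vertex, which is a good sanity check on the constant.
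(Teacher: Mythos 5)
Your overall strategy --- take a minimum tree covering $\TT$ of $G\,\stackplus{H}\,G$, separate the trees that meet $V(H)$ from those that do not, and bound each copy of $G$ separately --- is the same counting argument the paper uses; the paper merely runs it as a contradiction. But as written your plan has two genuine gaps. First, the step ``the restrictions of the trees in $\TT_H$ to $G_1$ can be reorganized into at most $k$ induced trees'' is unjustified, and it is where all the difficulty sits: $k$ is defined as a maximum over \emph{minimal tree coverings of $G$}, so it says nothing a priori about the fragments obtained by restricting a covering of the coalescence to one copy; and vertex-disjoint induced trees cannot in general be merged into fewer induced trees (an edge between two of them may close a cycle), so ``reorganizing'' is not an available move. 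A single tree $T\in\TT_H$ really can weave back and forth through $V(H)$ and leave up to $|V(T)\cap V(H)|+1$ components across the two copies, and $|V(H)|$ can far exceed $k$, so the fragment count is not controlled by $k$. You correctly flag this as the main obstacle, but the plan does not resolve it.

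Second, even granting that step, the arithmetic does not close. Writing $m=|\TT_H|$, your two per-copy bounds give $T(G)\le|\TT_0^{(1)}|+k$ and $T(G)\le|\TT_0^{(2)}|+k$, hence $T(G\,\stackplus{H}\,G)=|\TT_0|+m\ge 2T(G)-2k+m$, which yields the stated bound only if $m\ge k$ --- and nothing forces that: the optimal covering of the coalescence may place all of $V(H)$ in a single tree while $k\ge 2$. The paper's bookkeeping avoids this by bounding each copy's deficit by $m$ itself (at least $T(G)-m$ further trees are needed for each copy of $G\backslash H$), so the total is at least $m+2(T(G)-m)=2T(G)-m$, and the inequality $m\le k$ is invoked exactly once at the end; that is the version of the count you need. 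A small additional slip: $H$ being a tree does not force $k=1$ --- already for $H=K_2$ a minimal covering of $G$ may split the two endpoints between two trees, giving $k=2$; the case that genuinely reduces to Theorem~\ref{T&Z_+ of vertex sum} is $H=K_1$.
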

\begin{proof}
Suppose that $T(G\,\stackplus{H}\,G)< 2T(G)-k$. Thus we can cover the vertices of $G\,\stackplus{H}\,G$ with $|\TT|=2T(G)-k-1$ trees. In this covering assume that the vertices of $H$ have been covered by $m$ ($m\leq k$) trees. To cover the vertices of $G\backslash H$  at least $T(G)-m$ trees are needed, where $m\leq k$. On the other hand since the entire graph has been covered by $2T(G)-k-1$ trees, $G\backslash H$ must be covered by at most $T(G)-k-1$ trees, which contradicts  the definition of $k$ and the fact that $T(G)$ is the least number of trees that cover the vertices of $G$.    
\end{proof}

Recall that the positive zero forcing number of a graph is in fact the number of forcing trees in a minimal positive zero forcing process of $G$. Using  similar reasoning as in Theorem~\ref{T of coalesce} we obtain the following.
\begin{thm}\label{Z_+ of coalesce}
Let $H$ be a subgraph of $G$. Then
\[
Z_+(G\,\stackplus{H}\,G)\geq 2Z_+(G)-k,
\]
where $k$ is the maximum number of forcing trees that cover the vertices of $H$ in any minimal positive zero forcing process of $G$.\qed
\end{thm}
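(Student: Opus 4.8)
The plan is to mirror the proof of Theorem~\ref{T of coalesce}, replacing "minimal tree covering'' by "set of forcing trees of a minimal positive zero forcing process'' throughout; recall from Section~\ref{Forcing Trees} that the forcing trees attached to a minimal PZFS of a graph are pairwise vertex-disjoint induced trees that cover all its vertices and that their number equals $Z_+$ of that graph. So I would argue by contradiction: suppose $Z_+(G\,\stackplus{H}\,G) < 2Z_+(G)-k$ and fix a minimal positive zero forcing process of $G\,\stackplus{H}\,G$ with set of forcing trees $\mathcal F$, so $|\mathcal F| \le 2Z_+(G)-k-1$. Writing $G_1,G_2$ for the two copies of $G$ inside $G\,\stackplus{H}\,G$ (so $V(G_1)\cap V(G_2)=V(H)$ and no edge joins $V(G_1)\backslash V(H)$ to $V(G_2)\backslash V(H)$), I would split $\mathcal F$ into the trees lying in $V(G_1)\backslash V(H)$ (say $a_1$ of them), those lying in $V(G_2)\backslash V(H)$ (say $a_2$), and those meeting $V(H)$ (say $m$); every forcing tree falls into exactly one of these classes, since a tree avoiding $V(H)$ is connected and thus confined to a single copy.

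Next I would restrict to one copy. The two claims to establish — in the spirit of the counting in Theorem~\ref{T of coalesce} — are: (i) restricting $\mathcal F$ to $V(G_1)$ produces a valid collection of forcing trees for a positive zero forcing process of $G_1\cong G$, consisting of the $a_1$ trees already inside $G_1$ together with one restricted tree for each of the $m$ trees meeting $V(H)$, whence $a_1+m\ge Z_+(G)$, and symmetrically $a_2+m\ge Z_+(G)$; and (ii) $m\le k$, because these $m$ restricted trees are exactly the forcing trees covering $V(H)$ in a minimal positive zero forcing process of $G$, and $k$ is by definition the largest such number. Granting (i) and (ii), $|\mathcal F|=a_1+a_2+m\ge 2Z_+(G)-m\ge 2Z_+(G)-k$, contradicting $|\mathcal F|\le 2Z_+(G)-k-1$, which proves the theorem. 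As a sanity check, when $H=K_1$ one has $k=1$ and the bound specializes to $Z_+(G\,\stackplus{v}\,G)\ge 2Z_+(G)-1$, consistent with Theorem~\ref{T&Z_+ of vertex sum}.

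The hard part is claim (i): I must show that a minimal positive zero forcing process of the coalescence can be chosen so that no forcing tree "crosses'' $H$ more than once — equivalently, its intersection with each $V(G_i)$ is connected — and that the resulting restriction is genuinely realizable as a process on $G_i$ rather than merely a tree covering. I expect this to follow from a swap/exchange argument in the style of Lemma~\ref{swap lemma1}: whenever a forcing tree leaves $V(G_1)$ through an $H$-vertex into $V(G_2)\backslash V(H)$ and later re-enters $V(G_1)$ through a second $H$-vertex, one can re-root the re-entering fragment or reassign forces so that it is absorbed into a forcing tree already resident in $G_1$, keeping $|\mathcal F|$ unchanged. Once this non-crossing normal form is in hand, restricting the chronological list of forces to each copy gives an honest positive zero forcing process of $G$, and the counting in the second paragraph applies verbatim. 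This subtlety is exactly what separates the present positive-zero-forcing statement from the purely combinatorial tree-cover statement of Theorem~\ref{T of coalesce}.
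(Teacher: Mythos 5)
Your route is the paper's route: the paper proves Theorem~\ref{T of coalesce} by exactly the counting argument you describe and then disposes of Theorem~\ref{Z_+ of coalesce} in a single line, saying only that ``similar reasoning'' applies. So the decomposition of the forcing trees into $a_1$, $a_2$ and $m$, the bounds $a_1+m\ge Z_+(G)$ and $a_2+m\ge Z_+(G)$, and the final count $a_1+a_2+m\ge 2Z_+(G)-m\ge 2Z_+(G)-k$ are precisely what the author intends, and your consistency check against Theorem~\ref{T&Z_+ of vertex sum} is correct. However, the two steps you isolate as claims (i) and (ii) are genuinely open in your write-up, and the paper does not close them either.

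Claim (i) is stated as an expectation, not proved, and it is the entire content of the theorem. A forcing tree of $G\,\stackplus{H}\,G$ that meets $V(H)$ can enter and leave $V(H)$ several times, so its intersection with $V(G_1)$ is in general an induced forest rather than a single tree; and even after a non-crossing normal form is arranged, you must verify that the restricted chronological list of forces is legal in $G_1$: the positive colour-change rule depends on the components of the white vertices, and deleting $V(G_2)\backslash V(H)$ changes those components, so a force that was legal in the coalescence is not automatically legal in the copy. Your appeal to a swap argument in the spirit of Lemma~\ref{swap lemma1} is plausible but unexecuted. Separately, claim (ii) does not follow from the definition of $k$ as you justify it: $k$ is a maximum over \emph{minimal} positive zero forcing processes of $G$, i.e., processes with exactly $Z_+(G)$ forcing trees, whereas the restricted process on $G_1$ has $a_1+m$ trees and your argument only gives $a_1+m\ge Z_+(G)$. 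If that inequality is strict, the restricted process is not minimal and the definition of $k$ says nothing about its $m$ trees meeting $H$; the problematic range $k<m<2Z_+(G)-k$ is not covered. (The paper's proof of Theorem~\ref{T of coalesce} asserts the analogous ``$m\le k$'' parenthetically without justification, so this gap is inherited from the argument you are mirroring.) To finish, you would need either to show the restriction can always be taken to be a minimal process of $G$, or to handle $m>k$ by a separate argument.
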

The following is easy to prove.

\begin{lem}\label{forcing trees of K_n}
Let $K_n$ be a subgraph of $G$. In any positive zero forcing process of $G$ at most two of the vertices of $K_n$ are in the same forcing tree.
 \end{lem}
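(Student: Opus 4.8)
The plan is to argue by contradiction using the structure of the positive semidefinite colour change rule. Suppose that some forcing tree $T_r$ in a positive zero forcing process of $G$ contains at least three vertices of a complete subgraph $K_n$; call three of them $a$, $b$, $c$. Each of $a,b,c$ (except possibly the root $r$, if $r$ is among them) is forced by its parent in $T_r$. First I would set up the key observation about the colour change rule: when a black vertex $u$ forces a white vertex $w$ inside a component $W_i$ of $G\setminus B$, then at the moment of that force, $w$ is the \emph{unique} white neighbour of $u$ in the graph induced by $V(W_i)\cup B$; in particular, every other vertex of $W_i$ that is adjacent to $u$ is already black, and $w$ and all the as-yet-white vertices of $W_i$ are mutually connected only through vertices of $W_i$.

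The heart of the argument is that $a,b,c$, being pairwise adjacent in $K_n$, can never lie in the same component of $G\setminus B$ at the same time with two of them white: as soon as one of them, say $a$, is forced, consider the step at which $b$ (or $c$) is still white. Since $a$ is adjacent to both $b$ and $c$, once $a$ is black it is adjacent to at least one still-white vertex in whichever component contains $b$ and $c$; and if $b$ and $c$ are both white and in the same component as each other, then whichever black vertex $v$ in $T_r$ is the parent of $b$ would have $b$ as one white neighbour and $c$ as another white neighbour in that component (because $bc$ is an edge), contradicting that $v$ can force $b$. So the step at which $b$ is forced must occur in a component not containing $c$, and vice versa. Tracking the forcing tree: after $a$ is black, $b$ and $c$ must be separated into different components of $G\setminus B$ at every later moment of the process — but any two vertices joined by the edge $bc$ remain adjacent, hence in the \emph{same} component of $G\setminus B$ as long as both are white. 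I would push this to a clean contradiction: at the first time that two of $\{a,b,c\}$ are black and the third is white, the third one together with one of the black ones forms an edge inside a single component, and the black one also has the other black one as a (black) neighbour — but this forces the conclusion that the parent of the third vertex in $T_r$ had two white neighbours in that component at the relevant step (namely the third vertex and some neighbour along the $K_n$), which is impossible.

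The main obstacle I anticipate is handling the bookkeeping of \emph{when} vertices are coloured versus \emph{when} forces are recorded — in the positive process several forces happen simultaneously and a vertex may force several others at once, so "the step at which $b$ is forced" must be defined carefully relative to the levels of the forcing tree, and I must be careful that the three vertices $a,b,c$ need not be on the same level or forced by the same parent. A cleaner route, which I would actually take, is: induct on or directly examine the first level $X_i$ of $T_r$ at which a vertex of $K_n$ appears that is \emph{not} the root, and show that once two vertices of $K_n$ are black, the component structure of $G\setminus B$ forbids any further vertex of $K_n$ from being the unique white neighbour of its parent — because any third vertex $w\in V(K_n)$ still white is adjacent (via $K_n$) to the other, possibly still-white, vertices of $K_n$ in its component, so its parent always sees at least two white neighbours there unless $w$ is literally the last white vertex of $K_n$, in which case only two vertices of $K_n$ (the parent's other $K_n$-neighbour is already black) lie in $T_r$. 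This last-white-vertex case analysis is where the "at most two" bound crystallizes, and formalizing it is the delicate part of the write-up.
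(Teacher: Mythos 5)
The paper itself offers no argument for this lemma (it is introduced only with ``The following is easy to prove''), so I can only judge your proposal on its own merits, and it contains a genuine gap. The step you lean on repeatedly --- ``if $b$ and $c$ are both white and in the same component, then whichever black vertex $v$ is the parent of $b$ would have $b$ as one white neighbour and $c$ as another white neighbour in that component (because $bc$ is an edge)'' --- is a non sequitur. The edge $bc$ only puts $c$ in the same component as $b$; the positive semidefinite colour change rule is violated only if $c$ is a white \emph{neighbour of $v$}, and nothing forces the parent of $b$ to be adjacent to $c$. Concretely, take the triangle $\{a,b,c\}$ with pendant vertices $d\sim a$ and $e\sim b$ and start from $\{d,e\}$ plus one more vertex as needed: $e$ forces $b$ while $c$ is white and in the same component as $b$, with no violation of the rule. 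This same conflation of ``white vertex of the component'' with ``white neighbour of the forcer'' undermines your derived claims that $b$ and $c$ must lie in different components whenever one of them is forced, and (in your ``cleaner route'') that once two vertices of $K_n$ are black no further vertex of $K_n$ can be the unique white neighbour of its parent --- the latter is plainly false, since every vertex of $K_n$ does eventually get forced, often by a parent outside the clique while other clique vertices are still white. As written, your argument never really uses the hypothesis that $a,b,c$ lie in the \emph{same} forcing tree; if it worked it would prove the false statement that at most two clique vertices can ever be forced at all.

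The missing idea is to exploit the tree paths inside $T_r$ joining the three clique vertices. First show that any two clique vertices $b,c$ in $T_r$ must be in ancestor--descendant relation: if their least common ancestor $m$ were a third vertex, then at the moment $m$ forces the earlier of its two relevant children, the entire descending white paths from those children down to $b$ and to $c$, together with the edge $bc$, lie in one white component in which $m$ has two distinct white neighbours --- a genuine violation of the rule, because now the two offending vertices really are neighbours of the forcer. Hence three clique vertices in $T_r$ would form a chain $a\prec b\prec c$ in the ancestor order; but then when $a$ forces its child $x_1$ on the path toward $b$, the white descending path from $x_1$ through $b$ to $c$ lies in one component in which $a$ is adjacent to both $x_1$ and $c$ (and these are distinct), again contradicting the rule. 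That is where the ``at most two'' bound actually comes from.
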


\begin{cor}\label{T of coalescence on K_n}
If $G$ is a graph that contains a complete graph on $n$ vertices as a subgraph, then
\[
T(G\,\stackplus{K_n}\,G)=2T(G)-k,
\]
 where $k$ is the maximum number of trees that cover the vertices of $K_n$ in any minimal tree covering of $G$. 
\end{cor}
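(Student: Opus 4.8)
The plan is to combine the lower bound already supplied by Theorem~\ref{T of coalesce} (applied with $H=K_n$), which gives $T(G\,\stackplus{K_n}\,G)\ge 2T(G)-k$, with a matching upper bound obtained by constructing an explicit tree covering of $G\,\stackplus{K_n}\,G$ of size $2T(G)-k$. So the whole task reduces to building that covering.

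First I would set up notation: write $A=V(K_n)$ for the shared clique and let $G^{(1)},G^{(2)}$ be the two copies of $G$ in the coalescence, so that $V(G^{(1)})\cap V(G^{(2)})=A$ and, by the definition of coalescence, there are \emph{no} edges of $G\,\stackplus{K_n}\,G$ joining $V(G^{(1)})\setminus A$ to $V(G^{(2)})\setminus A$. By the definition of $k$, I can choose a minimum tree covering $\TT=\{S_1,\dots,S_{T(G)}\}$ of $G$ for which exactly $k$ of the $S_i$ meet $A$; relabel so that $S_1,\dots,S_k$ meet $A$ and $S_{k+1},\dots,S_{T(G)}$ are disjoint from $A$. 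A preliminary observation I would record is that for $1\le i\le k$ one has $|V(S_i)\cap A|\in\{1,2\}$: indeed $A$ induces a clique, so $V(S_i)\cap A$ induces a clique inside the induced tree $S_i$, and a tree contains no triangle (this is the tree-covering analogue of Lemma~\ref{forcing trees of K_n}).

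The construction is then: for $i>k$ keep both copies $S_i^{(1)}\subseteq G^{(1)}$ and $S_i^{(2)}\subseteq G^{(2)}$ as separate trees (contributing $2(T(G)-k)$ induced, pairwise disjoint trees, since $S_i$ is induced in $G$); and for $1\le i\le k$ replace the pair $S_i^{(1)},S_i^{(2)}$ by the single subgraph $\widehat S_i$ of $G\,\stackplus{K_n}\,G$ induced by $V(S_i^{(1)})\cup V(S_i^{(2)})$. The heart of the argument is to verify that each $\widehat S_i$ is an induced tree. If $V(S_i)\cap A=\{v\}$, then the induced subgraph on $V(S_i^{(1)})\cup V(S_i^{(2)})$ equals the vertex-sum $S_i^{(1)}\,\stackplus{v}\,S_i^{(2)}$ of two trees: the only possible spurious adjacencies would be between $V(S_i^{(1)})\setminus\{v\}\subseteq V(G^{(1)})\setminus A$ and $V(S_i^{(2)})\setminus\{v\}\subseteq V(G^{(2)})\setminus A$, and there are none, and a vertex-sum of trees is a tree. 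If $V(S_i)\cap A=\{a,b\}$, I would use that $ab$ is an edge of $K_n$, so in the induced tree $S_i$ the unique $a$--$b$ path is exactly the edge $ab$; hence the induced subgraph on $V(S_i^{(1)})\cup V(S_i^{(2)})$ is $S_i^{(1)}$ and $S_i^{(2)}$ glued along the single shared edge $ab$, which is connected with $|V|-1$ edges (a cycle would have to cross between the two copies through $\{a,b\}$, forcing a cycle inside one copy), and it is induced by the same ``no cross-edges'' reason. This case analysis — tracking which vertices lie in $A$ and ruling out edges between the two copies — is the step I expect to demand the most care.

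Finally I would do the bookkeeping: the trees $\widehat S_1,\dots,\widehat S_k$ together with $S_{k+1}^{(1)},S_{k+1}^{(2)},\dots,S_{T(G)}^{(1)},S_{T(G)}^{(2)}$ are pairwise vertex-disjoint (distinct $S_i$ are disjoint in $\TT$; the copies for $i>k$ avoid $A$ entirely; and $\widehat S_i\cap\widehat S_j$ for $i\ne j$ can only meet inside $A$, where $S_i,S_j$ are already disjoint) and their union is $V(G^{(1)})\cup V(G^{(2)})=V(G\,\stackplus{K_n}\,G)$. Their number is $k+2(T(G)-k)=2T(G)-k$, giving $T(G\,\stackplus{K_n}\,G)\le 2T(G)-k$, and together with Theorem~\ref{T of coalesce} the stated equality follows.
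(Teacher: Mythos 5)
Your proposal is correct and follows essentially the same route as the paper: the lower bound is quoted from Theorem~\ref{T of coalesce}, and the matching upper bound is obtained by taking a minimum tree covering of $G$ in which $k$ trees meet $K_n$ and merging each such tree with its mirror copy (at the single shared vertex or along the shared edge), leaving the remaining trees duplicated. Your write-up just makes explicit the verifications (each tree meets the clique in at most two vertices, the merged subgraphs are induced trees, disjointness and the count $k+2(T(G)-k)$) that the paper's proof leaves terse.
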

\begin{proof}
To prove this, we show that, in fact, there is a tree covering of size $2T(G)-k$ for $G\,\stackplus{K_n}\,G$. Let $\TT$ be a minimal tree covering for $G$  with $|\TT|=T(G)$ in which $k$ trees cover all the vertices of $K_n$. 
Clearly, a vertex $v$ of $K_n$ is either covered in a tree in $T\in \TT$ that covers no other vertices of $K_n$ , or only one other vertex of $K_n$ say $u$ is covered in the same tree too. In the first case $T\,\stackplus{v}\,T$ is a tree and in the second case $T\,\stackplus{vu}\,T$ is a tree formed by coalescencening the covering trees of $\TT$.  Thus there is a tree covering of $G\,\stackplus{K_n}\,G$ of size $2T(G)-k$. 

 \end{proof}

If we replace trees in a minimal tree covering of the graph by forcing trees in a minimal  positive zero forcing process of the graph in Corollary~\ref{T of coalescence on K_n} and combine this with Lemma~\ref{forcing trees of K_n}, we obtain the following corollary.

\begin{cor}\label{Z_+ of coalescence on K_n}
If $G$ is a graph that has $K_n$ as a subgraph, then 
\[
Z_+(G\,\stackplus{K_n}\,G)=2Z_+(G)-k,
\]
 where $k$ is the maximum number of forcing trees that cover the vertices of $K_n$ in any minimal positive zero forcing process of $G$.\qed
\end{cor}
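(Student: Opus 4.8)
The plan is to obtain the inequality ``$\geq$'' at no cost and to put all the work into ``$\leq$''. For the lower bound I would simply quote Theorem~\ref{Z_+ of coalesce} with $H=K_n$: it already states $Z_+(G\,\stackplus{K_n}\,G)\ge 2Z_+(G)-k$, with $k$ the maximum number of forcing trees meeting $V(K_n)$ over all minimal positive zero forcing processes of $G$. Thus the corollary reduces to producing, in $G\,\stackplus{K_n}\,G$, a positive zero forcing process whose set of forcing trees has exactly $2Z_+(G)-k$ members.

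For the upper bound I would imitate the construction in the proof of Corollary~\ref{T of coalescence on K_n}, using forcing trees in place of covering trees. Fix a minimal positive zero forcing process of $G$ attaining the maximum $k$, with forcing trees $\mathcal T=\{T_1,\dots,T_{Z_+(G)}\}$, exactly $k$ of which meet $V(K_n)$; by Lemma~\ref{forcing trees of K_n} each of these $k$ special trees has at most two vertices of $K_n$, and using Lemma~\ref{swap lemma1} I would first slide the root of each special tree along its tree onto one of its $K_n$-vertices. Writing $G^{(1)},G^{(2)}$ for the two copies of $G$ in the coalescence (sharing $K_n$), I glue, for each special tree $T$, its two copies together: at the shared vertex $v$ if $V(T)\cap V(K_n)=\{v\}$, and along the bridge $uv$ if $V(T)\cap V(K_n)=\{u,v\}$. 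As in Corollary~\ref{T of coalescence on K_n} each resulting $\widehat T$ is an induced tree of $G\,\stackplus{K_n}\,G$, and the $k$ glued trees together with the copy-$1$ and copy-$2$ images of the $Z_+(G)-k$ non-special trees partition $V(G\,\stackplus{K_n}\,G)$ into $2Z_+(G)-k$ induced trees.

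It then remains to certify that this partition is actually realized by a positive zero forcing process, so that its size bounds $Z_+(G\,\stackplus{K_n}\,G)$ from above. I would take as positive zero forcing set the roots of $\mathcal T^{(1)}$ (which after the slides include $k$ vertices of $K_n$) together with the roots of the $Z_+(G)-k$ non-special trees of $\mathcal T^{(2)}$, a set of size $2Z_+(G)-k$, and run the process so as to colour $G^{(1)}$, hence $V(K_n)$, first. Once the black clique $V(K_n)$ is present, $G^{(2)}\setminus V(K_n)$ is cut off from $G^{(1)}$, so the copy-$2$ process runs afterwards independently, its special trees now driven by the already-black vertices of $K_n$ (legitimate since a superset of a positive zero forcing set is again one, after the same root-slides). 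Tracing forces shows the forcing trees produced are precisely the $\widehat T$'s and the non-special copies, which gives $Z_+(G\,\stackplus{K_n}\,G)\le 2Z_+(G)-k$ and hence equality.

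The hard part will be making the first stage rigorous: a force $w\to x$ valid in $G$ alone can in principle fail in $G\,\stackplus{K_n}\,G$ when $w\in V(K_n)$ picks up a new white neighbour in the other copy that lies, through a still-white vertex of $K_n$, in the same component as $x$. The point to exploit is the completeness of $K_n$: a vertex of $K_n$ with a unique white neighbour in that neighbour's component can be blocked in this way only by a still-white vertex of $K_n$, so, using the order-independence of the derived set together with Lemma~\ref{swap lemma1}, I would schedule the copy-$1$ forces so that a vertex of $K_n$ never performs a force until $V(K_n)$ is black (save possibly for the vertex it is about to colour), which removes the obstruction. Verifying that such a schedule always exists --- equivalently, that colouring $G^{(1)}$ inside the coalescence never stalls --- is the one genuinely delicate point behind the one-line ``combine with Lemma~\ref{forcing trees of K_n}''.
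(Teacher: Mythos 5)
Your proposal follows exactly the paper's route: the lower bound is quoted from Theorem~\ref{Z_+ of coalesce}, and the upper bound glues the $k$ forcing trees meeting $V(K_n)$ across the two copies exactly as in Corollary~\ref{T of coalescence on K_n}, invoking Lemma~\ref{forcing trees of K_n} to know each such tree meets $K_n$ in at most two vertices. The paper's entire justification is the one-line remark preceding the statement (``replace covering trees by forcing trees and combine with Lemma~\ref{forcing trees of K_n}'') and never addresses the point you isolate --- that a black vertex of $K_n$ acquires white neighbours in the second copy which, through a still-white vertex of $K_n$, can lie in the same component as its intended target --- so your sketch is, if anything, more careful than the source, and the scheduling step you flag as delicate is precisely what the paper leaves implicit.
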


The following is a consequence of combining Corollaries~\ref{T of coalescence on K_n} and  \ref{Z_+ of coalescence on K_n}.
\begin{thm}
If  the graph $G$  satisfies $Z_+(G)=T(G)$ and has $K_n$ as a subgraph and any minimal family of forcing trees coincides with a collection of forcing trees, then
\[
Z_+(G\,\stackplus{K_n}\,G)=T(G\,\stackplus{K_n}\,G).\qed
\]
\end{thm}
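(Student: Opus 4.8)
The plan is to combine the two preceding corollaries by running the same argument through both parameters simultaneously. By Corollary~\ref{T of coalescence on K_n}, if $k_T$ denotes the maximum number of trees covering the vertices of $K_n$ in any minimal tree covering of $G$, then $T(G\,\stackplus{K_n}\,G)=2T(G)-k_T$; by Corollary~\ref{Z_+ of coalescence on K_n}, if $k_Z$ denotes the maximum number of forcing trees covering the vertices of $K_n$ in any minimal positive zero forcing process of $G$, then $Z_+(G\,\stackplus{K_n}\,G)=2Z_+(G)-k_Z$. Since we are given $Z_+(G)=T(G)$, the equality $Z_+(G\,\stackplus{K_n}\,G)=T(G\,\stackplus{K_n}\,G)$ will follow immediately once we establish that $k_T=k_Z$.

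First I would record that $k_Z\le k_T$ always holds in the presence of the hypothesis ``any minimal family of forcing trees coincides with a collection of forcing trees'': a minimal positive zero forcing process of $G$ produces a set of forcing trees which, by that hypothesis and Proposition~\ref{T(G)&Z_+(G)} together with $Z_+(G)=T(G)$, is a minimal tree covering of $G$. Hence the number of forcing trees meeting $V(K_n)$ in that process is the number of trees of some minimal tree covering meeting $V(K_n)$, which is at most $k_T$. Taking the maximum over all minimal positive zero forcing processes gives $k_Z\le k_T$.

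Next I would establish the reverse inequality $k_T\le k_Z$. This is the step I expect to be the main obstacle, because it requires converting a minimal tree covering that is \emph{extremal} for the number of trees hitting $K_n$ into an actual positive zero forcing process with the same property. The natural route is again the hypothesis that every minimal family of forcing trees coincides with a collection of forcing trees, read in the other direction together with the equality $Z_+(G)=T(G)$: one argues that a minimal tree covering of $G$ realizing $k_T$ can be oriented/rooted so that its trees become the forcing trees of a minimal positive zero forcing process (choosing roots appropriately, as in the proofs of Theorem~\ref{outerplanars satisfy Z_+=T} and Theorem~\ref{vertex-sum}, where forces are shown to be redirectable along a prescribed tree covering). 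Lemma~\ref{forcing trees of K_n} guarantees that no forcing tree can contain three vertices of $K_n$, so the combinatorial constraint on how $V(K_n)$ is distributed among the trees is the same for tree coverings and for forcing-tree families; this consistency is what makes the conversion possible. Once such a process is exhibited, its forcing trees hit $V(K_n)$ in exactly $k_T$ members, so $k_Z\ge k_T$.

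Finally I would assemble the pieces: $k_T=k_Z=:k$, and then
\[
Z_+(G\,\stackplus{K_n}\,G)=2Z_+(G)-k=2T(G)-k=T(G\,\stackplus{K_n}\,G),
\]
which completes the proof. The only genuine content is the identification $k_T=k_Z$; everything else is bookkeeping with Corollaries~\ref{T of coalescence on K_n} and~\ref{Z_+ of coalescence on K_n} and the hypothesis $Z_+(G)=T(G)$.
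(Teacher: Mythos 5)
Your proposal is correct and follows the paper's intended route: the paper derives this theorem by simply ``combining'' Corollaries~\ref{T of coalescence on K_n} and~\ref{Z_+ of coalescence on K_n} with no further argument, and the identification $k_T=k_Z$ you supply (the coincidence hypothesis giving $k_T\le k_Z$, and the fact that any minimal family of forcing trees is itself a minimal tree covering once $Z_+(G)=T(G)$ giving $k_Z\le k_T$) is exactly the glue the paper leaves implicit. Note that the theorem's hypothesis is evidently a misprint for ``any minimal tree covering coincides with a collection of forcing trees,'' under which the direction you flag as the main obstacle becomes immediate rather than requiring the re-rooting argument you sketch.
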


\subsection{$k$-trees}\label{k_trees}
Being motivated by the fact that all partial $2$-trees (including $2$-trees) satisfy $Z_+(G)=T(G)$, in this section we try to track  the variations between the positive zero forcing number and the tree cover number in $k$-trees with $k>2$. Our results in this section demonstrate that $2$-trees are rather special when it comes to comparing $Z_+(G)$ and $T(G)$. In particular, we show that these two parameters need not agree for ``$k$-trees'' with $k>2$. 

A \textsl{$k$-tree}\index{k-tree@$k$-tree} is constructed inductively by starting with $K_{k+1}$ and  at each step a new vertex is added to the graph and this vertex is adjacent to the vertices of an existing $K_k$. Based on this definition we can deduce that the minimum degree of a $k$-tree is $k$ (the degree of the last vertex added). We will consider a special subfamily of $k$-trees that we call a \textsl{cluster}. 

Let $G$ be a $k$-tree that is constructed in the following way: 
 start with a $H=K_{k+1}$. Add new vertices that are adjacent  to a sub-clique  $K_k$ of $H$. Let $S$ be the set of all distinct subcliques $K_k$ of $H$ such that $K_k\cup v$ is a clique of size $k+1$ in $G$ for some $v\in V(G)\backslash V(H)$. Therefore for each vertex not in $H$ there is  exactly one vertex in $H$ that is not adjacent to it. We call this type of  $k$-tree a \txtsl{cluster}. Obviously the vertices of $H$ form a PZFS for a cluster.    

\begin{thm}\label{Z_+&T of k-trees}
Let $G$ be a $k$-tree that is also a cluster. 
\begin{enumerate}
\item If $|S|\geq3$, then $Z_+(G)=k+1$.
\item If $|S|<3$, then $Z_+(G)=k$.
\item If $|S|=k+1$ and $k$ is  even, then $T(G)=\lceil \frac{k+1}{2} \rceil+1$.
\item If $|S|<k+1$ and $k$ is even, then $T(G)=\lceil \frac{k+1}{2} \rceil$.
\end{enumerate}
\end{thm}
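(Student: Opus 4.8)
The plan is to prove the four statements about clusters by analyzing the combinatorial structure of the starting clique $H = K_{k+1}$ and the set $S$ of sub-cliques that receive pendant vertices. Recall that for a cluster, every vertex $v \notin V(H)$ is adjacent to exactly $k$ of the $k+1$ vertices of $H$; so $v$ is naturally associated with the unique vertex $h(v) \in V(H)$ to which it is \emph{not} adjacent, and $S$ corresponds to a subset of $V(H)$ (namely those vertices $h$ such that $K_{k+1}\setminus\{h\}$ has been extended). A useful preliminary observation is that two "outside" vertices $u, v \notin V(H)$ are adjacent in $G$ if and only if... they never are, unless $k$ is large enough and they attach to the same sub-clique --- in fact since $G$ is a $k$-tree with all outside vertices simplicial (each outside vertex has degree exactly $k$ and its neighborhood $K_k \subset H$ is a clique), outside vertices form an independent set. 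This simpliciality is the key structural fact I would establish first.

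For statements (1) and (2), the lower bounds come from \textbf{Corollary~\ref{subgraphs K_n & K_(p,q)}}-type reasoning adapted to $Z_+$: a cluster with $|S| \geq 3$ contains a $K_{k+2}$ minor (contract appropriate outside vertices) or more directly one shows $M_+(G) \geq k+1$ via an explicit positive semidefinite matrix, hence $Z_+(G) \geq M_+(G) \geq k+1$; alternatively argue directly that no set of $k$ vertices is a PZFS when $|S|\geq 3$ by a counting/component argument. For the upper bound $Z_+(G) \leq k+1$ when $|S| \geq 3$: take $Z_p = V(H)$; after deleting these $k+1$ vertices, the remaining outside vertices are isolated (independent set), so each is forced by any of its $k$ neighbours in $V(H)$; thus $V(H)$ is a PZFS, giving $Z_+(G) \leq k+1$, and combined with the lower bound $Z_+(G) = k+1$. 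For $|S| < 3$ (so $|S| \in \{0,1,2\}$), I would show $Z_+(G) = k$: one can drop one vertex $h_0$ from $V(H)$ chosen so that $h_0$ lies in every $K_k$ of $S$ (possible since $|S| \leq 2$ means the sub-cliques $K_k = V(H)\setminus\{h\}$ for $h\in S$ have a common vertex whenever $|S|\leq k$, which holds as $k\geq 2$); then with $Z_p = V(H)\setminus\{h_0\}$ black, the component structure of $G\setminus Z_p$ lets $Z_p$ force $h_0$ and all outside vertices. The lower bound $Z_+(G)\geq k$ is just $Z_+(G) \geq \delta(G) = k$ (for $Z$) --- but note $Z_+$ need not dominate $\delta$, so instead use $Z_+(G) \geq M_+(G)$ and exhibit a PSD matrix of nullity $k$, or observe $G \supseteq K_{k+1}$ forces $Z_+(G) \geq k$ via a direct argument on $K_{k+1}$.

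For statements (3) and (4), the tree cover number, I would count how an induced forest must be distributed. An induced tree in $G$ can contain at most two vertices of any clique $K_{k+1}$ (an induced subgraph of $K_{k+1}$ on $\geq 3$ vertices contains a triangle, which is not a tree) --- this is exactly \textbf{Lemma~\ref{forcing trees of K_n}}'s principle. So covering the $k+1$ vertices of $H$ requires at least $\lceil (k+1)/2 \rceil$ trees. The extra "$+1$" in case (3) arises because when $|S| = k+1$, each of the $k+1$ sub-cliques of $H$ has an attached outside vertex, and one checks that no minimal tree cover of $H$ into $\lceil(k+1)/2\rceil$ induced trees (each a single edge or single vertex, since $k$ even means $(k+1)$ odd and a perfect-matching-plus-one-singleton partition) can be extended to cover all outside vertices without adding one more tree: the "leftover" singleton vertex $h^*$ of $H$ sits in a tree by itself, and the outside vertex attached to $V(H)\setminus\{h^*\}$ is non-adjacent to $h^*$ but adjacent to everything else, so it cannot join $h^*$'s tree as a tree extending an edge-tree would create a triangle --- forcing a new tree. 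When $|S| < k+1$ there is a sub-clique missing an attached vertex, and one arranges the matching so that the singleton of $H$ is the vertex \emph{not} in $S$'s "missing" configuration, allowing all outside vertices to be absorbed, yielding exactly $\lceil(k+1)/2\rceil$. For the matching upper bounds in both cases I would give the explicit tree cover.

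The main obstacle I expect is the \emph{lower bound} in statement (3) --- proving that $\lceil(k+1)/2\rceil$ trees genuinely do not suffice when $|S| = k+1$ and $k$ is even. One must argue about \emph{all} possible minimal tree covers, not just the obvious matching-based ones: an induced tree could be a path on several vertices of $H$... but no, at most two vertices of $H$ per tree, so every tree covers $\leq 2$ vertices of $H$, meaning exactly $\lceil(k+1)/2\rceil$ trees forces the covering of $V(H)$ to be a near-perfect matching plus at most one singleton, and with $k+1$ odd exactly one singleton $\{h^*\}$. Then the outside vertex $v^*$ with $h(v^*) = h^*$ has neighbourhood $V(H)\setminus\{h^*\}$, all of which is already distributed among the $\lfloor(k+1)/2\rfloor$ edge-trees; adding $v^*$ to any such edge-tree $\{a,b\}$ with $a,b \in V(H)$ creates the triangle $\{a,b,v^*\}$ (since $v^*$ is adjacent to both $a$ and $b$), contradiction; adding $v^*$ to the singleton tree $\{h^*\}$ is impossible since $v^* \not\sim h^*$. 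Other outside vertices can be placed, but $v^*$ cannot, so a $(k+2)$-nd tree is needed: $T(G) \geq \lceil(k+1)/2\rceil + 1$. Making this airtight --- handling the possibility that some edge-tree uses an outside vertex rather than two $H$-vertices, and re-running the pigeonhole --- is the delicate part, but it is a finite case analysis of where the $k+1$ vertices of $H$ land.
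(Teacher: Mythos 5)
There is a genuine error in your treatment of statement (2). You propose the positive zero forcing set $Z_p = V(H)\setminus\{h_0\}$ with $h_0$ chosen to lie in \emph{every} sub-clique of $S$, i.e.\ $h_0$ is adjacent to every vertex of $V(G)\setminus V(H)$. But then $G\setminus Z_p$ is a single connected component (a star centred at $h_0$ on $h_0$ together with all outside vertices), and when $|S|=2$, say the attached sub-cliques are $V(H)\setminus\{h_1\}$ and $V(H)\setminus\{h_2\}$, every black vertex has at least two white neighbours in that component: $h_1$ sees $h_0$ and any outside vertex attached to $V(H)\setminus\{h_2\}$, symmetrically for $h_2$, and every other vertex of $H$ sees $h_0$ and all outside vertices. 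No force can ever be performed, so your set is not a PZFS. The choice must go the other way: leave white a vertex $h_0$ that \emph{is} one of the missing vertices of a sub-clique in $S$, so that the outside vertices attached to $V(H)\setminus\{h_0\}$ fall into separate components of $G\setminus Z_p$ and some black vertex has $h_0$ as its unique white neighbour in $h_0$'s component. (The paper instead builds its size-$k$ PZFS around an outside vertex $u$ and $N(u)$ minus one suitable vertex, which amounts to the same repair.) Separately, your worry that ``$Z_+$ need not dominate $\delta$'' is moot: Corollary~\ref{Z_+&delta} of the paper gives $Z_+(G)\geq\delta(G)=k$ directly, which is exactly how the paper gets the lower bounds in (1) and (2).

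For statement (1) the lower bound $Z_+(G)\geq k+1$ is not actually established by any of the three routes you list. The $K_{k+2}$-minor route is impossible: a $k$-tree has treewidth $k$ and treewidth is minor-monotone, so no $k$-tree contains a $K_{k+2}$ minor. The route via $M_+(G)\geq k+1$ is salvageable (one can get it from $\mr_+(G)\leq\CC(G)$ and a count of the edge clique cover of a cluster with $|S|\geq 3$), but you do not exhibit the matrix, and the paper never proves $\mr_+\leq\CC$, only $\mr\leq\CC$. The route the paper actually takes is the ``counting/component argument'' you mention only in passing: if $B$ is a candidate PZFS of size $k$ contained in $V(H)$, then $|S|\geq 3$ guarantees that every black $u\in B$ and the white vertex $v$ of $H$ have a common white outside neighbour $w$ lying in $v$'s component of $G\setminus B$, so no vertex of $B$ can ever force; and a candidate $B$ not contained in $V(H)$ reduces to this case. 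That argument needs to be written out for the proof to stand. Your statements (3) and (4) are essentially the paper's argument and are fine --- indeed your justification that the leftover outside vertex $v^*$ cannot join any tree (a triangle with the two $H$-vertices of any edge-tree, non-adjacency to the singleton $h^*$) is spelled out more carefully than in the paper.
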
   
\begin{proof}
 Since the minimum degree of $G$ is $k$ by Corollary~\ref{Z_+&delta}, $Z_+(G)\geq k$. Suppose $H$ is the first $K_{k+1}$.
 
  To prove the first statement suppose $B$ is a PZFS of $G$ with $|B|=k$. First assume that $B\subset V(H)$. Let $u$ and $v$ be a black and a white vertex of $H$, respectively. According to the definition of the graph $G$, along with the fact that $|S|\geq3$, $v$ and $u$ have at least one common neighbour, say $w$, in $V(G)\backslash V(H)$. Thus $u$ has two white neighbours $v$ and $w$ in the component consisting of $v$ and $w$. This is true for all vertices of $B$ which contradicts the fact that $B$ is a PZFS for $G$. 
Second assume $B\not\subseteq V(H)$. Then there is a vertex $z\in B$ such that $z\not\in H$. If $z$ has only one white neighbour, then $z$ can force it and we return to the first case. If $z$ has more white neighbours, then no vertex of $B$ can perform any force and again we reach a contradiction. Obviously, the set $V(H)$ is a PZFS for $G$. Thus $Z_+(G)=k+1$.

To verify the second statement 
 observe that if $|S|<3$, then there is at least one vertex in $V(H)$, call it $v$, that has at most one neighbour in $V(G)\backslash V(H)$, call it $u$. Thus $u$ along with all its neighbours except $v$ forms a PZFS for $G$. If $v$ has no neighbour in $V(G)\backslash V(H)$,  then any subset of size $k$ of the vertices of $H$ including $v$ is a PZFS for $G$.

For the third statement assume $k$ is even and $|S|=k+1$. Since $K_{k+1}$ is a subgraph of $G$, the tree cover number of $G$ is no less than $\lceil \frac{k+1}{2} \rceil$. Suppose that $\TT$ is a minimal tree covering for $G$ with $|\TT|=\lceil \frac{k+1}{2} \rceil$. No tree in $\TT$ can contain more than two vertices  of $H$. Also since  $T(G)= \lceil \frac{k+1}{2} \rceil$ there is no more than one tree in $\TT$ that covers only one vertex of $H$. Each tree in $\TT$ except one, say $T_1$, contains exactly two vertices of $H$ and $T_1$ contains only a single vertex of $H$. Since $|S|=k+1$, for any vertex, $w\in V(H)$, there is a corresponding vertex in $V(G)\backslash V(H)$ which is adjacent to all of the vertices of $H$ except $w$.  In particular,  if $v$ is the single vertex of $H$ in the tree $T_1$, then there is a vertex $u$ which is adjacent to all vertices in $H$ except $v$. Since $N_G(u)=V(H)\backslash \{v\}$, the vertex $u$ can not be covered by extending any of the trees of $\TT$ and as $u$ is adjacent to both vertices in any tree from $\TT$. This contradicts $\TT$ being a minimal tree covering for $G$. Thus  
\[
T(G)\geq\left\lceil \frac{k+1}{2}\right\rceil+1.
\] 
Finally we will show that we can construct a tree covering of this size.
 Let $\TT'$ be a minimal  tree covering for $H$. Thus $|\TT'|=\lceil \frac{k+1}{2} \rceil$. By considering the construction of $G$  for any $u\in V(G)\backslash V(H)$ except one, call it $v$, there is a tree $T$ in $\TT'$ that is an edge, say $xy$, such that $xu\in E(G)$ and $yu\not\in E(G)$. Hence we can extend $T$ to cover $u$. Thus $\TT'\cup \{v\}$ is a tree covering of size $\lceil \frac{k+1}{2} \rceil+1$ of $G$. Therefore $T(G)=\lceil \frac{k+1}{2} \rceil+1$.             

Finally for the last statement assume $|S|<k+1$ and $k$ is even. Thus there is a vertex, $v$, in  $H$ that is adjacent to all of the vertices in the graph $G$. If we consider a tree covering $\TT'$ for $K_{k+1}$ in which $T=\{v\}$ is a covering tree. we can extend $T$ to cover all the vertices of $V(G)\backslash V(H)$. Thus $T(G)=|\TT'|=\lceil \frac{k+1}{2} \rceil$, which proves the last statement.    
 \end{proof}
Note that if in Theorem~\ref{Z_+&T of k-trees} we have $k=2$, then the positive zero forcing number and the tree cover number coincide.  
 
\begin{thm}\label{k-tree with k odd}
Let $G$ be a $k$-tree for some odd $k$. Then $T(G)=\frac {k+1}{2}$. 
\end{thm}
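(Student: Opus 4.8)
The plan is to establish the two inequalities $T(G)\ge \frac{k+1}{2}$ and $T(G)\le \frac{k+1}{2}$ separately, the first by a clique-counting argument and the second by induction on $|V(G)|$.

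For the lower bound I would use that every $k$-tree contains the initial clique $K_{k+1}$ as an induced subgraph, together with the elementary observation (already used in the proof of Theorem~\ref{Z_+&T of k-trees}) that an induced tree contains no triangle. Hence, in any tree covering of $G$, each covering tree meets $V(K_{k+1})$ in at most two vertices, so covering these $k+1$ vertices needs at least $\lceil (k+1)/2\rceil$ trees; since $k$ is odd this equals $\frac{k+1}{2}$, giving $T(G)\ge \frac{k+1}{2}$.

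For the upper bound I would induct on $n=|V(G)|$. In the base case $G=K_{k+1}$, a perfect matching (which exists because $k+1$ is even) is a set of $\frac{k+1}{2}$ vertex-disjoint edges, each an induced $K_2$ and hence an induced tree, so $T(K_{k+1})\le \frac{k+1}{2}$. For the inductive step, let $v$ be the last vertex adjoined in the construction of $G$, so that $N_G(v)$ is a clique $K$ on $k$ vertices and $G-v$ is a $k$-tree on $n-1\ge k+1$ vertices; by the inductive hypothesis $G-v$ has a tree covering $\mathcal{T}'$ with $|\mathcal{T}'|=\frac{k+1}{2}$. The idea is to enlarge one tree of $\mathcal{T}'$ by $v$ without increasing the count: I claim some $T\in\mathcal{T}'$ meets $K$ in exactly one vertex, and then, since $N_G(v)=K$, the vertex $v$ is a pendant in $G[V(T)\cup\{v\}]$, so $T\cup\{v\}$ is again an induced tree, and replacing $T$ by $T\cup\{v\}$ yields a tree covering of $G$ of size $\frac{k+1}{2}$.

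The main obstacle, and the only point where the oddness of $k$ is used, is the claim that some tree of $\mathcal{T}'$ meets $K$ in exactly one vertex. Here I would count: writing $a,b,c$ for the numbers of trees of $\mathcal{T}'$ meeting $K$ in $2$, $1$, $0$ vertices (again at most $2$ by the triangle-free property), we have $a+b+c=\frac{k+1}{2}$ and $2a+b=|K|=k$; subtracting gives $a-c=\frac{k-1}{2}$, and combining with $2a\le k$ (hence $a\le\frac{k-1}{2}$ since $k$ is odd) and $c\ge 0$ forces $a=\frac{k-1}{2}$, $c=0$, $b=1$. Thus exactly one tree of $\mathcal{T}'$ contains exactly one vertex of $K$, as required; if $k$ were even, $2a+b=k$ could be solved with $b=0$ and this step would fail, which is consistent with the more delicate situation described in Theorem~\ref{Z_+&T of k-trees} for even $k$. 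The remaining verifications — that deleting the last constructed vertex of a $k$-tree leaves a $k$-tree, that the untouched trees of $\mathcal{T}'$ remain induced and vertex-disjoint in $G$, and that $T\cup\{v\}$ is induced — are routine.
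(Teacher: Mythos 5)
Your proposal is correct and follows essentially the same route as the paper's proof: induction on $|V(G)|$, deleting a simplicial vertex $v$ of degree $k$, and extending the tree of the inherited covering that meets the clique $N_G(v)$ in exactly one vertex. The only differences are cosmetic — you make the lower bound via the $K_{k+1}$ subgraph explicit and pin down the counts $a,b,c$ exactly, where the paper settles the existence of such a tree with a one-line parity argument (each tree meets the odd-sized clique $N_G(v)$ in at most two vertices, so some tree meets it in exactly one).
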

\begin{proof}
The proof is by induction on the number of vertices. It is clearly true for $K_4$ (the smallest $3$-tree). Assume that the statement is true for all $G'$ with $|V(G')|<n$. Let $G$ be a $k$-tree with $|V(G)|=n$.  Let $v\in V(G)$ be a vertex of degree $k$. Thus by the induction hypothesis  $T(G\backslash{v})=\frac {k+1}{2}$. Let $\TT$ be a tree covering of $G\backslash{v}$ with $|\TT|=T(G\backslash{v})$. By the definition of a $k$-tree, the neighbours of $v$ form a clique on $k$ vertices.  The neighbours of $v$ form a clique, so any tree in $\TT$ covers at most two of the neighbours of $v$. Since $v$ has an odd number of neighbours, there is at least (in fact exactly) one tree, say $T$, in $\TT$ that covers only one of the neighbours of $v$. Therefore we can extend $T$ to cover $v$. Thus $T(G)=\frac {k+1}{2}$.        
\end{proof}

The equality between the positive zero forcing number and the tree cover number of $2$-trees along with our  approach in the proof of Theorem~\ref{Z_+&T of k-trees} and Theorem~\ref{k-tree with k odd} leads us to the following conjecture.
\begin{conj}\label{conj_k_trees}
If a $k$-tree $G$, has $t$ clusters as subgraphs, each with $|S|\geq3$, such that each pair of them share at most  $k-1$ cliques of size $k+1$ then $Z_+(G)=k+t$.   
\end{conj}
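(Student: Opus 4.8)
The plan is to prove the two inequalities $Z_+(G)\le k+t$ and $Z_+(G)\ge k+t$ separately, each by induction on the number $t$ of clusters, with Theorem~\ref{Z_+&T of k-trees}(1) furnishing the base case $t=1$ in both directions. Throughout I would fix, for each cluster $C_i$, its ``obstruction clique'' $H_i\cong K_{k+1}$ (the initial $K_{k+1}$ in the construction of $C_i$), noting that $C_i$ has at least three private vertices $u$, each adjacent to all of $H_i$ except one vertex, because $|S|\ge 3$; this is exactly the configuration exploited in the proof of Theorem~\ref{Z_+&T of k-trees}(1), where a black vertex of $H_i$ is ``blocked'' by simultaneously seeing a white vertex of $H_i$ and a white private vertex in the same component of $G\setminus B$.

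For the upper bound I would order the clusters $C_1,\dots,C_t$ so that for each $i\ge 2$ the cluster $C_i$ is attached to $G_{i-1}:=C_1\cup\cdots\cup C_{i-1}$ along a single $K_k$ (possible since $G$ is a connected $k$-tree assembled along copies of $K_k$), delete the private vertices of $C_t$ to obtain a $k$-tree $G'$ carrying $t-1$ clusters that still satisfy the hypotheses, invoke the inductive upper bound $Z_+(G')\le k+t-1$, take a minimum PZFS $B'$ of $G'$, and set $B=B'\cup\{y\}$ for a single carefully chosen vertex $y$ of $H_t$ outside the shared $K_k$. One then runs the $G'$-forcing inside $G$; the forces not touching $H_t$ proceed unchanged (private vertices of $C_t$ are adjacent only into $H_t$), and once all of $H_t$ is black one finishes by the single-cluster statement that $V(H_t)$ is a PZFS of $C_t$. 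The delicate point here is ``spilling'': one must choose $y$ and argue that the vertices of $H_t$ get coloured by forces originating \emph{outside} $H_t$, so that no intermediate black vertex of $H_t$ is stalled by a still-white private vertex of $C_t$; this is where the hypothesis $|S|\ge 3$ and the $K_k$-attachment structure are used, and it is the one nontrivial check on the easy side.

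The hard part is the lower bound $Z_+(G)\ge k+t$. Note that the tree-cover bound $\T(G)\le Z_+(G)$ of Proposition~\ref{T(G)&Z_+(G)} is of no help here, since already for a single cluster with $|S|=k+1$ and $k$ even Theorem~\ref{Z_+&T of k-trees}(3) shows $\T(G)=\lceil\frac{k+1}{2}\rceil+1<k+1=Z_+(G)$, so a genuinely direct argument is required. The strategy I would pursue is a ``disjoint charging'' scheme: given a PZFS $B$ with $|B|\le k+t-1$ and its set of forcing trees, show that each cluster $C_i$ forces $B$ to behave near $C_i$ like a PZFS of the standalone cluster $C_i$ — which by Theorem~\ref{Z_+&T of k-trees}(1) has size $k+1$ — so that $C_i$ must be ``charged'' one unit of $B$ beyond a common pool of size $k$, and that distinct clusters are charged disjointly. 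The combinatorial hypothesis that each pair of clusters shares at most $k-1$ cliques of size $k+1$ is precisely what should rule out the one way this could fail, namely a single set of $k+1$ black vertices simultaneously resolving the obstruction of two clusters. Concretely I would again try to induct: locate a \emph{pendant cluster} $C_t$ (attached to the rest along a single $K_k$, with its private region forming its own component of $G\setminus B$ after a suitable application of the swap Lemma~\ref{swap lemma1}), delete its private vertices to get $G'$, apply the inductive bound $Z_+(G')\ge k+t-1$, and show that $C_t$ forces at least one additional initial black vertex via the blocking phenomenon above, sharpening Lemma~\ref{forcing trees of K_n} (which alone only says a forcing tree meets $H_t$ in at most two vertices, not enough for large $k$).

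The main obstacle, then, is making the charging rigorous: proving that the ``$\le k-1$ shared $(k+1)$-cliques'' condition really does translate into non-reusability of black vertices across clusters, and that a pendant cluster in the required sense always exists — the latter is clear when the clusters overlap in a tree-like pattern but needs genuine justification because clusters, unlike blocks or single vertices, may overlap in large cliques. I would expect most of the work to go into a clean structural lemma describing how two clusters can intersect under the hypothesis, after which both the pendant-cluster extraction and the disjoint-charging step should follow by arguments patterned on the proof of Theorem~\ref{Z_+&T of k-trees}.
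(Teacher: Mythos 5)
The first thing to say is that the paper does not prove this statement: it is stated as Conjecture~\ref{conj_k_trees} and restated verbatim as Problem~7 in the Conclusion, i.e.\ it is explicitly left open. So there is no proof of record to compare your attempt against, and your write-up must be judged as a standalone argument. Judged that way, it is a strategy outline rather than a proof, and the steps you yourself flag as ``delicate'' or ``the main obstacle'' are exactly the ones carrying all the content. For the lower bound $Z_+(G)\ge k+t$, everything rests on (i) the existence of a pendant cluster attached to the rest of $G$ along a single $K_k$, and (ii) the claim that the ``at most $k-1$ shared $(k+1)$-cliques'' hypothesis prevents one set of black vertices from simultaneously resolving the obstructions of two clusters. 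Neither is established. For (i), two clusters may share up to $k-1$ cliques of size $k+1$ and hence a large common vertex set, so they need not overlap in a tree-like pattern; you acknowledge this and defer it to an unproved structural lemma. For (ii), the blocking argument in the proof of Theorem~\ref{Z_+&T of k-trees}(1) is local to a single cluster's $H\cong K_{k+1}$ and its private vertices; when two clusters overlap, a black vertex in the intersection can serve both, and nothing in your charging scheme shows the two ``extra'' units cannot coincide. Without these two lemmas the induction never starts.

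The upper bound is also not closed. Adding one vertex $y$ of $H_t$ to a minimum PZFS of $G'$ and hoping the $G'$-process colours $H_t$ before any of its vertices is stalled by a white private vertex of $C_t$ is precisely the failure mode exhibited in the proof of Theorem~\ref{Z_+&T of k-trees}(1): there, $k$ black vertices inside $H$ cannot force because each simultaneously sees a white vertex of $H$ and a white private vertex lying in the same component of the complement of the black set. Your sketch needs to show that the forces colouring $H_t$ originate outside $C_t$ and that the private vertices are only picked up after $H_t$ is entirely black; the first half is asserted, not proved, and depends on exactly how $C_t$ is attached. In short, the plan is reasonable and consistent with the single-cluster computation in Theorem~\ref{Z_+&T of k-trees}, but every step that distinguishes the general conjecture from the case $t=1$ is left as a gap --- which is consistent with the fact that the author also could not close it and listed it as an open problem.
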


\section{The positive zero forcing number and other graph parameters} 
This section is devoted to comparing the positive zero forcing number and other combinatorial graph parameters such as the chromatic number and the independence number.


\subsection{The positive zero forcing number and the chromatic number}
Again it seems logical to ask if there is any relationship between the positive zero forcing number of a graph and the chromatic number of the graph since both parameters  arise from a type of graph colouring.
This, along with the last question of  Section~\ref{colin de} motivates us to prove an inequality between the positive zero forcing number and the chromatic number of a graph.
According to Conjecture~\ref{chi&nu} and inequalities (\ref{M_+(G)<=Z_+(G)}) and (\ref{nu&M}), a weaker comparison can be stated as follows:
\newline The following inequality holds for any graph $G$,
\[
\chi(G)\leq Z_+(G)+1.
\]
This section is devoted to verifying the above inequality for any graph $G$.

\begin{thm}\label{Z_+&max delta}
Let $G$ be a graph.
Then we have
\[
Z_+(G) \geq  \max \{ \delta(H')\,\,  | \,\, H'\,\, \text{is an induced subgraph of}\,\, G\}.
\]
\end{thm}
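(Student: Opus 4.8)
The plan is to mimic, essentially verbatim, the proof of the analogous statement for the conventional zero forcing number, namely Theorem~\ref{Z&delta}, substituting the positive semidefinite colour change rule for the ordinary one. First I would fix an induced subgraph $H$ of $G$ that attains the maximum, so that $\delta(H) = \max\{\delta(H') : H' \text{ an induced subgraph of } G\}$, and let $Z$ be a minimum positive zero forcing set of $G$, so $|Z| = Z_+(G)$. If $V(H) \subseteq Z$ we are done immediately since $Z_+(G) = |Z| \geq |V(H)| \geq \delta(H)$. Likewise, if no vertex of $H$ ever performs a force during the positive zero forcing process started from $Z$, then I would pass to a reversal of $Z$ (using the analogue of Theorem~\ref{Reversal is a ZFS}, or more directly Lemma~\ref{swap lemma1}), which is again a minimum PZFS of the same size, and which can be taken to contain all of $V(H)$; then the conclusion follows as before.

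The substantive case is when some vertex of $H$ does perform a force. Here I would let $v$ be the \emph{first} vertex of $H$ to perform a force in the process, and let $v$ force $w$. By the positive semidefinite colour change rule, at the moment $v$ forces, $w$ is the unique white neighbour of $v$ inside the subgraph induced by $V(B \cup W_i)$, where $B$ is the current black set and $W_i$ is the component of $G \setminus B$ containing $w$. The key observation is that every neighbour of $v$ in $H$ other than $w$ must already be black at that moment: if some neighbour $x \in N_H(v) \setminus \{w\}$ were still white, then since $H$ is an induced subgraph, $x$ is genuinely adjacent to $v$ in $G$; one then checks that $x$ lies in the same component of $G \setminus B$ as $w$ (or, if not, that $v$ would still have a second white neighbour in that component, both alternatives contradicting that the rule permitted $v \to w$). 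Hence at least $d_H(v) - 1$ neighbours of $v$ in $H$ are already black. Since $v$ is the first vertex of $H$ to force, none of those black vertices was forced by a vertex of $H$, so each of them was reached along a distinct forcing tree whose root lies in $Z$ — and these roots are distinct. Together with the tree containing $v$ itself (whose root is also in $Z$, or $v \in Z$), this yields $Z_+(G) = |Z| \geq (d_H(v) - 1) + 1 = d_H(v) \geq \delta(H)$, completing the proof.

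The main obstacle, and the only place where the positive case genuinely differs from the proof of Theorem~\ref{Z&delta}, is the component bookkeeping in the colour change rule: one must verify carefully that the white neighbours of $v$ inside $H$ really do force $v$'s white degree inside the relevant induced subgraph $V(B \cup W_i)$ to be large, so that the rule could not have fired unless those neighbours were already black. Since $H$ is induced, any white neighbour $x$ of $v$ in $H$ is connected to $w$ through the path $x\,v\,w$ once $v$ is white, so $x$ and $w$ always lie in the same component of $G \setminus B$ at the instant \emph{before} $v$ is blackened — but at the instant $v$ fires, $v$ is black, and one must re-examine the components of $G \setminus B$ with $v \in B$; the claim is that $x$ still lies in $W_i$ together with $w$ because $v$ connects them, contradicting uniqueness. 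Making this argument airtight — i.e., handling the possibility that blackening intermediate vertices split $W_i$ — is the delicate step, but it is routine once one notes that the rule is applied with $v \in B$ and that any white $H$-neighbour of $v$ is adjacent to $v$, hence lies in whichever component of $G\setminus B$ the rule is examining. Everything else is an immediate transcription of the $Z(G)$ argument.
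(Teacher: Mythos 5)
Your reduction to the ``first vertex of $H$ that performs a force'' breaks down at exactly the point you flag as delicate, and it cannot be repaired: the claim that all $H$-neighbours of $v$ other than $w$ must already be black when $v$ fires is false under the positive semidefinite colour change rule. That rule only requires $w$ to be the unique white neighbour of $v$ inside $G[W_i\cup B]$ for the \emph{one} component $W_i$ containing $w$; the vertex $v$ may simultaneously have any number of white neighbours sitting in \emph{other} components of $G\setminus B$, and these do not obstruct the force $v\rightarrow w$. Your attempted patch --- that a white $H$-neighbour $x$ of $v$ lies in the same component as $w$ ``because $v$ connects them'' --- fails because $v$ is black at the moment it fires, so $v$ is not a vertex of $G\setminus B$ and the walk $x\,v\,w$ is not a path in $G\setminus B$. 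A concrete counterexample to your intermediate inequality $Z_+(G)\geq d_H(v)$: take $G=H=K_{2,3}$ with parts $\{a,b\}$ and $\{x,y,z\}$, and $Z=\{a,b\}$. Then $G\setminus Z$ has three singleton components, $a$ is the first vertex of $H$ to perform a force, it forces $x$ while $y$ and $z$ are still white, and $d_H(a)=3>2=Z_+(G)$. (A secondary problem: even the $H$-neighbours of $v$ that \emph{are} black need not lie in distinct forcing trees, since in the positive process a single vertex outside $H$ may force several vertices, so a forcing tree branches and two of those neighbours can share a root.)

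This is precisely why the paper does not transcribe the proof of Theorem~\ref{Z&delta}. Its proof of Theorem~\ref{Z_+&max delta} is global rather than local: from a minimum positive zero forcing process of $G$ with forcing trees $\mathcal{T}$, it builds for each $T\in\mathcal{T}$ a contracted tree $X_T$ on $V(H)\cap V(T)$ (adding shortcut edges that replace the portions of $T$ lying outside $H$), forms an auxiliary graph $G'$ on $V(H)$ containing $H$ as a spanning subgraph, and verifies that the $X_T$ are still valid forcing trees for $G'$. This yields $Z_+(G)\geq Z_+(G')$, and the conclusion follows since $\delta(G')\geq\delta(H)$. If you want to salvage your write-up you need an argument of this flavour, which accounts for white neighbours of $v$ hiding in other components of $G\setminus B$; the ``first force'' argument is specific to the standard colour change rule and does not survive the passage to the positive one.
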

\begin{proof}
Let $H$ be an induced subgraph of $G$ such that
\[
\delta(H)= \max \{ \delta(H')\,\,  |\,\,  H'\,\,\text{ is an induced subgraph of}\,\, G\}.
\]

Also assume $Z_p$ is a PZFS of $G$  of size $Z_+(G)$. Having $Z_p$ as a PZFS results in a set of covering forcing trees $\mathcal{T}$, with $|\mathcal{T}|=Z_+(G)$, the roots of which are the distinct members of $Z_p$.

We construct a new graph $G'$ which is spanned by $H$ and for which $Z_+(G')\leq Z_+(G).$  Define $G'$ as follows. Fix a forcing tree $T$ in $\mathcal{T}$; assume that $l$ is the first level of $T$ having some vertices of $H$ and let  $S=\{v_1,\ldots,v_k\}$ be the set of vertices of $H$ appearing in level $l$ of $T$. Set  $X''_T$ to be the subgraph of $G$ induced by  $V(H)\cap V(T)$. Add an edge between $u,v\in V(X''_T)$, if $u$ is in a level in $T$ lower than the level of $v$ in $T$ and  the (unique) path in $T$ between $u$ and $v$, neither contains a vertex of $H$ nor a vertex in a  level lower  than the level of $u$, call this new graph $X'_T$. Finally define $X_T$ to be the graph obtained from $X'_T$ by adding the edges connecting $v_1$ to $v_i$, for $i=2,\ldots,k$. Then set $G'$ to be the subgraph of $G$ induced by
\[
\bigcup_{T\in \mathcal{T}} X_T.
\]
Note that $X_T$ will be a rooted tree with the root $v_1$. 

We claim that $\mathcal{T'}=\{X_T\,|\, T\in \mathcal{T}\}$ is a set of forcing trees in $G'$. It is enough to show that each force in $X_T$ is allowed in $G'$, for any $X_T\in \mathcal{T'}$. First, note that  $v_1$ can force all the vertices in $S \backslash v_1$ since after removing the vertices of $G\backslash H$ from $T$, the vertices of $S$ will be in different components. Also $u\in V(G')$ can force $v\in V(G')$ in $G'$ if they are adjacent in  $X_T$ and $u$ lies in a higher level than $v$. To see this, let  $u, u_1, \ldots, u_s, v$ be the only path connecting $u$ to $v$ in $T$. Since $u$ forces $u_1$ in $T$, $u_1$ must be the only white neighbour of $u$ in its component. Therefore  by removing the vertices $u_1, \ldots, u_s$ and adding the edge $uv$ to the graph, $v$ will be the only white neighbour of $u$ in its component. Thus $u$ forces $v$ in $X_T$.
We, therefore, have
\[
|\mathcal{T'}|\geq Z_+(G');
\]
and since $H$ spans $G'$, we have $\delta(G')\geq \delta(H)$. Thus
\[
Z_+(G)=|\mathcal{T}|\geq |\mathcal{T'}|\geq Z_+(G')\geq \delta(H),
\]
which proves the desired inequality.
\end{proof}
Using this result we deduce the following.
\begin{cor}\label{Z_+&delta}
For any graph $G$ we have
\[
Z_+(G)\geq \delta(G).\qed
\]
\end{cor}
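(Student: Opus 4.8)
The final statement to prove is Corollary~\ref{Z_+&delta}, asserting that $Z_+(G) \geq \delta(G)$ for any graph $G$.

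\textbf{Proof proposal.} The plan is to derive this immediately from Theorem~\ref{Z_+&max delta}, which has just been established. That theorem states that
\[
Z_+(G) \geq \max\{\delta(H') \mid H' \text{ is an induced subgraph of } G\}.
\]
The key observation is that $G$ is itself an induced subgraph of $G$, so $G$ is one of the candidates $H'$ over which the maximum on the right-hand side is taken. Hence the maximum is at least $\delta(G)$, and chaining the two inequalities gives
\[
Z_+(G) \geq \max\{\delta(H') \mid H' \text{ is an induced subgraph of } G\} \geq \delta(G).
\]
That is the entire argument.

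There is essentially no obstacle here: the corollary is a one-line consequence of taking $H' = G$ in the preceding theorem, exactly paralleling how Corollary~\ref{Z& chi} (or rather the analogous observation $Z(G) \geq \delta(G)$) follows from Theorem~\ref{Z&delta} in the zero forcing setting. The only thing to be careful about is simply citing Theorem~\ref{Z_+&max delta} correctly and noting explicitly that a graph is an induced subgraph of itself, which justifies that $\delta(G)$ appears among the quantities being maximized. So the proof I would write is just: ``The inequality follows directly from Theorem~\ref{Z_+&max delta} by taking the induced subgraph $H'$ to be $G$ itself.''
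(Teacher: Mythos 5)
Your proof is correct and matches the paper's (implicit) argument exactly: the corollary is stated with a \qed precisely because it follows from Theorem~\ref{Z_+&max delta} by taking $H'=G$, which is an induced subgraph of itself. Nothing further is needed.
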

Note that, the bound in the above inequality is tight for some graphs such as trees, complete graphs and even cycles.

Using Theorem~\ref{Z_+&max delta} and Lemma~\ref{chi&delta}, we can deduce the following.
\begin{cor}\label{Z_+& chi}
For any graph $G$ we have
\begin{equation}\label{Z_+(G) and x(G)}
\chi(G)\leq Z_+(G)+1.\qed
\end{equation}
\end{cor}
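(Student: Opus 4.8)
The plan is to obtain this inequality as an immediate consequence of the two results that directly precede it, namely Theorem~\ref{Z_+&max delta} and Lemma~\ref{chi&delta}. Theorem~\ref{Z_+&max delta} gives the lower bound
\[
Z_+(G) \geq \max\{\delta(H') \,:\, H' \text{ is an induced subgraph of } G\},
\]
while Lemma~\ref{chi&delta} gives the upper bound
\[
\chi(G) \leq 1 + \max\{\delta(H') \,:\, H' \text{ is an induced subgraph of } G\}.
\]
So the only thing to do is to chain these two inequalities through the common quantity $\max\{\delta(H')\}$, which yields $\chi(G) \leq 1 + \max\{\delta(H')\} \leq 1 + Z_+(G)$.

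Concretely, first I would recall (or cite) Lemma~\ref{chi&delta} verbatim, so that the degeneracy-type bound on the chromatic number is available. Then I would invoke Theorem~\ref{Z_+&max delta} to replace $\max\{\delta(H')\}$ by the (possibly larger) quantity $Z_+(G)$ in the right-hand side. Substituting gives exactly $\chi(G) \leq Z_+(G) + 1$, which is (\ref{Z_+(G) and x(G)}). No further estimates or constructions are needed; the proof is a one-line substitution.

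There is essentially no obstacle here, since all the real work was done in establishing Theorem~\ref{Z_+&max delta} (where the forcing-tree surgery producing the subgraph $G'$ spanned by the extremal induced subgraph $H$ is carried out) and in Lemma~\ref{chi&delta} (a standard greedy-colouring fact about degeneracy). If one wanted a fully self-contained argument one would instead unwind Theorem~\ref{Z_+&max delta}: take $H$ to be an induced subgraph attaining $\max\{\delta(H')\}$, order the vertices of $G$ by repeatedly deleting a minimum-degree vertex from the current induced subgraph, and greedily colour in reverse order, so that each vertex has at most $\max\{\delta(H')\} \leq Z_+(G)$ already-coloured neighbours; but since both lemmas are already in hand, the short argument above suffices. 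It is also worth remarking, as the paper does for paths and complete graphs in the $Z(G)$ analogue (Corollary~\ref{Z& chi}), that the bound is sharp on trees, complete graphs, and even cycles, so no improvement of the constant is possible in general.
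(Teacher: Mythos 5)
Your proposal is correct and is exactly the paper's argument: the corollary is deduced by chaining Lemma~\ref{chi&delta} (the degeneracy bound $\chi(G)\leq 1+\max\{\delta(H')\}$ over induced subgraphs) with Theorem~\ref{Z_+&max delta} (which bounds that same maximum above by $Z_+(G)$). Nothing further is needed, and your remarks on sharpness match the paper's comments for the analogous $Z(G)$ bound.
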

Observe that Corollary~\ref{Z_+& chi} is stronger than Corollary~\ref{Z& chi} as
\[
Z_+(G)\leq Z(G).
\]

In $1943$ Hadwiger proposed the following conjecture \cite{hadwiger1943uber}: 
\newline ``Every $n$-chromatic graph contains a $K_n$ minor, a complete graph on $n$ points''.

By establishing Inequality (\ref{Z_+(G) and x(G)}), it seems tempting to verify Hadwiger's conjecture for $Z_+(G)$ i.e., to verify the truth of the following statement:
``If $Z_+(G)\geq n-1$ then $G$ contains a $K_n$ minor.'' 
 It turns out that this equality fails for outerplanar graphs. According to Corollary~\ref{Z_+& chi} we have $Z_+(G)=T(G)$ if $G$ is an outerplanar graph. The tree cover number and consequently the positive zero forcing number of an outerplanar graph, $G$, can be arbitrarily large while $G$ does not contain even a $K_4$ minor.

\subsection{The positive zero forcing number and the independence number}
 An \txtsl{independent set}, or \txtsl{coclique}, is a set of vertices of a graph $G$ of which no pair is adjacent. The \txtsl{independence number}, which is denoted by $\alpha(G)$, of $G$ is the size of the largest independent set of $G$.
Based on the definition of the positive zero forcing number of a graph $G$ it seems reasonable that it should be related to the independence number of $G$. In this section we establish a lower bound and an upper bound for  the positive zero forcing number of a graph $G$ using the independence number of $G$. 
\begin{lem} \label{chi&alpha}
Let $G$ be a graph with chromatic number $\chi(G)$ and independence number $\alpha(G)$. Then
\[
\chi(G)\geq \left\lceil \frac{|G|}{\alpha(G)}\right\rceil.
\]
\end{lem}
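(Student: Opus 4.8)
The plan is to prove this by a standard counting/pigeonhole argument on the colour classes of an optimal proper colouring. First I would take a proper colouring of $G$ using exactly $\chi(G)$ colours, which partitions $V(G)$ into colour classes $V_1, V_2, \ldots, V_{\chi(G)}$. Each $V_i$ is, by definition of a proper colouring, an independent set of $G$, so $|V_i| \leq \alpha(G)$ for every $i$. Summing over all classes gives
\[
|G| = \sum_{i=1}^{\chi(G)} |V_i| \leq \chi(G)\,\alpha(G).
\]

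From this inequality I would divide by $\alpha(G)$ (which is positive, since $G$ has at least one vertex) to obtain $\chi(G) \geq |G|/\alpha(G)$. Finally, since $\chi(G)$ is an integer and $|G|/\alpha(G)$ is a real number bounded above by $\chi(G)$, I would invoke the fact that an integer at least as large as a real number $x$ is at least $\lceil x \rceil$; hence $\chi(G) \geq \lceil |G|/\alpha(G) \rceil$, which is exactly the claimed bound.

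There is essentially no main obstacle here: the only point requiring a moment's care is the passage from $\chi(G) \geq |G|/\alpha(G)$ to the ceiling version, and that is immediate from the integrality of $\chi(G)$ together with the elementary property that if $n \in \mathbb{Z}$ and $n \geq x \in \mathbb{R}$ then $n \geq \lceil x \rceil$. One should also note the degenerate case is harmless: if $G$ is the empty graph on $n$ vertices then $\chi(G) = 1$ and $\alpha(G) = n$, and $\lceil n/n \rceil = 1$, so the inequality holds with equality. The whole argument is three lines and uses nothing beyond the definitions of $\chi$, $\alpha$, and a proper colouring as recalled earlier in the excerpt.
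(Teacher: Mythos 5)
Your proof is correct and follows exactly the paper's argument: the paper likewise observes that in a proper colouring each colour class is independent and hence used at most $\alpha(G)$ times, giving $|G|\leq \chi(G)\alpha(G)$. You simply spell out the division and the passage to the ceiling, which the paper leaves implicit.
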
 
\begin{proof}
The inequality follows from the fact that in a proper colouring of $G$ every colour can be used at most $\alpha(G)$ times. 
\end{proof}

\begin{thm} \label{Z_+&alpha}
Let $Z_+(G)$ and $\alpha(G)$  be the positive zero forcing number and the independence number of the graph $G$ respectively. Then,
\[
\left\lceil \frac{|G|}{\alpha(G)}\right\rceil-1\leq Z_+(G)\leq |G|-\alpha(G).
\]
\end{thm}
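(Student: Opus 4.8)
The lower bound follows almost immediately from results already in hand. By Corollary~\ref{Z_+& chi} we have $\chi(G)\leq Z_+(G)+1$, and by Lemma~\ref{chi&alpha} we have $\chi(G)\geq \lceil |G|/\alpha(G)\rceil$. Chaining these two inequalities gives $\lceil |G|/\alpha(G)\rceil \leq Z_+(G)+1$, i.e.\ $\lceil |G|/\alpha(G)\rceil - 1 \leq Z_+(G)$, which is exactly the left-hand inequality. So the first step is simply to invoke these two facts and combine them.

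For the upper bound, the natural plan is to exhibit a positive zero forcing set of size $|G|-\alpha(G)$. Let $S$ be a maximum independent set of $G$, so $|S|=\alpha(G)$, and let $B = V(G)\setminus S$ be its complement; then $|B| = |G|-\alpha(G)$. I would claim that $B$ is a PZFS. The key observation is that when we delete the black set $B$, the graph $G\setminus B = G[S]$ has \emph{no edges} (since $S$ is independent), so each of its $\alpha(G)$ connected components is a single isolated vertex $\{w\}$ for $w\in S$. Now apply the positive semidefinite colour change rule: for any such $w\in S$, the induced subgraph $G[\{w\}\cup B]$ is just $B$ together with the vertex $w$ attached to its neighbours in $B$. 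Since $w$ is the \emph{only} white vertex in that induced subgraph, any black neighbour $u\in B$ of $w$ sees $w$ as its unique white neighbour there, so $u$ forces $w$. (If $w$ has no neighbour in $B$ at all, then $w$ is isolated in $G$ and is a component on its own, but then it should really have been excluded or handled as a trivial component — I would note that an isolated vertex of $G$ not in $B$ would contradict maximality of $S$ only if... actually one should just add such vertices to $B$; cleaner to assume $G$ has no isolated vertices, or to observe that the rule as stated colours each component, and components that are isolated vertices of $G$ are precisely the awkward case, which forces us to instead argue more carefully.) In one pass, every vertex of $S$ gets coloured black, so the derived set is $V(G)$ and $B$ is a PZFS; hence $Z_+(G)\leq |B| = |G|-\alpha(G)$.

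I expect the only genuine subtlety to be the degenerate case in the upper-bound argument: a vertex $w\in S$ with no neighbour in $B$, which happens exactly when $w$ is an isolated vertex of $G$. Such a vertex never gets forced from $B$. The clean fix is to note that the components of $G\setminus B$ that are isolated vertices of $G$ itself can be dealt with separately — either assume $G$ is connected (or has no isolated vertices), in which case the issue does not arise, or observe that if $G$ has $c$ isolated vertices they contribute $c$ to both $\alpha(G)$ and to any forcing set, so the bound $Z_+(G)\leq |G|-\alpha(G)$ is preserved by reducing to the non-isolated part. I would phrase the main argument assuming no isolated vertices and remark that the general case reduces to this. Everything else is routine: the lower bound is a two-line corollary, and the upper bound is a one-pass verification that the complement of a maximum independent set is a PZFS.
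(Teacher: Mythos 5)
Your proof takes essentially the same route as the paper's: the lower bound by chaining the corollary $\chi(G)\leq Z_+(G)+1$ with the lemma $\chi(G)\geq\left\lceil |G|/\alpha(G)\right\rceil$, and the upper bound by checking that the complement of a maximum independent set is a positive zero forcing set because its deletion leaves only isolated-vertex components. Your caveat about isolated vertices of $G$ itself is a fair observation (the paper's argument silently assumes every vertex of the independent set has a neighbour outside it), but otherwise the two arguments coincide.
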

\begin{proof}
The first inequality follows from combining Corollary~\ref {Z_+& chi} and Lemma~\ref {chi&alpha}.
To show the second inequality, let $H$ be a set of vertices in $G$, no two of which are adjacent with $|H|=\alpha(G)$. Let $L=V(G)\backslash V(H)$. Then $L$ is a positive zero forcing set for $G$, since removing $L$ from $G$ results in having $\alpha(G)$ isolated vertices which can be coloured black by the vertices in $L$. Thus $Z_+(G)\leq |L|$.
\end{proof}
 Note that in Theorem~\ref{Z_+&alpha} both upper and lower bounds can be tight. For example the lower bound holds with equality for  odd cycles and the graphs $C_{2k+1}\,\stackplus{v}\, P_{2k'+1}$, where $k,k'\geq 1$ and $v$ is an endpoint of the path.  On the other hand, complete graphs and complete bipartite graphs are examples for which the upper bound is tight. 
 
If $Z_+(G)=n-2$, then according to Theorem~\ref{Z_+&alpha}, $\alpha(G)\leq 2$. If $\alpha(G)=1$, then $G$ is a complete graph, hence $Z_+(G)=n-1$. Thus $\alpha(G)=2$. Therefore graphs with $Z_+(G)=n-2$ are other examples for which the second inequality in Theorem~\ref{Z_+&alpha} is tight.





\chapter{Conclusion}\label{conclusion}
The graph parameters zero forcing number and positive zero forcing number, offer a wide range of prospective research.
It is very appealing to try to find bounds or the exact value of the maximum nullity and minimum rank of graphs without going through  direct algebraic calculations. Therefore, discovering more families of graphs satisfying $M(G) = Z(G)$ or $M_+(G) = Z_+(G)$, is  an interesting project.
\vspace{.3cm}
\\
{\bf Problem 1.} \textsl{For which families of graphs $G$ do we have $M(G)=Z(G)$ or $M_+(G)=Z_+(G)$?}
\vspace{.3cm}

In Section~\ref{graphs_with_Z=P}, we introduced families of graphs for which the zero forcing number and the path cover number coincide. In fact, we showed that for the family of block-cycle graphs this is true. It seems, however, that  there are more  families for which the equality holds between these two parameters. For example, the graph $G=K_4-e$, where $e$ is an edge of $K_4$, has $Z(G)=P(G)$. It is, therefore, natural to propose the  following.
\vspace{.3cm}
\\
{\bf Problem 2.} \textsl{Characterize all the graphs $G$ for which $Z(G)=P(G)$.}
\vspace{.3cm}

We showed in Section~\ref{ZFS_basics} that for trees, any minimal path cover coincides with a collection of forcing chains. It seems that this is also the case for the block-cycle graphs. Note that, if this is true for any graph $G$ satisfying $Z(G)=P(G)$, then
Conjecture~\ref{conj_Z&P_vertex_sum} follows from Theorem~\ref{zero forcing of vertex-sum} and Observation~\ref{path covering of vertex-sum}. Hence, we state the following.
\vspace{.3cm}
\\
{\bf Problem 3.} \textsl{Let $G$ be a graph with $Z(G)=P(G)$. Is it true that any minimal path cover of $G$ coincides with a collection of forcing chains of $G$?}
\vspace{.3cm}

In Section~\ref{Forcing Trees}, we proved the equality $Z_{+}(G) = T(G)$ where $G$ is an outerplanar graph. The structure of a planar embedding of outerplanar graphs was the key point to establish the equality. It looks  like there are some non-outerplanar graphs whose pattern follows the same structure. Hence an approach consisting of discovering more graphs satisfying $Z_+(G)=T(G)$,  seems promising. We, therefore, propose the following problem.
\vspace{.3cm}
\\
{\bf Problem 4.} \textsl{Characterize all the graphs $G$ for which $Z_{+}(G) = T(G)$.}
\vspace{.3cm}

In Chapter~\ref{ZFS} we discussed the zero forcing number of the corona of  graphs as well as the join of graphs. In addition, in Chapter~\ref{PZFS} we studied the problem of finding the positive zero forcing number for the coalescence of graphs. Thus the problem of evaluating the (positive) zero forcing number for some other  graph operations such as  the Cartesian product, the direct product, the strong product or the complement of graphs may also be interesting. 
\vspace{.3cm}
\\
{\bf Problem 5.} \textsl{What can we say about the (positive) zero forcing number of graph operations such as different  products and  complements of graphs?}
\vspace{.3cm}

Recall from Section~\ref{graph_with_Z=M} that the equality $M(G)=Z(G)$ does not hold for all chordal graphs. On the other hand, however,  we showed that for some families of chordal graphs, the equality does indeed hold. It might be possible to  describe the conditions under which a chordal graph satisfies the equality $M(G)=Z(G)$.  We phrase this problem as follows.  
\vspace{.3cm}  
\\
{\bf Problem 6.} \textsl{For which chordal graphs $G$ does the equality $M(G)=Z(G)$ hold?}
\vspace{.3cm}

Recall from Subsection~\ref{k_trees} that the equality $Z_+(G)=T(G)$ holds for $2$-trees but fails to hold for general $k$-trees. Recall, also, that we evaluated the positive zero forcing number and the tree cover number for some families of $k$-trees, namely the clusters. We are, therefore, interested in finding the positive zero forcing number and the tree cover number of the general $k$-trees. Hence solving Conjecture~\ref{conj_k_trees} is one of the most interesting future works in this area. 
\vspace{.3cm}
\\
{\bf Problem 7.} \textsl{If a $k$-tree $G$, has $t$ clusters as subgraphs, each with $|S|\geq3$, such that each pair of them share at most $k-1$ clique(s) of size $k+1$ then does the equality $Z_+(G)=k+t$ hold}?
\vspace{.3cm}

Finally, note that the parameters $\mu$ and $\nu$ discussed in Chapter~\ref{ZFS} are of great importance because of their minor monotonicity property.   Studying Conjecture~\ref{chi&mu} and Conjecture~\ref{chi&nu} is yet another interesting direction to follow. In particular, a follow up project could consist of investigating the Hadwiger conjecture which is one of the deepest unsolved problems in graph theory and would directly verify Conjecture~\ref{chi&mu} and Conjecture~\ref{chi&nu}.

\printindex
\addcontentsline{toc}{chapter}{References}
\bibliographystyle{plain}
\bibliography{bibliography}
\end{document}